\documentclass{amsart}
\pdfoutput=1
\allowdisplaybreaks
\usepackage{mathpazo}
\usepackage{amsmath,amsfonts,amssymb}
\usepackage{amsrefs}
\usepackage{amsthm}
\usepackage{latexsym,amsmath,amssymb,amsfonts}
\usepackage{rotating}
\usepackage{mathrsfs}
\usepackage{amscd}
\usepackage{hyperref} 
\usepackage{euscript}
\usepackage{hhline}
\usepackage{graphicx,epstopdf}
\usepackage{epsfig}
\usepackage{xcolor}
\usepackage[all,color]{xy}
\usepackage{tikz}
\usetikzlibrary{cd}
\usetikzlibrary{matrix}
\usepackage{spectralsequences}
\usepackage{float}
\usepackage{mathtools} 

\newlength{\fighskip} \fighskip=2pt
\newlength{\figvskip} \figvskip=3pt

\usepackage{hyperref}

\advance\textheight by \topskip
\numberwithin{equation}{section}

\renewcommand{\epsilon}{\varepsilon}

\newcommand{\R}{\mathbb{R} }
\newcommand{\C}{\mathbb{C} }
\renewcommand{\P}{\mathbb{P} }
\newcommand{\Z}{\mathbb{Z} } 
\newcommand{\Q}{\mathbb{Q} }
\newcommand{\U}{{\mathcal{U}}}

\newcommand{\LL}{\mathcal{L} }

\newcommand{\zz}{{\mathbf{z}}}
\newcommand{\ww}{{\mathbf{w}}}
\newcommand{\vv}{{\mathbf{v}}}
\newcommand{\xx}{{\mathbf{x}}}
\newcommand{\yy}{{\mathbf{y}}}
\newcommand{\kk}{{\mathbf{k}}}
\newcommand{\jj}{{\mathbf{j}}}

\newcommand{\xalpha}{{\boldsymbol \alpha}}
\newcommand{\xbeta}{{\boldsymbol \beta}}

\newcommand{\OO}{{\mathcal O}}

\newcommand{\DD}{{\mathbb D}}

\newcommand{\TT}{\boldsymbol T}
\newcommand{\ZZ}{{\boldsymbol Z}}

\newcommand{\hotimes}{\hat\otimes}

\renewcommand{\emptyset}{\varnothing}

\renewcommand{\j}{{\bf{j}}}

\newcommand{\HH}{\mathscr{H} }

\newcommand{\inv}{^{-1}}

\newcommand{\bs}[1]{{\boldsymbol{#1}}}

\newcommand{\LR}[1]{\left ( #1 \right )}
\newcommand{\Lr}[1]{\left [ #1 \right ]}
\newcommand{\lr}[1]{\left\{ #1\right \}}
\newcommand{\lR}[1]{\left < #1\right >}
\newcommand{\set}[2]{{\left\{ \left. #1 \,\right|\, #2 \right\}}}    

\providecommand{\abs}[1]{\left\lvert#1\right\rvert}

\newcommand{\ali}[1]{$$\begin{aligned} #1 \end{aligned}$$}

\newcommand{\lra}{ \longrightarrow}
\newcommand{\lmt}{ \longmapsto}
\newcommand{\hlra}{{\,\xhookrightarrow{\quad\,}\,}} 
\newcommand{\slra}{{\overset{\simeq}{\lra}}} 
\newcommand{\islra}{{\,\xhookrightarrow{\,\,\simeq\,\,}\,}} 

\newcommand{\Omegaz}{\Omega_{\zz}}

\newcommand{\VAvacuum}{{\left |0\right>}}  

\newcommand{\Kdisk}{\mathcal K}  

\newcommand{\sT}{\sigma(T)}

\newcommand{\bp}{{\Bar{\partial}}}
\renewcommand{\d}{{\mathrm{d}+\bp}}
\newcommand{\bd}{{(\d)}}
\newcommand{\bfd}{{\mathbf d}}

\newcommand{\rSquare}{|\xx|^2 + 2|\zz|^2} 

\newcommand{\ZZi}{z^1,\cdots,z^n}
\newcommand{\PPi}{\partial_1,\cdots, \partial_n}

\newcommand{\setn}{\lr{1,\cdots, n}}
\newcommand{\punctured}{{\setminus\lr{0}}}

\newcommand{\Ocplx}{\mathcal{O}^{cplx}} 
\newcommand{\A}{A^{\bullet}}  
\newcommand{\Ak}{A^{\bullet}_{(k)}}  
\newcommand{\Adef}{\Aconk}
\newcommand{\Hdef}{\Hconk}
\newcommand{\Aconk}{A^{\bullet}_{con,(k)} } 
\newcommand{\Hconk}{H^{\bullet}_{con,(k)} } 

\newcommand{\Xdef}{X^{def}} 
\newcommand{\Odef}{\OO^{def}} 
\newcommand{\AOdef}{A\OO^{def,\bullet} } 
\newcommand{\Ocon}{\OO^{con}} 
\newcommand{\OconU}{\OO^{con}} 

\newcommand{\RGamma}{R\Gamma(\maka,\Ocplx_{\maka})} 

\newcommand{\aka}{{\R^{n'}\times \C^n}} 
\newcommand{\paka}{{\R^{n'}\times \C^n \punctured}} 
\newcommand{\maka}{{\Conf_m(\R^{n'}\times \C^n)}} 
\newcommand{\cmaka}{{\cConf_m(\R^{n'}\times \C^n)}} 

\newcommand{\GD}{\widehat{\mathcal{D}}}
\newcommand{\cD}{{\mathcal{D}}}
\newcommand{\cGD}{{\mathcal{D}}_{conn}}
\newcommand{\NAI}{{\mathcal{N}}}
\newcommand{\ExtVert}{A}

\newcommand{\VertexDiagram}{{\Gamma_{W}(a)}}
\newcommand{\chorddiagram}{{\Gamma_{W}(a,b)}} 
\newcommand{\ConfSpaceCohomology}{{\Hconk(\Conf_A(\aka))}} 
\newcommand{\DefinableChainsA}{{\Aconk(\Conf_A(\aka))}} 

\newcommand{\GlobalRes}{\oint} 

\newcommand{\GIK}{\widehat{\operatorname{I}}}

\newcommand{\calU}{{\mathcal U}}

\def\Xint#1{\mathchoice
	{\XXint\displaystyle\textstyle{#1}}%
	{\XXint\textstyle\scriptstyle{#1}}%
	{\XXint\scriptstyle\scriptscriptstyle{#1}}%
	{\XXint\scriptscriptstyle\scriptscriptstyle{#1}}%
	\!\int}
\def\XXint#1#2#3{{\setbox0=\hbox{$#1{#2#3}{\int}$}
		\vcenter{\hbox{$#2#3$}}\kern-.5\wd0}}

\def\dashint{\Xint-}

\DeclareMathOperator{\BL}{BL}

\DeclareMathOperator{\Conf}{Conf}
\newcommand{\cConf}{{\overline{\Conf}}}

\DeclareMathOperator{\dVol}{dVol}
\DeclareMathOperator{\End}{End}

\newcommand{\Ho}{\mathscr{H}}
\newcommand{\Hol}{\OO^{hol}}
 
\DeclareMathOperator{\Hom}{Hom}
\DeclareMathOperator{\I}{I}
\DeclareMathOperator{\Id}{Id}
\renewcommand{\Im}{\mathrm{Im} \;\!}
\DeclareMathOperator{\Ker}{Ker}

\DeclareMathOperator{\Maps}{Maps}

\DeclareMathOperator{\Obs}{Obs}
\DeclareMathOperator{\Opens}{Opens}

\DeclareMathOperator{\pr}{pr}
\DeclareMathOperator{\PV}{PV}

\renewcommand{\Re}{\text{Re}}
\DeclareMathOperator{\Res}{Res}

\DeclareMathOperator{\sign}{sign}
\DeclareMathOperator{\sgn}{sgn}

\DeclareMathOperator{\Spec}{Spec}

\DeclareMathOperator{\Vol}{Vol}

\theoremstyle{plain}

\newtheorem{thm}{Theorem}[section]
\newtheorem{lemma}[thm]{Lemma}
\newtheorem{lem}[thm]{Lemma}

\newtheorem{prop}[thm]{Proposition}

\newtheorem{cor}[thm]{Corollary}

\theoremstyle{definition}

\newtheorem{defn}[thm]{Definition}

\newtheorem{eg}[thm]{Example}
\newtheorem{notation}[thm]{Notation}

\theoremstyle{remark}

\newtheorem{rem}[thm]{Remark}

\allowdisplaybreaks[4]  

\title{\textbf On the Formality of Configuration Spaces of $\aka$}

\author{Si Li, Peng Yang, and Jiawei Zhou}

\address{
S.~ Li:
Yau Mathematical Sciences Center, Tsinghua University, Beijing, China
}
\email{sili@mail.tsinghua.edu.cn}

\address{
P.~ Yang: 
Beijing Institute of Mathematical Sciences and Applications, Beijing, China
}
\email{yangpeng@bimsa.cn}

\address{
J.~ Zhou:
Department of Mathematics, Nanchang University, Nanchang, Jiangxi, China
}
\email{jiaweizhou90@ncu.edu.cn}

\begin{document}

\begin{abstract}
This paper presents a complete classification of the formality of configuration spaces of $\aka$. 
We define a constructible de Rham-Dolbeault cohomology theory which provides a constructible CDGA (commutative differential graded algebra) model of $\maka$. 
For $(n'=0,n\ge2)$ or $(n'=1,n\ge1)$,  the CDGAs are non-formal. 
For $n'\ge2,n\ge1$, we establish an explicit quasi-isomorphism between the constructible CDGA and its cohomology by using a diagrammatic CDGA of admissible diagrams and a regularized configuration space integral, which leads to the formality. 
As an application, we show that the local operator algebra of a topological-holomorphic field theory on $\aka$ ($n'\ge2,n\ge1$) is homotopically equivalent to a higher dimensional analog of vertex algebras.
\end{abstract}

\maketitle
  
\tableofcontents


\section{Introduction}
\label{sec:introduction}

A CDGA model encodes the rational homotopy type of a space. The space is called formal if this CDGA is connected by quasi-isomorphisms to its cohomology algebra equipped with the zero differential (see Definition \ref{defn:formality}).  In this paper, we consider the configuration spaces
$$
\maka  = \set{(\ZZ_1,\cdots,\ZZ_m)\in (\aka)^m}{\ZZ_i \ne \ZZ_j \text{ for } i\ne j}, \qquad n\ge1
$$
equipped with two CDGAs: the CDGA of smooth $\bd$-forms $\A(\maka)$, and its constructible counterpart $\Aconk(\maka)$ in which every differential form has constructible coefficients as defined in \cite{int-closed} (see Section \ref{sec:constructible-chains}). Such CDGAs encode topological data in the $\R^{n'}$-directions and holomorphic data in the $\C^n$-directions. For $m\ge 2$, we present a complete classification of their formality summarized in the following table: 
\begin{center}
\begin{tabular}{|c|c|c|c|}
    \hline
      & $n=0$ & $n=1$ & $n>1$ \\
    \hline
    $n'=0$ & $\emptyset$ & formal & non-formal \\
    \hline
    $n'=1$ & formal & non-formal & non-formal \\
    \hline
    $n'>1$ & formal & formal & formal \\
    \hline
\end{tabular}
\end{center}  
While the configuration space of $\R^{n'}$ is always formal, formality is not always guaranteed when complex dimensions are involved.

\subsection{Formality and the Little Disks Operad} 

The little $N$-disks operad, introduced by Boardman, Vogt and May, plays a fundamental role in encoding the algebraic structures of $N$-fold loop spaces. 
The celebrated formality theorem for this operad, which asserts a weak equivalence between the singular chains on the little $N$-disks operad and its homology, was originally proved by Kontsevich \cite{Kontsevich-Operads} and subsequently developed in the setting of CDGA cooperads \cite{Formality-little-disks}. An alternative proof based on Drinfeld associators was given by Tamarkin \cite{Tamarkin-formality}. 
For $N=2$, the formality directly implies the deformation quantization of any Poisson manifold \cite{Kontsevich-DQ}. 

The proof of formality of the little $N$-disks operad depends crucially on the formality of the configuration spaces $\Conf_m(\R^N)$, which was established using the Kontsevich configuration space integral \cite{Kontsevich-Operads}
$$
\I: \cD_N(m) \lra \Omega_{PA}(C_N[m])
$$ 
between a CDGA of admissible diagrams $\cD_N(m)$ and the piecewise semialgebraic de Rham complex on the compactification $C_N[m]$ of $\Conf_m(\R^N)$.

The systematic study of cohomology of configuration spaces began with Arnold's work \cite{Arnold-conf-cohomology} where he computed the singular cohomology ring of $\Conf_m(\R^2)$ and found the celebrated Arnold relation. 
The cohomology ring of $C_N[m]$ is generated by the cohomology classes $\theta_{ab}$, with relations $\theta_{ab}=(-1)^N \theta_{ba}$, $\theta_{ab}^2=0$,  and the Arnold relation $\theta_{ab}\theta_{bc}+\theta_{bc}\theta_{ca}+\theta_{ca}\theta_{ab}=0$. Here $\theta_{ab}$ is the pullback of the cohomology class of the volume form of $S^{N-1}$ via the projection
\ali{
\pr_{ab}: C_N[m] \lra C_N[2] \cong S^{N-1}
} 
that forgets $m-2$ points. 
We assign the differential form $\theta_{ab}$ to every edge $e:a\to b$ in a diagram.  The configuration space integral $\I$ integrates over all but $m$ of the points. The differential of $\cD_N(m)$ is contraction of edges, which reflects Stokes theorem on $C_N[m]$.

In this paper, we generalize the formality of $\Conf_m(\R^N)$ to $\maka$ for $n'\ge2$. The diagonal maps 
\ali{
\phi_{ij}: \quad \maka &\lra \paka \\
  (\ZZ_1,\cdots,\ZZ_m)       &\lmt \ZZ_i-\ZZ_j
}
take values in $\paka$, so we begin by studying the cohomology of $\paka$  
\ali{
H^\bullet(\paka)&=H^\bullet(\A(\paka),\d), \\
\Hdef(\paka)&=H^\bullet(\Aconk(\paka),\d).
} 
See Section \ref{sec:CDGAs} for definitions. 
Let $\omega$ be a smooth $n'+n-1$ form on $\paka$ which solves the distribution equation (see Section \ref{sec:local-coho})
$$\bd \omega=\delta_0  \prod_{i=1}^{n'}dx^i\prod_{j=1}^{n} d\bar z^j,
$$
where $\delta_0$ is the Dirac delta distribution at $0$. 
Then we have  

\begin{prop}[see Proposition \ref{prop:coh-local}] 
For $n'+n\ge2$, the cohomology of the CDGAs $\A(\paka)$ and $\Aconk(\paka)$ are as follows:
\begin{equation*} 
\Hdef(\paka)  = 
\begin{cases}
\C[\ZZi ]  & \bullet=0 \\
\C[\PPi ]\omega & \bullet=n'+n-1 \\
0 & \text{else}
\end{cases} 
\end{equation*}  
and 
$$
H^\bullet(\paka)=
\begin{cases}
\OO^{hol}(\C^n)  & \bullet=0 \\
\overline{\C[\PPi ]\omega} & \bullet=n'+n-1 \\
0 & \text{else}
\end{cases}.
$$
\end{prop}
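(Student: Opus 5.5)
We compute the cohomology of $\bd$ on $\paka$ by first discarding the real directions and then computing holomorphic local cohomology on the complex factor.

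\emph{Reduction to a sphere-type problem.} Write $\paka=(\R^{n'}\times\C^n)\setminus\{0\}$ and view the total differential $\mathrm{d}+\bp$ as $d_{\R}+\bp_{\C}$, where $d_\R$ is the de Rham differential in $x$ and $\bp_\C$ is the Dolbeault operator in $z$. The complex $\A$ is the $\bp$-resolution of the sheaf $\Ocplx$ of functions that are locally constant in $x$ and holomorphic in $z$ (the \v{C}ech/Dolbeault lemma with parameters). So $H^\bullet(\paka)=H^\bullet(\paka,\Ocplx)$. The analogous statement for $\Aconk$ should follow from the constructible Poincaré and Dolbeault lemmas of \cite{int-closed}, with polynomial data replacing entire holomorphic data.

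\emph{Local cohomology sequence.} Set $X=\aka$. The complex $\A(X)$ is acyclic in positive degrees, and its $H^0$ is $\OO^{hol}(\C^n)$ (respectively $\C[\ZZi]$ in the constructible setting). The excision long exact sequence
\[
\cdots\to H^k_{\{0\}}(X,\Ocplx)\to H^k(X)\to H^k(X\setminus 0)\to H^{k+1}_{\{0\}}(X,\Ocplx)\to\cdots
\]
then gives $H^k(\paka)\cong H^{k+1}_{\{0\}}$ for $k\ge1$, together with the degree-$0$ part. We compute the local cohomology by a Künneth-type argument. Topological local cohomology at $0\in\R^{n'}$ is concentrated in degree $n'$. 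Holomorphic local cohomology $H^j_{\{0\}}(\C^n,\OO)$ is concentrated in degree $n$ and equals $\C[\PPi]\delta_0$, generated by the derivatives of the delta (Grothendieck's computation). Hence $H_{\{0\}}^\bullet$ is concentrated in degree $n'+n$ and equals $\C[\PPi]\cdot\delta_0\prod dx^i\prod d\bar z^j$.

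\emph{Reading off the result.} Since $n'+n\ge2$, the local cohomology vanishes in degrees $0$ and $1$. Therefore restriction gives $H^0(\paka)=H^0(X)$; this is Hartogs extension, mixed with connectedness in the $\R$-directions. In degree $n'+n-1$, the connecting map is realized explicitly by $\bd$ applied to the extension of a form by zero. The class $\omega$ is defined by $\bd\omega=\delta_0\cdots$, so it maps to the generator, and $\partial_I\omega$ maps to $\partial_I\delta_0$. This gives the answer $\C[\PPi]\omega$ in the constructible case. In the smooth case, we obtain its closure, namely the convergent infinite sums, matching the completion of the distributions supported at $0$ that the smooth Dolbeault theory sees.

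\emph{Main obstacle.} The hardest part is making the constructible version rigorous: proving that $\Aconk$ satisfies Poincaré/Dolbeault lemmas and excision with polynomial growth. Concretely, the issue is showing that constructible primitives exist and that only finitely many derivatives $\partial_I\omega$ arise. I would first try an explicit homotopy formula: radial integration in $x$, and a Bochner–Martinelli-type Cauchy kernel in $z$. I would then check that this formula preserves constructibility, degree by degree in the weight grading $(k)$.
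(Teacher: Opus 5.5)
Your smooth-case argument takes a genuinely different route from the paper, and its skeleton is sound. Since $\Hol$ is the pullback of $\OO_{\C^n}$ along $\aka\to\C^n$, excision and K\"unneth reduce everything to $R\Gamma_{\{0\}}(\C^n,\OO_{\C^n})[-n']$. The paper instead covers $\paka$ by the $2n'+n$ sets $\{x^i>0\}$, $\{x^i<0\}$, $\{z^j\ne0\}$, whose intersections are all products of $\R$, $\R_+$, $\C$, $\C\setminus\{0\}$. It computes \v{C}ech cohomology there and carries the class of $\sign x^1\cdots\sign x^{n'}/(z^1\cdots z^n)$ to $\omega$ with the collating map.

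However, one of your inputs is wrong. $H^n_{\{0\}}(\C^n,\OO_{\C^n})$ is not $\C[\PPi]\delta_0$; that is Grothendieck's algebraic local cohomology $H^n_{(\ZZi)}(\C[\ZZi])$. The analytic one consists of all series $\sum_{\kk\ge\bs{1}}a_{\kk}\zz^{-\kk}$ converging on $(\C^*)^n$, i.e.\ analytic functionals carried by $0$. With your identification, the excision sequence would give exactly $\C[\PPi]\omega$ in the smooth case too. That contradicts the closure you assert at the end, and the closure is precisely this difference.

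For the same reason, ``$\bd$ of the extension by zero'' computes the connecting map only on classes that extend as currents across $0$. These are the finite combinations of the $\partial_I\omega$, since a current supported at a point is a finite sum of derivatives of $\delta_0$. For the remaining classes you still have to show that the corresponding infinite sums of $\partial_I\omega$ converge in $\A(\paka)$ and represent them.

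The constructible half of the statement is a genuine gap. Excision and local cohomology are sheaf theory on the analytic topology, but constructible forms do not form a sheaf there: $\sin x$ is constructible on every bounded interval but not on $\R$. So you would first need that $\Aconk$ computes $R\Gamma(-,\Ocon)$ on a definable site. That is Theorem \ref{thm:cons-resolution}, which the paper only sketches and deliberately does not use. You would then need a separate computation of the constructible local cohomology at $0$, to which neither your K\"unneth step nor Grothendieck's computation applies without a comparison theorem. Also, \cite{int-closed} contains no Poincar\'e or Dolbeault lemma; it only gives stability of constructible functions under parametric integration.

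The missing ingredients are Lemma \ref{lem:definable-Poincare} and Example \ref{cor:0-cohomology-affine}. You need these even for the acyclicity of $\Aconk(\aka)$ in your own excision step.
\begin{itemize}
\item Constructible functions obey only polynomial bounds (Corollary \ref{cor:polynomial-bound}). So the Cauchy or Bochner--Martinelli integral you propose diverges in general. It must be corrected by subtracting truncated expansions of $1/(z-w)$ at $\infty$ and at each puncture (Proposition \ref{prop:Constructible-poincare-disk}, Corollary \ref{cor:poincare-punctured-disk}).
\item $\bd$-closed constructible functions on the pieces are polynomials, respectively Laurent polynomials, by the o-minimal Chow theorem (Proposition \ref{prop:Hcons-affine}). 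This is what produces $\C[\ZZi]$ in degree $0$ and only finitely many $\partial_I\omega$ in top degree.
\end{itemize}
With these results, the \v{C}ech route handles both cases uniformly. The cover and the combinatorics stay the same, $\Hol(\C\setminus\{0\})$ is simply replaced by $\C[z,z^{-1}]$, and the dense inclusion comes for free.
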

The above two equations identify both $\Hdef(\paka)$ and $H^\bullet(\paka)$ as subspaces of $\A(\paka)$. Under this identification, the latter is the completion of the former with respect to the nuclear topology on smooth differential forms.

The non-formal cases are classified as follows. 
\begin{thm}[see Section \ref{02}-\ref{1>1}]
For $(n'=0,n\ge2)$ or $(n'=1,n\ge1)$, the CDGAs $\A(\maka)$ and $\Aconk(\maka)$ are non-formal for all $m\ge2$.
\end{thm}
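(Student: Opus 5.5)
Non-formality will be detected by a nontrivial Massey triple product, since a formal CDGA has all Massey products vanishing (with zero indeterminacy modulo the appropriate ideal). I start with $m=2$. The map $(\ZZ_1,\ZZ_2)\mapsto(\ZZ_1-\ZZ_2,\ZZ_2)$ splits $\Conf_2(\aka)$ as $(\paka)\times(\aka)$. The second factor is acyclic apart from holomorphic functions, so everything reduces to the CDGA of the punctured space. The case $m>2$ should then follow by pulling back along $\phi_{12}$ and projecting back via a section. For example, one can fix $\ZZ_3,\dots,\ZZ_m$ far away, i.e.\ restrict to an open subset retracting onto $\Conf_2$ times a point. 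Because such a retraction compatible with the CDGA structure may not be literally available, I would instead use the forgetful/projection maps. With these, the Massey product built on $\Conf_2$ pulls back to a Massey product on $\maka$, and its non-vanishing can be tested by restricting to a slice. This makes it enough to exhibit the obstruction on $\paka$ and check that it survives pullback.

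The cohomology of $\paka$ is described by Proposition \ref{prop:coh-local}. In degree $0$ it consists of polynomials (respectively holomorphic functions) in $z$. In degree $d=n'+n-1$ it is the module $\C[\PPi]\omega$. The key structural point is that this is a module over $H^0$ on which $z^i$ acts through the contraction $z^i\cdot\partial^{\alpha}\omega$. Since $z^i\omega$ is exact away from the origin (the delta function kills $z^i$), we get $z^i\omega=\bd\eta_i$ for an explicit $(d-1)$-form $\eta_i$. This gives the candidate triple Massey product $\langle z^i, z^j, \omega\rangle$ or $\langle z^i,\omega,z^j\rangle$. For instance, $z^i\partial_j\omega$ is cohomologous to $-\delta^i_j\omega$ up to sign, so products like $z^j\cdot\partial_j\omega$ are nonzero. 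The real candidate, however, is the following: take $a=z^1$, $b=\partial_1\omega$-type classes, and a third class with $ab=0$ and $bc=0$. One then forms $\eta_{ab},\eta_{bc}$ and the class $\eta_{ab}c\pm a\eta_{bc}$ in degree $d$ or $2d-1$. In $\paka$ the only cohomology lives in degrees $0$ and $d$, so the product must land in degree $d$. This forces two of the entries to be degree-$0$ functions, as in $\langle z^i, z^j, \partial^\alpha\omega\rangle$ with $z^j\partial^\alpha\omega=0$ and $z^iz^j\neq 0$. The latter is nonzero in a polynomial ring, so I would rather look for a product of the form $\langle f,\partial^\alpha\omega, g\rangle$ with $f\partial^\alpha\omega=0=\partial^\alpha\omega\, g$. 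Here is the concrete choice: $\langle z^1, \omega, z^2\rangle$ with $n\ge2$, giving $\eta_1 z^2 - z^1\eta_2$. This is a closed $(d-1)$-form, but $H^{d-1}=0$ unless $d-1=0$.

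Since the degrees cannot be matched on $\paka$ alone, the obstruction must genuinely use $m$ points or the $\R^{n'}$ direction. I therefore expect to work directly on $\Conf_2$ or $\Conf_3$ and use the classes $\omega_{ab}=\phi_{ab}^*\omega$. The natural candidate is $\langle \omega_{12}, z_{12}^1, \omega_{12}\rangle$-type or $\langle\omega_{12},\omega_{23},\omega_{31}\rangle$-type products. The latter is the failure of an Arnold-type relation, exploiting the fact that holomorphic propagators do not satisfy Arnold's relation on the nose when $n'\le1$. For $n'=1$ one can use $\theta$-like behaviour in the single real direction.

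The main obstacle is choosing the right Massey product and proving it is nonzero modulo indeterminacy. This requires knowing enough of the cohomology ring of $\maka$ in the relevant degree to separate the product from the indeterminacy ideal. For that step I would compute the relevant piece of cohomology by Leray/Fadell–Neuwirth-type fibration $\Conf_m\to\Conf_{m-1}$ with fiber $\paka$ minus points. I would then detect the Massey class by integrating against a cycle, namely a product of small spheres linking the diagonals, using residue-type pairings. The constructible version $\Aconk$ would be handled identically, with the inclusion $\Aconk\to\A$ preserving the Massey product, so non-vanishing in either one transfers.
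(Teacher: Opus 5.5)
The step you defer, exhibiting a triple Massey product that is nonzero modulo indeterminacy, is the whole proof, and in most cases it cannot be done. Put $d=n'+n-1$. By Theorem~\ref{thm:conf-space-cohomomogy}, $H^\bullet(\maka)$ and $\Hconk(\maka)$ are concentrated in degrees $0,d,\dots,(m-1)d$. A triple product $\langle a,b,c\rangle$, however, sits in degree $|a|+|b|+|c|-1\equiv -1\pmod{d}$. So for $d\ge2$, which is exactly $(n'=0,n\ge3)$ and $(n'=1,n\ge2)$, every triple Massey product lands in a zero group, for every $m$. (Only $A_\infty$-operations $m_k$ with $k\equiv 2\pmod{d}$ can survive, the first being $m_{d+2}$.)

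Passing to $\Conf_3$ and the classes $\omega_{ab}$ therefore cannot help. Moreover $\langle\omega_{12},\omega_{23},\omega_{31}\rangle$ is not even defined, since $\omega_{12}\omega_{23}\neq0$ in cohomology; only the Arnold sum vanishes, and it does vanish in cohomology for every $n'$. Your conclusion that the obstruction ``must use $m$ points'' also clashes with the statement, which includes $m=2$. There $\Conf_2(\aka)\cong(\aka)\times(\paka)$, and the first factor contributes only degree-$0$ holomorphic functions. For $(1,1)$ one has $d=1$, but your candidates vanish. Using $z\omega^1=-2\bd\mu_1$ and $\mu_1\omega^1=\bd\mu_2$, the defining systems $a_{12}=a_{23}=-2\mu_1$ give $\langle z,\omega^1,z\rangle=[-2z\mu_1+2\mu_1 z]=0$ and $\langle\omega^1,z,\omega^1\rangle=[-4\mu_1\omega^1]=[-4\bd\mu_2]=0$.

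The one case your method reaches is $(0,2)$, and there you discarded the right candidate. Since $d-1=0$, $\langle z^1,\omega,z^2\rangle$ lives in $H^0$. With $\omega=\mu_1\bp\mu_2-\mu_2\bp\mu_1$, $z^1\omega=\bp\mu_2$ and $z^2\omega=-\bp\mu_1$, it equals $[-(z^1\mu_1+z^2\mu_2)]=[-1]\notin(z^1,z^2)$. This is precisely the paper's argument in Section~\ref{02}.

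The other cases need obstructions invisible to triple products:
\begin{itemize}
\item For $(0,n\ge3)$ the paper uses the Koszul-type system $\bd\omega^I=\sum_{j\notin I}\pm z^j\omega^{I\cup\lr{j}}$ together with the tree formula of Lemma~\ref{lem:omega-tree}. For $(1,1)$ it uses $z\omega^1=-2\bd\mu_1$, $\mu_1\omega^1=\bd\mu_2$, $\omega^1=3\mu_1\bd\mu_2+\bd(-)$. In both cases the chain-level relation system lifts along any quasi-isomorphism of nonnegatively graded CDGAs (Lemma~\ref{Zhou-lemma} and its analogue), and in $(H^\bullet,0)$ it forces $[\omega]=0$.
\item For $(1,n\ge2)$, with $R=\C[z^1,\dots,z^n]$, $I=(z^1,\dots,z^n)$ and $k=R/I$, the paper makes the zigzag $R$-linear up to an automorphism preserving $I$ (Lemma~\ref{lemma-R-zigzag}). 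It then compares derived fibres at the origin: $H^\bullet(k\otimes^{\mathbf L}_R J_{1,n})\cong k\times k$ (coming from $y^2=1$), versus $H^\bullet(k\otimes^{\mathbf L}_R \HH)\cong k[\epsilon]/(\epsilon^2)$.
\end{itemize}

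Finally, transfer between $\Aconk$ and $\A$ goes only one way. Since $\Aconk\hookrightarrow\A$ is not a quasi-isomorphism, a Massey product nonzero in $\A$ is nonzero in $\Aconk$, but not conversely.
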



We compute the cohomology of $\A(\maka)$ and $\Aconk(\maka)$. These cohomology rings, viewed as CDGAs with zero differential, are generated by the degree-zero cohomology and by diagonal classes which are holomorphic derivatives of $\omega_{ij}=\phi_{ij}^*\omega$. These generators satisfy a generalized version of the Arnold relation.

\begin{thm}[see Theorem \ref{thm:coho-ring-conf} and Corollary \ref{cor:Hdef-group-structure}]
For $n'+n\ge2$, the cohomology ring $\Hdef(\maka)$ is generated  by 
$$
\C[z_j^1,\cdots,z_j^n]\oplus \oplus_{i<j} {\C[\partial_{z^1_j},\cdots, \partial_{z^n_j}]\omega_{ij}},
$$
subject to the following relations:  
\begin{itemize}
\item $f(\partial_{\zz_j})\omega_{ij}\cdot g(\partial_{\zz_j})\omega_{ij}=0$, for $f(\partial_{\zz_j})\omega_{ij}, g(\partial_{\zz_j})\omega_{ij}\in {\C[\partial_{z^1_j},\cdots, \partial_{z^n_j}]\omega_{ij}}$,
\item $
\zz_{ij}^{\xalpha}\partial_{\zz_j}^{\xbeta}\omega_{ij}=
\begin{cases}
(-1)^{\alpha_1+\cdots +\alpha_n} \prod_{i=1}^n \frac{\beta_i!}{(\beta_i-\alpha_i)!} \, \partial_{\zz_j}^{\xbeta-\xalpha}\omega_{ij} & \alpha_i\le \beta_i \text{ for all } i \\
0 & \alpha_i>\beta_i \text{ for some } i \\
\end{cases},
$
\item the Arnold relation:
$$g(\partial_{\zz_j})(f(\partial_{\zz_i})\omega_{ij}h(\partial_{\zz_k})\omega_{jk})+h(\partial_{\zz_k})(g(\partial_{\zz_j})\omega_{jk}f(\partial_{\zz_i})\omega_{ki})+f(\partial_{\zz_i})(h(\partial_{\zz_k})\omega_{ki}g(\partial_{\zz_j})\omega_{ij})=0.$$
\end{itemize}
Moreover, when identified with subspaces of $\A(\maka)$, $H^\bullet(\maka)$ is the closure of \\ $\Hconk(\maka)$ with respect to the nuclear topology. 
\end{thm}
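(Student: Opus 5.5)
Assuming Proposition \ref{prop:coh-local} (the cohomology of $\paka$), the plan is to compute $\Hdef(\maka)$ by induction on $m$, using the Fadell--Neuwirth type fibration that forgets the last point:
$$\pi_m:\maka\lra \Conf_{m-1}(\aka),$$
whose fiber over $(\ZZ_1,\dots,\ZZ_{m-1})$ is $\aka$ with $m-1$ points removed. The first step is a fiberwise computation: the constructible $\bd$-cohomology of $\aka\setminus\{\ZZ_1,\dots,\ZZ_{m-1}\}$. I would argue by a Mayer--Vietoris / local-to-global decomposition. Cover the fiber by small punctured neighborhoods of each $\ZZ_i$ and their complement; each puncture contributes a copy of $\C[\PPi]\omega$ (translated), as in Proposition \ref{prop:coh-local}. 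The global piece contributes $\C[z^1_m,\dots,z^n_m]$ in degree $0$. Hence the fiber cohomology is
$$\C[z_m^1,\dots,z_m^n]\oplus\bigoplus_{i<m}\C[\partial_{\zz_m}]\omega_{im},$$
with the degree-zero part concentrated as polynomials. Here the constructibility is essential: it kills the non-algebraic holomorphic functions and the $\bp$-cohomology at infinity. The relations $\zz_{ij}^{\xalpha}\partial^{\xbeta}\omega_{ij}=\cdots$ come from Leibniz applied to $z_{ij}^{\alpha}\omega_{ij}$: the exact identity $\zz\,\omega$ equals a $\bd$-exact form in the local model, since $\omega$ is a residue-type class, and the general formula follows by commuting $\partial^{\xbeta}$ past $\zz^{\xalpha}$.

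The second step is to show that the Leray--Serre type spectral sequence for $\pi_m$, filtered by form degree along the base, degenerates. It suffices to show that each fiber class extends to a global closed form on $\maka$, namely $z_m^{\alpha}$ and $\phi_{im}^*(\partial^{\xbeta}\omega)=\partial^{\xbeta}_{\zz_m}\omega_{im}$. This yields a Künneth/Leray--Hirsch isomorphism: $\Hdef(\maka)$ is a free $\Hdef(\Conf_{m-1})$-module on the fiber classes. The ring structure then follows. The relation that two $\omega_{ij}$-type classes multiply to zero holds because $\omega\wedge\omega$-type products pulled back from $\paka$ vanish in cohomology, since the degree $2(n'+n-1)$ exceeds the top cohomological degree of $\paka$.

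The Arnold relation I would prove by pulling back from $\Conf_3$, where it reduces to the identity for $\omega_{12},\omega_{23},\omega_{31}$ with derivatives. I would establish it via a Stokes/fiber-integration argument: integrate $\omega_{ij}\omega_{jk}$ over the position of one point (the regularized configuration integral) to obtain a primitive. Alternatively, the module rank computed in the second step can be compared with the algebra presented by the relations, which forces the relation to hold in cohomology. The presentation is then complete by a dimension or basis count: monomials of the form $\prod_j (\text{at most one }\omega_{i_j j},\ i_j<j)$ times polynomials span, and they match the Leray--Hirsch basis.

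Finally, for $H^\bullet(\maka)$ with smooth coefficients, the same induction runs with $\C[\zz]$ replaced by $\Hol$ and $\C[\partial]\omega$ replaced by its completion, as in the second half of Proposition \ref{prop:coh-local}. The comparison map is then the inclusion of a dense subspace at each stage, which gives the closure statement. The main obstacle is the first step in the multi-puncture, constructible setting: showing that the fiber cohomology is exactly the direct sum above, with no extra classes at infinity and no interaction between punctures, and doing so uniformly in the base so that Leray--Hirsch applies. I would first try to handle this by an explicit $\bd$-homotopy built from partitions of unity subordinate to the cover, ensuring constructibility is preserved.
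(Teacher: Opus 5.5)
You follow the paper's route for most of the argument: the fibration forgetting the last point, fiber contributions localized at the diagonals, and degeneration because $z_m^{\xalpha}$ and $\partial_{\zz_m}^{\xbeta}\omega_{im}$ are globally defined closed forms. The same holds for the first two relations, pulled back along $\phi_{ij}$, and for the closing basis comparison. Your degree argument for $\omega\cdot\omega=0$ is fine. So is your derivation of the $\zz_{ij}^{\xalpha}$-relation from exactness of $z^i\omega$ together with $[\partial_{z^i},\bd]=0$.

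\textbf{The gap: the Arnold relation.} This is the one relation not pulled back from $\paka$, and your argument for it fails. Integrating $\omega_{ij}\omega_{jk}$ over one point cannot produce a primitive on $\Conf_3(\aka)$. Either the integrand does not depend on that point, or (if the point is $\ZZ_j$) you get a bivalent-vertex integral living on the configuration space of $\ZZ_i,\ZZ_k$ alone. The degrees also rule it out. Integrating $d\zz_l$ times $k$ kernels over the $(n'+2n)$-dimensional fiber gives degree $n+k(n'+n-1)-(n'+2n)=(k-1)(n'+n-1)-1$. This equals $2(n'+n-1)-1$ only for $k=3$.

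What you need (Remark \ref{rem:Arnold}) is an auxiliary fourth point and the tripod integral
$$
\dashint_{\ZZ_l} d\zz_l\, f(\partial_{\zz_i})\omega_{li}\, g(\partial_{\zz_j})\omega_{lj}\, h(\partial_{\zz_k})\omega_{lk},
$$
taken along the fibers of $\Conf_4(\aka)\to\Conf_3(\aka)$.
\begin{enumerate}
\item The fiberwise Stokes formula gives one residue for each collision $\ZZ_l\to\ZZ_i,\ZZ_j,\ZZ_k$. Each produces one Arnold term, with the derivative weights transferred as in the edge-contraction computation of Proposition \ref{prop:diagram-to-form-map}.
\item The residue at infinity vanishes by the argument of Section \ref{sec:vanishing-infty}, since the internal vertex has valence $3\ge 2$.
\item No $D_S$ with $|S|>2$ contributes, because the three base points are distinct.
\end{enumerate}

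Your fallback does not repair this. Leray--Hirsch tells you that, e.g., $\omega_{13}\omega_{23}$ is some $\Hdef(\Conf_2(\aka))$-linear combination of $\omega_{12}\omega_{13}$ and $\omega_{12}\omega_{23}$ (with derivative weights). No rank count determines the coefficients. Those coefficients are exactly what you need, both to state the relation and, in the final step, to reduce arbitrary monomials to normal form.

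\textbf{A smaller point about your ``main obstacle''.} Covering the fiber $F=\aka\setminus\{\ZZ_1,\dots,\ZZ_{m-1}\}$ by punctured neighborhoods and ``their complement'' is circular. The complement piece has essentially the same cohomology as $F$, not just $\C[z_m^1,\dots,z_m^n]$.

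Use instead Mayer--Vietoris for $\aka=F\cup\bigsqcup_i B_i$, with small disjoint \emph{unpunctured} box-times-polydisk neighborhoods $B_i$, so that Lemma \ref{lem:definable-Poincare} applies to $B_i$ and to $\aka$. Applied once to this cover and once per single puncture, it gives $H^q(F)\cong\bigoplus_i H^q(B_i\setminus\{\ZZ_i\})\cong\bigoplus_i H^q(\aka\setminus\{\ZZ_i\})$ for $q\ge 1$. This holds in both the smooth and constructible settings and reduces the fiber computation to Proposition \ref{prop:coh-local}, with no explicit homotopy needed.

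This is the Mayer--Vietoris counterpart of the paper's argument. The paper uses the triangle $R\Gamma_Z\OO\to\OO\to Rj_*\OO$ for $j:\maka\hookrightarrow\Conf_{m-1}(\aka)\times\aka$, with $Z=\bigsqcup_i\Delta_i$, and then the Leray spectral sequence of $\pi$.
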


We define the diagram complex $\cD(m)$ to encode the homotopy information of $\maka$. The cohomology ring of $\maka$ has two types of generators, degree $0$ terms  and degree $n'+n-1$ terms, so we assign the following weights for every diagram $\Gamma$ (see Definition \ref{def:diagram}):
\begin{itemize}
\item a weight $W_\Gamma^V(v)$ for each vertex $v$, which is a degree-zero cohomology class on $\aka$,
\item a weight $W_\Gamma^E(e)$ for each edge $e$, which is a degree $n'+n-1$ cohomology class on $\Conf_2(\aka)$.
\end{itemize}
The differential on $\cD(m)$ is given by contraction of edges.
Using the notation in Section \ref{sec:formality-conf}, 
the configuration space integral associated with $\Gamma$ is defined by  
$$
\dashint \prod_{v\in I_\Gamma} d\zz_v \,  \prod_{v\in V_\Gamma} \theta_v^* (W^V_\Gamma(v)) \prod_{e\in E_\Gamma} \theta_e^* (W^E_\Gamma(e)\omega),
$$
where $\theta_v^*$ and $\theta_e^*$ pullback the weights to $\maka$, and $\dashint$ is the regularized integration along compactified fibers. A residue formula identifies the boundary contribution associated with a two-point collision with the edge-contraction differential, which yields a quasi-isomorphism of CDGAs
$$
\bar \I: \cD(m) \slra \A(\maka).
$$
After eliminating diagrams with internal vertices, we get a quasi-isomorphism of CDGAs 
$$
\cD(m) \slra H^\bullet(\maka).
$$
The formality theorem follows from the above quasi-isomorphisms.
\begin{thm}[see Theorem \ref{thm:formality} and Remark \ref{rem:formality}]
When $n'\ge2, n\ge1$, the CDGAs $\A(\maka)$ and $\Adef(\maka)$ are formal.
\end{thm}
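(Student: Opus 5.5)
The plan is to follow the Kontsevich strategy sketched just above: build a zig-zag
$$\A(\maka) \xleftarrow{\ \bar\I\ } \cD(m) \lra H^\bullet(\maka)$$
of CDGA quasi-isomorphisms, and its constructible analogue with $\Adef(\maka)$ and $\Hdef(\maka)$. Formality then follows from Definition \ref{defn:formality}.

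\emph{Step 1: the diagram CDGA.} First I would set up $\cD(m)$ (Definition \ref{def:diagram}). Its diagrams have $m$ external vertices and some internal vertices. Vertex weights are degree-zero classes (polynomials in $\zz$, or holomorphic functions in the smooth case). Edge weights lie in $\C[\partial_{\zz}]\omega$. The product is gluing along external vertices, and the differential is edge contraction. For contraction to be well defined, the weights must combine under collision. Concretely, contracting an edge $e: a\to b$ with weight $f(\partial)\omega$ should produce a new vertex weight, namely the "residue" pairing of $f(\partial_{\zz})$ against the incident vertex weights. This is modelled on the relation $\zz_{ij}^{\xalpha}\partial^{\xbeta}_{\zz_j}\omega_{ij}$ from Theorem \ref{thm:coho-ring-conf}. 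I would check $d^2=0$ and the Leibniz rule combinatorially, exactly as in the real case.

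\emph{Step 2: the integral map $\bar\I$.} Next I would define $\bar\I(\Gamma)$ by the regularized fiber integral $\dashint$ displayed above, and show it is a chain map. By a fiberwise Stokes formula on the compactified fibers, $(\d)\bar\I(\Gamma)$ equals a sum of boundary contributions. The two-point collision boundaries should reproduce $\bar\I(d\Gamma)$ via a residue formula, which uses $\bd\omega=\delta_0\prod dx\prod d\bar z$. All other boundary strata must vanish: the hidden faces, where three or more points collide, and the faces at infinity.

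This vanishing is where $n'\ge2$ enters. Kontsevich's vanishing lemmas use degree counting plus a symmetry/dimension argument in the $\R^{n'}$ directions. With $n'\ge2$ one has enough real directions to apply the involution or dimension-excess arguments on each collapsing cluster. The $\C^n$ directions contribute only holomorphic factors, which integrate against $d\zz_v$ without producing extra boundary terms. Multiplicativity of $\bar\I$ is by Fubini.

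This step is the main obstacle. The difficulty is that the regularization, and the holomorphic weights with poles, could create anomalous boundary terms on hidden faces. I would first try to reduce to Kontsevich's vanishing by a scaling argument: rescale the collapsing cluster and use the homogeneity of $\omega$ together with the fact that vertex weights are polynomial. The polynomial weights then contribute only finitely many Taylor terms, and each such term is again a form of Kontsevich type. The comparison with the table of non-formal cases ($n'=1$) indicates that this is precisely where $n'\ge 2$ is indispensable.

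\emph{Step 3: quasi-isomorphism and projection.} To show $\bar\I$ induces an isomorphism on cohomology, I would compare with Theorem \ref{thm:coho-ring-conf}. The first task is to show that the subcomplex of diagrams with internal vertices is acyclic, or more precisely that it can be killed. For this I would filter by the number of internal vertices and contract a homotopy at a univalent or bivalent internal vertex, as in Lambrechts--Voli\'c. This leaves diagrams with only external vertices, which are closed. Their span modulo the image of $d$ then satisfies exactly the relations of Theorem \ref{thm:coho-ring-conf}. The Arnold relation comes from contracting a triangle, and the $\zz^{\xalpha}\partial^{\xbeta}$ relation comes from contracting a single edge with vertex weights.

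Under this identification the map $\cD(m)\to H^\bullet(\maka)$ is defined as the projection that sends diagrams with internal vertices to $0$. It is a CDGA map because products never create internal vertices from external ones. It is a quasi-isomorphism by the acyclicity just described, and $\bar\I$ composed with the cohomology identification is the identity on generators, so $\bar\I$ is a quasi-isomorphism too.

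The constructible version follows verbatim: polynomial weights are used throughout, and the integrals land in constructible forms by the closure results of \cite{int-closed}. Remark \ref{rem:formality} then transfers the result to $\A(\maka)$ via the density statement in Theorem \ref{thm:coho-ring-conf}.
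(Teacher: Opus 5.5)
Your overall architecture matches the paper's. It is the zigzag $\DefinableChainsA\leftarrow\cD(A)\to\ConfSpaceCohomology$, with the integral map $\I$ made a chain map by fiberwise Stokes, constructibility from \cite{int-closed}, and a completed diagram complex for the smooth case.

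\textbf{The gap is in Step 3.}

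First, the span of diagrams with internal vertices is not a subcomplex. For the tripod $T$ (one trivalent internal vertex joined to $1,2,3$), $dT$ is the Arnold combination of three distinct chord diagrams.

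Second, the quotient $\cD(A)/\cD^{(0)}(A)$ is not acyclic. $\cD^{(0)}(A)$ has zero differential, so the connecting map sends $[T]$ to $dT\neq0$. If that quotient were acyclic, $H(\cD(A))$ would be all of $\cD^{(0)}(A)$, with none of the Arnold or $\zz_{ij}^{\xalpha}\partial_{\zz_j}^{\xbeta}\omega_{ij}$ relations, and the projection could not be a quasi-isomorphism. What must actually be shown is two things:
\begin{enumerate}
\item every cocycle is cohomologous to an element of $\cD^{(0)}(A)$;
\item $\operatorname{im}d\cap\cD^{(0)}(A)$ is exactly the kernel of $\cD^{(0)}(A)\to\ConfSpaceCohomology$.
\end{enumerate}

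Third, your homotopy at univalent or bivalent internal vertices is not available. Admissible diagrams have no univalent internal vertices. The weighted bivalent ones are exactly what produces the $\zz^{\xalpha}\partial^{\xbeta}$ relations (Figure \ref{fig:diagram-differential2}), so they must survive in the quotient. The paper instead does the following:
\begin{itemize}
\item it splits by connected components and fixes an \emph{external} vertex $a$;
\item it uses the Lambrechts--Voli\'c decomposition into $\calU_0$, $\calU_1$ and $\cGD'(A)$;
\item it gets acyclicity of $\cGD(A)/\cGD'(A)$ from the spectral sequence of \cite{Formality-little-disks}, Lemma 8.5;
\item it compares inductively with the decomposition in Corollary \ref{cor:Hdef-group-structure}.
\end{itemize}

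Fourth, ``products never create internal vertices'' only gives multiplicativity of the projection. The chain-map property needs $d\Gamma\mapsto0$ for every one-internal-vertex $\Gamma$, of any valence and with any weights. The paper gets this uniformly from $\bar\I_0(d\Gamma)=[\I(d\Gamma)]=[d\I(\Gamma)]=0$. Matching against the listed relations would instead force you to handle stars of every valence by hand.

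\textbf{Points in Steps 1--2.}

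\emph{Contraction.} The two-point residue reproduces contraction only if $W^E(e)(-\partial_{v_e,1},\dots,-\partial_{v_e,n})$ also differentiates the weights of the edges at $v_e$, not just the merged vertex weight. This is integration by parts using Lemma \ref{vanish-lie-derivative}.

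\emph{Hidden faces.} Reducing by scaling to Kontsevich's lemmas is not literally possible. The kernels are generalized Bochner--Martinelli forms: they are only $\bd$-closed and are integrated against $d\zz_v$, not pullbacks of sphere volume forms. The paper instead uses degree counting (Lemmas \ref{lem:Laman1}, \ref{lem:Laman2}) to restrict to $(n'+n)$-Laman collapsing subdiagrams. It then invokes \cite{topological-holomorphic}, Proposition 3.26, which is where $n'\ge2$ enters.

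\emph{Faces at infinity.} You list these but do not treat them. They need the expansion of $\omega$ near $H$ from Section \ref{sec:bochner-infty}. Because internal valence is $l\ge2$, the leading parts $\prod_{j\ne0,p}d\bar z^j\prod_i dx^i$ multiply to zero. The remaining terms carry $d\bar z^0/\bar z^0$ or $d|z^0|/|z^0|$, whose circle integrals against $dz^0$ vanish.
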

  
We defer the study of the (co)operad structures for $\maka$ and the CDGAs of differential forms on it to future work.

\subsection{Tame Geometry and Constructible Differential Forms}
The concept of tame geometry originates in Grothendieck's Esquisse d'un Programme \cite{Esquisse}, which advocated a fundamental reformulation of point-set topology to preserve the intrinsic tameness of geometric intuition. 
O-minimal geometry is a framework that fulfills Grothendieck’s vision and has by now evolved from its roots in model theory to an indispensable tool across differential geometry, Hodge theory, and arithmetic geometry. 

The simplest example of an o-minimal structure is that of semi-algebraic sets, which appears prominently in Kontsevich’s proof of the formality of the little disks operad \cite{Kontsevich-Operads}. Because the natural projection between compactified configuration spaces is not a smooth fiber bundle (See \cite{Formality-little-disks}, Example 5.22), standard differential forms cannot be integrated along the fibers in the usual sense. Kontsevich addressed this by treating the projection as a semi-algebraic bundle and replacing smooth forms by piecewise semi-algebraic (PA) forms, thereby establishing a robust framework for fiber integration on semi-algebraic stratifications. The story was developed in detail in \cites{semi-algebraic,Formality-little-disks}. 

There are several versions of de Rham cohomology theories in the o-minimal setting, including a constructible one \cites{o-minimal-de-rham,O-minimal-deRham}. In this paper, we define a novel version of constructible $\bd$-cohomology, which applies simultaneously to the de Rham and Dolbeault settings. 
Our motivation for defining the constructible $\bd$-cohomology is to encode the algebraic structure of operator product expansions (OPEs) in topological-holomorphic field theories. As asserted in \cite{chiral-algebra}, the space of chiral operations for a $\mathcal D$-module $\mathcal{A}$ on an algebraic curve $X$ is encoded in the $\mathcal D_{X^n}$-module map
$$
\Hom_{\mathcal D_{X^n}} \LR{j_*j^*\mathcal{A}^{\boxtimes n},\Delta_*\mathcal{A}}
$$
where $j:\Conf_n(X) \to X^n$ is the open immersion and $\Delta:X\to X^n$ is the diagonal embedding. A chiral algebra is a Lie algebra object in the operad of chiral operations. The construction is generalized to $X=\C^n$ using a Jouanolou model as a CDGA model of the space of derived global sections of the structure sheaf of $X$ \cite{Higher-CA-Jouanolou}. 

To describe the chiral operations of topological-holomorphic field theories, the appropriate structure sheaf is the sheaf $\Ocon$ of functions that are locally constant in the real directions and regular in the complex directions (see Definition \ref{def:con-structure-sheaf}), whose CDGA of derived global sections can be computed by the constructible Dolbeault complex defined in Section \ref{sec:constructible-chains}.

\begin{thm}(see Theorem \ref{thm:cons-resolution})
Let $U\subset \R^p\times \C^q$ be an open subvariety. Then $\Aconk(U)$ computes the CDGA of derived global sections of the sheaf $\OconU$  
$$
\Aconk(U) \cong R\Gamma(U,\Ocon).
$$
\end{thm}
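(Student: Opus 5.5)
The statement to prove is that for an open subvariety $U\subset\R^p\times\C^q$, $\Aconk(U)$ computes $R\Gamma(U,\Ocon)$. The plan is to present $\Aconk$ as a sheaf-level resolution of $\Ocon$ by acyclic sheaves and then take global sections.

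First I define, for each open (definable) $V\subset U$, the complex $\Aconk(V)$ of $\bd$-forms with constructible coefficients of order $\le k$. I check that $V\mapsto\Aconk(V)$ is a sheaf of CDGAs $\mathcal{A}^\bullet_{con,(k)}$ on $U$, at least for the site of definable opens. Definable open covers of definable sets admit finite refinements, and gluing preserves constructibility, so this is plausible. The natural inclusion $\Ocon\hookrightarrow\mathcal{A}^0_{con,(k)}$ gives an augmentation. The goal is then two statements:
\begin{itemize}
\item[(a)] the augmented complex $0\to\Ocon\to\mathcal{A}^\bullet_{con,(k)}$ is exact as a complex of sheaves;
\item[(b)] each $\mathcal{A}^j_{con,(k)}$ is $\Gamma$-acyclic.
\end{itemize}
Together these give $\Aconk(U)=\Gamma(U,\mathcal{A}^\bullet_{con,(k)})\simeq R\Gamma(U,\Ocon)$. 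The multiplicative structure is compatible because the resolution is a sheaf of CDGAs, so the derived global sections inherit the CDGA structure through, for example, a Godement or Thom--Sullivan model.

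\textbf{Step (a): local exactness.} I reduce to small boxes $B=I^p\times D^q$, products of real intervals and polydisks, which form a basis of definable neighborhoods. On such a box the complex is a tensor product of two pieces: a constructible de Rham complex in the $x$-variables and a constructible Dolbeault complex in the $z$-variables. I handle each piece with a homotopy operator.
\begin{itemize}
\item In the real directions I use the radial (cone) homotopy $h_x$. It integrates along segments to a base point. Fiberwise integration of constructible coefficients stays constructible; this is the standard o-minimal fiber-integration fact used in \cite{int-closed}. Its cohomology is therefore the locally constant functions in $x$.
\item In the complex directions I use a $\bp$-Poincaré lemma on polydisks, either the iterated Cauchy transform $\frac{1}{2\pi i}\int \frac{f(\zeta)}{\zeta-z}d\zeta\wedge d\bar\zeta$ in one variable at a time, or the Bochner--Martinelli kernel. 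I have to check that it preserves the constructible-coefficient class of order $\le k$.
\end{itemize}
A Künneth-type argument then shows that the total cohomology is concentrated in degree $0$ and equals functions locally constant in $x$ and holomorphic in $z$ with constructible (hence regular/algebraic-type) behaviour, namely $\Ocon(B)$.

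\textbf{Step (b): acyclicity.} I show the sheaves $\mathcal{A}^j_{con,(k)}$ are soft (or fine) on the definable site. Definable partitions of unity exist in the o-minimal setting: definable $C^k$ bump functions exist for finite $k$, which is presumably why the order $k$ appears. They act as module endomorphisms, so Čech cohomology for finite definable covers vanishes in positive degrees. On the o-minimal site every cover has a finite refinement, so Čech computes sheaf cohomology and gives $H^{>0}(U,\mathcal{A}^j_{con,(k)})=0$.

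I expect the main obstacle to be the $\bp$-Poincaré lemma within the constructible class. The Cauchy transform of a constructible coefficient picks up logarithmic and transcendental terms, and it must remain in the allowed function class with the order $k$ controlled. I would first try to prove closure of the coefficient class under one-variable Cauchy integration by working with the explicit description of constructible coefficients from Section~\ref{sec:constructible-chains}, likely via cell decomposition and the residue formula. If that fails, I would instead choose the order filtration so that $\bp$-primitives of order $\le k$ exist with a loss of order, and pass to the colimit over $k$.
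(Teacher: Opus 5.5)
\textbf{There is a genuine gap: you resolve a different sheaf on a different site.}

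The sheaf $\Ocon$ in the statement lives on $U_{alg}$. Its objects are unions of components of complements $D(I)$ of algebraic subsets. Every nonempty $x$-slice of such an object is a Zariski open subset of $\C^q$, so bounded polydisks never occur. Your step (a) only gives exactness of $0\to\Ocon\to\mathcal A^\bullet_{con,(k)}$ on small boxes $I^p\times D^q$, i.e.\ exactness on the definable site. On $U_{alg}$, ``locally'' means on the members of a finite cover by large Zariski-type opens, and (a) says nothing about those.

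Moreover, the degree-zero kernel on a box is not what you claim. On a bounded disk, $e^{z}|_{\DD}$ is definable in $\R_{an}$ (restricted analytic), hence constructible, and it is $\bp$-closed, but it is not algebraic. Theorem \ref{thm:ominimal-chow} needs a definable closed analytic subset of all of $\C^n$, which is why the paper sets up $\Ocon$ on $U_{alg}$. So even if (a) and (b) were completed, you would get $\Aconk(U)\simeq R\Gamma_{def}(U,\mathcal O^{def})$ for the sheaf $\mathcal O^{def}$ of constructible $\bd$-closed functions on definable opens. The comparison with $R\Gamma(U,\Ocon)$ would still be missing.

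\textbf{That comparison is where the content of the paper's route lies.} The argument runs as follows.
\begin{enumerate}
\item Take a finite cover $\{U_i\}$ of $U$ by objects of $U_{alg}$ whose finite intersections are of split product type. For $U=\paka$ this is the cover in the proof of Proposition \ref{prop:coh-local}, with $U_I=\R^{I_1}\times(\R_+)^{J_1}\times\C^{I_2}\times(\C\punctured)^{J_2}$.
\item Prove $H^{>0}(\Aconk(U_I))=0$ with $H^0=\Ocon(U_I)$.
\item Conclude that $\Aconk(U)\to\operatorname{TW}(\Aconk(U_\bullet))\leftarrow\operatorname{TW}(\Gamma(U_\bullet,\Ocon))$ are quasi-isomorphisms of CDGAs. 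Your step (b) (constructible $C^k$ partitions of unity) is exactly what handles the first arrow.
\end{enumerate}

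The acyclicity needed for the second arrow is global on unbounded domains, not local. The Cauchy transform $\int_\C\dVol_w\,f(w)/(z-w)$ diverges for coefficients with polynomial growth at infinity or at the punctures. The paper repairs this in two ways:
\begin{itemize}
\item it uses Corollary \ref{cor:polynomial-bound} and its analogue at punctures;
\item it subtracts truncated Taylor expansions $g_\infty\chi_\infty$, $g_i\chi_i$ of the kernel, which are holomorphic in $z$ and hence invisible to $\bp$ (Proposition \ref{prop:Constructible-poincare-disk}, Corollary \ref{cor:poincare-punctured-disk}, Lemma \ref{lem:definable-Poincare}).
\end{itemize}
$H^0$ is then identified with regular functions via Proposition \ref{prop:Hcons-affine}. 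So the obstacle you anticipated is not the real one. Constructibility of the Cauchy transform on small polydisks follows from Cluckers--Miller stability under integration.

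Note also that the split structure of the cover is essential. Take $U=D(z-x)\subset\R\times\C$. The closed form $dx/(z-x)$ is not $\bd$-exact on any nonempty object of $U_{alg}$ contained in $U$. Indeed, suppose $\bd f=dx/(z-x)$ near a generic point $(t,t)$ of the removed line. Then $f+\operatorname{Log}(z-x)$ is independent of $x$ on $\{\mathrm{Im}\,z>0\}$ and on $\{\mathrm{Im}\,z<0\}$. Yet computing its jump across a real point $s$ using some $x<s$ gives $0$, and using some $x>s$ gives $2\pi i$. So no $U_{alg}$-version of your step (a) can hold for arbitrary open subvarieties.
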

In particular,  
$$
H^\bullet(\Aconk(\aka),\d) \cong \C[z^1,\cdots,z^n]
$$
is the polynomial ring on $\C^n$. 
Thus, $\Aconk$ is a suitable CDGA to describe the collision geometry of topological-holomorphic field theories. 
The space of constructible differential forms is sufficiently rigid to support algebraic and diagrammatic arguments, while remaining large enough to contain the generalized Bochner-Martinelli kernels and the configuration space integrals used throughout the paper.
A full development of the constructible $\bd$-cohomology theory will be given in a subsequent paper. 

\subsection{From Formality to Vertex Algebras}

The motivation for studying the formality of these mixed configuration spaces is rooted in quantum field theory. In the formalism of factorization algebras \cite{kevin-owen}, a field theory on a spacetime manifold $M$ assigns a chain complex $Obs(U)$ of observables to each open subset $U\subset M$. For pairwise disjoint open subsets $U_1\sqcup \cdots \sqcup U_m \subset U$, there is a factorization product
$$
\Obs(U_1) \otimes \cdots \otimes \Obs(U_m) \lra \Obs(U)
$$
which encodes how observables on separated regions combine, or in physical terms, the operator product expansion.  
Let $U_i$ be small balls with centers $\ZZ_i$ and radii $r_i$. In the limit where every $r_i\to 0$, every $U_i$ shrinks to the point $\ZZ_i$, so the asymptotic behavior of the factorization product recovers the OPE of point operators on these $\ZZ_i$. In this limit, we get the configuration space of these $Z_i$, and the factorization product is governed by the derived global sections of the structure sheaf on the configuration space. Different CDGA models of the derived global sections yield homotopically equivalent descriptions of the OPE.

In particular, when the configuration space is formal, the OPE is completely encoded homotopically by the cohomology ring. For example, chiral conformal field theories on $\C$ lead to vertex algebras, and the local observables of a topological field theory on $\R^N$ are encoded in a $P_N$-algebra, that is, a commutative algebra together with a Poisson bracket of degree $1-N$ \cite{Secondary-Products}. 

For $(n'=0,n\ge2)$ or $(n'=1,n\ge1)$, the CDGAs $\A(\maka)$ and $\Aconk(\maka)$ are non-formal. 
On $\C^n$ and $\R\times \C$, higher-dimensional analogs of vertex algebras were proposed, using cohomology of certain CDGAs associated with configuration spaces \cites{Cohomological-VA,raviolo-vertex-algebras}. 
Because the associated CDGAs of configuration spaces of $\C^n$ and $\R\times \C$ are not formal, we expect to find additional homotopy information when constructing them. 
On the other hand, higher-dimensional chiral algebras associated to $\C^n$ were studied in \cite{Higher-CA-Jouanolou} using a Jouanolou model $J^m_n$ of derived global sections of the structure sheaf on $\Conf_m(\C^n)$. There is a natural inclusion of CDGAs
$$
J^m_n \slra \Aconk(\Conf_m(\C^n))
$$
which is a quasi-isomorphism. The non-formality is reflected by infinitely many higher chiral operations in the unit chiral algebra.

Following \cite{Higher-CA-Jouanolou}, we can define a higher-dimensional chiral algebra on $\aka$ where the space of derived global sections is given by $\Aconk(\maka)$. 
For $n'\ge2,n\ge1$, the formality of \\ $\Aconk(\maka)$ as a $\mathcal D_{\C^{mn}}$-module implies that the associated higher-dimensional chiral algebra is homotopically entirely determined by chiral operations built using $\Hconk(\maka)$. The collection of these chiral operations forms a higher-dimensional analog of vertex algebras, which is defined in Section \ref{sec:VA}. We plan to investigate its detailed properties in a subsequent work.

\subsection{Outline of the Paper}
\begin{itemize}
\item Section \ref{sec:CDGA-homotopy}: We review the homotopy theory of CDGAs, including the notion of formality.
\item Section \ref{sec:CDGAs}: 
We introduce the CDGA of smooth $\bd$-forms
$$
\A(\maka)
$$
and the CDGA of constructible  $\bd$-forms
$$
\Aconk(\maka) 
$$
\item Section \ref{sec:2-point-CDGA}: We treat the two-point configuration spaces 
$$
\Conf_2(\aka)\cong \LR{\aka} \times \LR{\paka}.
$$
We compute the cohomology ring of $\A(\paka)$ and $\Aconk(\paka)$, construct a mixed Jouanolou model for $\paka$, and show that the CDGAs are formal except when $(n'=0,n\ge2)$ or $(n'=1,n\ge1)$.
\item Section \ref{sec:cohomology-conf}: We compute the cohomology ring of $\maka$.
\item Section \ref{sec:Regularized-Integrals}: We use the projectivization of $\paka$ to form a Fulton-MacPherson-type compactification of $\maka$. 
We study the asymptotic behavior of the generalized Bochner-Martinelli kernel and develop the regularized integration and residue calculus.
\item Section \ref{sec:formality-conf}: We define the CDGA $\cD(A)$ of (weighted) admissible  diagrams. For $n'\ge2,n\ge1$, we construct quasi-isomorphisms 
$$
\DefinableChainsA \longleftarrow \cD(A) \lra \ConfSpaceCohomology
$$
thereby proving that the CDGA $\Aconk(\maka)$ is formal. The formality result also holds for $\A(\maka)$.
\item Section \ref{sec:VA}: We introduce the notion of $(n',n)$-vertex algebras as an application of the formality theorem to topological-holomorphic field theories on $\aka$, for $n'\ge2,n\ge1$.
\end{itemize}

\subsection{Acknowledgments}
Part of this work is based on the second author's PhD thesis. 
The first and second authors would like to thank Minghao Wang and Zhengping Gui for helpful discussions and insightful comments. S.~L. was supported by NSFC No.12571068. 
J.~Z. was supported by Natural Science Foundation of Jiangxi, China (No. 20262BAC240204).

\subsection{Notations} 
\begin{itemize}
\item A point in $\aka$ is denoted by $\ZZ=(\xx,\zz)=(x^1,\cdots,x^{n'},z^1,\cdots,z^n)$. We use subscripts to indicate different points. 
\item The configuration space of $m$ points in $\aka$ is
$$
\maka  = \set{(\ZZ_1,\cdots,\ZZ_m)\in (\aka)^m}{\ZZ_i \ne \ZZ_j \text{ for } i\ne j}.
$$
\item $\hotimes$ denotes the completed tensor product of nuclear spaces. 
\item A hat symbol $\,\,\widehat{\, }\,\,$ over a component of a vector indicates that it is omitted.
\end{itemize}

\section{Homotopy Theory of CDGAs}\label{sec:CDGA-homotopy}    
In this section, we review the homotopy theory of CDGAs over $\C$ and refer the reader to  \cites{Rational-homotopy-theory,algebraic-operads}.

\begin{defn}
A {CDGA} is a graded $\mathbb{C}$-vector space $A = \bigoplus_{p\in\Z} A^p$ equipped with a linear map $d: A^p \to A^{p+1}$ such that $d^2 = 0$, and a multiplication satisfying:
\begin{enumerate}
    \item {Graded commutativity:} For any homogeneous elements $x, y \in A$, $xy = (-1)^{|x||y|}yx$.
    \item {Leibniz rule:} $d(xy) = (dx)y + (-1)^{|x|}x(dy)$.
\end{enumerate}
\end{defn}

Given a CDGA $(A, d)$, the graded vector space $H^\bullet(A) = \ker d / \text{im } d$ naturally inherits the structure of a CDGA equipped with the induced multiplication and zero differential.  

\begin{defn}
A CDGA morphism $f: A \to B$ is a quasi-isomorphism (or a weak equivalence) if it induces an isomorphism  $H^\bullet(f): H^\bullet(A) \xrightarrow{\simeq} H^\bullet(B)$ on cohomology.  
\end{defn}

\begin{defn}\label{defn:formality}
A CDGA $A$ is formal if it is connected to its cohomology $H^\bullet(A)$ (viewed as a CDGA with zero differential) by a zigzag of  quasi-isomorphisms

\begin{center}
\begin{tikzcd}
  & A_1 \arrow[ld, "\simeq"'] \arrow[rd, "\simeq"] &     & {} \arrow[ld, "\simeq"'] & \cdots & {} \arrow[rd, "\simeq"] &        & A_{2k+1} \arrow[ld, "\simeq"'] \arrow[rd, "\simeq"] &                    \\
A &                                                               & A_2 &                          &        &                         & A_{2k} &                                                     & {(H^\bullet(A),0)}
\end{tikzcd}.
\end{center}

\end{defn}

To study the algebraic data of a CDGA $(A, d)$ encoded in its cohomology, we utilize the Homotopy Transfer Theorem. This is achieved via a strong homotopy retraction (SDR) datum
\[
\begin{array}{c}
\begin{tikzcd}
h \circlearrowleft (A, d) \arrow[r, "p", shift left] & (H^\bullet(A), 0) \arrow[l, "i", shift left]
\end{tikzcd}
\end{array}
\]
consisting of chain maps $i$ and $p$, and a degree $-1$ homotopy $h: A^\bullet \to A^{\bullet-1}$ such that:
\begin{enumerate}
    \item $p \circ i = \text{id}_{H^\bullet(A)}$;
    \item $i \circ p - \text{id}_A = d h + h d$;
    \item $h^2 = 0$, $h \circ i = 0$, and $p \circ h = 0$.
\end{enumerate}

\begin{thm}[Homotopy Transfer]\label{thm:Homotopy-Transfer}
Given such an SDR datum, there exists an $A_\infty$-algebra structure $\{m_n\}_{n \ge 1}$ on $H^\bullet(A)$ uniquely determined by the CDGA structure of $A$ and the maps $(i, p, h)$. In particular:
\begin{itemize}
    \item $m_1 = 0$;
    \item $m_2(a, b) = p(i(a) \cdot i(b))$, which recovers the standard multiplication on $H^\bullet(A)$;
    \item For $n \ge 3$, the operations $m_n$ are defined via summation over planar rooted trees, where edges are decorated by the homotopy $h$ and vertices by the multiplication in $A$.
\end{itemize} 
Moreover, there exists an $A_\infty$ quasi-isomorphism between $H^*(A)$ and $A$.
\end{thm}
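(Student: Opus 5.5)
The plan is to derive the $A_\infty$ structure from the SDR data by the standard perturbative construction. The morphism $i$ is then upgraded to an $A_\infty$ quasi-isomorphism by the same tree sums.

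\textbf{Step 1: define the operations.} Define $m_1=0$ and $m_2(a,b)=p(i(a)\cdot i(b))$. For $n\ge 3$, first define auxiliary maps $\lambda_n: H^{\otimes n}\to A$ recursively by $\lambda_1 = i$ and
\[
\lambda_n=\sum_{\substack{k+l=n\\ k,l\ge1}} \pm\, \mu\bigl(h\lambda_k\otimes h\lambda_l\bigr),
\]
with the convention that $h\lambda_1$ is replaced by $i$. Here $\mu$ is the product in $A$. Then set $m_n=p\circ\lambda_n$ and $f_n=h\circ\lambda_n$ for $n\ge2$, with $f_1=i$. Unwinding the recursion expresses $m_n$ as a sum over planar binary rooted trees with $n$ leaves: leaves carry $i$, internal vertices carry $\mu$, internal edges carry $h$, and the root carries $p$. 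Uniqueness means only that these formulas are determined by $(\mu,i,p,h)$. Note that $m_2$ is the induced product on cohomology, since $i$ lands in cocycles and $p$ induces the inverse of $H(i)$.

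\textbf{Step 2: verify the $A_\infty$ relations.} We show that $\sum \pm\, m_r(1^{\otimes s}\otimes m_k\otimes 1^{\otimes t})=0$. Compute $d\lambda_n$ by induction on $n$. Use the Leibniz rule for $\mu$, the identity $dh = ip-\mathrm{id}-hd$, the fact that $d\circ i=0$, and associativity of $\mu$. Inductively one finds
\[
d\lambda_n=\sum \pm\, \lambda_r(1\otimes m_k\otimes 1) - (\text{terms of the form } \mu(\lambda\otimes\lambda) \text{ regrouped}),
\]
and the regrouped terms cancel by associativity. Applying $p$ kills the left side, because $p\circ d=0$ as $H$ has zero differential. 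This yields the relations. The same identity, read without $p$, gives the $A_\infty$-morphism equations $\sum f_r(1\otimes m_k\otimes 1)=\sum \mu(f_{i_1}\otimes\cdots)+d f_n$, so that $\{f_n\}$ is an $A_\infty$ map $H\to A$ with $A$ regarded as an $A_\infty$ algebra with $m_{\ge3}=0$. Its linear part $i$ is a quasi-isomorphism, so $f$ is an $A_\infty$ quasi-isomorphism.

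\textbf{Main obstacle.} The main difficulty is the sign bookkeeping and the cancellation in Step 2. I would handle it by passing to the bar construction. There the SDR induces, by the homological perturbation lemma, a new SDR between $B(A)$ with perturbed differential and $B(H)$. The side conditions $h^2=0$, $hi=0$ and $ph=0$ guarantee that the perturbed maps are compatible with the coalgebra structure. The transferred codifferential on $B(H)$ squares to zero automatically, which encodes the $A_\infty$ relations. The perturbed inclusion is a dg coalgebra map, which encodes the $A_\infty$ quasi-isomorphism. Expanding the perturbation series $\sum (\delta h)^k$ reproduces the tree formulas above, and this gives the uniqueness statement. For full details I would cite \cite{algebraic-operads}.
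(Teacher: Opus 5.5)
The paper does not prove this theorem; it records it as background from \cites{Rational-homotopy-theory,algebraic-operads}. Your argument is essentially the standard Kadeishvili--Merkulov tree-formula proof given there, with the bar-construction/perturbation-lemma route as the usual alternative, and it is correct: your Step 2 uses only $d\circ i=0$, $p\circ d=0$, $dh+hd=ip-\mathrm{id}$, the Leibniz rule and associativity, so the side conditions $h^2=hi=ph=0$ matter only in the perturbation-lemma packaging, and $m_2$ recovers the cohomology product through the identification $H(i)$.
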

 
Massey products provide a concrete way to detect nontrivial $A_\infty$-structures.

\begin{defn}[Triple Massey product]
Let $\alpha_1, \alpha_2, \alpha_3 \in H^\bullet(A)$ be cohomology classes, represented by cocycles $a_1, a_2, a_3 \in A$, respectively. Assume that $\alpha_1 \alpha_2 = 0$ and $\alpha_2 \alpha_3 = 0$ in $H^\bullet(A)$. 

The triple Massey product $\langle \alpha_1, \alpha_2, \alpha_3 \rangle$ is defined as the coset
\[
\langle \alpha_1, \alpha_2, \alpha_3 \rangle = 
[a_1 a_{23} - (-1)^{|a_1|}a_{12} a_3] \quad \in \quad 
H^\bullet(A) \big/ (\alpha_1 \cdot H^\bullet(A) + H^\bullet(A) \cdot \alpha_3),
\]
where $a_{12}, a_{23} \in A$ are any cochains satisfying $d(a_{12}) = a_1 a_2$ and $d(a_{23}) = a_2 a_3$. The coset is independent of the choices of $a_{12}$ and $a_{23}$. 
\end{defn}

\begin{prop} \label{Massey-nonformal}
If $A$ is formal, then the $A_\infty$-structure on $H^\bullet(A)$ can be chosen such that $m_n = 0$ for all $n \ge 3$. In particular,  all Massey products on $H^\bullet(A)$ vanish. 
\end{prop}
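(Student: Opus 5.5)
The plan is to go through the homotopy theory of $A_\infty$-algebras. Formality is invariant under the zigzag, and every CDGA quasi-isomorphism induces an $A_\infty$ quasi-isomorphism between the transferred structures on cohomology; from this we will deduce that some choice of transferred structure is strictly associative. Concretely, I would proceed in three steps.

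\emph{Step 1.} Fix a zigzag $A \leftarrow A_1 \rightarrow \cdots \rightarrow (H^\bullet(A),0)$ as in Definition \ref{defn:formality}. Choose an SDR datum on each CDGA; this is possible over the field $\C$ by splitting each complex into cocycles, a complement, and cohomology. Apply Theorem \ref{thm:Homotopy-Transfer} to obtain minimal $A_\infty$-structures on $H^\bullet(A)$, $H^\bullet(A_1)$, and so on. On $(H^\bullet(A),0)$ itself we may take $i=p=\mathrm{id}$ and $h=0$. All trees with at least one internal edge then vanish, so the transferred structure there is $m_2=$ product and $m_n=0$ for $n\ge3$.

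\emph{Step 2.} Each CDGA quasi-isomorphism $f:B\to C$ induces an $A_\infty$ quasi-isomorphism between the transferred minimal models. To see this, compose the $A_\infty$ quasi-isomorphisms $H(B)\to B$ and $C\to H(C)$; the latter exists as the $A_\infty$ quasi-inverse given by the transfer theorem, since quasi-isomorphisms of $A_\infty$-algebras over a field are invertible up to homotopy. An $A_\infty$ quasi-isomorphism between minimal $A_\infty$-algebras has invertible linear part, so it is an $A_\infty$-isomorphism. Composing along the zigzag and inverting the backward arrows gives an $A_\infty$-isomorphism $F$ from the transferred structure $\{m_n\}$ on $H^\bullet(A)$ to the strict structure $(H^\bullet(A),m_2,0,0,\dots)$. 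Pushing $\{m_n\}$ forward along the linear part $F_1$ (an algebra automorphism, since it respects $m_2$) and absorbing $F_1$, we get that the $A_\infty$-structure can be chosen, up to $A_\infty$-isomorphism, with $m_n=0$ for $n\ge3$. This proves the first claim.

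\emph{Step 3 (Massey products).} I expect this to be the main obstacle, because Massey products are defined from $A$ directly, not from a chosen $A_\infty$-structure. The cleanest route avoids $A_\infty$ language altogether: show that the triple Massey product, as a subset of the quotient $H/(\alpha_1H+H\alpha_3)$, is invariant under CDGA quasi-isomorphisms. Given $f:B\to C$, defining systems in $B$ map to defining systems in $C$, so $f_*\langle\alpha_1,\alpha_2,\alpha_3\rangle_B\subseteq \langle f\alpha_1,f\alpha_2,f\alpha_3\rangle_C$. Both sides are single cosets of corresponding indeterminacy subgroups, and $f_*$ is a ring isomorphism on cohomology, so the two sets are equal. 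Transporting along the zigzag, the Massey product in $A$ corresponds to the Massey product in $(H^\bullet(A),0)$. There $d=0$ and $\alpha_1\alpha_2=0$, so we may take $a_{12}=a_{23}=0$, and the product therefore contains $0$. Hence every triple Massey product in $A$ vanishes, i.e.\ it contains zero. Alternatively, one could use the known identification of $\pm m_3(\alpha_1,\alpha_2,\alpha_3)$ with an element of the Massey coset.
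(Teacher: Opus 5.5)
Your proposal is correct. The paper does not prove this proposition; it states it in the review of CDGA homotopy theory as a standard fact and refers to F\'elix--Halperin--Thomas and Loday--Vallette. Your argument therefore supplies what the paper leaves to the literature rather than competing with a different proof.

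Step~2 proves the first sentence in its standard meaning: the transferred minimal model, which is only defined up to $A_\infty$-isomorphism, is $A_\infty$-isomorphic to $(H^\bullet(A),m_2,0,0,\ldots)$. The final ``absorbing $F_1$'' is harmless but unnecessary, since $F$ is already such an isomorphism.

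The more valuable part is Step~3. With the paper's single-valued definition of the triple Massey product in $H^\bullet/(\alpha_1H^\bullet+H^\bullet\alpha_3)$, a CDGA quasi-isomorphism carries Massey products to Massey products and identifies the indeterminacy quotients. Vanishing therefore transports along the zigzag to $(H^\bullet(A),0)$, where it is immediate. This is exactly what the paper uses later, for the nontrivial product $\langle[z^1],[\omega^{\{1,2\}}],[z^2]\rangle$ on $\mathbb{C}^2\setminus\{0\}$.

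It also sidesteps two facts that the ``in particular'' silently needs if one derives it from $m_n=0$:
\begin{enumerate}
\item For a transferred structure, $\pm m_3(\alpha_1,\alpha_2,\alpha_3)$ represents the Massey coset.
\item If $F$ is an $A_\infty$-isomorphism to a structure with $m_3'=0$, its arity-three component gives $F_1m_3(\alpha_1,\alpha_2,\alpha_3)=\pm F_1\alpha_1\cdot F_2(\alpha_2,\alpha_3)\pm F_2(\alpha_1,\alpha_2)\cdot F_1\alpha_3$, hence $m_3(\alpha_1,\alpha_2,\alpha_3)\in\alpha_1H^\bullet+H^\bullet\alpha_3$.
\end{enumerate}

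Your argument covers only triple Massey products, which is all the paper defines. For higher ones the indeterminacy is no longer a subgroup, so you would need to lift defining systems through the quasi-isomorphisms rather than compare cosets.
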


\section{CDGAs Associated to Configuration Spaces}\label{sec:CDGAs}
In this section, we consider CDGAs of chains associated to the configuration space of $m$ points in $\aka$
$$
\maka  = \set{(\ZZ_1,\cdots,\ZZ_m)\in (\aka)^m}{\ZZ_i \ne \ZZ_j \text{ for } i\ne j}
$$
where we denote a point in $\aka$ by $\ZZ=(\xx,\zz)=(x^1,\cdots,x^{n'},z^1,\cdots,z^n)$.


The plan of this section is as follows.
\begin{itemize}
\item \ref{sec:analytic-chains}: We define $\A(\maka)$, the CDGA of $\bd$-forms on $\maka$.
\item \ref{sec:constructible-chains}: We define $\Adef(\maka)$, the CDGA of constructible $\bd$-forms on $\maka$. 

\end{itemize}

\subsection{The CDGA of \texorpdfstring{$\bd$}{(d+∂)}-forms}\label{sec:analytic-chains} 

Let $U \subset \R^p\times \C^q$ be an open submanifold. 

\begin{defn}
Define  
$$
A^s(U) = \set{ \sum_{u+v=s}\sum_{|I|=u,|J|=v} f_{IJ} dx^{I} d\bar z^{J} }{f_{IJ} \text{ are smooth functions on $U$}}
$$
and  
$$
\A(U):=\bigoplus_{s=0}^{p+q} A^s(U).
$$
\end{defn}
$\A(U)$ is naturally a CDGA when equipped the wedge product and the differential $\d$, where $\mathrm{d}$ is the de Rham differential on $\R^p$ and $\bp$ is the Dolbeault differential on $\C^q$. Denote its cohomology by
$$
H^\bullet(U) = H^\bullet(\A(U),\d).
$$

\begin{lem}[The $\bd$-Poincar\'e Lemma]\label{lem:Ck-Poincare}
Let $U$ be an open polydisk in $\R^p\times \C^q$, then the positive-degree cohomology of $(\A (U),\d)$ vanishes. 
\end{lem}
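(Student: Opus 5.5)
The approach is to treat $\A(U)$ as the total complex of a double complex and to combine the two classical Poincaré lemmas: de Rham in the real directions and Dolbeault--Grothendieck in the complex directions. Write $U=B\times D$, where $B\subset\R^p$ is an open box or ball and $D=D_1\times\cdots\times D_q\subset\C^q$ is a polydisk. Every form in $\A(U)$ decomposes by bidegree $(u,v)$, with $u$ the number of $dx$'s and $v$ the number of $d\bar z$'s. The differentials $\mathrm{d}$ (in $x$ only) and $\bp$ (in $\bar z$ only) anticommute, so $\A(U)$ is the total complex of the double complex $(A^{u,v}(U),\mathrm{d},\bp)$. This double complex is bounded, with $0\le u\le p$ and $0\le v\le q$, so the spectral sequences converge.

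The key step is to compute the $\bp$-cohomology first, column by column. For fixed $u$, the complex $(A^{u,\bullet}(U),\bp)$ is a direct sum over multi-indices $I$ of copies of the Dolbeault complex of $(0,\bullet)$-forms on $D$, with smooth dependence on the parameter $x\in B$. I will invoke the Dolbeault--Grothendieck lemma on a polydisk with parameters. Concretely, I will use the standard homotopy built by solving $\partial u/\partial\bar z^j=f$ one variable at a time via the Cauchy transform. To avoid shrinking the domain, I will either use the exhaustion argument of Hörmander (Theorem 2.3.3, via Mittag-Leffler/Runge approximation) or directly cite the $\bp$-exactness on polydisks for $(0,v)$-forms with $v\ge1$. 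The smooth dependence on $x$ follows because the Cauchy integral operators and the approximation steps can be carried out uniformly on compact subsets of $B$ with derivatives in $x$ passing under the integral.

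This step gives $H_{\bp}^{u,v}=0$ for $v>0$. In degree $v=0$ it gives the space of $u$-forms in $x$ whose coefficients are smooth on $U$ and holomorphic in $z$. On this $E_1$ page the $\mathrm{d}$-differential is the de Rham differential in $x$ with holomorphic parameter $z$. Applying the de Rham Poincaré lemma on the star-shaped domain $B$ via the radial homotopy operator $h\eta=\int_0^1 \iota_{x\partial_x}\eta(tx,z)\,dt/t$ gives the result. This operator preserves holomorphy in $z$, so the $E_2$ page is concentrated in bidegree $(0,0)$, where it equals the functions constant in $x$ and holomorphic in $z$, i.e.\ $\Hol(D)$. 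The spectral sequence degenerates there, so the total cohomology in positive degree vanishes. If an explicit argument is preferred over the spectral sequence, I will run the standard zig-zag (staircase) argument on the bounded double complex to produce the primitive directly.

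The main obstacle is the parametrized Dolbeault lemma on the full polydisk, as opposed to a shrunken one, together with smooth dependence on $x$. I would handle it by exhausting $D$ by relatively compact polydisks $D^{(k)}$, solving on each, and correcting the solutions by holomorphic functions, with smooth dependence on $x$, approximated via Taylor polynomials in $z$ so that they converge. Alternatively, if the lemma is only needed for germs or for shrinking polydisks, the direct Cauchy-transform homotopy with a cutoff suffices.
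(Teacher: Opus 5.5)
Your proposal is correct and follows essentially the paper's argument. The paper also first uses Grothendieck's (parametrized) Dolbeault lemma to subtract a $\bd$-exact form that kills all $d\bar z$ components, then applies the homotopy operator for $\gamma_t(\xx,\zz)=(t\xx,\zz)$, which preserves holomorphy in $z$; your spectral-sequence/staircase packaging is a formalization of that subtraction step, and your exhaustion-plus-Taylor-correction treatment of the full (unshrunk) polydisk supplies a detail the paper's sketch leaves implicit.
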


\begin{proof}[Sketch of proof]
Let $\alpha\in \A (U)$. 
Following Grothendieck's  proof of the $\bp$-Poincar\'e Lemma 
(see for example \cite{Complex-geometry}, Proposition 1.3.8), we can find a $\bd$-exact form $\beta\in \A (U)$ such that $\alpha-\beta$  contains no $d\bar z^j$ terms, i.e., $\alpha-\beta$ is holomorphic in any $z^j$. Then, applying a homotopy operator associated with the contraction $\gamma_t:(\xx,\zz)\mapsto (t\xx,\zz)$, $0\le t\le 1$, yields its $\mathrm{d}$-inverse.  
\end{proof} 

As $U$ varies, these spaces form a sheaf $\A$ of CDGAs on $\R^p\times \C^q$. Let $\Hol$ be the sheaf of smooth $\bd$-closed functions.  
By Lemma \ref{lem:Ck-Poincare}, $\A$ is a fine resolution of $\Hol$.

For $U=\maka$, we have a CDGA 
$$
\A(\maka).
$$

\subsection{The CDGA of Constructible \texorpdfstring{$\bd$}{(d+∂)}-forms}\label{sec:constructible-chains}

\subsubsection{O-minimal Structures}\label{sec:o-minimal}
O-minimal structures are generalizations of semialgebraic sets while retaining strong finiteness properties. The standard reference is \cite{tame-o-minimal}.

\begin{defn} 
A structure on the real field $(\mathbb{R}, +, \cdot, <)$ is a sequence $\mathcal{S} = (S_n)_{n \in \mathbb{N}}$, where each $S_n$ is a collection of subsets of $\R^n$,  satisfying the following: 
\begin{enumerate}
    \item $S_n$ contains all algebraic subsets, i.e., sets defined by polynomial equalities and inequalities.
    \item $S_n$ is closed under finite union, finite intersection, and complement.
    \item If $A \in S_m$ and $B \in S_n$, then $A \times B \in S_{m+n}$.
    \item If $p:\R^{n+1}\to \R^n$ is a linear projection and $A \in S_{n+1}$, then $p(A)\in S_n$.
\end{enumerate}
The elements of $S_n$ are called  $\mathcal S$-definable subsets of $\R^n$. A map (or function) $f:A\to B$ between $\mathcal S$-definable sets is called definable if its graph is $\mathcal S$-definable.
\end{defn}

\begin{rem}
It is worth distinguishing the definition of a definable map $f$ from the intuitive notion borrowed from topology or measure theory, where maps are defined via pullbacks (e.g., $f^{-1}(T)$ being open or measurable for relevant $T$). While definability of the graph implies that $f^{-1}(T)$ is definable for every definable set $T$, the converse does not hold in general. 
\end{rem}

\begin{defn} 
A structure $\mathcal{S}$ is said to be \textit{o-minimal} 
if $S_1$ consists precisely of finite unions of points and open intervals.
\end{defn}

\begin{eg}
The smallest o-minimal structure on $(\mathbb{R}, +, \cdot, <)$ is the structure $\R_{alg}$ of semialgebraic sets, which was used in \cite{Kontsevich-Operads} to describe the projection geometry of the compactified configuration spaces of $\R^N$.
\end{eg}

\begin{defn}
A function $f:\R^n\to\R$ is called a restricted real analytic function if
$$
f(\xx)=\begin{cases}
\widetilde f(\xx) & \text{if } \xx\in [-1, 1]^n \\
0 & \text{else}
\end{cases},
$$
where $\widetilde f$ is a real analytic function defined on an open neighborhood of $[-1, 1]^n$ in $\R^n$.

\end{defn}

\begin{eg}
The structure $\mathbb{R}_{an}$, defined as the smallest structure on $(\mathbb{R}, +, \cdot, <)$ in which all restricted analytic functions are definable,  is o-minimal. 
\end{eg}

Unless specified otherwise, in this paper the word "definable" always means definable in the o-minimal structure $\R_{an}$.

\begin{eg} 
The structure $\mathbb{R}_{an,exp}$, defined as the smallest structure on $(\mathbb{R}, +, \cdot, <)$ in which all restricted analytic functions and the global exponential function $\exp:\R\to \R$ are definable, is o-minimal. 
\end{eg} 

\begin{eg} \label{eg:exp-nondefinable}
The o-minimal condition imposes strong constraints on definable functions on affine spaces. For example, the function $e^z=e^{x+iy}=e^x(\cos y+i\sin y)$ on $\C\simeq \R^2$ is not definable in any o-minimal structure since $\cos y$ has infinitely many zeros.
\end{eg}

\subsubsection{Constructible Differential Forms 
}
 
\begin{defn} 
A set $A \subseteq \mathbb{R}^n$ is \textit{globally subanalytic} if its image under the compactifying map 
\[ \psi: \mathbb{R}^n \to [-1, 1]^n, \quad \psi(x) = \left( \frac{x_1}{\sqrt{1+x_1^2}}, \dots, \frac{x_n}{\sqrt{1+x_n^2}} \right) \]
is a subanalytic subset of $\mathbb{R}^n$, meaning it is locally a projection of a semi-analytic set. 

A function $f: A \to \mathbb{R}$ is globally subanalytic if its graph $\Gamma(f)$ is a globally subanalytic set.  
\end{defn}
 
\begin{thm}[\cite{LC2}]
A set/function is definable in $\R_{an}$ iff it is globally subanalytic.  
\end{thm}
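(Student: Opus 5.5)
Since the two classes are defined independently (one through the o-minimal axioms, the other through the compactifying map $\psi$ and local projections of semi-analytic sets), I would prove the two inclusions separately and reduce both to the one substantive input: \emph{Gabrielov's theorem of the complement}, which says that the complement of a subanalytic subset of a real analytic manifold is subanalytic. First I would note that it suffices to treat sets, since in both settings a function is definable, respectively globally subanalytic, exactly when its graph is. Second, $\psi$ is a semialgebraic homeomorphism of $\R^n$ onto $(-1,1)^n$, hence definable in any structure. So a set $A$ is globally subanalytic iff $\psi(A)$ is a subanalytic subset of $\R^n$ that is relatively compact (its closure lies in $[-1,1]^n$).

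For the inclusion "definable in $\R_{an}$ $\Rightarrow$ globally subanalytic", I would show that the globally subanalytic sets form a structure containing the graphs of all restricted analytic functions. Then minimality of $\R_{an}$ gives the inclusion. The axioms are checked as follows.
\begin{enumerate}
\item Semialgebraic sets become relatively compact semianalytic sets after applying $\psi$.
\item Closure under finite unions and intersections is immediate. Closure under complement is exactly Gabrielov's theorem, applied to $\psi(A)$ inside a neighborhood of $[-1,1]^n$.
\item Closure under products holds because $\psi$ acts coordinatewise.
\item For closure under a linear projection $p$, one uses that the image of a relatively compact subanalytic set under a proper analytic map is subanalytic (subanalytic sets are stable under proper images). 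The map $\psi\circ p\circ\psi^{-1}$ is, after a coordinate change, the restriction of a coordinate projection $[-1,1]^{n+1}\to[-1,1]^n$, which is proper on the compact cube.
\item Graphs of restricted analytic functions are semianalytic, bounded, and closed up to finitely many pieces, so their $\psi$-images are subanalytic.
\end{enumerate}

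For the converse, let $A$ be globally subanalytic, so that $B=\psi(A)\subset[-1,1]^n$ is subanalytic. Because the cube is compact, $B$ is covered by finitely many neighborhoods on each of which $B$ is a projection of a semianalytic set. Each semianalytic piece is a finite Boolean combination of sets $\{f=0\}$ and $\{g>0\}$ with $f,g$ analytic near a small closed box. After rescaling the box to $[-1,1]^N$, these functions are restricted analytic, so each piece is $\R_{an}$-definable. Projections of definable sets are definable, so $B$ is definable. Since $\psi^{-1}$ is semialgebraic, $A=\psi^{-1}(B)$ is definable too.

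I expect the main obstacle to be the complement axiom in the first inclusion, since it genuinely needs Gabrielov's theorem; I would cite it rather than reprove it. Compactness via $\psi$ is what makes the finite-cover arguments work in both directions. This is the content of \cite{LC2}, so I would present these steps as a recollection of that result.
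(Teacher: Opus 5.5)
The paper gives no proof of this statement; it is quoted from \cite{LC2}, so there is no in-paper argument to compare against. Your sketch is the standard argument behind that citation, and it is correct in outline. In one direction, the globally subanalytic sets form a structure containing the graphs of restricted analytic functions, with Gabrielov's complement theorem supplying the complement axiom; minimality of $\R_{an}$ then gives the inclusion. In the other direction, compactness of the cube reduces everything to finitely many local semianalytic descriptions by restricted analytic functions.

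Three points need tightening.
\begin{enumerate}
\item In the converse direction, a semianalytic set is only \emph{locally} a finite Boolean combination of sets $\{f=0\}$ and $\{g>0\}$. Your step ``each semianalytic piece is a finite Boolean combination \ldots{} near a small closed box'' therefore needs the local semianalytic sets to be relatively compact, as in the standard definition of subanalytic (the paper's wording omits this). Then their closures can be covered by finitely many boxes. This is not cosmetic: $\{1/(k\pi)\}_{k\ge1}\subset(-1,1)$ is the projection of the semianalytic set $\{xy=1,\ y>0,\ \sin y=0\}$, yet its preimage under $\psi$ is not definable in any o-minimal structure.
\item The graph of a restricted analytic function is not bounded. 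Split it into the compact piece over $[-1,1]^n$ and the semialgebraic piece $\{(x,0): x\notin[-1,1]^n\}$. The $\psi$-image of the compact piece is compact and semianalytic, because $\psi$ is an analytic diffeomorphism onto $(-1,1)^n$.
\item If ``linear projection'' in the structure axiom means an arbitrary linear surjection $p$, a coordinate change does not suffice, since $\psi$ commutes only with coordinate permutations and sign changes. Instead reduce to coordinate projections using the semialgebraic graph of $p$.
\end{enumerate}
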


\begin{defn}[Constructible functions]
A constructible function $f:A\to \R$ is a function of the form 
$$
f=\sum_{i=1}^k f_i \prod_{j=1}^{l_i} \log(g_{ij})
$$
where $f_i:A\to \R$, $g_{ij}:A\to \R_{>0}$ are globally subanalytic functions. 
\end{defn}

Constructible functions are preserved under both partial derivatives \cites{derivative-closed,derivative-closed-2} and parametric integration \cite{int-closed}. Moreover, coefficients of $\omega_{BM}$, the generalized Bochner-Martinelli kernel defined in Section \ref{sec:local-coho}, are constructible, hence the diagram integrals defined in Section \ref{sec:Conf-Space-Integrals} yield constructible forms.

 

Let $U$ be a definable open subspace of $\R^p\times \C^q$, and $p+q\le k<\infty$. 
\begin{defn}[Constructible differential forms]\label{def:constructible-forms}
For $s\le k$, define  
\ali{
\qquad A^s_{con,(k)}(U) = 
\set {\sum_{u+v=s}\sum_{|I|=u,|J|=v} f_{IJ} dx^{I} d\bar z^{J} }{f_{IJ} \text{ are constructible $C^{k-s}$   functions on $U$} }
}
and the space of constructible $C^{k-\bullet}$ forms
$$
\Aconk(U):=\bigoplus_{s=0}^{p+q} A^s_{con,(k)}(U).
$$
\end{defn}
$\Aconk(U)$ is naturally a CDGA when equipped with the wedge product and the differential $\d$.

\begin{rem}
We do not have a smooth constructible partition of unity since any smooth constructible function on $\R^p\times \C^q$ is real analytic \cite{derivative-closed-2}. This is why we assume $k<\infty$ in Definition \ref{def:constructible-forms}.
\end{rem}

We have the following chain of inclusions of CDGAs 
$$
\cdots \hlra A^\bullet_{con,(p+q+2)}(U)\hlra A^\bullet_{con,(p+q+1)}(U) \hlra A^\bullet_{con,(p+q)}(U).
$$

\begin{defn}
The constructible $C^{k-\bullet}$ $\bd$-cohomology of $U$ is defined as 
$$
\Hconk(U) = H^\bullet(\Aconk(U),\d).
$$
\end{defn}

\subsubsection{Mixed Real-complex Varieties}\label{sec:Real-complex-Varieties}

In this paper, all mixed real-complex varieties under consideration are open subvarieties of $\R^p\times\C^q$. 
Specifically, such a variety $U$ can be written as $D(I)=\R^p\times \C^q \setminus V(I)$, where $I \subset\C[x^1,\cdots,x^p,z^1,\cdots,z^q]$ is an ideal, and $V(I)\subset \R^p\times \C^q$ is the vanishing locus of $I$.
 
Since an open subvariety of $\R^p\times \C^q$ need not be connected when regarded as a smooth manifold, it is necessary to work with its connected components in order to construct a cover.
\begin{defn}
For every open subvariety $U$ of $\R^p\times\C^q$, we define a site $U_{alg}$ as follows. The category $U_{alg}$ has as objects the collection
$$
\Opens(U)=\set{V\subset U}{V \text{ is a finite union of connected components of the manifold $D(I_V)$ for some $I_V$}}.
$$
with morphisms given by inclusions. We equip $U_{alg}$ with the coverage generated by finite covers
\end{defn}

Chow’s theorem states that every closed analytic subvariety of 
$\C\P^n$ is algebraic. 
However, this statement does not extend to $\C^n$; the graph of $e^z$ is not algebraic. 
Nevertheless, o-minimal structures possess strong finiteness properties, which lead to the following o-minimal analog of Chow’s Theorem for $\C^n$.

\begin{thm}[O-minimal Chow’s Theorem, \cites{YS-chow1,YS-chow2}]\label{thm:ominimal-chow}
Let $Y\subset\C^n$ be a closed analytic subvariety
whose underlying set is definable in some o-minimal structure. Then $Y$ is algebraic.
\end{thm}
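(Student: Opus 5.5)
The statement is the o-minimal Chow theorem of Peterzil--Starchenko, and the paper cites it rather than proving it. The plan I would follow is the classical route: pass to the projective closure $\overline{Y}\subset\C\P^n$, show it is a closed analytic subvariety by an extension theorem of Bishop type, and then invoke the classical Chow theorem. Once we know $\overline Y\subset \C\P^n$ is analytic, Chow gives that it is algebraic, so $Y=\overline Y\cap \C^n$ is algebraic (here we use that $Y$ is closed in $\C^n$).

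\textbf{Step 1: reduce to the pure-dimensional, irreducible case.} I would show that each irreducible component of $Y$ is again definable. The regular locus $Y_{reg}$ is definable, since it is cut out by first-order conditions on local graphs. By o-minimality, $Y_{reg}$ has finitely many connected components, and each of these is definable. The irreducible components of $Y$ are the closures of these connected components. Hence $Y$ is a finite union of definable, irreducible analytic sets, and we may assume $Y$ is of pure complex dimension $k$.

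\textbf{Step 2: local finiteness of volume at infinity.} Let $H=\C\P^n\setminus\C^n$ be the hyperplane at infinity. Near any point of $H$, take a standard affine chart $\C^n\cong U_i\subset\C\P^n$. The transition map is semialgebraic, so the image of $Y$ in this chart is a definable set of real dimension $2k$. I then need that a bounded definable set of real dimension $2k$ has finite $2k$-dimensional Hausdorff measure. For this I would use the Cauchy--Crofton formula: the $2k$-volume is an average, over real affine planes $L$ of codimension $2k$, of $\#(Y\cap L)$. By uniform finiteness in o-minimal structures, the counts $\#(Y\cap L)$ that are finite are bounded by a constant independent of $L$, since $\{(L,y): y\in Y\cap L\}$ is a definable family. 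Generic $L$ meets $Y$ in finitely many points, so the volume is finite. This is the step I expect to be the main technical obstacle. The delicate points are justifying that the Crofton formula applies to the definable (rectifiable) set, and handling the non-generic planes, which form a measure-zero set.

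\textbf{Step 3: extension across $H$ and conclusion.} Now $Y$ is a pure $k$-dimensional analytic subset of $\C\P^n\setminus H$, and $H$ is an analytic subset of $\C\P^n$. By Step 2, $Y$ has locally finite $2k$-dimensional Hausdorff measure near every point of $H$. Bishop's extension theorem then says that the closure $\overline Y$ is analytic in $\C\P^n$. Chow's theorem makes $\overline Y$ projective algebraic. Intersecting with $\C^n$ shows that $Y$ is algebraic, and taking the union over the finitely many components from Step 1 finishes the proof.
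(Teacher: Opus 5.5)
The paper does not prove this statement. It quotes it from Peterzil--Starchenko and uses it only as a black box, to show that $\bar\partial$-closed constructible functions are algebraic. Your proposal is a correct, self-contained proof along the classical route. Definability gives each pure $k$-dimensional piece of $Y$ locally finite $2k$-volume near the hyperplane at infinity $H$. Bishop's extension theorem, with exceptional set the analytic hypersurface $H$, then makes $\overline{Y}\subset\C\P^n$ analytic. Classical Chow finishes, and $Y=\overline{Y}\cap\C^n$ because $Y$ is closed.

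The points you flag in Step 2 all go through:
\begin{itemize}
\item $Y\cap W$ is a locally closed analytic subset of $W\setminus H$. It is therefore countably $2k$-rectifiable (regular part plus a singular part of $\mathcal{H}^{2k}$-measure zero), so Crofton applies.
\item The affine planes meeting $Y\cap W$ in infinitely many points form a definable set. Counting dimensions of the incidence set shows its dimension is strictly smaller than that of the space of affine planes, so it is null.
\item Chart changes are bi-Lipschitz on relatively compact sets, so local finiteness of volume does not depend on the chart or the metric.
\end{itemize}

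Two things you use implicitly are worth stating. First, the o-minimal structure must expand the real field (as in the paper's definition of a structure), so that chart maps and the family of planes are definable. Second, Step~1 is not cosmetic: for non-pure $Y$ the $2k$-volume is infinite when $k$ is below the top dimension. So Steps~2--3 must be applied to each definable pure-dimensional piece separately, which your component argument provides.

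Compared with the cited sources, your argument is short and relies only on classical extension theorems, but it is tied to $\R$ through Hausdorff measure. Peterzil--Starchenko instead develop definable versions of the classical extension theorems, such as Remmert--Stein. In the nonstandard setting they work over an arbitrary real closed field, where Hausdorff measure is unavailable; that generality is what their heavier machinery buys.
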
 

\begin{prop}
\label{prop:Hcons-affine}
Let $X$ be a complex algebraic variety, and $f:X\to \C$ be a constructible $C^1$ function satisfying $\bp f=0$. Then $f$ is algebraic.
\end{prop}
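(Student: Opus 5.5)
The plan is to show that $f$ is a holomorphic function on $X$ whose graph is definable, and then to apply the o-minimal Chow theorem (Theorem \ref{thm:ominimal-chow}) to the graph. First, since $f$ is $C^1$ and $\bp f=0$, $f$ is holomorphic on the smooth locus of $X$. I will treat $X$ as a locally closed subvariety of some $\C^N$ (or of an affine chart, which is enough since algebraicity is local on $X$). If $X$ is singular, I first prove the statement on $X_{reg}$, and at the end use that $f$ is continuous on $X$ and algebraic on the dense open set $X_{reg}$ to conclude.

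The key difficulty is that $f$ is constructible rather than definable: it has the form $\sum_i f_i\prod_j \log g_{ij}$, and $\log$ is not globally definable in $\R_{an}$. So the graph of $f$ need not be definable in $\R_{an}$. Two ways around this are possible.

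\textbf{First route.} Observe that constructible functions are definable in the larger o-minimal structure $\R_{an,exp}$, since $\log$ is definable there. Theorem \ref{thm:ominimal-chow} allows any o-minimal structure, so this route avoids any growth analysis.

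\textbf{Second route.} Show directly that the log terms must drop out: a holomorphic function with at most polylogarithmic growth corrections relative to a definable function is forced to be definable. This route is not needed if the first one applies.

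I take the first route. The graph
$$\Gamma_f=\{(x,f(x))\}\subset X\times\C\subset \C^{N+1}$$
is then definable in $\R_{an,exp}$. It is also a closed analytic subvariety of $X\times\C$, because it is the graph of a holomorphic function on $X$. Its Zariski closure-type argument proceeds as follows. The closure $\overline{\Gamma_f}$ in $\C^{N+1}$ is definable. By the definable version of the Remmert–Stein/Peterzil–Starchenko theorem underlying \cite{YS-chow1,YS-chow2}, it is a closed analytic subset of $\C^{N+1}$. It is therefore algebraic by Theorem \ref{thm:ominimal-chow}.

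Hence $\Gamma_f$ is a constructible algebraic set and the graph of a function. Its projection to $X$ is an isomorphism onto $X$, at least generically; on the normal locus it is an isomorphism by Zariski's main theorem. So $f$ is a regular (algebraic) function there, and on the remaining locus it is algebraic in the sense that its graph is algebraic. That completes the proof modulo checking the step below.

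\textbf{Main obstacle.} The step I expect to be hardest is passing from "$\Gamma_f$ is a definable analytic subset of $X\times\C$" to "its closure in $\C^{N+1}$ is analytic." Theorem \ref{thm:ominimal-chow} is stated for closed analytic subvarieties of $\C^n$, while $\Gamma_f$ is only closed in the quasi-affine $X\times\C$. I will handle this with the definable Remmert–Stein extension theorem of Peterzil–Starchenko. The complement of $X\times\C$ in its closure is algebraic of lower dimension than $\Gamma_f$ in the relevant components. So the closure of the definable analytic set $\Gamma_f$ across it is analytic.

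Alternatively, I can avoid this by restricting $f$ to affine open pieces and to lines. A definable holomorphic function of one variable on a punctured neighborhood of $\infty$ has a finite-order pole by o-minimality: it cannot have an essential singularity, by the reasoning of Example \ref{eg:exp-nondefinable}. This gives polynomial growth and hence algebraicity directly.
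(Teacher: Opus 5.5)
Your core argument is the paper's: constructible functions are definable in $\R_{an,exp}$ (because $\log$ is), so the graph of $f$ is a definable analytic set, and Theorem~\ref{thm:ominimal-chow} makes it algebraic. You diverge only in how you obtain a \emph{closed} analytic subset of affine space, and there the paper's route is simpler. It covers $X$ by affine opens $U$ and picks \emph{closed} embeddings $U\hookrightarrow\C^n$. Then $\Gamma_{f|_U}$, being closed in $U\times\C$, is already closed in $\C^n\times\C$, so Chow applies directly; since algebraicity is local, nothing more is needed. Your ``main obstacle'' arises only because you embed $X$ as a locally closed set.

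If you keep your route, the sentence justifying the extension by a dimension count is false. The set $(\overline X\setminus X)\times\C$ can have the same dimension as $\Gamma_f$. This already happens for $X=\C^*\subset\C$, where the graph of $e^{1/z}$ shows that the classical Remmert--Stein theorem genuinely fails. What makes the step work is definability alone: the Peterzil--Starchenko definable Remmert--Stein theorem has no dimension hypothesis. One way to see this is that definable analytic sets have locally finite volume, so Bishop's extension theorem applies. It is also how the o-minimal Chow theorem is deduced from the projective one, taking $E$ to be the hyperplane at infinity, whose dimension may exceed that of the set. So drop the dimension claim and rely on definability.

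Two smaller points. Your reduction to $X_{\mathrm{reg}}$ followed by a continuity argument is more careful than the paper. The paper tacitly treats $\Gamma_{f|_U}$ as a closed analytic subset even when $U$ is singular, where $\bp f=0$ only has meaning on the regular locus. The ``restrict to lines'' alternative, by contrast, is only a sketch. A general affine $X$ does not contain enough lines, and even on $\C^n$, polynomiality along each line needs an extra Baire-category or uniform-growth argument before it yields a polynomial.
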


\begin{proof}
Every constructible function is definable in the o-minimal structure $\R_{an,exp}$. 
For each affine open subset $U\subset X$, choose a closed embedding $U\hookrightarrow \C^n$ and apply Theorem \ref{thm:ominimal-chow} to the graph $\Gamma_{f|_U}\subset U\times\C \subset \C^k\times\C$, we obtain that  $f|_U$ is algebraic.
\end{proof}

\begin{eg}
\label{cor:0-cohomology-affine}
Let $U=\R^p\times\C^q \setminus V(I)$ and let $f$ be a constructible $C^1$ function on a connected component of $U$ with $\bd f=0$. Because  $\partial_{x^i}f=\partial_{z^j}f=0$ for all $i,j$, $f$ is locally constant in the $\R^p$-directions and holomorphic in the $\C^q$-directions. For every $\xx\in \R^p$, the restriction  $f|_{\lr{\xx}\times \C^q}$ is algebraic by Proposition \ref{prop:Hcons-affine}. Thus f can be written as a rational function $g/h$, $g,h\in\C[z^1,\cdots,z^q]$, with $h$ not vanishing on the component. 

Let $\Ocon(U)$ denote the space of constructible $C^1$ functions on $U$ that are $\bd$-closed.

\end{eg}

\begin{defn}\label{def:con-structure-sheaf}
Let $U\subset\R^p\times\C^q$ be an open subvariety. The functor  
\ali{
U_{alg} \ni V \lmt \OconU(V) 
} 
defines a sheaf on the site $U_{alg}$. 
This sheaf is called the constructible structure sheaf and is denoted by $\OconU$.  
\end{defn}
Note that when $p=0$ and $U\subset \C^q$, the sheaf $\OconU$ coincides with the usual structure sheaf of $U$ when $U$ is regarded as a complex variety.

\begin{thm}\label{thm:cons-resolution}
$\Aconk(U)$ computes the CDGA of derived global sections of the sheaf $\OconU$ on the site $U_{alg}$
$$
\Aconk(U) \cong R\Gamma(U,\Ocon).
$$
\end{thm}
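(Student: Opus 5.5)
The plan is to realize $U\ni V\mapsto \Aconk(V)$ as a sheaf of CDGAs on the site $U_{alg}$ which resolves $\OconU$ and is acyclic for global sections. The quasi-isomorphism $\Aconk(U)\simeq R\Gamma(U,\Ocon)$ then follows from the usual comparison between Čech hypercohomology and global sections. Concretely, I would establish three things. First, $\Aconk$ is a sheaf on $U_{alg}$; this is clear, since constructibility and the $C^{k-s}$ condition are local and the covers are finite. Second, the complex $0\to \Ocon\to \Aconk\to\cdots$ is exact as a complex of sheaves on $U_{alg}$. Third, for every finite cover $\{V_i\}$ of any $V\in U_{alg}$, the Čech complex of each $A^s_{con,(k)}$ is acyclic in positive Čech degrees. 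Granting these, the double complex argument shows that $\Aconk(V)$ is quasi-isomorphic to $\check{C}(\{V_i\},\Ocon)$ for every cover. Passing to the colimit over covers gives the derived global sections, and this identification is compatible with products because the Čech-to-total maps are maps of (homotopy) commutative algebras.

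For the acyclicity in the third step, I would construct a constructible partition of unity subordinate to a finite cover by definable open sets. For a finite $k$ this should be possible with constructible (indeed globally subanalytic) $C^k$ functions, since definable $C^k$ cell decomposition and definable $C^k$ bump functions exist in $\R_{an}$. The standard homotopy $h(c)_{i_0\cdots i_{p-1}}=\sum_j \rho_j c_{j i_0\cdots i_{p-1}}$ then contracts the Čech complex. Multiplying by $\rho_j$ preserves constructibility and keeps the regularity class $C^{k-s}$ as long as $\rho_j\in C^{k}$. This is exactly where the remark that $k<\infty$ is needed gets used.

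The main obstacle is the second step, a constructible $\bd$-Poincaré lemma: a $\bd$-closed constructible $C^{k-s}$ form of positive degree must be exact with a constructible $C^{k-s+1}$ primitive, locally in the site topology. The site has no small polydisks, so "locally" must mean on a refinement by objects of $U_{alg}$. A cleaner approach is to prove exactness for the whole cover structure at once. I would follow the sketch of Lemma \ref{lem:Ck-Poincare}. To kill the $d\bar z$-components, use the Cauchy/Bochner–Martinelli-type integral operator in each variable $z^j$, after multiplying by a constructible cutoff. To handle the real directions, apply the homotopy operator for $(\xx,\zz)\mapsto(t\xx,\zz)$, which is a parametric integral. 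Constructibility of the resulting primitives is preserved by the closure of constructible functions under parametric integration \cite{int-closed}. This is exactly why logarithms appear in the definition: $\bar\partial$-inverting via $\frac{1}{z-w}$ produces $\log$ terms.

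The delicate points are two. The first is the loss of exactly one derivative: integrals gain regularity, but the cutoff and the differentiation under the integral must be tracked, which is handled by the $C^{k-\bullet}$ grading. The second is that degree-zero cohomology is identified with $\Ocon$ by Example \ref{cor:0-cohomology-affine}. I would first try to prove exactness on each basic open $D(I_V)$ component that is star-shaped in $\xx$ and a product of discs or complements in $\zz$. If that fails, I would instead compare $\Aconk$ with the smooth resolution $\A$ via Lemma \ref{lem:Ck-Poincare} and show that the inclusion induces an isomorphism on local cohomology, using the o-minimal Chow theorem (Proposition \ref{prop:Hcons-affine}) to pin down the algebraic cocycles.
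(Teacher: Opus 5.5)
\textbf{Verdict: there is a genuine gap, in your step 2.} Your steps 1 and 3 are fine and match the routine half of the paper's sketch. The paper fixes a finite cover $\{U_i\}$ and uses the zigzag of CDGA maps
\[
\Aconk(U)\lra\operatorname{TW}\LR{\Aconk(U_\bullet)}\longleftarrow\operatorname{TW}\LR{\Gamma(U_\bullet,\OconU)} .
\]
The first arrow is a quasi-isomorphism by your constructible $C^k$ partition-of-unity argument. The second is justified only by ``a calculation on affine intersections''. Two points in your formal part need repair.
\begin{itemize}
\item The Thom--Whitney totalization is not cosmetic. The \v Cech complex with its cup product is not graded-commutative, so ``the \v Cech-to-total maps are maps of (homotopy) commutative algebras'' does not give the CDGA identification the statement asserts. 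Replace the \v Cech double complex by $\operatorname{TW}$, as the paper does.
\item The double complex argument does not give $\Aconk(V)\simeq\check C(\{V_i\},\Ocon)$ for every cover. Its rows compute $\prod_I H^\bullet_{con,(k)}(V_I)$, and sheaf-level exactness does not make the $V_I$ acyclic; take the one-element cover of $\paka$, using Proposition \ref{prop:coh-local}. What steps 1--3 actually give is that $\Aconk$ is a $\Gamma$-acyclic resolution of $\Ocon$. That suffices additively, with no colimit over covers.
\end{itemize}

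\textbf{The gap in step 2.} In $U_{alg}$, ``locally'' means on the members of a finite cover by complements of algebraic sets. These are in general not of the product form to which Lemma \ref{lem:definable-Poincare} (one-variable Cauchy kernels plus the homotopy for $(\xx,\zz)\mapsto(t\xx,\zz)$) applies.

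For example, take $V=\{(x,z)\in\R\times\C: z\neq x\}=D(z-x)$.
\begin{itemize}
\item Every $J\times B$ with $J\subset\R$ nonempty open and $B\subset\C$ cofinite meets the line $\{z=x\}$. So $V$ contains no nonempty product of that kind.
\item More intrinsically, take any nonempty object $W\subset V$ and a generic real $z_0$ in its $x$-projection. The $x$-slice of $W$ over $z_0$ acquires an extra puncture at $x=z_0$ that the slices over non-real $z$ lack. So the number of slice components jumps, and no coordinate change compatible with $\bd$ makes $W$ a product.
\end{itemize}
The complement of a smooth affine cubic $E\subset\C^2$ is a purely holomorphic example. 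A product structure on a Zariski-open subset of $\C^2\setminus E$ would force the non-rational curve $E$ into a fibre of a pencil of rational curves, which is impossible. Hence no refinement of any cover of $V$ puts you in the setting of Lemma \ref{lem:definable-Poincare}.

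The classical cutoff-and-shrink trick is unavailable too: it only solves the equation on a smaller polydisk, and discs $\{|z|<R\}$ are not objects of $U_{alg}$.

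Your fallback does not close the gap either:
\begin{itemize}
\item Polydisks are not objects, so Lemma \ref{lem:Ck-Poincare} says nothing on $U_{alg}$.
\item $\Aconk\hookrightarrow\A$ is not a quasi-isomorphism on objects once $q\ge1$. Already in degree $0$ you get polynomial versus holomorphic functions (cf.\ Proposition \ref{prop:coh-local}).
\item Showing that a constructible form which is smoothly exact is constructibly exact is a statement of the same depth as the theorem.
\item Proposition \ref{prop:Hcons-affine} only controls degree $0$.
\end{itemize}

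What is missing is an actual proof that positive-degree constructible $\bd$-cohomology vanishes on the non-product affine pieces of a cover. The paper compresses exactly this into ``a calculation on affine intersections'' and defers the full proof. You correctly identify it as the main obstacle, but the method you propose for it cannot work beyond product-type pieces.
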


We will not use Theorem \ref{thm:cons-resolution} in this paper and defer the full proof to a future work.

\begin{proof}[Sketch of proof]
Let $U_i$ be a finite cover of $U$, then the Thom–Whitney totalization $\operatorname{TW}\LR{\Gamma(U_\bullet,\OconU)}$ represents $R\Gamma(U,\Ocon)$. Using a calculation on affine intersections we can show there are quasi-isomorphisms of CDGAs
\ali{
\Aconk(U)
\overset{\simeq}{\lra}
\operatorname{TW}\LR{\Aconk(U_\bullet)}
\overset{\simeq}{\longleftarrow} 
\operatorname{TW}\LR{\Gamma(U_\bullet,\OconU)}.
}
\end{proof} 
 
\begin{rem}
Let $X$ be a reduced separated smooth complex algebraic variety of finite type. Then we can view $X$ as a $\R_{an}$-definable manifold and define the CDGA of constructible $C^{k-\bullet}$ differential forms on $X$. In this case, $\Aconk(X)$ is still a CDGA model for the derived global sections of the structure sheaf $\OO_X$ of regular functions on $X$
$$
\Aconk(X) \cong R\Gamma(X,\OO_X).
$$
In particular, the constructible Dolbeault  cohomology computes the sheaf cohomology of $\OO_X$
$$
\Hconk(X) \cong H^\bullet(X,\OO_X).
$$
A sheaf version also holds. 
\end{rem}

\subsubsection{The Constructible \texorpdfstring{$\bd$}{(d+∂)}-equation}
In this section, we prove that the constructible $\bd$-equation is solvable on a product of punctured disks in $\R$ or $\C$, which will be used to calculate the constructible $\bd$-cohomology of $\maka$. 

 
\begin{notation}
Let $U\subset \R^p\times \C^q$ be a definable open subset. We say the constructible $\bd$-equation is solvable on $U$ if for every $\alpha\in A_{con,(k)}^{\bullet>0}(U)$ with $\bd\alpha=0$, there exists $\beta\in A_{con,(k+1)}^{\bullet-1}(U)$ such that $\bd \beta=\alpha$. 
\end{notation}

\begin{rem}
Let $\alpha$ be a constructible $C^k$ $\bd$-closed form. 
For $|I|\le k$,  $\partial^I \alpha$ is constructible, so its coefficients are locally H\"older continuous by Proposition \ref{prop:Holder-continuous}. 
Consequently, every solution $\beta$ of the $\bd$-equation $\bd \beta=\alpha$ is automatically $C^{k+1}$.
\end{rem}

\begin{lem}
Let $f$ be a continuous globally subanalytic function on $\R^q$. Then there exist $r,C\in\R$ such that $|f(\xx)|\le |\xx|^r+C$.
\end{lem}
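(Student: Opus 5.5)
We reduce the statement to a one-variable growth bound for the function $M(t)=\max_{|\xx|\le t}|f(\xx)|$, $t\ge 0$. Since $f$ is continuous, $M$ is finite, nondecreasing and continuous. It is globally subanalytic: its graph is
$$\set{(t,s)}{\exists \xx\,(|\xx|\le t,\ |f(\xx)|=s)\ \text{and}\ \forall \yy\,(|\yy|\le t \Rightarrow |f(\yy)|\le s)}.$$
This is a first-order definable set in $\R_{an}$, because definable sets are closed under complements and projections and $|f|$ is definable. By the theorem of \cite{LC2}, globally subanalytic sets are exactly the $\R_{an}$-definable sets, so $M$ is an $\R_{an}$-definable function of one variable. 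It then suffices to find $r$ and $C$ with $M(t)\le t^r+C$ for all $t\ge0$, since $|f(\xx)|\le M(|\xx|)$.

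The key step is polynomial boundedness of $\R_{an}$: every definable $g:\R\to\R$ satisfies $|g(t)|\le t^N$ for all sufficiently large $t$, for some $N$. I would prove this directly in the globally subanalytic language. Compose $M$ with the compactifying map $\psi$ from the definition. The function $u\mapsto \psi(M(\psi^{-1}(u)))$ on $[0,1)$ is subanalytic and bounded, and it tends to a limit as $u\to1^-$. By the Puiseux-type expansion for one-variable subanalytic functions (Łojasiewicz), near $u=1$ it equals a convergent Puiseux series in $(1-u)^{1/p}$.

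Translate back. If $M(t)\to\infty$, then $1-\psi(M(t))\sim 1/(2M(t)^2)$ while $1-\psi(t)\sim 1/(2t^2)$. The Puiseux expansion gives $1-\psi(M(t))\ge c\,(1-\psi(t))^{a}$ for some rational $a>0$ and $c>0$, hence $M(t)\le c' t^{a}$ for large $t$. If instead $M$ stays bounded, the bound is trivial.

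Finally, choose $r\ge a$ (taking $r\ge0$) and $t_0$ such that $M(t)\le t^r$ for $t\ge t_0$. Set $C=M(t_0)$, which is finite by compactness of the ball. Then $M(t)\le t^r+C$ for every $t\ge 0$. The main obstacle is the polynomial-boundedness step, namely the correct handling of the Puiseux expansion at the boundary point of the compactification. If one prefers not to redo it, one can simply cite van den Dries–Miller, who show that $\R_{an}$ is polynomially bounded.
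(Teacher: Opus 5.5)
Your proof is correct, but it takes a different route from the paper.

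The paper keeps the direction as a parameter. It sets $F(\xx,t)=f(t\xx)$ for $\xx\in S^{q-1}$ and invokes Miller's uniform finiteness of exponents to choose one $r$ that works along every ray. It then uses Cluckers--Miller to obtain a definable threshold $g(\xx)$ beyond which $|f(t\xx)|\le t^{r}$, and finally bounds $g$ by compactness of $S^{q-1}$. You instead absorb all directions at once into the single definable function $M(t)=\max_{|\xx|\le t}|f(\xx)|$. Then only one-variable polynomial boundedness of $\R_{an}$ is needed. The paper also rests on polynomial boundedness, since Miller's result concerns polynomially bounded structures, but it uses it in a parametric form.

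Your reduction buys more than brevity. The paper's final compactness step is the weak point: a definable function on a compact set need not be bounded, and ray-by-ray control does not by itself give uniform control. For example, take $N\ge 2$ and $f(x,y)=|x|^{N}\max(0,1-|y-1|)$ on $\R^2$.
\begin{itemize}
\item This $f$ vanishes for all large $t$ along every ray, so the exponent list may be empty and the recipe produces $r=1$.
\item Yet $f(x,1)=|x|^{N}$, so the bound $|f|\le|\xx|+C$ fails.
\item Correspondingly, the threshold $g$ must be at least about $1/|\sin\theta|$ for directions $\theta$ near $0$ or $\pi$, so it is unbounded.
\end{itemize}
Taking the maximum over balls builds exactly this missing uniformity into $M$.

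One small fix in your last step: from $M(t)\le c't^{a}$ you should take $r>a$ (for instance $r=a+1$), not merely $r\ge a$, since $c'$ may exceed $1$. With that, the rest closes the argument. Monotonicity of $M$ makes your bounded/unbounded dichotomy exhaustive, and the choice $C=M(t_0)$ handles small $t$.
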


\begin{proof}
Let $\tau$ denote the definable map
\ali{
\tau: S^{q-1} \times [0,+\infty) &\lra \R^q \\
     (\xx,t) & \lmt t\xx
}
and $F$ be the definable function on $\R^{q}\times \R$, which is defined by
\ali{
F(\xx,t)=
\begin{cases}
f\circ \tau & \xx\in S^{q-1} \\
0 & \text{else}
\end{cases}.
}
Denote by $F_\xx$ the restriction of $F$ to $\set{(\xx,t)}{t\in \R}$. 
By \cite{poly-bound2}, Proposition 5.2, there exist $r_1,\cdots,r_l \in\Q$ such that for every $\xx\in \R^{q}$, either $F_\xx$ is identically zero for sufficiently large t, or there is an index i such that $\lim_{t\to +\infty} F_\xx(t)/t^{r_i}=c(\xx) \ne 0$.

Let $r=\max\lr{r_1,\cdots,r_l,0}+1$ and $\tilde F(\xx,t)=F(\xx,t) t^{-r}$, then $\lim_{t\to+\infty}\tilde F(\xx,t)=0$ for each $\xx$. By \cite{int-closed}, Proposition 1.5, ${F(\xx,t)\le 1}$ holds for $t>g(\xx)$, for some globally subanalytic function $g(\xx)$. Since $S^{q-1}$ is compact, there exists $C_1>0$ such that $g(\xx)<C_1$ for $\xx\in S^{q-1}$. Let $C=\max_{|\xx|<C_1}|f(\xx)|$, then we have $|f(\xx)|\le |\xx|^r+C$. 
\end{proof}

\begin{cor}\label{cor:polynomial-bound}
Let $f$ be a continuous constructible function on $\R^q$. Then there exist $r,C\in\R$ such that $|f(\xx)|\le |\xx|^r+C$.
\end{cor}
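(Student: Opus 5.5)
The plan is to reduce to the preceding lemma by treating each term of the decomposition defining a constructible function separately. Write
$$
f=\sum_{i=1}^k f_i\prod_{j=1}^{l_i}\log(g_{ij}),
$$
with $f_i$ and $g_{ij}>0$ globally subanalytic on $\R^q$. It suffices to bound each summand by a polynomial-type bound $|\xx|^{r_i}+C_i$ away from a compact set. Continuity of $f$ then handles any bounded region: take $C$ to be the maximum of $|f|$ on a large ball, and enlarge $r$ and $C$ to absorb finitely many terms.

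The difficulty is that the lemma requires continuity, and the individual $f_i$ and $g_{ij}$ need not be continuous, even though $f$ is. I would therefore first rework the lemma's proof for a definable function that is merely \emph{locally bounded}, or better, bound the functions only along rays as the proof does.

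Along a ray $t\xx$, $\xx\in S^{q-1}$, the cited Proposition 5.2 of \cite{poly-bound2} gives finitely many exponents $r_1,\dots,r_l$, uniform in $\xx$, controlling the asymptotic growth of $f_i(t\xx)$ and of $g_{ij}(t\xx)$ and $g_{ij}(t\xx)^{-1}$. Hence for large $t$,
$$
|\log g_{ij}(t\xx)|\le C'\log t .
$$
The product of each summand is then eventually bounded by $t^{R}$ for some rational $R$ independent of $\xx$. As in the lemma's proof, the threshold beyond which this holds is a globally subanalytic function of $\xx$, by \cite{int-closed}, Proposition 1.5, applied to $\tilde F = F\,t^{-R-1}$ with the logarithms bounded by powers.

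The main obstacle is this uniformity in $\xx$. The threshold function $g(\xx)$ may be unbounded on the compact sphere $S^{q-1}$, because it comes from discontinuous data. For the lemma itself, boundedness of the threshold relied only on compactness together with definability. A definable function on a compact set may still be unbounded, so I would address this in one of two ways. The first option is to cell-decompose $S^{q-1}\times(0,\infty)$ into finitely many cells on which $f_i$ and $g_{ij}$ are continuous, and argue cell by cell. The second, preferable, option is to replace the pointwise statement by a uniform one. Note that
$$
\sup_{|\yy|\le t}|f_i(\yy)|\,|\log g_{ij}(\yy)|
$$
is not directly definable because of the logarithm, but each factor's supremum is definable after bounding $|\log g|\le g+g^{-1}$. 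So $h(t)=\sup_{|\yy|\le t}\max(|f_i|,g_{ij},g_{ij}^{-1})$, wherever finite, is a definable function of one variable and hence polynomially bounded for large $t$ by \cite{poly-bound2}. This yields a bound $t^{R}$ on each summand for large $|\xx|$.

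The remaining worry is finiteness of these suprema when the pieces are locally unbounded. In that case I would first replace the representation of $f$ by one whose pieces are continuous off a lower-dimensional set and invoke the continuity of $f$ there. I expect this step to require the most care.
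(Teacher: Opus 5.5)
Your proposal stops exactly where the proof has to happen. Nothing in it produces a bound that is uniform near the locus where the representing data $f_i$, $g_{ij}^{\pm1}$ degenerate. Each device you sketch fails there, because each estimates factors or summands separately, whereas continuity of $f$ is in general a cancellation phenomenon.
\begin{itemize}
\item Your function $h(t)=\sup_{|\yy|\le t}\max(|f_i|,g_{ij},g_{ij}^{-1})$ is identically $+\infty$ already for $f(\xx)=x_1\log|x_1|$ (extended by $0$) with $q\ge 2$, whatever representation you pick. Suppose all $g_{ij}$ were bounded away from $0$ and $\infty$ on a neighborhood of a point of $\{x_1=0\}$. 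Then every $\log g_{ij}$, and hence $f$, would be globally subanalytic there. That is impossible, since the one-variable germ of $x_1\log|x_1|$ at $0$ is not of Puiseux type. So some $g_{ij}^{\pm1}$ is unbounded near every point of this unbounded hyperplane, and no compact excision helps.
\item Passing to pieces that are continuous off a lower-dimensional set, or to cells on which they are continuous, changes nothing. The pieces may still blow up towards that set or towards the cell frontiers. Continuity of $f$ says nothing about individual summands: in $0=f_1-f_1$ with $f_1=1/x_1$ off $\{x_1=0\}$, no summand is bounded. You would need either a representation with controlled summands, which you do not construct, or an argument that sees the cancellation.
\item The ray-wise route cannot work even in principle. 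Take $f(x,y)=x^N\max(0,1-|xy-1|)$ for $x>0$ and $f=0$ for $x\le 0$. It is continuous and semialgebraic and vanishes for large $t$ on every ray, yet equals $x^N\sim|\xx|^N$ on the hyperbola $xy=1$.
\end{itemize}
This last example also shows that the proof of the preceding lemma is broken as written. Its statement should instead be derived from polynomial boundedness of the one-variable globally subanalytic function $t\mapsto\max_{|\xx|\le t}|f(\xx)|$. For constructible $f$ that function is merely $\R_{an,exp}$-definable, so this fix does not transfer.

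The missing ingredient is a preparation statement for constructible functions that is uniform up to the boundary, applied after compactifying $\R^q$ (e.g.\ via $\psi$). One option is a Lion--Rolin/Cluckers--Miller-type preparation, in which the terms of a prepared sum have distinct asymptotic types and so cannot cancel. Another is the parameterized rectilinearization used in the proof of Lemma \ref{lem:constructible-pre}. On each of finitely many pieces this writes $f$ as $\sum r^{a}(\log r)^{b}A_{a,b}(w,\rho)$ with $A_{a,b}$ analytic up to the closure. Local boundedness of $f$ then forces the singular monomials in the variables not corresponding to $|\xx|\to\infty$ to cancel, and what remains is dominated by a power of $1+|\xx|$. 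The paper itself gives no separate proof of this corollary; it presents it as an immediate consequence of the lemma. Its implicit termwise application has exactly the defect you identify, namely that $f_i,g_{ij}$ need not be continuous, as the appendix remark concedes. Your diagnosis is accurate, but the proposal does not supply the argument that closes the gap.
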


\begin{prop}\label{prop:Constructible-poincare-disk}
The  constructible  $\bd$-equation is solvable on $\C$. 
\end{prop}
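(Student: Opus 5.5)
We need to solve $\bp\beta=\alpha$ on $\C$ with constructible data. The only positive-degree case is a 1-form $\alpha=f\,d\bar z$, where $f$ is a constructible $C^{k-1}$ function on $\C$; closedness is automatic. So we need a constructible $C^{k}$ function $g$ with $\partial_{\bar z} g=f$.

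The plan is to use the Cauchy transform. We cannot apply it to $f$ directly, because $f$ need not decay; by Corollary \ref{cor:polynomial-bound} we only know $|f(z)|\le |z|^r+C$. The standard remedy is to divide out polynomial growth. Fix an integer $N>r+2$ and write
\[
g(z)=\frac{z^N}{2\pi i}\int_{\C}\frac{f(w)}{w^N(w-z)}\,dw\wedge d\bar w .
\]
This integral is singular at $w=0$, so we first split $f=f_0+f_1$. Here $f_0=f$ on a disk $|w|<2$ and $f_1=f$ outside it. The splitting is done with a definable cutoff $\chi$ that is $C^{k}$ and piecewise polynomial in $|w|^2$, so it is semialgebraic and hence globally subanalytic, and $f_0=\chi f$ stays constructible and $C^{k-1}$.

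\begin{itemize}
\item For the compactly supported part $f_0$ we use the plain Cauchy transform $g_0=\frac{1}{2\pi i}\int f_0(w)(w-z)^{-1}dw\wedge d\bar w$.
\item For $f_1$, which vanishes near $0$, we use the weighted kernel above. The identity $\frac{z^N}{w^N(w-z)}=\frac{1}{w-z}-\sum_{j<N}\frac{z^j}{w^{j+1}}$ shows that it differs from the Cauchy kernel by a polynomial in $z$, which is holomorphic. Hence $\partial_{\bar z}g_1=f_1$ in the distributional sense, and absolute convergence follows from $N>r+2$.
\end{itemize}

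Then $g=g_0+g_1$ satisfies $\bp g=\alpha$.

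Two properties of $g$ remain to be checked.

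\begin{itemize}
\item \textbf{Constructibility.} The integrands are constructible functions of $(z,w)\in\R^2\times\R^2$, being products of $f$, $\chi$ and rational functions. Constructible functions are stable under parametric integration \cite{int-closed}, so $g$ is constructible. A technical point is that the integrand is only locally integrable near $w=z$. To handle this we change variables $w=z+u$ in $g_0$, and in $g_1$ near the diagonal. After this the singularity is at the fixed point $u=0$ and the family is integrable uniformly in parameters, which is the setting of \cite{int-closed}. We also take real and imaginary parts separately.
\item \textbf{Regularity.} By the remark preceding the statement, derivatives of $f$ up to order $k-1$ are constructible and locally H\"older continuous. Standard Schauder-type estimates for the Cauchy transform then give $g\in C^{k}$, which is what the notation requires.
\end{itemize}

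The main obstacle is the constructibility of the integral. Results on parametric integration of constructible functions preserve constructibility when the integrand is integrable for the given parameters, but our integrands involve improper integrals both at infinity and at $w=z$. Near $w=z$, the substitution $w=z+u$ with $f_0(z+u)/u$ handles it. At infinity, the polynomial bound of Corollary \ref{cor:polynomial-bound} guarantees absolute integrability, and \cite{int-closed} allows integration over all of $\R^2$ provided integrability holds, so the result is constructible.
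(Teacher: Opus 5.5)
Your proposal is correct and is essentially the paper's proof. The paper integrates $f$ against the single kernel $\frac{1}{z-w}-g_{\infty}(z,w)\chi_{\infty}(w)$, where $g_{\infty}$ is the truncated expansion of $\frac{1}{z-w}$ at infinity and $\chi_{\infty}$ is a constructible cutoff vanishing near the origin; up to sign and normalization, this is your plain Cauchy kernel on $(1-\chi_{\infty})f$ plus your weighted kernel $\frac{z^{N}}{w^{N}(w-z)}$ on $\chi_{\infty}f$. Convergence and constructibility then come, as in your argument, from the polynomial bound of Corollary \ref{cor:polynomial-bound} and stability under parametric integration, and the one-derivative gain comes from the H\"older remark preceding the statement.
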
 

\begin{proof}
Let $f(z)$ be a constructible $C^k$ function on $\C$. We have $|f(z)|\le |z|^r+C$ for some $r,C$ by Corollary \ref{cor:polynomial-bound}. 

For $|w|>|z|$, we have  
\ali{
\frac{1}{z-w}
=-\frac{1}{w} \frac{1}{1-\frac{z}{w}}
=-\sum_{l\ge 0} \frac{z^l}{w^{l+1}}.
}
Choose an integer $s>r+3$ and define 
$$
g_{\infty}(z,w)=-\sum_{l= 0}^{s} \frac{z^l}{w^{l+1}}.
$$
$g_{\infty}(z,w)$ is holomorphic in $z$. Let $\chi_{\infty}(w)$ be a constructible bump function satisfying
$$
\chi_{\infty}(w)=
\begin{cases}
1 & |w|>N+1 \\
0 & |w|<N
\end{cases}.
$$
Let $\dVol_w$ be a translation-invariant  volume form on $\C$, normalized such that 
$$
\int_\C \dVol_w \, \partial_{\bar z} \LR{\frac{1}{z-w}} = 1.
$$ 
Then the following integral
$$
h(z) = \int_\C \dVol_w \, f(w) \LR{\frac{1}{z-w} - g_{\infty}(z,w)\chi_{\infty}(w)}
$$
converges absolutely for every $z$, and defines a $\bp$-inverse of $f(z)d\bar z$. $h$ is constructible since  partial derivatives and parametric integrals preserve constructibility. 
\end{proof}
 
\begin{lem}
Let $f$ be a constructible function on $\C\setminus\lr{p_1,\cdots,p_u}$. Then, for each i, there exist some $r_i,C_i$ such that $|f|< C_i|z-p_i|^{r_i}$ near each $p_i$. 
\end{lem}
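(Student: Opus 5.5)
The plan is to reduce to the polynomial-growth result at infinity, Corollary~\ref{cor:polynomial-bound}, by inverting around each puncture, in the same spirit as the proof of the preceding lemma. One caveat: the bound can only hold where $f$ is locally bounded away from the punctures. Otherwise $1/(|z|-1)$ on $\C\setminus\lr{0}$ is a counterexample. So I read the statement, as it is used later for the $C^k$ coefficients of forms, with $f$ continuous on $\C\setminus\lr{p_1,\cdots,p_u}$.

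Fix $i$ and choose $\epsilon>0$ so that the punctured disk $0<|z-p_i|\le\epsilon$ contains no other $p_j$. Consider the definable inversion map
$$
\iota_i: \set{w\in\C}{|w|\ge \epsilon^{-1}} \lra \set{z}{0<|z-p_i|\le \epsilon},\qquad w\lmt p_i+\frac1w .
$$
It is semialgebraic, so $F=f\circ\iota_i$ is again a constructible function. To see this, note that each globally subanalytic $f_j$, $g_{jl}$ composed with $\iota_i$ stays globally subanalytic, and the $\log$ factors compose the same way.

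Next, extend $F$ to a continuous constructible function $\tilde F$ on all of $\C\cong\R^2$. One way is to precompose with a semialgebraic continuous retraction of $\C$ onto the closed region $|w|\ge\epsilon^{-1}$; for instance, send $|w|<\epsilon^{-1}$ radially to the circle $|w|=\epsilon^{-1}$, with $0\mapsto\epsilon^{-1}$. Then apply Corollary~\ref{cor:polynomial-bound} to the real and imaginary parts of $\tilde F$ separately, treating complex-valued $f$ componentwise. This gives
$$
|F(w)|\le |w|^{r}+C,\qquad r\ge 0 .
$$

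Translating back through $w=1/(z-p_i)$ gives $|f(z)|\le |z-p_i|^{-r}+C$. For $|z-p_i|\le\epsilon$ the right side is at most $(1+C\epsilon^{r})|z-p_i|^{-r}$. So I can take $r_i=-r$ and $C_i=1+C\epsilon^{r}$, and enlarge $C_i$ slightly if the strict inequality is wanted.

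The main obstacle I expect is justifying that $F$, and its extension, are really constructible and continuous in the sense required by Corollary~\ref{cor:polynomial-bound}. Composition with the semialgebraic map $\iota_i$ is routine. The delicate point is the continuous extension across the circle, where the radial retraction handles continuity. If that extension step becomes awkward, I would instead repeat the proof of the preceding lemma directly: restrict $F$ to rays $t\mapsto t\xx$, $\xx\in S^1$, use \cite{poly-bound2}, Proposition~5.2, for uniform exponents $r_1,\cdots,r_l$, and use \cite{int-closed}, Proposition~1.5, together with compactness of $S^1$. That route only needs $F$ on $|w|\ge\epsilon^{-1}$, plus continuity to bound $F$ on the compact annulus where the asymptotic regime has not yet started.
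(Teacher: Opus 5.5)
Your strategy (invert around $p_i$, then apply Corollary~\ref{cor:polynomial-bound}) is the paper's. Your continuity assumption also matches how the lemma is proved and used there. However, the extension step fails as written.

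The radial map $w\mapsto w/(\epsilon|w|)$ on $0<|w|<\epsilon^{-1}$, with $0\mapsto\epsilon^{-1}$, is discontinuous at $w=0$: along the ray $\arg w=\theta$ it tends to $e^{i\theta}/\epsilon$. So $\tilde F$ is generally discontinuous at the origin, and the hypothesis of Corollary~\ref{cor:polynomial-bound} is not met. The problem sits at the center of the disk, not across the circle. No choice of retraction can fix it: a continuous retraction of $\C$ onto $\lr{|w|\ge\epsilon^{-1}}$ would make a space with $H_1\cong\Z$ a retract of a contractible space.

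The repair is easy. You can use a cone, $\tilde F(w)=\epsilon|w|\,F\bigl(w/(\epsilon|w|)\bigr)$ for $0<|w|<\epsilon^{-1}$ and $\tilde F(0)=0$, which is continuous and constructible. Alternatively, use your fallback route, which never needs values inside the disk. The paper avoids the issue altogether by first multiplying $f$ by a constructible $C^k$ cutoff $\chi$ that equals $1$ near $p_i$ and is supported in a small disk avoiding the other punctures. Then $w\mapsto(\chi f)(p_i+1/w)$ vanishes for small $|w|$. Once set to $0$ at $w=0$, it is already a continuous constructible function on all of $\C$.

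A minor point: your counterexample $1/(|z|-1)$ does not show that continuity is needed. It is undefined on $|z|=1$, and once extended by $0$ there it is bounded near the puncture $0$, so it satisfies the lemma. A genuine example is $1/\operatorname{Im}z$ extended by $0$ on $\R$, which is unbounded along the real axis arbitrarily close to $0$.
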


\begin{proof}
Using a constructible $C^k$ partition of unity, we may assume $f$ is a constructible function on $\C\setminus\lr{p_i}$. Then apply Corollary \ref{cor:polynomial-bound} to the function $f((z-p_i)^{-1})$.
\end{proof}

\begin{cor}\label{cor:poincare-punctured-disk}
The  constructible $\bd$-equation is solvable on $\C\setminus\lr{p_1,\cdots,p_u}$. 
\end{cor}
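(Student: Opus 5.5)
The plan is to mimic the proof of Proposition \ref{prop:Constructible-poincare-disk}, replacing the single pole at $\infty$ by poles at each $p_i$ as well. On $\C$ the only positive-degree closed forms are $\alpha=f\,d\bar z$ with $f$ a constructible $C^k$ function on $U=\C\setminus\lr{p_1,\cdots,p_u}$. We must produce a constructible $h$ with $\bp h=f\,d\bar z$.

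\textbf{Step 1: localize.} Choose constructible $C^k$ bump functions $\chi_1,\cdots,\chi_u,\chi_\infty,\chi_0$ forming a partition of unity on $U$. Take $\chi_i$ supported in a small disk around $p_i$ and equal to $1$ near $p_i$, $\chi_\infty$ supported near $\infty$ as in the proof of Proposition \ref{prop:Constructible-poincare-disk}, and $\chi_0$ compactly supported in $U$. Since $\bp$ is linear, it suffices to solve for $f\chi_0$, $f\chi_\infty$ and each $f\chi_i$ separately. For $f\chi_0$ the plain Cauchy transform $\int \dVol_w\, f\chi_0(w)/(z-w)$ works and is constructible by closure of constructible functions under parametric integration. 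For $f\chi_\infty$ we use exactly the argument of Proposition \ref{prop:Constructible-poincare-disk}, with the polynomial bound from Corollary \ref{cor:polynomial-bound}.

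\textbf{Step 2: the punctures.} Fix $i$ and set $g=f\chi_i$, which is supported in a small disk around $p_i$. By the preceding lemma, $|g|<C_i|w-p_i|^{r_i}$ near $p_i$, where $r_i$ may be very negative. For $|w-p_i|<|z-p_i|$ we expand
\[
\frac{1}{z-w}=\sum_{l\ge0}\frac{(w-p_i)^l}{(z-p_i)^{l+1}}.
\]
Choose an integer $s>-r_i+3$, and subtract the truncation $g_i(z,w)=\sum_{l=0}^{s}(w-p_i)^l/(z-p_i)^{l+1}$. This truncation is holomorphic in $z\in U$, so subtracting it does not change $\bp_z$ of the kernel away from $w=z$. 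Define
\[
h_i(z)=\int_\C \dVol_w\, g(w)\LR{\frac{1}{z-w}-g_i(z,w)}.
\]
The subtracted kernel is $O\big(|w-p_i|^{s+1}/|z-p_i|^{s+1}\big)$ as $w\to p_i$ with $z$ fixed. This beats the growth $|w-p_i|^{r_i}$ of $g$, so the integral converges absolutely near $p_i$. The singularity at $w=z$ is integrable as before, and $g$ has compact support. Then $\bp h_i=g\,d\bar z$ on $U$, and $h_i$ is constructible by closure under parametric integration, since the integrand is constructible in $(z,w)$.

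\textbf{Step 3: sum.} Set $h=h_0+h_\infty+\sum_i h_i$. The remark after the notation gives the regularity, $h\in A^0_{con,(k+1)}$. The main obstacle is Step 2. One must check that differentiating under the integral sign is justified: the bound on $g$ is only a power bound, and the subtracted terms must not spoil the distributional identity $\bp_z\frac{1}{z-w}=\delta$. The plan is to verify $\bp h_i=g$ by testing against compactly supported test functions in $U$, where everything converges absolutely. A secondary worry is that the partition of unity needs the $\chi$'s to be genuinely constructible and $C^k$; these are built as in Proposition \ref{prop:Constructible-poincare-disk} from piecewise polynomial functions of $|w-p_i|^2$.
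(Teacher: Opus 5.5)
Your proposal is correct and is essentially the paper's argument. You subtract truncated Laurent expansions of $\frac{1}{z-w}$ at $\infty$ and at each $p_i$, cut off near the corresponding end, and get constructibility from closure under parametric integration; the paper places the cutoffs $\chi_i(w)$ on the kernel corrections inside a single integral rather than splitting $f$ by a partition of unity, which with the same cutoffs gives literally the same function.

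Your expansion $\frac{1}{z-w}=\sum_{l\ge0}\frac{(w-p_i)^l}{(z-p_i)^{l+1}}$, valid for $|w-p_i|<|z-p_i|$, is the one that actually makes the corrected kernel vanish to order $s+1$ at $w=p_i$. The formula $-\sum_{l\ge0}\frac{(z-p_i)^l}{(w-p_i)^{l+1}}$ written in the paper's sketch is the expansion for the opposite regime and should be read as yours.
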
 
\begin{proof} [Sketch of proof]
Expand $\frac{1}{z-w}= -\sum_{l\ge 0} \frac{(z-p_i)^l}{(w-p_i)^{l+1}}$ near each $p_i$ and construct $g_i(z,w)$ and $\chi_i(w)$ as in Proposition \ref{prop:Constructible-poincare-disk}. Then 
$$
 \int_\C \dVol_w \, f(w) \LR{\frac{1}{z-w} - g_{\infty}(z,w)\chi_{\infty}(w)-\sum_{i=1}^u g_{i}(z,w)\chi_{i}(w)} 
$$
converges and defines a constructible $\bp$-inverse of $fd\bar z$. 
\end{proof}

\begin{cor}  \label{cor:poincare-punctured-diskR}
Let $\DD$ be an open disk in $\C$ with radius $0<R<\infty$. Then the  constructible $\bd$-equation is solvable on $\DD$ and on  $\DD\setminus\lr{p_1,\cdots,p_u}$. 
\end{cor}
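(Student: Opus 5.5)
We reduce to the two preceding results, Proposition \ref{prop:Constructible-poincare-disk} and Corollary \ref{cor:poincare-punctured-disk}, by cutting off at the boundary of $\DD$. On $\C$ the only forms of positive degree are multiples of $d\bar z$, and every such form is automatically $\bp$-closed. So the task is: given a constructible $C^k$ function $f$ on $\DD$ (respectively on $\DD\setminus\lr{p_1,\cdots,p_u}$), find a constructible $C^{k+1}$ function $h$ with $\partial_{\bar z} h = f$.

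The naive route is to extend $f$ by zero outside $\DD$ and apply the earlier results, but this does not work. A constructible function on $\DD$ may blow up polynomially as $|z|\to R$, so its extension need not be continuous or integrable. The plan is therefore to handle the boundary circle with an exhaustion, in the same way the earlier proofs handle the punctures.

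\textbf{Step 1: polynomial bound near the boundary.} Let $\delta(z)=R-|z|$, which is a semialgebraic function on $\DD$. Via the definable change of variables $z\mapsto \delta(z)^{-1}$ together with the angular coordinate, Corollary \ref{cor:polynomial-bound} gives a bound $|f(z)|\le C\,\delta(z)^{-r}$ near $\partial\DD$. This is the same argument as the lemma on bounds near punctures.

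\textbf{Step 2: define the kernel integral.} Take
$$h(z)=\int_{\DD}\dVol_w\, f(w)\Big(\frac{1}{z-w}-\sum_i g_i(z,w)\chi_i(w)\Big),$$
where the $g_i,\chi_i$ are the puncture corrections from Corollary \ref{cor:poincare-punctured-disk}. Near $\partial\DD$ the factor $\frac{1}{z-w}$ is bounded for $z$ in compact subsets of $\DD$. However, $f$ may still fail to be integrable against $\dVol_w$ up to the boundary, so the integral need not converge.

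\textbf{Step 3: fix the convergence problem by a conformal reduction (the main obstacle).} There are two options.

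\emph{Option (a), tried first.} Precompose with a definable diffeomorphism $\phi:\C\to\DD$, for instance the semialgebraic radial map $\phi(z)=Rz/\sqrt{1+|z|^2}$. Pull back $f\,d\bar z$ to $\C$, solve there using Proposition \ref{prop:Constructible-poincare-disk}, and push forward. The difficulty is that $\phi$ is not holomorphic, so the pullback of $\bp$ is a Beltrami-type operator and this fails as stated.

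\emph{Option (b), the one I will use.} Replace the radial map by the weight $\delta(w)^{N}$ with $N>r+2$. Solve $\partial_{\bar z}h_0=\delta^N f$ on $\DD$; here $\delta^N f$ extends by zero to a continuous constructible compactly supported function on $\C$, so the earlier results apply directly. The task is then to correct from $\delta^N f$ back to $f$. To do this, exhaust $\DD$ by closed disks $\DD_j$ of radius $R-1/j$, each of which is definable, and observe that the Cauchy transform over $\DD_j$ solves the equation on $\DD_j$.

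The real difficulty is gluing these solutions while keeping the result constructible. A countable Mittag-Leffler-type gluing destroys definability. I would instead keep a single integral, with a parameter-dependent cutoff at the boundary analogous to $g_\infty\chi_\infty$. Concretely, the reflection $w\mapsto R^2/\bar w$ turns the boundary circle into a neighborhood of $\infty$. Using this, I would subtract a truncated expansion of the Cauchy kernel in the reflected variable, so that the kernel decays to order $\delta(w)^{s}$ with $s>r+2$. The corrected integral then converges absolutely. Each subtracted term is holomorphic in $z$, so the $\partial_{\bar z}$-derivative still equals $f$.

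\textbf{Step 4: regularity and constructibility.} Constructibility of $h$ follows from closure of constructible functions under parametric integration and under partial derivatives. The $C^{k+1}$ regularity follows from the remark on Hölder continuity.
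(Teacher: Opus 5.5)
Your final plan, the last part of Step 3, is the paper's construction: a single integral over $\DD$ in which, besides the puncture counterterms, you subtract a boundary counterterm $g_{\partial\DD}(z,w)\chi_{\partial\DD}(w)$. This counterterm is holomorphic in $z\in\DD$ and makes the corrected kernel vanish to high order in $\delta(w)=R-|w|$, which compensates the growth $|f|\le C\delta^{-r}$ from your Step 1. The paper's sketch uses that bound implicitly, so Step 1 is a useful addition.

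\textbf{The gap.} As written, the decisive step is never carried out, and the reason you give for it is false. The reflection $w\mapsto R^2/\bar w$ fixes $\partial\DD$ pointwise and sends the center of $\DD$ to $\infty$. It does not turn the boundary circle into a neighborhood of $\infty$, so the boundary cannot be treated as the point at infinity. The correct analogy is with the puncture case, except that the expansion center must move with $w$. The detour through $\delta^N f$ and the exhaustion by $\DD_j$ play no role and can be dropped.

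\textbf{The fix.} Expand around a $w$-dependent center $c=c(w)$ that lies outside $\DD$ and satisfies $|w-c|\lesssim\delta(w)$.
\begin{itemize}
\item The paper takes the radial projection $c=Rw/|w|$.
\item Your reflected point $c=R^2/\bar w$ works equally well, since $|c|>R$ and $|w-c|=(R^2-|w|^2)/|w|\le 3\delta(w)$ for $|w|\ge R/2$.
\end{itemize}
The exact identity
\[
\frac{1}{z-w}-\sum_{l=0}^{s}\frac{(w-c)^{l}}{(z-c)^{l+1}}=\frac{(w-c)^{s+1}}{(z-c)^{s+1}(z-w)},
\]
together with $|z-c|\ge R-|z|$, settles the construction. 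Take $g_{\partial\DD}(z,w)=\sum_{l=0}^{s}(w-c)^{l}(z-c)^{-l-1}$; it is semialgebraic in $(z,w)$, and rational and holomorphic in $z\in\DD$ because $c\notin\DD$. The corrected kernel is then $O(\delta(w)^{s+1}/|z-w|)$ locally uniformly in $z$, so for $s>r$ the integral converges absolutely.

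Mind the direction of the geometric series: it must be in powers of $(w-c)/(z-c)$. The series $-\sum_{l}(z-c)^{l}/(w-c)^{l+1}$ displayed in the paper's sketch has the roles swapped, and with it the counterterm would blow up like $\delta(w)^{-s-1}$. The same swap affects the puncture counterterms you import in Step 2; there one needs $\sum_{l\le s}(w-p_i)^{l}/(z-p_i)^{l+1}$.

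Finally, the subtracted terms cannot be integrated against $f$ one at a time, since for small $l$ they diverge. So check $\partial_{\bar z}h=f$ on the combined kernel. For $z$ in a compact $K\subset\DD$, split off a thin collar near $\partial\DD$ that is disjoint from $K$ and on which $\chi_{\partial\DD}\equiv 1$. On the collar the integrand is holomorphic in $z\in K$ and may be differentiated under the integral sign. On the compact complement you are back in the setting of the punctured case.
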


\begin{proof}[Sketch of proof]
Expand 
\ali{
\frac{1}{z-w}
=\frac{1}{\LR{z-\frac{Rw}{|w|}} -\LR{w-\frac{Rw}{|w|}}}
=-\sum_{l\ge0} \frac{\LR{z-\frac{Rw}{|w|}}^l}{\LR{w-\frac{Rw}{|w|}}^{l+1}}
}
near the boundary and construct $g_{\partial \DD}$ and $\chi_{\partial \DD}$ as in Proposition \ref{prop:Constructible-poincare-disk}. Then 
$$
 \int_{\DD} \dVol_w \, f(w) \LR{\frac{1}{z-w} - g_{\partial \DD}(z,w)\chi_{\partial \DD}(w)-\sum_{i=1}^u g_{i}(z,w)\chi_{i}(w)} 
$$
converges and defines a constructible $\bp$-inverse of $fd\bar z$. 
\end{proof}

\begin{lem} \label{lem:definable-Poincare}
Let $U=\prod_{i}U_i$, where each $U_i$ is one of the following:  $\C$, $\C\setminus\lr{p_1,\cdots,p_u}$, $\DD$, $\DD\setminus\lr{p_1,\cdots,p_u}$, a definable open subset of $\R$. 
Then the positive-degree cohomology of $(\Aconk (U),\d)$ vanishes. 
\end{lem}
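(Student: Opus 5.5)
We prove Lemma \ref{lem:definable-Poincare} by induction on the number of factors, in the style of the Grothendieck argument from Lemma \ref{lem:Ck-Poincare}, with the integral operators of Proposition \ref{prop:Constructible-poincare-disk}, Corollaries \ref{cor:poincare-punctured-disk} and \ref{cor:poincare-punctured-diskR} as the one-variable inverses. Write $U=U_1\times U'$ with $U'=\prod_{i\ge2}U_i$, and split a form as $\alpha=\alpha_0+\eta_1\wedge\alpha_1$. Here $\eta_1=d\bar z^1$ or $dx^1$ according to whether $U_1$ is complex or real, and $\alpha_0,\alpha_1$ contain no $\eta_1$. For each one-variable factor we build a constructible homotopy operator $K_1$ acting on the coefficient of $\eta_1$, applied fiberwise with the remaining variables as parameters.

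For a complex factor, $K_1$ is the kernel integral $f\mapsto\int_{U_1}\dVol_w\, f(w,\cdot)\,\big(\tfrac{1}{z-w}-\text{corrections}\big)$ built in the cited results. For a real factor $U_1$ (a finite union of open intervals), $K_1$ is integration $\int_{c}^{x}f\,dt$ from a fixed base point $c$ in each interval. Since parametric integrals and partial derivatives preserve constructibility (and the relevant smoothness degree), $K_1$ maps constructible forms to constructible forms, and it satisfies $\partial_{\bar z^1}K_1 f=f$ (resp.\ $\partial_{x^1}K_1f=f$). The key point to check is that $K_1$ commutes with $\bd$ in the other variables $U'$. This holds because the kernel depends only on the first variable and we may differentiate under the integral sign.

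Given a closed $\alpha$ of positive degree, set $\beta_1=K_1\alpha_1$, using the sign convention making $\bd(\beta_1)$ have $\eta_1$-component exactly $\eta_1\wedge\alpha_1$. Then $\alpha-\bd\beta_1$ is closed and contains no $\eta_1$. Closedness then forces its coefficients to be holomorphic in $z^1$ (resp.\ constant in $x^1$ on each interval). Repeating over all factors, we reduce to a closed form $\gamma$ that contains no $d\bar z^i$ or $dx^i$ at all. Such a $\gamma$ has degree $0$, so every positive-degree closed form is exact. Equivalently, we can argue by induction: after removing $\eta_1$, the form lies in the complex of forms on $U'$ with coefficients that are $\bd$-closed in the first variable. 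The same operators $K_2,\dots$ act on this complex, and each preserves the property "closed in the earlier variables" because they commute with $\partial_{\bar z^1}$ and $\partial_{x^1}$.

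The main obstacle is controlling $K_1$ uniformly in the parameters. The convergence argument of Proposition \ref{prop:Constructible-poincare-disk} uses a polynomial bound $|f|\le |z|^r+C$ in one variable; now $r$ and $C$ must be chosen uniformly over compact sets in the parameter variables, and near the punctures as well. I would first try applying Corollary \ref{cor:polynomial-bound} and the preceding puncture lemma to the function on the whole product; where the parameter factor is only a definable open set, I would use the o-minimal uniform bound from \cite{poly-bound2} for a definable family. Since a family of exponents is definable, only finitely many occur. This gives a single truncation order $s$, chosen larger than all of them, working locally uniformly. Differentiating under the integral then shows that $\beta_1$ is constructible and of class $C^{k+1-\bullet}$, which is the regularity required by the Notation.
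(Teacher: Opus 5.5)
Your overall strategy matches the paper's. You run a Grothendieck-style elimination one factor at a time, using the convergent kernels of Proposition \ref{prop:Constructible-poincare-disk} and Corollaries \ref{cor:poincare-punctured-disk}, \ref{cor:poincare-punctured-diskR} fiberwise, with the other variables as parameters.

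For the real factors you integrate from a base point in each interval instead of using the contraction $(\xx,\zz)\mapsto(t\xx,\zz)$ of Lemma \ref{lem:Ck-Poincare}. That is a better choice: a definable open subset of $\R$ such as $\R_{>0}$, or a union of intervals, is not preserved by that contraction.

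Your point that the polynomial bounds must be locally uniform in the parameters is one the paper glosses over. Note, though, that Miller's finiteness of exponents is a statement about polynomially bounded structures. It does not apply verbatim to constructible functions, which are only $\R_{an,exp}$-definable; you have to go through the globally subanalytic constituents $f_i,g_{ij}$, or through Cluckers--Miller preparation.

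The step that fails as written is the regularity claim in your last paragraph. If $\alpha$ has degree $s$, its coefficients are only $C^{k-s}$, while a primitive in $\Aconk(U)$ needs $C^{k-s+1}$ coefficients. You need this one-derivative gain already for $\beta_1$, because $\alpha-\bd\beta_1$ contains first derivatives of $\beta_1$ in all remaining directions. Otherwise every elimination step costs a derivative, and since $k$ is necessarily finite here, the induction does not stay inside $\Aconk(U)$.

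Differentiating under the integral sign cannot produce this gain, for two reasons:
\begin{itemize}
\item In the parameter directions it only transfers the $C^{k-s}$ regularity of $\alpha_1$.
\item In the kernel's own complex variable, $\partial_z$ of the Cauchy-type integral is a singular (Beurling-type) integral. Such integrals preserve H\"older classes but not $C^m$ classes.
\end{itemize}
Already for $U=\R^2$ and $\alpha=f\,dx+g\,dy$, the pure derivative $\partial_y^{k}\int_c^x f(t,y)\,dt$ is not controlled by $f\in C^{k-1}$ alone. It exists only because closedness gives $\partial_y\int_c^x f\,dt=g(x,y)-g(c,y)$.

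So you need two further ingredients. First, use $\bd\alpha=0$ at each step to trade derivatives of $\beta_j$ in the other directions for derivatives of other coefficients of the current closed form. Second, for the complex kernels, use the local H\"older continuity of continuous constructible functions (Proposition \ref{prop:Holder-continuous}) to get a Schauder-type gain of one derivative. This is what the Remark preceding the lemma invokes, and it is where the paper's argument gets its regularity.
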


The proof is like that of Lemma \ref{lem:Ck-Poincare}, except  each integral $\int \frac{dw}{z-w} (-)$ in the one-dimensional $\bp$-Poincar\'e Lemma is replaced  by a convergent one.

\begin{proof}[Sketch of proof]
Let $\alpha(\xx,\zz)$ be a $\bd$-closed form on $U$ with degree $\ge1$. 
Use Proposition \ref{prop:Constructible-poincare-disk}, Corollary \ref{cor:poincare-punctured-disk} and Corollary \ref{cor:poincare-punctured-diskR} to eliminate all $d\bar z^j$ in $\alpha$ inductively, 
and then use the homotopy operator in Lemma \ref{lem:Ck-Poincare}.
\end{proof}

\section{Two-point Configurations}\label{sec:2-point-CDGA}

In this section, we study configuration spaces of two points in $\R^{n'} \times \C^n$. We may fix one point at the origin, therefore the other point lies in the punctured space $\paka$, in which a point is denoted by 
$$\ZZ=(\xx,\zz)=(x^1,\cdots,x^{n'},z^1,\cdots,z^n).$$

The plan of this section is as follows.
\begin{itemize}
\item \ref{sec:local-coho}: We compute the cohomology groups of $\A(\paka)$ and  $\Adef(\paka)$. 
\item \ref{sec:residues}: We define the residue map $\oint$ which picks up the cohomology class of a $\bd$-closed form on $\paka$.
\item \ref{sec:loc-ring-structure}: We determine the ring structures on $H^\bullet(\paka)$ and $\Hdef(\paka)$.
\item \ref{sec:chain-level}: We study a mixed Jouanolou model $J_{n',n}$ associated to $\paka$ and relations of its generators. 
\item \ref{01}-\ref{>1>0}: We study formality of $\Adef(\paka)$ and $\A(\paka)$ for $n>0$. The results are  summarized in the following table: 
\begin{center}
\begin{tabular}{|c|c|c|}
    \hline
       & $n=1$ & $n>1$ \\
    \hline
    $n'=0$   & formal & non-formal \\
    \hline
    $n'=1$   & non-formal & non-formal \\
    \hline
    $n'>1$   & formal & formal \\
    \hline
\end{tabular}
\end{center}
\end{itemize}

\subsection{Cohomology Groups}\label{sec:local-coho}

Let $\omega_{BM}$ be the generalized Bochner-Martinelli kernel 
$$
\omega_{BM}
= 
\frac{C}{(\rSquare )^{n + \frac{n'}{2}}} \cdot 
\left( \sum_{i = 1}^n (- 1)^{i - 1} 2\bar z^i \left(
    \prod_{j \neq i} d \bar z^j \right) d^{n'} x +
 \sum_{i = 1}^{n'} (- 1)^{n + i - 1} x^i d^n
    \bar z \left( \prod_{j \neq i} d x^j \right) \right) \prod_{i=1}^n dz^i
$$
where $C$ is a constant such that $\bd \omega_{BM}=\delta_0 \dVol$,  where $\delta_0$ is the Dirac delta distribution at $0$.  Let 
$$
\omega= 
\frac{C}{(\rSquare )^{n + \frac{n'}{2}}} \cdot 
\left( \sum_{i = 1}^n (- 1)^{i - 1} 2\bar z^i \left(
    \prod_{j \neq i} d \bar z^j \right) d^{n'} x +
 \sum_{i = 1}^{n'} (- 1)^{n + i - 1} x^i d^n
    \bar z \left( \prod_{j \neq i} d x^j \right) \right).
$$
In this paper, the specific numerical value of $C$ will not be used, and we will sometimes omit $C$ altogether.

Denote $\partial_i=\LL_{\partial_{z^i}}$. Let $\overline{\C[\PPi ]\omega}$ be the closure of the subspace 
$$
\C[\PPi ]\omega \subset \A(\paka) 
$$
where $\A(\paka)$ is equipped with the standard nuclear topology.

\begin{prop}\label{prop:coh-local}
For $n'+n\ge2$, 
The cohomology group of $\A(\paka)$ is 
$$
H^\bullet(\paka)=
\begin{cases}
\OO^{hol}(\C^n)  & \bullet=0 \\
\overline{\C[\PPi ]\omega} & \bullet=n'+n-1 \\
0 & \text{else}
\end{cases}
$$
where $\OO^{hol}(\C^n)$ is the space of holomorphic functions on $\C^n$. The cohomology group of $\Adef(\paka)$ is 
$$
\Hdef(\paka)  = 
\begin{cases}
\C[\ZZi ]  & \bullet=0 \\
\C[\PPi ]\omega & \bullet=n'+n-1 \\
0 & \text{else}
\end{cases}
$$

For $n'=0,n=1$, we have 
$$
H^\bullet(\C\punctured) =
\begin{cases}
\OO^{hol}(\C\punctured)  & \bullet=0 \\
0 & \text{else}
\end{cases}
$$
$$
\Hdef(\C\punctured) =
\begin{cases}
\C [z,z^{-1}]  & \bullet=0 \\
0 & \text{else}
\end{cases}
$$

In both cases, there is a natural inclusion
$$
\Hdef(\paka) \hlra H^\bullet(\paka)
$$
which naturally identifies $\Hdef(\paka)$ with a dense subspace of $H^\bullet(\paka)$.
\end{prop}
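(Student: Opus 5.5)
The plan is to compute both cohomologies by a Mayer--Vietoris (\v{C}ech) argument over the cover of $\paka$ by complements of coordinate hyperplanes. Each finite intersection of these sets is a product of the pieces allowed in Lemma \ref{lem:definable-Poincare} (resp.\ Lemma \ref{lem:Ck-Poincare} in the smooth setting), so the positive-degree cohomology of each intersection vanishes. The whole answer then reduces to a purely algebraic computation with degree-zero data.

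\textbf{Step 1 (the cover).} Let
$$U_i=\set{(\xx,\zz)}{x^i\neq 0},\quad 1\le i\le n', \qquad V_j=\set{(\xx,\zz)}{z^j\neq 0},\quad 1\le j\le n .$$
These $n'+n$ open sets cover $\paka$. Every finite intersection is a product of copies of $\R\setminus\lr{0}$, $\R$, $\C\setminus\lr{0}$ and $\C$, so Lemma \ref{lem:definable-Poincare} applies to it.
\begin{itemize}
\item In the smooth case, a smooth partition of unity subordinate to the cover shows that the double complex $\check C^\bullet(\{U_i,V_j\},\A)$ computes $H^\bullet(\paka)$.
\item In the constructible case, the sets are definable and constructible $C^k$ partitions of unity exist because $k<\infty$, so the same double-complex argument computes $\Hdef(\paka)$.
\end{itemize}
In both cases the spectral sequence degenerates: the cohomology is the cohomology of the \v{C}ech complex of the degree-zero sheaf. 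On an intersection, the degree-zero classes are the functions that are locally constant in $\xx$ (depending only on the signs of the removed $x^i$) and holomorphic in $\zz$. In the constructible case, Proposition \ref{prop:Hcons-affine} and Example \ref{cor:0-cohomology-affine} show these functions are Laurent polynomials in the removed $z^j$ and polynomial in the others. In the smooth case one gets convergent Laurent series instead.

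\textbf{Step 2 (the algebraic computation).} Augment the \v{C}ech complex by the global sections on all of $\aka$. The augmented complex is then a tensor product of two-term complexes:
\begin{itemize}
\item for each real coordinate, $\C\to\C\oplus\C$ (restriction to $x^i>0$ and $x^i<0$), whose only cohomology is a one-dimensional cokernel;
\item for each complex coordinate, $\C[z^j]\to\C[z^j,(z^j)^{-1}]$, whose only cohomology is the cokernel $(z^j)^{-1}\C[(z^j)^{-1}]$.
\end{itemize}
By K\"unneth, the augmented complex is acyclic except in its top degree $n'+n$, where the cohomology is
$$\textstyle\bigotimes_i\C\otimes\bigotimes_j (z^j)^{-1}\C[(z^j)^{-1}]\cong \C[\PPi]\cdot \zz^{-\mathbf 1}.$$
Removing the augmentation shifts this to degree $n'+n-1$ in the cohomology of $\paka$. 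When $n'+n\ge2$, the shifted top degree is at least $1$, so $H^0(\paka)$ equals the global sections, namely $\C[\ZZi]$, resp.\ $\OO^{hol}(\C^n)$ (a Hartogs-type phenomenon). The case $n'=0$, $n=1$ is immediate, since $\C\setminus\lr{0}$ itself is one of the admissible factors in Lemma \ref{lem:definable-Poincare}.

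\textbf{Step 3 (identifying the top class with $\omega$).} I will show that $\omega$ represents the generator $\zz^{-\mathbf 1}$ and that $\partial^{\xbeta}\omega$ corresponds, up to a nonzero constant, to $\zz^{-\mathbf 1-\xbeta}$. To detect classes I will use a pairing with a small sphere around $0$: integrate $\zz^{\xalpha}\,\partial^{\xbeta}\omega\prod dz^i$ over $S^{2n+n'-1}$, which by Stokes and $\bd\omega_{BM}=\delta_0\dVol$ equals $\delta_{\xalpha\xbeta}$ times a nonzero constant. On the \v{C}ech side I will compute the same residue pairing on $\zz^{-\mathbf 1-\xbeta}$ and check that the two pairings agree. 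Since $\C[\PPi]\omega$ then pairs perfectly with polynomials, it maps isomorphically onto the top \v{C}ech cohomology. This gives $\Hdef(\paka)$.

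\textbf{Step 4 (the smooth case and density).} In the smooth case the top cohomology consists of convergent Laurent-tail series in $\zz^{-1}$. Realized inside $\A(\paka)$, these are exactly the nuclear-topology limits of finite combinations of the $\partial^{\xbeta}\omega$, which gives $\overline{\C[\PPi]\omega}$. The same description shows that the map $\Hdef(\paka)\hlra H^\bullet(\paka)$ is injective with dense image.

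I expect the main obstacle to be Step 3 together with the comparison in Step 4: making the dictionary between \v{C}ech classes and the explicit forms $\partial^{\xbeta}\omega$ precise, and proving that the closure statement holds in the nuclear topology rather than just formally. My first attempt will be to build the zigzag between the \v{C}ech and $\bd$ descriptions explicitly for the class $\omega$, and then use equivariance under $\partial_j$, which commutes with all the maps involved.
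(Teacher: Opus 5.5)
Your Steps 1--2 follow the paper's proof: a \v{C}ech cover of $\paka$ by complements of coordinate hyperplanes, the $\bd$-Poincar\'e lemmas on the intersections, and a computation with the sheaf of $\bd$-closed functions. Merging the paper's sets $\lr{x^i>0}$ and $\lr{x^i<0}$ into $\lr{x^i\neq 0}$ is a small improvement. The augmented complex then becomes literally a tensor product of two-term complexes, so the paper's ``direct computation'' becomes a K\"unneth argument. (Like the paper, you cite Lemma \ref{lem:Ck-Poincare} in the smooth case. But intersections with $\C\setminus\lr{0}$ factors are not polydisks, so you actually need the smooth analogue of Lemma \ref{lem:definable-Poincare}, which is standard.)

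The genuine difference is Step 3. The paper applies the collating map $K$, built from the homogeneous, non-compactly supported functions $\rho_i$, to the modified cochain $(|\xx|^2+2|\zz|^2)^{n'/2}/(x^1\cdots x^{n'}z^1\cdots z^n)$, which yields $\omega$ on the nose. It then asserts that $K\widetilde\gamma-\omega$ is exact in currents, where $\widetilde\gamma=\sign x^1\cdots\sign x^{n'}/(z^1\cdots z^n)$ is the \v{C}ech class. This is explicit, and the same device produces the forms $\omega^I=K\gamma^I$ of Section \ref{sec:chain-level}, but it requires handling singular representatives. You instead detect classes by the sphere pairing against polynomials. This needs only Stokes and $\bd\omega_{BM}=\delta_0\dVol$ on the form side, and it also gives the nondegenerate residue pairing that the paper obtains only afterwards (Sections \ref{sec:residues}--\ref{sec:loc-ring-structure}) using this proposition.

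The price is the \v{C}ech-versus-sphere residue comparison, a Grothendieck-residue-type statement you have not proved. You can bypass it:
\begin{itemize}
\item The comparison isomorphism is induced by canonical maps of double complexes, so it is equivariant for the torus $U(1)^n$ rotating the $z^j$.
\item Each weight space of the top \v{C}ech cohomology is one-dimensional, spanned by $\zz^{-\mathbf 1-\xbeta}$.
\item $\partial^{\xbeta}\omega$ has the same weight, because $\omega$ transforms like $1/(z^1\cdots z^n)$ and $\partial_j$ shifts weights like $1/z^j$.
\end{itemize}
So the nonvanishing of its pairing with $\zz^{\xbeta}$ already shows that $[\partial^{\xbeta}\omega]$ spans that weight space.

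Finally, your Step 4, like the corresponding sentence in the paper, is asserted rather than proved. You still need an estimate on $\partial^{\xbeta}\omega$ showing that the series of forms attached to a Laurent tail $\sum a_{\xbeta}\zz^{-\mathbf 1-\xbeta}$ converging on $(\C\setminus\lr{0})^n$ converges in $\A(\paka)$. You also need the converse: every nuclear limit of elements of $\C[\PPi]\omega$ arises in this way.
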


\begin{proof} 
We  compute both cohomology groups simultaneously using \v Cech cohomology. 
Let $\mathcal U$ be the open cover of $\paka$ defined by
$$
\mathcal U=\lr{U_i}_{i=1,\cdots,n+2n'}, \qquad 
U_i=\begin{cases}
\vspace{1mm} \set{(\xx,\zz)\in \paka}{x^{i}> 0}, & 1\le i\le n' \\
\vspace{1mm} \set{(\xx,\zz)\in \paka}{x^{i-n'}< 0}, & n'+1\le i\le 2n' \\
\set{(\xx,\zz)\in \paka}{z^{i-2n'}\ne 0}, & 2n'+1\le i\le 2n'+n
\end{cases}
$$ 
Denote $U_I=\cap_{i\in I} U_i$. Then each nonempty $U_I$ is of the form
$$
U_I = \R^{I_1} \times (\R_+)^{J_1} \times  \C^{I_2} \times (\C\punctured)^{J_2}.
$$
By Lemma \ref{lem:Ck-Poincare} and Lemma \ref{lem:definable-Poincare}, we have
$$
H^{\bullet>0}(U_I) = H_{con,(k)}^{\bullet>0}(U_I) = 0.
$$

Denote by $\Hol$ the sheaf of $\bd$-closed functions, and recall $\Ocon$ is the sheaf of constructible $\bd$-closed functions defined in Section \ref{sec:Real-complex-Varieties}.
By Example \ref{cor:0-cohomology-affine}, we have 
\ali{
\Hol (U_I) &=   \Hol(\C)^{\hotimes I_2} \,\hotimes\, \Hol(\C\punctured)^{\hotimes J_2}, \\
\Ocon (U_I) &= \Ocon(\C)^{\hotimes I_2} \,\hotimes\, \Ocon(\C\punctured)^{\hotimes J_2}
}
and 
\ali{
\Hol(\C\punctured) &= \Hol(\C) \oplus \Hol_{-}(\C\punctured), \\
\Ocon(\C\punctured) &= \Ocon(\C) \oplus \Ocon_{-}(\C\punctured),
}
where $\Hol_{-}(\C\punctured)$ is the space of power series $\sum_{k>0} a_{k} z^{-k}$ that converges on $\C\punctured$, and $\Ocon_{-}(\C\punctured)$ is the subspace of $\Hol_{-}(\C\punctured)$ which contains power series with finite terms. 

A direct computation shows the \v Cech cohomology groups vanish unless $\bullet=0 \text{ or } n'+n-1$, where
\begin{itemize}
\item The $0$-th cohomology groups are 
\ali{
H^{0}(\paka) &= \Hol (\paka)   = \Hol (\C^n), \\
H_{con,(k)}^{0}(\paka) &= \Ocon(\paka)   = \C[z^1,\cdots,z^n]
}
\item The $n'+n-1$-th cohomology groups are 
\ali{
H^{n'+n-1}(\paka) &= \LR{\Hol_{-}(\C\punctured)}^{n}, \\
H^{n'+n-1}_{con,(k)}(\paka) &= \LR{\Ocon_{-}(\C\punctured)}^{n}.
}
\end{itemize}

Let 
$$
\widetilde\gamma=\frac{\sign x^1 \cdots \sign x^{n'}}{z^1\cdots z^n} 
, \qquad 
\sign x=\begin{cases}
1 & x>0 \\
-1 & x<0
\end{cases}.
$$
Then the $n'+n-1$-th \v Cech cohomology groups are 
\ali{
H^{n'+n-1}(\paka) &= \overline{\C[\PPi ]\widetilde\gamma}, \\
H_{con,(k)}^{n'+n-1}(\paka) &= \C[\PPi ]\widetilde\gamma 
} 
where the former is the completion of the latter with respect to the nuclear topology. 

Next, we use the collating map of the \v Cech-$\bd$ bicomplex to find a representative of $\widetilde\gamma$ as a differential form. 
Using the ``partition of unity"  
$$
\rho_i=\begin{cases}
\frac{(x^{i})^2}{\rSquare } & 1 \le i \le n' \\
\frac{2z^{i}\bar z^{i}}{\rSquare } & n'+1\le i\le n'+n \\
\end{cases}
$$
the image of $\widetilde\gamma$ under the collating map
\ali{
K: C^k(A^0(\paka),\delta) & \lra (A^k(\paka), \d ) \\
\alpha &\lmt \sum_{i_0<\cdots<i_k} \sum_{l=0}^k \alpha_{i_0\cdots i_k} (-1)^{l}\rho_{i_l} \bd \rho_{i_0} \cdots \widehat{\bd \rho_{i_l}} \cdots \bd \rho_{i_k}
} 
generates the cohomology class. Note that $\lr{\rho_i}$ is not a genuine partition of unity because $\rho_i$ are not compactly supported, so the image of $\widetilde\gamma$
is not smooth (in fact, it diverges at $0$). However, it represents the true cohomology class in the space of currents.
We now find a smooth representative of this cohomology class. Let 
$$
\gamma=\frac{(\rSquare ))^{n'/2}}{x^1\cdots x^{n'} z^1\cdots z^n} \in 
\bigoplus_{i_1\in\lr{1,2},\cdots, i_{n'}\in \lr{2n'-1,2n'}} A^0\LR{U_{i_1}\cap \cdots \cap U_{i_{n'}}\cap U_{2n'+1}\cap \cdots \cap U_{2n'+n}}.
$$
Then $K\gamma$ is equal to $\omega$ up to a multiplicative constant, and a  direct computation shows $K\widetilde\gamma-\omega$ is a $\bd $-coboundary in the space of currents. Moreover, for multi-index $I=(i_1,\cdots,i_n)$, $i_1,\cdots,i_n\ge0$, denote $\partial^I=\partial_{z_1}^{i_1}\cdots \partial_{z_n}^{i_n}$, then the cohomology class of $K(\partial^I\widetilde\gamma)$ is represented by $\partial^I\omega$. Note also that $\partial^I\omega$ is definable because the map
$$
(-)^{n/2}: \R_{>0} \lra \R_{>0}, \qquad n\in \Z,
$$
is definable.  
Thus, we obtain
\ali{
H^{n'+n-1}(\paka) &= \overline{\C[\PPi ]\omega}, \\
H_{con,(k)}^{n'+n-1}(\paka) &= \C[\PPi ]\omega. \\
}
\end{proof}

\subsection{Residues}\label{sec:residues}
Let $\Omega^{n+n'-1}(\paka)$ denote the space of $\bd $-closed $(n+n'-1)$-forms on \\ $\paka$. 
Let $S^{n'+2n-1}$ be any sphere in $\paka$ with center $0$. Since $\omega dz^1\cdots dz^n$ is  the integral kernel for the inverse of $\d $, we have
\begin{lem}
Let  $i_1,\cdots,i_n\ge0$. Then
$$\int_{S^{n'+2n-1}} \, (z^1)^{i_1}\cdots (z^n)^{i_n} \omega \,dz^1\cdots dz^n=
\begin{cases}
1 & i_1=\cdots=i_n=0 \\
0 & \text{else}
\end{cases}.
$$
\end{lem}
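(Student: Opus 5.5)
Our plan is to deduce the statement from the defining distributional equation $\bd(\omega\, dz^1\cdots dz^n)=\delta_0 \dVol$ (equivalently $\bd\omega_{BM}=\delta_0\dVol$), via Stokes' theorem applied to a ball. Write $\zz^I=(z^1)^{i_1}\cdots(z^n)^{i_n}$ and let $B$ be the closed ball in $\aka$ whose boundary is the given sphere $S^{n'+2n-1}$. The form $\eta=\zz^I\omega\,dz^1\cdots dz^n$ has total real degree $n'+2n-1$ and is smooth on $B\setminus\{0\}$. Its exterior derivative is $\mathrm{d}\eta=\bd\eta$. Indeed, the $\partial$-part of the differential is killed by the $dz^1\cdots dz^n$ factor. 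Moreover, since $\zz^I$ is holomorphic and $x$-independent, $\bd\eta=\zz^I\,\bd(\omega\,dz^1\cdots dz^n)$ up to sign conventions, which vanishes away from the origin.

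First, we show the integral is independent of the radius. For two concentric spheres, Stokes' theorem on the annulus between them, together with $\mathrm{d}\eta=0$ there, shows that the integrals agree. Next, we identify the common value. Excise a small ball $B_\epsilon$ around $0$, or equivalently interpret the distributional identity as the statement $\int_{\partial B}\eta=\int_B \zz^I\,\delta_0\dVol=\zz^I(0)$. This equals $1$ if all $i_k=0$, and $0$ otherwise. The constant $C$ is fixed precisely so that the case $I=0$ gives $1$. To do this cleanly, pair with a test function: take $\chi$ compactly supported in $B$ with $\chi=1$ near $0$, and apply the distributional equation to $\chi\zz^I$. The boundary term then appears as $\int \mathrm{d}\chi\wedge\eta$, which equals $-\int_{S}\eta$ after the radius-independence step.

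As an independent check of the vanishing for $I\neq 0$, we will use a symmetry argument. Under the rotation $z^k\mapsto e^{i\theta}z^k$, the form $\omega\,dz^1\cdots dz^n$ is invariant: $\bar z^k\,dz^k$ and $d\bar z^k\,dz^k$ are invariant, and $|\zz|^2$ is invariant. The sphere is also preserved. However, $\zz^I$ picks up the factor $e^{i i_k\theta}$. The integral therefore equals $e^{i i_k\theta}$ times itself for all $\theta$, so it vanishes whenever $i_k>0$.

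The main obstacle is the bookkeeping of signs and orientation conventions, so that the normalization of $C$ (which the paper defined through $\bd\omega_{BM}=\delta_0\dVol$) yields exactly $+1$ rather than $-1$. We would settle this by choosing the orientation of $S^{n'+2n-1}$ as the boundary of $B$, consistent with that normalization, and verifying directly that $\int_S\omega\,dz^1\cdots dz^n=\int_B\delta_0\dVol$.
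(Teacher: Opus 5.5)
Your proposal is correct and follows essentially the same route as the paper, which justifies this lemma in one line by noting that $\omega\,dz^1\cdots dz^n$ is the integral kernel for the inverse of $\bd$ (that is, $\bd\omega_{BM}=\delta_0\dVol$). Your Stokes argument with a radial cutoff $\chi\zz^I$ unpacks that reproducing property carefully, using that $\partial(\zz^I)\wedge dz^1\cdots dz^n=0$. The rotation-symmetry argument for $I\neq 0$ is a valid independent check, though not needed.
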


Recall we denote $\partial_i=\LL_{\partial_{z^i}}$. 
\begin{lem} \label{vanish-lie-derivative}
Let $\alpha \in \Omega^{n+n'-1}(\paka)$. Then $\int_{S^{n'+2n-1}} \partial_i\alpha \,dz^1\cdots dz^n=0$.
\end{lem}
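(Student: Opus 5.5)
The idea is to write $\partial_i\alpha\,dz^1\cdots dz^n$ as an exact top-degree form on the sphere and apply Stokes' theorem. Write $\Omega:=dz^1\cdots dz^n$. Since $\alpha$ is $\bd$-closed and contains only $dx$ and $d\bar z$ differentials, the full de Rham differential satisfies
$$\mathrm{d}(\alpha\,\Omega)=(\bd\alpha)\,\Omega+\sum_j (-1)^{|\alpha|} dz^j\wedge(\partial_{z^j}\text{-terms})\,\Omega=(\bd\alpha)\,\Omega=0,$$
because each extra $dz^j$ term is killed by $dz^j\Omega=0$. So $\alpha\,\Omega$ is a closed $(n'+2n-1)$-form on $\paka$.

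Cartan's formula then gives
$$\LL_{\partial_{z^i}}(\alpha\,\Omega)=\mathrm{d}\,\iota_{\partial_{z^i}}(\alpha\,\Omega)+\iota_{\partial_{z^i}}\mathrm{d}(\alpha\,\Omega)=\mathrm{d}\,\iota_{\partial_{z^i}}(\alpha\,\Omega).$$
On the other hand, $\LL_{\partial_{z^i}}\Omega=0$, since the coefficients are constant and $\LL_{\partial_{z^i}}dz^j=\mathrm{d}(\partial_{z^i}z^j)=0$. Hence $\LL_{\partial_{z^i}}(\alpha\,\Omega)=(\partial_i\alpha)\,\Omega$. Here $\partial_i\alpha$ means the Lie derivative, which acts on coefficients, and $\LL_{\partial_{z^i}}$ kills $dx^k$ and $d\bar z^j$.

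Combining the two, $(\partial_i\alpha)\Omega$ is exact on $\paka$ with primitive $\iota_{\partial_{z^i}}(\alpha\,\Omega)$. This primitive is a smooth form on all of $\paka$, which contains the sphere. Since $S^{n'+2n-1}$ is a closed manifold, Stokes' theorem gives
$$\int_{S^{n'+2n-1}}\partial_i\alpha\,\Omega=\int_{S^{n'+2n-1}}\mathrm{d}\,\iota_{\partial_{z^i}}(\alpha\Omega)=0.$$

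The only point needing care is the sign and type bookkeeping in the closedness of $\alpha\Omega$. One must check that $\iota_{\partial_{z^i}}$ is taken with $\partial_{z^i}$ as a complex vector field, with complexified forms so that Cartan's formula still holds. One must also check that the degree conventions match: $\alpha$ has degree $n+n'-1$, so $\alpha\Omega$ has real degree $n'+2n-1$, which equals $\dim S^{n'+2n-1}$. The argument works for any closed cycle, not just the sphere.
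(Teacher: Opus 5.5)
Your proposal is correct and is essentially the paper's proof: the paper also sets $\beta=\alpha\,dz^1\cdots dz^n$, observes that $\beta$ is closed under the total de Rham differential, uses Cartan's formula to write $\partial_i\beta=d_{total}\,\iota_{\partial_{z^i}}\beta$, and concludes by Stokes' theorem on the boundaryless sphere. Your extra checks are details the paper leaves implicit: that $\LL_{\partial_{z^i}}$ annihilates $dz^j$, $dx^k$ and $d\bar z^j$, and that the $\partial$-part of the total differential of $\alpha$ dies against $dz^1\cdots dz^n$.
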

\begin{proof}
Denote by $d_{total}$ the total de Rham differential on $\R^{n'} \times \C^n$. 
Set $\beta=\alpha \,dz^1\cdots dz^n$. Since $d_{total} \beta=0$, we have $\partial_i\beta=\LL_{\partial_{z^i}}\beta=d_{total}\iota_{\partial_{z^i}}\beta+\iota_{\partial_{z^i}} d_{total}\beta=d_{total}\iota_{\partial_{z^i}}\beta$ and
$$
\int_{S^{n'+2n-1}} \partial_i\beta
=\int_{S^{n'+2n-1}} d_{total}\iota_{\partial_{z^i}}\beta
=\int_{\partial S^{n'+2n-1}} \iota_{\partial_{z^i}}\beta 
=0.
$$
\end{proof}

\begin{defn}
Define the residue map on $\Omega^{n+n'-1}(\paka)$ by 
\ali{
\oint: \Omega^{n+n'-1}(\paka) &\lra \C[[\PPi ]]\omega \\
  \beta       &\lmt \sum_{i_1,\cdots,i_n {\ge 0}}\LR{\int_{S^{n'+2n-1}} \, \frac{(-z^1)^{i_1}}{i_1!}\cdots \frac{(-z^n)^{i_n}}{i_n!} \beta \,dz^1\cdots dz^n} \partial_{1}^{i_1}\cdots\partial_{n}^{i_n}\omega
}
\end{defn}
Since $\bd \beta=0$, the result is independent of the choice of the  sphere $S^{n'+2n-1}$.  

\begin{lem}
When restricted to the subspace $\overline{\C[\PPi ]\omega} \subset \Omega^{n+n'-1}(\paka)$, the residue $\oint$ coincides with the natural inclusion  $\overline{\C[\PPi ]\omega} \hookrightarrow  \C[[\PPi ]]\omega$. 
\end{lem}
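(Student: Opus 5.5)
The plan is to verify the claim first on the dense subspace $\C[\PPi]\omega$ and then pass to the closure by continuity. Since $\oint$ is linear, on $\C[\PPi]\omega$ it suffices to treat a single basis element $\partial^J\omega=\partial_1^{j_1}\cdots\partial_n^{j_n}\omega$. The goal there is to show that the coefficient of $\partial^I\omega$ in $\oint\partial^J\omega$ equals $\delta_{IJ}$.

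\textbf{Integration by parts.} Let $f$ be a holomorphic polynomial in $\zz$ and let $\beta\in\Omega^{n+n'-1}(\paka)$. Since $f$ is independent of $\xx$ and holomorphic, $f\beta$ is again $\bd$-closed. Applying Lemma \ref{vanish-lie-derivative} to $f\beta$ and using the Leibniz rule $\partial_i(f\beta)=(\partial_{z^i}f)\beta+f\,\partial_i\beta$ gives
$$\int_{S^{n'+2n-1}} f\,\partial_i\beta\,dz^1\cdots dz^n=-\int_{S^{n'+2n-1}}(\partial_{z^i}f)\,\beta\,dz^1\cdots dz^n .$$

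\textbf{The coefficients on $\C[\PPi]\omega$.} Iterating the identity above with $f=\prod_k (-z^k)^{i_k}/i_k!$ moves all of $\partial^J$ onto $f$:
$$\int_{S^{n'+2n-1}} f\,\partial^J\omega\,dz^1\cdots dz^n=(-1)^{|J|}\int_{S^{n'+2n-1}}\partial^J_{\zz}f\;\omega\,dz^1\cdots dz^n .$$
There are two cases.
\begin{itemize}
\item If $j_k>i_k$ for some $k$, then $\partial^J_{\zz}f=0$ and the coefficient vanishes.
\item Otherwise $\partial^J_{\zz}f=(-1)^{|J|}\prod_k(-z^k)^{i_k-j_k}/(i_k-j_k)!$. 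The two signs cancel, and the first lemma of this subsection (the integral of $\zz^{I-J}\omega\,dz^1\cdots dz^n$ over the sphere) shows the integral equals $1$ exactly when $I=J$ and $0$ otherwise.
\end{itemize}
Hence $\oint\partial^J\omega=\partial^J\omega$, so $\oint$ is the natural inclusion on $\C[\PPi]\omega$.

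\textbf{Passing to the closure.} Every element of $\overline{\C[\PPi]\omega}$ is a limit of finite sums in the nuclear (i.e.\ $C^\infty$, uniform convergence on compacts with all derivatives) topology. In particular these limits are still $\bd$-closed, so they lie in $\Omega^{n+n'-1}(\paka)$ and $\oint$ is defined on them. For a fixed sphere, each coefficient functional $\beta\mapsto\int_{S^{n'+2n-1}} f\beta\,dz^1\cdots dz^n$ is continuous in this topology, since it integrates over a fixed compact set. The natural inclusion $\overline{\C[\PPi]\omega}\hookrightarrow\C[[\PPi]]\omega$ is defined by the same coefficient-wise extension of the identification on the dense subspace. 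Two continuous maps that agree on a dense subspace agree everywhere, which gives the statement.

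The main obstacle is making precise what ``natural inclusion'' means on the closure. The plan is to take it to mean the coefficient-wise extension by continuity, so that its coefficients are exactly these continuous functionals and the continuity argument above applies. The integration-by-parts step itself is routine once Lemma \ref{vanish-lie-derivative} is applied to $f\beta$.
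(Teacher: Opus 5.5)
Your proof is correct and follows the same route as the paper, which simply invokes Lemma \ref{vanish-lie-derivative} and an integration-by-parts argument to get $\oint \partial^J\omega=\partial^J\omega$ on basis elements. Your sign bookkeeping and the reduction to the first lemma of the subsection fill in exactly that step, and your density/continuity argument for the closure makes explicit what the paper leaves implicit: the coefficient functionals are integrals over a fixed compact sphere, hence continuous.
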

\begin{proof}
Using Lemma \ref{vanish-lie-derivative}, we see $\oint \partial_{1}^{i_1}\cdots\partial_{n}^{i_n}\omega = \partial_{1}^{i_1}\cdots\partial_{n}^{i_n}\omega$ by an integration-by-parts argument.
\end{proof}
 
\begin{prop}\label{oint-kernel}
$\ker\oint 
$ is precisely the space of $\bd $-exact forms, and $\Im \oint=\overline{\C[\PPi ]\omega}$.
\end{prop}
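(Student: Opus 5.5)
The plan is to combine the cohomology computation of Proposition \ref{prop:coh-local} with the two residue lemmas. By that proposition, every $\bd$-closed $(n+n'-1)$-form $\beta$ on $\paka$ satisfies
$$
\beta = \eta + \bd\mu, \qquad \eta\in\overline{\C[\PPi ]\omega}.
$$
We first check that $\oint$ vanishes on $\bd$-exact forms. If $\beta=\bd\mu$, then $\beta\,dz^1\cdots dz^n = \pm d_{total}(\mu\, dz^1\cdots dz^n)$, because $\partial$-components are killed by wedging with the holomorphic top form. Multiplying by the holomorphic polynomial $\prod(-z^j)^{i_j}/i_j!$ preserves this exactness, so each coefficient integral over the closed sphere vanishes by Stokes. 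Hence $\oint$ factors through cohomology.

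Next, the preceding lemma says that $\oint$ restricted to $\overline{\C[\PPi ]\omega}$ is the natural inclusion into $\C[[\PPi ]]\omega$. Combining this with the decomposition above gives $\oint\beta=\eta$. This immediately shows $\Im\oint\subseteq\overline{\C[\PPi ]\omega}$. Equality also holds, since every $\eta$ in the closure is closed and satisfies $\oint\eta=\eta$.

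For the kernel, suppose $\oint\beta=0$. Then $\eta=0$ as an element of $\C[[\PPi ]]\omega$. Injectivity of the inclusion $\overline{\C[\PPi ]\omega}\hookrightarrow\C[[\PPi ]]\omega$ then gives $\eta=0$ as a form, so $\beta=\bd\mu$ is exact.

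The main obstacle is making the preceding lemma rigorous on the closure rather than just on the polynomial span. Concretely, one needs the following.

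\begin{enumerate}
\item The coefficient functionals $\beta\mapsto\int_{S}z^I\beta\, dz^1\cdots dz^n$ must be continuous in the nuclear topology. This holds because integration of smooth forms over a fixed compact sphere is continuous.
\item The family $\partial^I\omega$ must be ``dual'' to the monomials $(-z)^J/J!$. This means $\int_S \frac{(-z)^J}{J!}\partial^I\omega\, dz = \delta_{IJ}$. I would verify it by integrating by parts $|I|$ times using Lemma \ref{vanish-lie-derivative}: the Leibniz rule gives $\partial_i(z^J\alpha)=J_iz^{J-e_i}\alpha+z^J\partial_i\alpha$, and the total $\partial_i$-derivative integrates to zero. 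This reduces the integral to $\int_S z^{J-I}\omega\,dz$ up to the factorial and sign, and the first lemma of this subsection evaluates that.
\end{enumerate}

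By continuity, the duality passes to limits, which yields both the identity $\oint\eta=\eta$ and injectivity on the closure. For the same reason, the \v{C}ech identification in Proposition \ref{prop:coh-local} must be compatible with these topologies, so that the decomposition $\beta=\eta+\bd\mu$ with $\eta$ in the closure is legitimate.
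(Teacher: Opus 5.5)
Your argument is the paper's argument written as a direct element chase. The paper also uses that $\oint$ kills $\bd$-exact forms, so it factors through $H^{n'+n-1}(\paka)$. It then combines two facts: $\overline{\C[\PPi]\omega}$ is a set of representatives (Proposition \ref{prop:coh-local}), and $\oint$ is the natural inclusion on the closure (the preceding lemma). From these the induced map is injective with image $\overline{\C[\PPi]\omega}$, and the paper concludes $\Im\bd=\ker\oint$ by a five-lemma chase instead of your decomposition $\beta=\eta+\bd\mu$. Your Stokes argument for $\oint\circ\bd=0$ and your integration-by-parts check of $\int_S\frac{(-z)^J}{J!}\partial^I\omega\,dz^1\cdots dz^n=\delta_{IJ}$ are fine.

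The one inference that fails as stated is in your last paragraph. Continuity of the coefficient functionals $c_J$, together with biorthogonality on the dense span $\C[\PPi]\omega$, does not imply that the $c_J$ separate points of the closure. So it does not give injectivity of $\overline{\C[\PPi]\omega}\to\C[[\PPi]]\omega$. For instance, in $\ell^2$ take $x_k=e_1+\tfrac1k e_k$ and $f_k=k\langle e_k,\cdot\rangle$ for $k\ge2$. Each $f_k$ is continuous and $f_k(x_j)=\delta_{kj}$, yet $x_k\to e_1\neq0$ while $f_k(e_1)=0$ for all $k$. Replacing $\tfrac1k$ by $k^{-k}$ gives the same phenomenon in the nuclear Fr\'echet space of rapidly decreasing sequences, so nuclearity does not help.

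What you actually need is an expansion property: each $\eta\in\overline{\C[\PPi]\omega}$ is the limit in $\A(\paka)$ of the partial sums of $\sum_J c_J(\eta)\,\partial^J\omega$. This comes from the concrete description in Proposition \ref{prop:coh-local}. On the \v{C}ech side the closure is the completion of $\C[\PPi]\widetilde\gamma$, i.e.\ series in $1/z^1,\dots,1/z^n$ convergent on $(\C^*)^n$. Their partial sums converge and they are determined by their coefficients, and this transports to forms through the identification of the two completions. The paper uses this tacitly when it calls $\overline{\C[\PPi]\omega}\to\C[[\PPi]]\omega$ an inclusion. With that input in place of the continuity argument, your proof is complete.
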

\begin{proof}
Since $\oint\bd (-)=
0$, the image of 
$$
\bd _{(n'+n-2)}: A^{n'+n-2}(\paka)\lra \Omega^{n'+n-1}(\paka)
$$
lies in $\ker\oint$. We have the following commutative diagram 
$$
\begin{tikzcd}
0 \arrow[r] & \Im  \bd _{(n'+n-2)} \arrow[r] \arrow[d, hook] & \Omega^{n'+n-1}(\paka) \arrow[d, phantom, sloped, "=\!=\!\!="]   &   &   \\
0 \arrow[r] & \Ker\oint \arrow[r] & \Omega^{n'+n-1}(\paka) \arrow[r, "\oint"] & \C[[\PPi ]]\omega   &  
\end{tikzcd}
$$
By the universal property of cokernels, the diagram can be extended to
$$
\begin{tikzcd}
0 \arrow[r] & \Im  \bd _{(n'+n-2)} \arrow[r] \arrow[d] & \Omega^{n'+n-1}(\paka) \arrow[d, phantom, sloped, "=\!=\!\!="] \arrow[r] & H^{n'+n-1}(\paka) \arrow[r] \arrow[d] & 0 \\
0 \arrow[r] & \Ker\oint \arrow[r] & \Omega^{n'+n-1}(\paka) \arrow[r, "\oint"] & \C[[\PPi ]]\omega   &  
\end{tikzcd}
$$ 

Since $H^{n'+n-1}(\paka)$ has a set of representatives $\overline{\C[\PPi ]\omega}$, the map 
$$
H^{n'+n-1}(\paka) \lra \C[[\PPi ]]\omega
$$
is injective with image $\overline{\C[\PPi ]\omega}$. 
By the five lemma, the map $\Im  \bd _{(n'+n-2)} \to \Ker \oint$ is an isomorphism.
\end{proof}
\begin{cor}\label{residue-convergence}
The series $\oint \beta\in \C[[\PPi ]]\omega$ converges on $\aka\punctured$ and  depends only on the cohomology class of $\beta$. 
\end{cor}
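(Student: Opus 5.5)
The plan is to read both claims of Corollary \ref{residue-convergence} off Proposition \ref{oint-kernel} and the lemma preceding it, rather than estimating the series directly.

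\textbf{Step 1: dependence only on the cohomology class.} Let $\beta,\beta'\in\Omega^{n+n'-1}(\paka)$ be cohomologous, so $\beta-\beta'=\bd\eta$ for some $\eta\in A^{n'+n-2}(\paka)$. Since $\oint$ is linear and vanishes on $\bd$-exact forms (by Proposition \ref{oint-kernel}), we get $\oint\beta=\oint\beta'$.

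To see the vanishing concretely, look at each coefficient
$$\int_{S^{n'+2n-1}} \frac{(-z^1)^{i_1}}{i_1!}\cdots \frac{(-z^n)^{i_n}}{i_n!}\, \bd\eta \,dz^1\cdots dz^n .$$
The prefactor $z^I\,dz^1\cdots dz^n$ is $d_{total}$-closed. On forms of type (forms in $dx$ and $d\bar z$)$\cdot dz^1\cdots dz^n$, the operator $d_{total}$ agrees with $\bd$ up to sign. The integrand is therefore $\pm d_{total}\bigl(z^I\eta\, dz^1\cdots dz^n\bigr)$, and its integral over the closed sphere is zero by Stokes.

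\textbf{Step 2: convergence.} By Proposition \ref{oint-kernel}, $\Im\oint=\overline{\C[\PPi]\omega}$. So $\oint\beta=\oint\gamma$ for some $\gamma\in\overline{\C[\PPi]\omega}$, namely the representative of $[\beta]$ supplied by Proposition \ref{prop:coh-local}.

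By the lemma preceding Proposition \ref{oint-kernel}, on $\overline{\C[\PPi]\omega}$ the map $\oint$ is just the inclusion into $\C[[\PPi]]\omega$. Hence the formal series $\oint\beta$ equals the element $\gamma$. This element lies in the nuclear closure of $\C[\PPi]\omega$ inside $\A(\paka)$, so it is a genuine smooth form on $\aka\punctured$. Concretely, the partial sums of the series converge to $\gamma$ in the Fréchet ($C^\infty$ on compacts) topology, so the series converges on $\aka\punctured$.

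\textbf{Main obstacle.} The delicate point is justifying that this formal-to-actual identification is legitimate: the expansion of $\gamma$ in the basis $\partial^I\omega$ must be the convergent expansion that realizes $\gamma$ as a limit of polynomial combinations.

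I would handle it through the Čech description in the proof of Proposition \ref{prop:coh-local}. There $H^{n'+n-1}(\paka)\cong(\Hol_{-}(\C\punctured))^n$, whose elements are power series $\sum a_I\, z^{-I-1}$ convergent on the relevant punctured region. Under the collating map $K$, a Laurent monomial $z^{-I-1}$ corresponds to a constant multiple of $\partial^I\omega$ (up to exact terms).

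The residue coefficients extract exactly the $a_I$, up to the factorials. The convergence of the Laurent series, i.e.\ the growth bound $|a_I|^{1/|I|}\to0$, then transfers to convergence of $\sum a_I'\,\partial^I\omega$ uniformly with all derivatives on compact subsets of $\aka\punctured$. The key input is that $\partial^I\omega$ grows at most like $I!\,|\ZZ|^{-|I|-c}$.
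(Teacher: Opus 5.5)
Your proposal is correct and is essentially the paper's argument. The corollary is read off directly from Proposition \ref{oint-kernel}: $\oint$ annihilates $\bd$-exact forms, and its image is $\overline{\C[\PPi]\omega}$, on which $\oint$ is the natural inclusion into $\C[[\PPi]]\omega$. Your explicit Stokes computation and your \v{C}ech/Cauchy-estimate argument that $\sum_I a_I'\,\partial^I\omega$ converges in $C^\infty$ on compacts go beyond what the paper writes, and they usefully justify the identification that the paper takes for granted. One small correction: the bound on $\partial^I\omega$ should carry a geometric factor, i.e.\ $C\,I!\,(c|\ZZ|)^{-|I|}$, which the superexponential decay of $a_I$ absorbs.
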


\subsection{Ring Structures}\label{sec:loc-ring-structure}

Let $\xalpha=(\alpha_1,\cdots,\alpha_n)$ and $\xbeta=(\beta_1,\cdots,\beta_n)$ be multi-indices and denote 
$$
\zz^{\xalpha}\partial^{\xbeta}\omega = 
(z^1)^{\alpha_1}\cdots (z^n)^{\alpha_n}   \partial_1^{\beta_1}\cdots \partial_n^{\beta_n}\omega.
$$

\begin{prop}\label{prop:local-cohomology}
The ring structures on 
\ali{
H^\bullet (\paka) &= \Hol(\C^n) \oplus \overline{\C[\PPi ]\omega} \\
\Hdef(\paka) &=
\C[\ZZi ] \oplus \C[\PPi ]\omega \\
} 
are given by the following relations:
$$
\zz^{\xalpha_1} \cdot  \zz^{\xalpha_2}= \zz^{\xalpha_1+\xalpha_2}, \qquad \partial^{\xbeta_1}\omega \cdot \partial^{\xbeta_2}\omega=0,
$$ 
$$
\zz^{\xalpha}\partial^{\xbeta}\omega=
\begin{cases}
(-1)^{\alpha_1+\cdots +\alpha_n} \prod_{i=1}^n \frac{\beta_i!}{(\beta_i-\alpha_i)!} \, \partial^{\xbeta-\xalpha}\omega & \alpha_i\le \beta_i \text{ for all } i \\
0 & \alpha_i>\beta_i \text{ for some } i \\
\end{cases}.
$$ 
\end{prop}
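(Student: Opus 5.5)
The plan is to compute all products inside the explicit description of cohomology from Proposition \ref{prop:coh-local}, using the residue map $\oint$ of Section \ref{sec:residues} to identify classes in top degree. By Proposition \ref{oint-kernel} and Corollary \ref{residue-convergence}, a closed $(n'+n-1)$-form is determined up to exact forms by its residue. So every relation in top degree can be checked by evaluating the sphere integrals defining $\oint$.

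\textbf{Degree reasons first.} Products in degree $0$ are products of $\bd$-closed functions, so $\zz^{\xalpha_1}\cdot\zz^{\xalpha_2}=\zz^{\xalpha_1+\xalpha_2}$ holds on the nose. A product $\partial^{\xbeta_1}\omega\cdot\partial^{\xbeta_2}\omega$ has degree $2(n'+n-1)$. This exceeds $n'+n-1$ since $n'+n\ge2$, and all such cohomology vanishes by Proposition \ref{prop:coh-local}. Hence the product is zero in both $H^\bullet$ and $\Hdef$. At chain level it is in fact zero whenever $2(n'+n-1)>n'+n$, but the cohomological statement is all we need. For $H^\bullet$, density and continuity of the product extend the statements to the closures.

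\textbf{Mixed products.} The form $\zz^{\xalpha}\partial^{\xbeta}\omega$ is $\bd$-closed, because $\zz^{\xalpha}$ is $\bd$-closed and $\bd$ obeys the Leibniz rule. It therefore suffices to compute $\oint(\zz^{\xalpha}\partial^{\xbeta}\omega)$. Its coefficient at $\partial^{\xgamma}\omega$ is
\[
\int_{S}\prod_i\frac{(-z^i)^{\gamma_i}}{\gamma_i!}\,(z^i)^{\alpha_i}\,\partial^{\xbeta}\omega\,dz^1\cdots dz^n .
\]

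\textbf{Evaluating the integrals.} I integrate by parts using Lemma \ref{vanish-lie-derivative}, applied coordinatewise. Since $\partial_i$ is the Lie derivative $\LL_{\partial_{z^i}}$, it satisfies the Leibniz rule, and the lemma says the integral of a $\partial_i$ of a closed form vanishes. Each closed form of the shape $z^{a}\partial^{b}\omega$ can be written as a combination of $\partial$-derivatives and lower terms. Concretely,
\[
\partial_i\bigl(z^{a}\partial_i^{b-1}\omega\bigr)=a\,z^{a-1}\partial_i^{b-1}\omega+z^{a}\partial_i^{b}\omega .
\]
By induction this gives
\[
\int z^{m}\,\partial^{b}\omega=(-1)^b\frac{m!}{(m-b)!}\int z^{m-b}\omega \quad\text{for } m\ge b,
\]
and the integral is $0$ for $m<b$.

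Combining this with the normalization lemma $\int z^{I}\omega\,dz=\delta_{I,0}$, the coefficient at $\partial^{\xgamma}\omega$ is nonzero only when $\gamma_i+\alpha_i=\beta_i$ for all $i$. The product of signs and factorials then simplifies to $(-1)^{|\xalpha|}\prod_i\beta_i!/(\beta_i-\alpha_i)!$. If some $\alpha_i>\beta_i$, no admissible $\xgamma$ exists, so the residue vanishes and the class is exact.

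\textbf{Expected obstacle.} The main care lies in this bookkeeping of signs and factorials in the iterated integration by parts. One must also check that each intermediate form is genuinely $\bd$-closed on $\paka$, so that Lemma \ref{vanish-lie-derivative} applies. This holds because $\partial_i$ commutes with $\bd$.

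In the constructible case, all forms involved are constructible and closed, and their classes lie in $\C[\PPi]\omega$. Since $\oint$ is injective on cohomology, the same identities hold in $\Hdef$.
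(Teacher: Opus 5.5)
Your proposal is correct and follows essentially the same route as the paper. Exactness of $\zz^{\xalpha}\partial^{\xbeta}\omega$ minus the claimed multiple of $\partial^{\xbeta-\xalpha}\omega$ is detected by $\oint$ (Proposition~\ref{oint-kernel}), and the residue coefficients come from integration by parts with Lemma~\ref{vanish-lie-derivative} together with the normalization $\int_{S^{n'+2n-1}} \zz^{I}\omega\,dz^1\cdots dz^n=\delta_{I,0}$; your explicit coefficient computation is a cleaner version of the paper's induction on $[\partial_i,z^i]=1$.

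The one step you should justify explicitly is injectivity of $\oint$ on $\Hdef(\paka)$. This follows from Proposition~\ref{prop:coh-local}: every constructible class has a representative in $\C[\PPi]\omega$, where $\oint$ is the identity. It does not follow from Proposition~\ref{oint-kernel}, which is stated for smooth forms.
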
  

\begin{proof}
Using $[\partial_i,z^i]=1$, we have by induction
$$
z^i \partial_i^{\beta_i}(-) = -\beta_i \partial_i^{\beta_i-1}(-) + \partial_i^{\beta_i}(z^i\cdot(-)),
$$
$$
(z^i)^{\alpha_i} \partial_i^{\beta_i}(-) = (-1)^{\alpha_i} \frac{\beta_i!}{(\beta_i-\alpha_i)!} \partial_i^{\beta_i-\alpha_i}(-) + \partial_i^{\beta_i}((z^i)^{\alpha_i}\cdot(-)).
$$
By Proposition \ref{oint-kernel}, $\zz^{\xalpha}\partial^{\xbeta}\omega$ is $\bd$-exact iff $\oint \zz^{\xalpha}\partial^{\xbeta}\omega=0$, i.e., for every non-negative $\xalpha'$, 
$$
\int_{S^{n'+2n-1}} \, \zz^{\xalpha+\xalpha'}\partial^{\xbeta}\omega\,dz^1\cdots dz^n=0.
$$
Moreover, we have $\int_{S^{n'+2n-1}} \, \partial_i(-) \,dz^1\cdots dz^n=0$ by Lemma \ref{vanish-lie-derivative}.
By induction we see
\begin{itemize}
\item If $\alpha_i>\beta_i$ for some $i$, then $\zz^{\xalpha+\xalpha'}\partial^{\xbeta}\omega$ is $\bd$-exact for any $\xalpha'$, so $\zz^{\xalpha}\partial^{\xbeta}\omega$ is $\bd$-exact.
\item If $\alpha_i\le \beta_i$ for all $i$,  then 
$\zz^{\xalpha}\partial^{\xbeta}\omega- (-1)^{\alpha_1+\cdots +\alpha_n} \prod_{i=1}^n \frac{\beta_i!}{(\beta_i-\alpha_i)!} \,  \partial^{\xbeta-\xalpha}\omega$ is $\bd$-exact.
\end{itemize}

\end{proof}

\begin{prop}
The residue pairings 
\ali{
H^0(\paka) \times H^{n'+n-1}(\paka) &\lra \C \\
H_{con,(k)}^0(\paka) \times H_{con,(k)}^{n'+n-1}(\paka) &\lra \C \\
(\zz^{\xalpha}, \partial^{\xbeta}\omega) & \lmt \int_{S^{n'+2n-1}} \, \zz^{\xalpha} \partial^{\xbeta}\omega\,dz^1\cdots dz^n
}
are nondegenerate. 
\end{prop}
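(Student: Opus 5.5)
The plan is to compute the pairing explicitly on the monomial bases and find that the matrix is triangular with nonzero diagonal. First, the pairing is well defined on cohomology. If $\zz^{\xalpha}$ is replaced by another representative, nothing changes, because $H^0$ has no coboundaries. If $\partial^{\xbeta}\omega$ is changed by a $\bd$-exact form $\bd\eta$, then $\zz^{\xalpha}\bd\eta=\bd(\zz^{\xalpha}\eta)$ since $\zz^{\xalpha}$ is $\bd$-closed. The integral of $\bd(\zz^{\xalpha}\eta)\,dz^1\cdots dz^n = d_{total}(\zz^{\xalpha}\eta\,dz^1\cdots dz^n)$ over the closed sphere vanishes. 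By the same Stokes argument the pairing does not depend on the radius of the sphere.

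Next, I evaluate the pairing on basis elements using Proposition \ref{prop:local-cohomology} together with the first lemma of Section \ref{sec:residues}. Write $\zz^{\xalpha}\partial^{\xbeta}\omega = c\,\partial^{\xbeta-\xalpha}\omega + (\text{exact})$. Here $c=(-1)^{|\xalpha|}\prod_i \beta_i!/(\beta_i-\alpha_i)!$ if $\xalpha\le\xbeta$ componentwise, and $c=0$ otherwise. The pairing is therefore $c\int_{S}\partial^{\xbeta-\xalpha}\omega\,dz^1\cdots dz^n$. By Lemma \ref{vanish-lie-derivative} this integral vanishes unless $\xbeta=\xalpha$, and by the normalization lemma it equals $1$ when $\xbeta=\xalpha$. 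Hence
$$\langle \zz^{\xalpha},\partial^{\xbeta}\omega\rangle = \delta_{\xalpha\xbeta}\,(-1)^{|\xalpha|}\,\xalpha!,$$
so the monomials $\zz^{\xalpha}$ and the classes $\partial^{\xbeta}\omega$ form dual bases up to nonzero scalars.

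In the constructible case this already gives nondegeneracy. By Proposition \ref{prop:coh-local}, $H^0_{con,(k)}=\C[\ZZi]$ and $H^{n'+n-1}_{con,(k)}=\C[\PPi]\omega$, each with the monomial basis. A nonzero polynomial $\sum a_{\xalpha}\zz^{\xalpha}$ pairs nontrivially with $\partial^{\xalpha}\omega$ for any $\xalpha$ with $a_{\xalpha}\ne0$, and symmetrically in the other slot.

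The smooth case is the main obstacle, because $\Hol(\C^n)$ and $\overline{\C[\PPi]\omega}$ are completions and I need continuity to pass to infinite sums. My plan is to use continuity of the pairing in each variable. In the first variable, uniform convergence of the Taylor series on the compact sphere gives $\langle f,\partial^{\xbeta}\omega\rangle = (-1)^{|\xbeta|}\partial^{\xbeta}f(0)$. So if $f$ pairs to zero against all of $H^{n'+n-1}$, then all its Taylor coefficients vanish and $f=0$. For the second variable, take a class $\beta$ in the closure. Corollary \ref{residue-convergence} and the definition of $\oint$ show that the coefficients of $\oint\beta$ in the basis $\partial^{\xalpha}\omega$ are, up to the factor $(-1)^{|\xalpha|}/\xalpha!$, exactly the pairings $\langle\zz^{\xalpha},\beta\rangle$. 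If all of these vanish, then $\oint\beta=0$, and Proposition \ref{oint-kernel} forces $\beta$ to be exact, i.e. zero in cohomology. This gives nondegeneracy in both slots.
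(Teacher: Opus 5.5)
Your proof is correct: the diagonal evaluation $\langle \zz^{\xalpha},\partial^{\xbeta}\omega\rangle=\delta_{\xalpha\xbeta}(-1)^{|\xalpha|}\xalpha!$ comes from Lemma \ref{vanish-lie-derivative} and the normalization lemma. That settles the constructible case, and for the completed smooth case you add Taylor expansion in the first slot and Proposition \ref{oint-kernel} in the second. The paper states this proposition without a separate proof, but this is exactly the argument its residue machinery is built for: the factor $\frac{(-z)^{i}}{i!}$ in the definition of $\oint$, and the lemma that $\oint$ restricts to the inclusion on $\overline{\C[\PPi]\omega}$, encode the same diagonal pairing.
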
  

\subsection{The Mixed Jouanolou Model and Chain Level Relations}\label{sec:chain-level}  
The Jouanolou model is used in \cites{chiral-algebra, Higher-CA-Jouanolou, raviolo-vertex-algebras} as a CDGA model for the derived global sections of configuration spaces. In this section, we construct a mixed Jouanolou model, which is a mixed version of Jouanolou torsor and spheres, serving as an algebraic CDGA model for $\maka$. 

Define
\ali{
J_{n',n} 
= \frac{\C[z^k, y^l, w^k, \bfd y^l, \bfd w^k]_{1\le l\le n', \, 1\le k \le n}}{\lR{\sum_{l=1}^{n'} (y^l)^2  +  \sum_{k=1}^n z^k w^k -1, \,\, \sum_{l=1}^{n'} 2y^l \bfd y^l   +  \sum_{k=1}^n z^k \bfd w^k}}.
}

\begin{prop}\label{prop:coho-Jouanolou}
The cohomology of $J_{n',n}$ is isomorphic to $\Hdef(\paka)$.  
\end{prop}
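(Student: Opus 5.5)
We need to compute the cohomology of $J_{n',n}$ and show it is $\C[z^1,\dots,z^n]$ in degree $0$, $\C[\partial_1,\dots,\partial_n]\omega$ in degree $n'+n-1$, and zero otherwise. The differential is presumably $\bfd$, with $\bfd z^k=0$, $\bfd y^l$ and $\bfd w^k$ the odd generators, and $\bfd$ extended by the Leibniz rule. The second relation is $\bfd$ of the first, so the ideal is a dg ideal.

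\textbf{Geometric setup.} Write $V=\Spec$ of the degree-zero part, $\C[z,y,w]/(\sum (y^l)^2+\sum z^kw^k-1)$. Its real points with $w^k=\bar z^k/(|\xx|^2+|\zz|^2)$ and $y^l=x^l/(|\xx|^2+|\zz|^2)^{1/2}$ give a map
$$\Phi:\paka\to V.$$
I will work with a modified normalisation $\rSquare$ to match the paper's $\omega$.

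\textbf{Step 1: a CDGA map to constructible forms.} Define
$$\Phi^*:J_{n',n}\to\Aconk(\paka),\qquad z^k\mapsto z^k,\quad y^l\mapsto x^l/r,\quad w^k\mapsto 2\bar z^k/r^2,\qquad r^2=\rSquare .$$
Send $\bfd y^l$ and $\bfd w^k$ to the $(\mathrm d+\bp)$-derivatives of these images. The images are semialgebraic, hence constructible, and the relations hold: $\sum(x^l)^2/r^2+2|\zz|^2/r^2=1$, and the second relation is its differential. So $\Phi^*$ is a CDGA morphism, and the remaining task is to show it is a quasi-isomorphism onto $\Hdef(\paka)$ as computed in Proposition~\ref{prop:coh-local}.

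\textbf{Step 2: algebraic computation of $H^\bullet(J_{n',n})$.} I mimic the \v Cech argument of Proposition~\ref{prop:coh-local}, one open chart of $V$ for each coordinate function: $\{y^l\ne0\}$ and $\{z^k\ne0\}$. On the chart $z^k\ne0$, the relation lets us solve $w^k=(1-\sum(y^l)^2-\sum_{j\ne k}z^jw^j)/z^k$, and likewise for $\bfd w^k$. The localised algebra is then a free polynomial dg algebra in the remaining $y,w,\bfd y,\bfd w$ over $\C[z][1/z^k]$, whose cohomology is $\C[z][1/z^k]$ by the algebraic Poincaré lemma.

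The $y^l\ne0$ charts are subtler, since $y$ is only locally constant in the real model. Here $J$ plays the role of $\Ocon$ on the analogue of $\{x^l>0\}\sqcup\{x^l<0\}$. I would instead use the grading and contracting homotopy: the Euler-type derivation rescaling $(y,\bfd y)$ by $t$ and $(w,\bfd w)$ compatibly. Rather than fight this, I will organise the computation as a Koszul/Čech spectral sequence in which the degree-$0$ cohomology is $\C[z]$ and top classes come from $\prod(1/z^k)$ times the $y$-sphere class $\sum \pm y^l\,\bfd y^{1}\cdots\widehat{\bfd y^l}\cdots\bfd y^{n'}$.

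\textbf{Step 3: match generators.} The explicit cocycle
$$\Omega=\sum_k\pm w^k\prod_{j\ne k}\bfd w^j\,\prod_l\bfd y^l+\sum_l\pm y^l\prod_k\bfd w^k\prod_{j\ne l}\bfd y^j$$
maps under $\Phi^*$ to a nonzero multiple of $\omega$, because the formula for $\omega$ is exactly this expression after substitution. Its $\partial_{z}$-derivatives, realised algebraically by the derivations $z^k\mapsto z^k+\epsilon$ acting on $J$ and preserving the ideal after compensating in $w$, give the images of $\partial^I\omega$.

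\textbf{Step 4: injectivity and conclusion.} Injectivity on cohomology follows from the residue pairing in Section~\ref{sec:residues}. A class $f(z)\cdot\partial^I\Omega$ pairs against $\zz^{\xalpha}$ through $\oint$ after applying $\Phi^*$, and the nondegeneracy of the residue pairing together with Proposition~\ref{prop:local-cohomology} gives linear independence. Surjectivity is by Step 3, and the vanishing of other degrees comes from Step 2.

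\textbf{Main obstacle.} The hardest part is Step 2's vanishing, namely proving that $H^\bullet(J_{n',n})$ has no classes beyond these. My first attempt would be induction on $n'$. Write $J_{n',n}$ as a quotient of $J_{n'-1,n}\otimes\C[y,\bfd y]$ via the relation $y^2+(\dots)=1$, and use a Gysin-type long exact sequence, as for the Jouanolou model of $\C^n\setminus0$ (case $n'=0$, known from \cite{Higher-CA-Jouanolou}). Each added real direction should shift the top class up by one, and the base case then gives the answer.
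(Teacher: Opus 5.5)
\textbf{Step 2 is the whole content of the proposition, and it is not carried out.}

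Steps 1, 3 and 4 go through. The map is the one of Proposition~\ref{prop:two-point-Jouanolou}. For Step 3, note that the $D_{\C^n}$-action must also rescale $y$, namely $\partial_{z^s}y^l=-\frac12 y^lw^s$ and $\partial_{z^s}w^k=-w^kw^s$; compensating in $w$ alone does not preserve the ideal. But these steps only show two things: that $H^\bullet(J_{n',n})\to\Hdef(\paka)$ is surjective, and that the candidate classes $\C[z^1,\dots,z^n]\oplus\C[\partial_1,\dots,\partial_n]\Omega$ are linearly independent. They cannot exclude further classes in $H^\bullet(J_{n',n})$, either in intermediate degrees or in the kernel of $\Phi^*$ on cohomology. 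That upper bound is exactly what has to be proved.

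None of the tools you propose for it works as stated:
\begin{itemize}
\item The \v{C}ech cover by $\{y^l\ne0\}$ and $\{z^k\ne0\}$ fails because the $y$-charts are not acyclic for the relative de Rham complex. Already over a point with $z^k\ne0$, the fiber of $\{y^l\ne0\}$ is $\C^{n'+n-1}$ minus the hyperplane $\{y^l=0\}$, which carries the nontrivial class $\bfd y^l/y^l$. So the algebraic chart $\{y^l\ne0\}$ is not an analogue of the real $\{x^l>0\}\sqcup\{x^l<0\}$.
\item The ``Koszul/\v{C}ech spectral sequence'' is described only by its expected output.
\item The induction on $n'$ starts from a false presentation. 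In $J_{n'-1,n}\otimes\C[y^{n'},\bfd y^{n'}]$ the relation $\sum_{l<n'}(y^l)^2+\sum_k z^kw^k=1$ already holds, so imposing the $n'$-variable relation would force $(y^{n'})^2=0$. No Gysin sequence is actually produced.
\end{itemize}

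\textbf{The missing idea is to separate the locus $z\ne0$ from the support $I=(z^1,\dots,z^n)$, instead of trying to cover the $y$-directions.} This is what the paper does.
\begin{itemize}
\item Write $J_{n',n}=\Gamma(X,\Omega^\bullet_{X/S})$ for the smooth map $X=\Spec B\to S=\Spec R$, with $R=\C[z^1,\dots,z^n]$.
\item Over $U=S\setminus V(I)$, use only the charts $D(z^j)$. On these, $X$ is an affine-space bundle, which gives $R\Gamma(U,\OO_U)$.
\item Compute the $I$-supported part after $I$-adic completion. With $s=(1-\sum_k z^kw^k)^{1/2}$, the substitution $y^l\mapsto y^l/s$ identifies $\widehat B$ with $\widehat R[x,w]/(\sum_l(x^l)^2-1)$. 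Hence $H^\bullet(R\Gamma_I\Omega^\bullet_{X/S})\cong H^n_I(R)\otimes H^{\bullet-n}_{dR}(Q)$, where $Q$ is the complex affine quadric, homotopy equivalent to $S^{n'-1}$.
\item The triangle $R\Gamma_I\Omega^\bullet_{X/S}\to\Gamma(X,\Omega^\bullet_{X/S})\to R\Gamma(U,\OO_U)$ then yields $\C[z^1,\dots,z^n]$ in degree $0$, $H^n_I(R)\cong\frac{1}{z^1\cdots z^n}\C[\frac{1}{z^1},\dots,\frac{1}{z^n}]$ in degree $n'+n-1$, and zero otherwise. This matches Proposition~\ref{prop:coh-local}.
\end{itemize}
The real directions enter only through $H_{dR}(Q)\cong H(S^{n'-1})$. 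A hemisphere-type cover cannot capture this algebraically, which is why your Step 2 stalls. Once this computation is in place, your residue-pairing argument becomes a valid way to match the classes.
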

\begin{proof}[Sketch of proof]
Let $R=\C[z^1,\cdots,z^n]$, $S=\Spec R$ and $I=(z^1,\cdots,z^n)$. Define
$$ 
B=R[y^1,\cdots,y^{n'},w^1,\cdots,w^n]/ (\sum_{l=1}^{n'} (y^l)^2  +  \sum_{k=1}^n z^k w^k -1),
$$
then $J_{n',n}$ is precisely the relative de Rham complex of the affine $R$-scheme $X=\Spec B$
$$
J_{n',n} = \Gamma(X,\Omega^\bullet_{X/S}).
$$
Notice that the projection $X\to S$ is smooth. Let $U=S \setminus V(I)$. On each principal open set $D(z^j)\subset U$ we can solve 
$$
w^j = \frac{1-\sum_l (y^l)^2-\sum_{k\ne j}z^kw^k}{z^j},
$$
so $X_{z^j} \simeq \C^{n'+n-1}_{R_{z^j}}$. The relative Poincar\'e Lemma gives a quasi-isomorphism
$$
\Omega^\bullet_{X_{z^j}/R_{z^j}} \simeq R_{z^j}
$$
for each $j$. These quasi-isomorphisms are compatible, so we have
$$
R\Gamma(X_U,\Omega^\bullet_{X_U/U}) \simeq R\Gamma(U,\OO_U).
$$

For the $I$-supported part, let 
$$
\widehat R = \C[[z^1,\cdots,z^n]], \quad \widehat B = \varprojlim_r B/I^rB.
$$
Define an element $s=(1-\sum_k z^kw^k)^{1/2}\in \widehat B$ and the following $\widehat R$-algebra map
$$
\widehat R[x^1,\cdots,x^{n'},w^1,\cdots,w^n]/(\sum_l (x^l)^2-1) \lra \widehat B, \qquad x^l \mapsto y^l/s, \quad w^k\mapsto w^k
$$
is an  isomorphism. Since $\Omega^\bullet_{B/R}$ is a finitely generated $B$-module, the local cohomology can be computed after completion, which is
$$
H^k (R\Gamma_I(\Omega^\bullet_{X/S})) \simeq H^n_I(R) \otimes H^{k-n}_{dR}(Q)
$$
where $Q=\Spec \C[x^1,\cdots,x^{n'}]/(\sum_l (x^l)^2-1)$, and its algebraic de Rham cohomology $H^{\bullet}_{dR}(Q)$ is computed by the singular cohomology 
$$
H^{\bullet}_{dR}(Q) \cong
H^\bullet_{sing}(Q(\C),\C) \cong H^\bullet_{sing}(TS^{n'-1},\C) \cong H^\bullet_{sing}(S^{n'-1},\C).
$$
Finally, we apply the distinguished triangle
$$
R\Gamma_I(\Omega^\bullet_{X/S}) \to \Gamma(X,\Omega^\bullet_{X/S}) \to R\Gamma(U,\OO_U) \overset{+1}{\to}
$$ 
to find 
$$
H^{k}(X,\Omega^\bullet_{X/S}) \simeq H^n_I(R) \otimes \tilde H^{k-n}_{dR}(Q), \quad \text{for } k>0.
$$
Here $\tilde H$ is the reduced cohomology. Consequently, we have
\ali{
H^\bullet(J_{n',n},\bfd)=
\begin{cases}
\C[z^1,\cdots,z^n] & \bullet=0 \\
\frac{1}{z^1\cdots z^n} \C[\frac{1}{z^1},\cdots,\frac{1}{z^n}] & \bullet=n'+n-1 \\
0 & \text{else}
\end{cases}
} 
\end{proof}
 
Define 
$ 
\mu_i=\begin{cases}
\frac{x^{i}}{\LR{\rSquare}^{1/2}} & 1 \le i \le n' \\
\frac{2\bar z^{i-n'}}{\rSquare } & n'+1\le i\le n'+n \\
\end{cases}
$, so 
$
\rho_i=\begin{cases}
\frac{(x^{i})^2}{\rSquare } = \mu_i^2 & 1 \le i \le n' \\
\frac{2z^{i-n'}\bar z^{i-n'}}{\rSquare } = z^{i-n'}\mu_i & n'+1\le i\le n'+n \\
\end{cases}
$. 

\begin{prop} \label{prop:two-point-Jouanolou} 
There is an injective quasi-isomorphism of CDGAs
$$
(J_{n',n},\bfd) \islra (\Adef(\paka),\d)
$$ 
given by
\ali{
z^k \mapsto z^k, \qquad
y^l \mapsto \mu_l, \qquad
w^k \mapsto \mu_{k+n'}.
} 
\end{prop}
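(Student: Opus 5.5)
The plan is to check in turn that the assignment is well defined, that it is a CDGA map, that it is injective, and that it induces an isomorphism on cohomology. Throughout write $r^2=\rSquare$.

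\emph{Well-definedness and compatibility with differentials.} Since $J_{n',n}$ is the relative de Rham complex over $R=\C[z^1,\cdots,z^n]$, we have $\bfd z^k=0$. This matches $\bd z^k=0$ in $\Adef(\paka)$, so I extend the assignment by $\bfd y^l\mapsto \bd\mu_l$ and $\bfd w^k\mapsto \bd\mu_{k+n'}$.

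The first relation holds because
$$\sum_l\mu_l^2+\sum_k z^k\mu_{k+n'}=\frac{|\xx|^2+2|\zz|^2}{r^2}=1.$$
Applying $\bd$ to this identity, and using that the $z^k$ are $\bd$-closed, gives exactly the image of the second relation. Hence the map descends to the quotient and commutes with $\bfd$ and $\d$ by construction.

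The functions $\mu_i$ are smooth and semialgebraic on $\paka$. So every image lies in $\Adef(\paka)$ for all $k$, and we obtain a CDGA morphism.

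\emph{Injectivity.} First I show that the ring map $B\to A^0$ is injective. The map $(\xx,\zz)\mapsto(\zz,\mu)$ has image that is Zariski dense in $X=\Spec B$. Indeed, $B$ is a domain, and $\mu_{k+n'}=2\bar z^k/r^2$ together with $\mu_l=x^l/r$ involve the variables $\bar z^k$ and $x^l$, which are algebraically independent from $\zz$. Any polynomial relation among the images would therefore have to come from the defining relation of $B$; I would verify this by setting $|\xx|^2+2|\zz|^2=1$ and using independence of $\zz,\bar\zz,\xx$ on that real hypersurface.

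For higher forms, $J_{n',n}$ is a free $B$-module on monomials in a basis of relative differentials, taken locally on the $D(z^j)$. The images $\bd\mu_i$ of such a basis are pointwise linearly independent on a dense open set, because the map $(\xx,\zz)\mapsto\mu$ restricted to fixed $\zz$ is a submersion onto the fiber. This gives injectivity in all degrees.

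\emph{Quasi-isomorphism.} By Proposition \ref{prop:coho-Jouanolou} and Proposition \ref{prop:coh-local}, both cohomologies are concentrated in degrees $0$ and $n'+n-1$. In degree $0$ the map is the identity on $\C[\ZZi]$. In degree $n'+n-1$ the induced map is $R$-linear between two modules isomorphic to $H^n_I(R)\cong E_R(\C)$; on the target side this uses the module structure of Proposition \ref{prop:local-cohomology}, with $\omega$ as socle generator. By Matlis duality, an $R$-linear endomorphism of $E_R(\C)$ is multiplication by an element of $\C[[z^1,\cdots,z^n]]$. Such a map is an isomorphism as soon as it is nonzero on the socle; otherwise the element lies in the maximal ideal and kills the socle.

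So it suffices to show that the image of a generator $\eta$ of the socle of $H^{n'+n-1}(J_{n',n})$ has nonzero residue, i.e.\ that $\int_{S^{n'+2n-1}}\eta\,dz^1\cdots dz^n\neq0$.

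\emph{Main obstacle.} This last residue computation is where I expect the real work. I would take $\eta$ to be the Jouanolou analogue of the collated \v Cech class,
$$\eta=\sum_i(-1)^{i-1}\nu_i\,\bfd\nu_1\cdots\widehat{\bfd\nu_i}\cdots\bfd\nu_{n'+n},$$
written with the coordinates $y^l,w^k$ (with suitable weights for the $y$'s). Its image is built from $\mu_i$ and $\bd\mu_i$, and I expect it to equal the collating-map expression $K\widetilde\gamma$ from the proof of Proposition \ref{prop:coh-local}, up to a nonzero constant and an exact term.

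If a direct identification proves messy, the fallback is a symmetry argument. Both the image of $\eta$ and $\omega$ are invariant under $O(n')\times U(n)$ and have the same scaling weight. On the unit sphere $r=1$, the form $\eta\,dz^1\cdots dz^n$ then restricts to a positive multiple of the invariant volume form, which I would check at a single point such as $\xx=e_1$, $\zz=0$. This gives a nonzero residue and finishes the proof.
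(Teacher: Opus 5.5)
Your proof is correct in outline, but it reaches the quasi-isomorphism by a different route than the paper.

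\textbf{What the paper does.} It checks that the two defining relations are annihilated. It then asserts that the map carries the basis of $H^\bullet(J_{n',n})$ from Proposition~\ref{prop:coho-Jouanolou} to the basis $\C[\ZZi]\oplus\C[\PPi]\omega$ of Proposition~\ref{prop:coh-local}. The concrete input is the identity recorded right after the proposition, $\omega=\sum_{i=1}^{n'+n}(-1)^{i-1}\mu_i\prod_{j\neq i}\phi(j)\bd\mu_j$. It exhibits $\eta=\sum_i(-1)^{i-1}\nu_i\prod_{j\neq i}\phi(j)\bfd\nu_j$, with $\nu_l=y^l$ and $\nu_{n'+k}=w^k$, as a preimage of $\omega$. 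The remaining classes $\partial^{\xbeta}\omega$ are matched through the \v Cech descriptions, or the $D_{\C^n}$-module structure of the next proposition.

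\textbf{What you do instead.} You observe that the induced map in degree $n'+n-1$ is an $R$-linear map between two copies of $E_R(\C)$. Its endomorphisms are multiplications by elements of $\C[[z^1,\cdots,z^n]]$, so only the socle needs checking. This frees you from bookkeeping with $\partial^{\xbeta}\omega$ or $D$-module structures. The cost is that you lean on the $R$-module identifications in Propositions~\ref{prop:coho-Jouanolou} and~\ref{prop:local-cohomology}.

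\textbf{Injectivity.} You also give an injectivity argument, which the paper omits. The degree-$0$ reduction to polynomials in $(\zz,\bar\zz,\xx)$ vanishing on the real sphere $|\xx|^2+2|\zz|^2=1$ is sound. In higher degrees, ``submersion for fixed $\zz$'' does not make sense as stated and should be replaced by an actual rank computation. For example, at $\xx=0$, $\zz=e_j/\sqrt2$ the forms $\bd\mu_l$ and $\bd\mu_{n'+k}$ ($k\neq j$) become $dx^l$ and $2\,d\bar z^k$.

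\textbf{Your ``main obstacle.''} This is a direct computation. Each $\mu_j$ equals its numerator ($x^j$ or $2\bar z^{j-n'}$) times $r^{-2/\phi(j)}$. In $\phi(j)\bd\mu_j$ the $\bd r$-coefficient is therefore $\phi(j)\cdot 2/\phi(j)=2$, independent of $j$, so the terms containing $\bd r$ cancel in pairs. Hence the image of $\eta$ is $\omega$ up to the normalizing constant $C$, and its residue is nonzero.

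\textbf{Two fixes.}
\begin{enumerate}
\item You must check that $[\eta]$ lies in the socle of $H^{n'+n-1}(J_{n',n})$. Otherwise a nonzero image does not force the multiplier to be a unit: in $E_R(\C)$, multiplication by $z^1$ sends the non-socle class $1/((z^1)^2z^2\cdots z^n)$ onto the socle. Socle membership follows from $z^i\omega=(-1)^{i-1}\bd\omega^{\setn\setminus\lr{i}}$, all of whose terms lie in the image of $J_{n',n}$, together with injectivity.
\item Drop the symmetry fallback. The sphere $|\xx|^2+2|\zz|^2=1$ is not a single $O(n')\times U(n)$-orbit, since $|\xx|$ is invariant. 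So invariance plus a check at one point determines neither the sign nor the nonvanishing of the integral.
\end{enumerate}
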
 
\begin{proof}
These assignments define a CDGA map 
$$
\C[z^k, y^l, w^k, \bfd y^l, \bfd w^k]_{1\le l\le n', \, 1\le k \le n} \lra \Adef(\paka)
$$
which annihilates the defining relations of $J_{n',n}$, so we get a CDGA map 
$$
(J_{n',n},\bfd) \lra (\Adef(\paka),\d).
$$
Moreover,  it maps the basis of $H^\bullet(J_{n',n},\bfd)$ in the proof of Proposition \ref{prop:coho-Jouanolou} to the basis of $\Hconk(\paka)$ in Proposition \ref{prop:coh-local}.
\end{proof}

\begin{prop}
There is a left $D_{\C^{n}}$-module structure on $J_{n',n}$, where $z^s$ acts by multiplication and 
$$
[\partial_{z^s}, \bfd]=0, \quad 
\partial_{z^s} z^l =\delta_{sl}, \quad 
\partial_{z^s} y^l = -\frac{1}{2}y^l w^s, \quad
\partial_{z^s} w^k = -w^k w^s.
$$
Moreover, the morphism
$$
(J_{n',n},\bfd) \islra \Adef(\paka)
$$
is a  quasi-isomorphism of $D_{\C^n}$-modules. 
\end{prop}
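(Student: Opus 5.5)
The plan is to define $\partial_{z^s}$ on $J_{n',n}$ as an even derivation, check that it descends to the quotient and gives a $D_{\C^n}$-action, and then check that the map of Proposition \ref{prop:two-point-Jouanolou} intertwines it with the Lie derivative $\partial_s=\LL_{\partial_{z^s}}$ on $\Adef(\paka)$.

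\emph{Step 1: the derivation on the free algebra.} On the free graded-commutative algebra $F=\C[z^k, y^l, w^k, \bfd y^l, \bfd w^k]$, let $\partial_{z^s}$ be the unique degree-zero derivation that takes the prescribed values on $z^l,y^l,w^k$ and satisfies $\partial_{z^s}(\bfd y^l)=\bfd(\partial_{z^s}y^l)$ and $\partial_{z^s}(\bfd w^k)=\bfd(\partial_{z^s}w^k)$. Since $\bfd$ is a derivation with $\bfd z^k=0$, the commutator $[\partial_{z^s},\bfd]$ is a derivation that vanishes on all generators. Hence $[\partial_{z^s},\bfd]=0$ on $F$.

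\emph{Step 2: descent to $J_{n',n}$.} The defining ideal is the dg ideal generated by $r=\sum_l (y^l)^2+\sum_k z^kw^k-1$ and $\bfd r$. A direct computation gives
$$\partial_{z^s} r=-\sum_l (y^l)^2w^s+w^s-\sum_k z^kw^kw^s=-w^s r.$$
This lies in the ideal. Because $\partial_{z^s}$ commutes with $\bfd$,
$$\partial_{z^s}(\bfd r)=\bfd(\partial_{z^s}r)=-\bfd(w^s r),$$
which also lies in the ideal. So $\partial_{z^s}$ descends to $J_{n',n}$ and still commutes with $\bfd$.

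\emph{Step 3: the $D_{\C^n}$-module relations.} The relation $[\partial_{z^s},z^l]=\delta_{sl}$ follows from the Leibniz rule. For $[\partial_{z^s},\partial_{z^t}]=0$, note that the commutator is again a derivation commuting with $\bfd$, so it suffices to check it on $y^l$ and $w^k$. The computations
$$\partial_{z^t}\partial_{z^s}y^l=\tfrac34\, y^lw^sw^t,\qquad \partial_{z^t}\partial_{z^s}w^k=2\,w^kw^sw^t$$
give expressions symmetric in $s,t$, so the commutator vanishes.

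\emph{Step 4: compatibility with the map.} On $\Adef(\paka)$, the operator $\LL_{\partial_{z^s}}$ is a derivation that commutes with $\mathrm{d}$ and $\bp$, since it acts on coefficients and $dx^I, d\bar z^J$ have constant coefficients. Two derivations commuting with the differentials agree on the whole image once they agree on $z^k$, $\mu_l$ and $\mu_{k+n'}$. Write $\rho=|\xx|^2+2|\zz|^2$. Then $\partial_{z^s}\rho=2\bar z^s=\mu_{s+n'}\rho$, and therefore
$$\partial_{z^s}\mu_l=-\tfrac12\mu_l\mu_{s+n'},\qquad \partial_{z^s}\mu_{k+n'}=-\mu_{k+n'}\mu_{s+n'}.$$
These are exactly the images of $\partial_{z^s}y^l$ and $\partial_{z^s}w^k$. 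So the injective map of Proposition \ref{prop:two-point-Jouanolou} is $D_{\C^n}$-linear, and it is a quasi-isomorphism by that proposition.

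The main point needing care is Step 2, namely that $\partial_{z^s}$ preserves the ideal. This reduces to the identity $\partial_{z^s}r=-w^sr$, and the remaining verifications are routine.
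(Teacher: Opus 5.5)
The paper states this proposition without proof, so there is no argument of its own to compare with. Your verification is correct and is the natural one. The checks all go through:
\begin{itemize}
\item the identity $\partial_{z^s} r=-w^s r$, which shows that $\partial_{z^s}$ preserves the dg ideal generated by $r$ and $\bfd r$;
\item the symmetric second derivatives $\tfrac34\,y^lw^sw^t$ and $2\,w^kw^sw^t$;
\item the formulas $\partial_{z^s}\mu_l=-\tfrac12\mu_l\mu_{s+n'}$ and $\partial_{z^s}\mu_{k+n'}=-\mu_{k+n'}\mu_{s+n'}$.
\end{itemize}
Reducing each identity to generators, by using that the relevant operator is a derivation commuting with the differential, is legitimate.

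One point should be stated more carefully; the imprecision is already present in the statement itself. For finite $k$, the operator $\LL_{\partial_{z^s}}$ does not preserve $\Adef(\paka)$. It turns a $C^{k-s}$ coefficient into a $C^{k-s-1}$ one, so it only maps $A^{\bullet}_{con,(k)}(\paka)$ into $A^{\bullet}_{con,(k-1)}(\paka)$. You therefore should not call it an operator on $\Adef(\paka)$ that commutes with the differential.

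What your Step 4 actually proves, and what the statement can meaningfully mean, is the following. The image of $J_{n',n}$ consists of forms with real-analytic constructible coefficients, this image is stable under $\LL_{\partial_{z^s}}$, and the inclusion intertwines $\partial_{z^s}$ with $\LL_{\partial_{z^s}}$. The cleanest way to phrase this is as a comparison of two $\iota$-derivations $\LL_{\partial_{z^s}}\circ\iota$ and $\iota\circ\partial_{z^s}$ from $J_{n',n}$ to $A^\bullet(\paka)$. Both commute with the differentials and agree on $z^k,y^l,w^k$, so they agree everywhere. Combined with injectivity and the quasi-isomorphism property of $\iota$, this also shows that the induced $D_{\C^n}$-action on $\Hdef(\paka)$ is well defined.
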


Notice that 
$$
\mu_i \bd \rho_i = \rho_i\bd \mu_i \phi(i), \quad  \phi(i)=\begin{cases}
2 & 1\le i\le n' \\
1 & n'+1\le i\le n'+n
\end{cases}.
$$
Consequently, 
we have
\ali{
\omega
=\frac{1}{\prod_{i=1}^n z^i \prod_{i=1}^{n'}  \mu_i} \sum_{i=1}^{n+n'} (-1)^{i-1} \rho_i \prod_{j\ne i}\bd \rho_j  
=  \sum_{i=1}^{n+n'} (-1)^{i-1} \mu_i \prod_{j\ne i}\bd \mu_j \phi(j). \\  
}

Let $I$ be a subset of $\lr{1,\cdots, n}$. If $n'=0$, we assume that $I$ is nonempty. Suppose $I=\lr{i_1,\cdots,i_k}$, $i_1<\cdots<i_k$, we consider the $(n'+k-1)$-\v Cech cochain 
$$
\gamma^I
=\frac{(\rSquare ))^{n'/2}}{x^1\cdots x^{n'} z^{i_1}\cdots z^{i_k}} 
\in \bigoplus_{j_1\in\lr{1,2},\cdots, j_{n'}\in \lr{2n'-1,2n'}} A^0\LR{U_{j_1}\cap \cdots \cap U_{j_{n'}}\cap U_{2n'+i_1}\cap \cdots \cap U_{2n'+i_k}}.
$$
Set $\omega^I=K\gamma^I$,  $n'+I:=\lr{n'+i_1,\cdots,n'+i_k}$, and $\widehat I=\lr{1,\cdots,n'} \cup (n'+I)$. Then
\ali{
\omega^I
= \frac{1}{\prod_{i\in I}  z^i \prod_{i=1}^{n'} \mu_i} \sum_{i\in \widehat I} \pm \rho_i \prod_{j\in \widehat I \setminus \lr{i}} \bd \rho_j 
=\sum_{i\in \widehat I} \pm \mu_i \prod_{j\in \widehat I \setminus \lr{i}} \bd \mu_j \phi(j).
}
We see $\omega^{\setn}=\omega$
, and 
$$
\bd \omega^I = \sum_{j\in \setn\setminus I} (-1)^{(j,I)} z^j \omega^{I\cup \lr{j}}
$$
where $(j,I)$ is the position from the rear of the index $j$ when $I\cup \lr{j}$ is ordered in the usual manner. For example,
$$
z^i\omega^{\setn} = (-1)^{i-1} \bd  \omega^{\setn\setminus\lr{i}}
$$
is $\bd $-exact.  

We also introduce an ordered version 
$$
\omega^{i_1\cdots i_n}:= (-1)^{(i_1\cdots i_n)} \omega^{\lr{i_1,\cdots, i_n}}.
$$

\subsection{Formality for \texorpdfstring{$n'=0$, $n=1$}{n'=0, n=1}}\label{01}
\begin{prop}
$\A(\C \punctured )$ and $\Adef(\C \punctured)$ are formal.
\end{prop}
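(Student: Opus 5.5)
By Proposition \ref{prop:coh-local}, for $n'=0,n=1$ the cohomology of both CDGAs is concentrated in degree $0$:
$$H^\bullet(\C\punctured)=\OO^{hol}(\C\punctured), \qquad \Hdef(\C\punctured)=\C[z,z^{-1}].$$
The plan is to exploit this directly. A CDGA whose cohomology sits in degree $0$ and which is itself concentrated in nonnegative degrees is formal. The reason is that the inclusion of the degree-zero cocycles gives a zigzag of length one.

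Concretely, let $A$ denote either $\A(\C\punctured)$ or $\Adef(\C\punctured)$. Both are concentrated in degrees $0$ and $1$, spanned by $f$ and $f\,d\bar z$. Let $Z^0(A)=\ker(\bp: A^0\to A^1)$.

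First, $Z^0(A)$ is a sub-CDGA concentrated in degree $0$ with zero differential. It is closed under products by the Leibniz rule, since $\bp(fg)=\bp f\, g+f\,\bp g$.

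Next, the inclusion $\iota: (Z^0(A),0)\hookrightarrow (A,\bp)$ is a CDGA morphism. This holds because the differential vanishes on $Z^0(A)$ and multiplication is restricted from $A$.

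Then $\iota$ is a quasi-isomorphism. In degree $0$ it induces the identity $Z^0(A)=H^0(A)$, since there are no negative-degree elements and hence no coboundaries in degree $0$. In degree $1$ both sides vanish: $H^1(A)=0$ by Proposition \ref{prop:coh-local}.

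Finally, $Z^0(A)=H^\bullet(A)$ as algebras with zero differential. So $\iota$ is itself a quasi-isomorphism $(H^\bullet(A),0)\to A$, which is the required zigzag in Definition \ref{defn:formality}.

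For $\Adef$ this means $Z^0=\C[z,z^{-1}]$, which is also Example \ref{cor:0-cohomology-affine}. For $\A$ it means $Z^0=\OO^{hol}(\C\punctured)$.

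There is essentially no obstacle beyond the vanishing of $H^1$, which is already supplied by Proposition \ref{prop:coh-local}. If one preferred not to rely on the Čech argument there for $n'=0,n=1$, the fallback would be to prove the vanishing directly. This follows from Corollary \ref{cor:poincare-punctured-disk} with $u=1$ in the constructible case, and from the classical solvability of $\bp$ on the open Riemann surface $\C\punctured$ in the smooth case. The only point to check is the constructible regularity bookkeeping: a $C^{k-1}$ form $f\,d\bar z$ has a $C^{k}$ primitive, and this is exactly what is needed for the primitive to lie in $\Adef$, per the remark following the notation on solvability.
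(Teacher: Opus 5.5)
Your proposal is correct and is essentially the paper's proof. The paper also uses the inclusions $\Hol(\C\punctured)\hookrightarrow \A(\C\punctured)$ and $\C[z,z^{-1}]\hookrightarrow \Adef(\C\punctured)$ of the degree-zero cocycles as the quasi-isomorphisms. It justifies $H^1=0$ directly, via Steinness of $\C\punctured$ and Corollary \ref{cor:poincare-punctured-disk}, which is exactly your fallback and is also the real input behind Proposition \ref{prop:coh-local} when $n'=0,n=1$, since the \v{C}ech cover there is the single open set $\C\punctured$.
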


\begin{proof}
We have $H^1(\C\punctured)=0$ because noncompact Riemann surfaces are Stein manifolds. Moreover, $H_{con,(k)}^1(\C\punctured)=0$ by Corollary \ref{cor:poincare-punctured-disk}. The proof follows from the natural inclusions 
$$
\Hol(\C\punctured) \islra \A(\C \punctured )
,\qquad  
\C[z,z^{-1}] \islra \A_{con,(k)}(\C \punctured ).
$$ 
\end{proof}

\subsection{Non-formality for \texorpdfstring{$n'=0$, $n=2$}{n'=0, n=2}}\label{02}
\begin{prop}
$\A(\C^2 \punctured )$ and $\Adef(\C^2 \punctured )$ are non-formal.
\end{prop}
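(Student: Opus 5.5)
We will show non-formality by exhibiting a nonvanishing triple Massey product and then invoking Proposition \ref{Massey-nonformal}. Here $n'=0$ and $n=2$. By Proposition \ref{prop:coh-local}, the cohomology of $\Adef(\C^2\punctured)$ is $\C[z^1,z^2]$ in degree $0$ and $\C[\partial_1,\partial_2]\omega$ in degree $1$. The cohomology of $\A(\C^2\punctured)$ is $\Hol(\C^2)$ in degree $0$ and $\overline{\C[\partial_1,\partial_2]\omega}$ in degree $1$. The candidate is
$$\langle z^1,\omega,z^2\rangle .$$
By Proposition \ref{prop:local-cohomology} we have $z^i\omega=0$ in cohomology, since $\alpha_i>\beta_i=0$. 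So the product is defined in both CDGAs.

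We will do the computation at chain level using the explicit forms $\mu_3=2\bar z^1/(2|\zz|^2)$ and $\mu_4=2\bar z^2/(2|\zz|^2)$ from Section \ref{sec:chain-level}; with $n'=0$ these are $\mu_1,\mu_2$ in the indexing of $\omega$. Write $\mu_1,\mu_2$ for them. They satisfy
$$z^1\mu_1+z^2\mu_2=1,\qquad z^1\bp\mu_1+z^2\bp\mu_2=0 .$$
By the formula for $\omega$ after Proposition \ref{prop:two-point-Jouanolou}, up to a nonzero constant,
$$\omega=\mu_1\bp\mu_2-\mu_2\bp\mu_1 .$$
Equivalently, these are the images of $w^1,w^2$ under the quasi-isomorphism $J_{0,2}\hookrightarrow\Adef(\C^2\punctured)$ of Proposition \ref{prop:two-point-Jouanolou}. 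Both $\mu_i$ are constructible and smooth, so they lie in $\Adef$ and in $\A$.

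Using the two relations we get
$$z^1\omega=z^1\mu_1\bp\mu_2+z^2\mu_2\bp\mu_2=\bp\mu_2,$$
and similarly
$$\omega z^2=-z^1\mu_1\bp\mu_1-z^2\mu_2\bp\mu_1=-\bp\mu_1 .$$
So we may take $a_{12}=\mu_2$ and $a_{23}=-\mu_1$. This gives
$$\langle z^1,\omega,z^2\rangle=[z^1a_{23}-a_{12}z^2]=[-z^1\mu_1-z^2\mu_2]=[-1].$$

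The Massey product is a coset modulo the indeterminacy $z^1H^\bullet+H^\bullet z^2$. In degree $0$ this indeterminacy is the ideal $(z^1,z^2)$, taken in $\C[z^1,z^2]$ for $\Adef$ and in $\Hol(\C^2)$ for $\A$. Every element of it vanishes at the origin, so it does not contain $-1$. Hence the Massey product is nonzero in both CDGAs, and Proposition \ref{Massey-nonformal} shows they are non-formal.

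The main point to check is the sign and normalization bookkeeping. The constant $C$ and the factors $\phi(j)$ only rescale $\omega$, and rescaling $\omega$ rescales the Massey product by the same nonzero factor, so it cannot affect non-vanishing. One should also confirm that the indeterminacy is exactly the ideal at the origin. This follows because $H^0$ consists of genuine functions on $\C^2$ (Hartogs), so every element of $z^1H^0+H^0z^2$ vanishes at $0$.
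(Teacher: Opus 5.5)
Your proof is correct and is essentially the paper's argument: the paper also computes the triple Massey product $\langle [z^1],[\omega],[z^2]\rangle$ from $z^1\omega=\bp\mu_2$ and $\omega z^2=-\bp\mu_1$, gets $[-z^1\mu_1-\mu_2 z^2]=[-1]$, and concludes non-formality because $-1$ is not in the ideal $(z^1,z^2)$ of $H^0$. The only blemish is the stray labels $\mu_3,\mu_4$; with $n'=0$ the relevant functions are $\mu_1,\mu_2$ directly, which is what you actually use.
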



\begin{proof}
We prove the statement for $\A(\C^2 \punctured )$; the other case is analogous. 
Notice that $\omega^{\lr{1,2}}=\mu_1\bp \mu_2 - \mu_2\bp \mu_1$, $z^1\omega^{\lr{1,2}}=\bp\mu_2$ and $z^2\omega^{\lr{1,2}}=-\bp\mu_1$. 
The triple Massey product 
$$
\lR{[z^1],[\omega^{\lr{1,2}}],[z^2]}=[z^1\cdot(-\mu_1) - \mu_2 \cdot z^2]=[-1] \in 
\frac{H^\bullet(\C^2 \punctured )}{z^1 \cdot  H^\bullet(\C^2 \punctured )+H^\bullet(\C^2 \punctured ) \cdot z^2} 
$$
is nontrivial because $-1$ is not contained in the ideal of $H^\bullet(\C^2 \punctured )$ generated by $z^1,z^2$. The proof follows from Proposition \ref{Massey-nonformal}.
\end{proof}

\subsection{Non-formality for \texorpdfstring{$n'=0$, $n\ge3$}{n'=0, n≥3}}\label{0>2}
\begin{prop}\label{Cn-nonformal}
For $n\ge3$, $\A(\C^n \punctured )$ and $\Aconk(\C^n \punctured )$ are non-formal. 
\end{prop}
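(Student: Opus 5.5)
The plan is to generalize the Massey product argument of Section~\ref{02}. For $n\ge3$ the ordinary triple product $\lR{[z^1],[\omega],[z^2]}$ lands in degree $n-2$. By Proposition~\ref{prop:coh-local} that cohomology vanishes, so it cannot detect anything. Instead I will use a higher Massey-type operation whose defining system is supplied by the Koszul-like family of forms $\omega^I$ from Section~\ref{sec:chain-level}. This operation outputs a degree-zero class, namely a nonzero constant. I will then show that this constant cannot be absorbed by the indeterminacy. The two CDGAs $\A(\C^n\punctured)$ and $\Aconk(\C^n\punctured)$ are treated in parallel: all forms involved ($\mu_i$, $\omega^I$) are constructible, and Proposition~\ref{prop:coh-local} gives the cohomology in both cases.

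\textbf{Step 1 (chain-level relations).} For $n'=0$ we have $\omega^{\lr{i}}=\mu_i$ (degree $0$), and the partition identity reads $\sum_i z^i\mu_i=\sum_i\rho_i=1$. The identity $\bp\omega^I=\sum_{j\notin I}(-1)^{(j,I)}z^j\omega^{I\cup\lr{j}}$ shows that $\lr{\omega^I}_{I\ne\emptyset}$ is a contracting homotopy for the Koszul differential $\sum_j z^j\,\iota_j$ up to $\omega=\omega^{\setn}$. Concretely, $z^i\omega$ is exact with primitive $\pm\omega^{\setn\setminus\lr{i}}$. Multiplying further by $z^j$ gives something exact with primitive built from $\omega^{\setn\setminus\lr{i,j}}$, and so on down to $\omega^{\lr{k}}=\mu_k$.

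\textbf{Step 2 (the defining system).} I will organize these primitives into a defining system for an $(n+1)$-fold product $\lR{[\omega],[z^{1}],\dots,[z^{n}]}$, in the form of a matric Massey product: the row of classes $z^i$ is paired against $\omega$. Equivalently, I will compute the transferred $A_\infty$ operation $m_{n+1}(\omega,z^1,\dots,z^n)$ of Theorem~\ref{thm:Homotopy-Transfer}. For that I choose the homotopy $h$ so that $h(z^i\omega^{I})$ is the $\omega^{I\setminus\lr{i}}$ predicted by the Koszul identity. The tree sum then telescopes, and for the fully nested tree it gives $\sum_{k}\pm z^k\mu_k=\pm1$. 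The sign bookkeeping via $(j,I)$ and the ordered symbols $\omega^{i_1\cdots i_n}$ should mirror the $n=2$ computation $z^1(-\mu_1)-\mu_2z^2=-1$.

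\textbf{Step 3 (nontriviality and the main obstacle).} The output lies in $H^0=\C[z^1,\dots,z^n]$ (respectively $\Hol(\C^n)$). The indeterminacy of such a product, or equivalently the possible change of $m_{n+1}$ under an $A_\infty$-isomorphism, consists of sums of products in which some $z^i$ or $\partial^{\xbeta}\omega$ appears as a factor via $m_2$. By Proposition~\ref{prop:local-cohomology}, the degree-zero part of such terms lies in the ideal $(z^1,\dots,z^n)$. The same holds for terms where an $\omega$-factor is paired with a $z^\alpha$ factor, since these give multiples of $\partial^{\xbeta-\xalpha}\omega$, which have degree $n-1\neq0$. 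Hence $\pm1$ survives modulo the indeterminacy, and Proposition~\ref{Massey-nonformal} gives non-formality.

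I expect the main obstacle to be exactly this invariance statement for an operation of length $n+1$ when the intermediate products $z^iz^j$ are \emph{not} zero, so that a classical higher Massey product is not defined. I would first try to handle it by working with the minimal $A_\infty$ model and showing directly that any $A_\infty$-isomorphism $f$ to a model with $m_{\ge3}=0$ forces the identity $\pm1=\sum(\text{terms in }(z^1,\dots,z^n))$ at arity $n+1$. Evaluating at $z=0$ then gives a contradiction. If the direct argument is too messy, a fallback is to specialize to a convenient $A_\infty$ structure by restricting the inputs to the $\C[z]$-submodule generated by $\omega$ and the $z^i$.
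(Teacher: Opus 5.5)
\textbf{The gap is in Step 3.} You flag the obstacle and then assume it away. The single value $m_{n+1}(\omega,z^1,\dots,z^n)$ is not an invariant of the $A_\infty$ (or $C_\infty$) quasi-isomorphism class.

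Since $H^\bullet$ is concentrated in degrees $0$ and $n-1$, degree counting gives $m_3=\dots=m_n=0$. It also gives $F_2=\dots=F_{n-1}=0$ for any $A_\infty$-isomorphism $F$ with $F_1=\mathrm{id}$. Hence $m_{n+1}$ changes exactly by the Hochschild coboundary $\delta F_n$, and every $F_n$ of degree $1-n$ is realized by transporting the structure along $F$.

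Evaluated at $(\omega,z^1,\dots,z^n)$, $\delta F_n$ contains the outer terms you list. These are harmless: $\omega\,F_n(z^1,\dots,z^n)=0$, $F_n(\omega z^1,\dots)=0$, and $F_n(\omega,z^1,\dots,z^{n-1})\,z^n\in(z^1,\dots,z^n)$. But it also contains the inner terms $\pm F_n(\omega,z^1,\dots,z^iz^{i+1},\dots,z^n)\in H^0$. Because $z^iz^{i+1}\neq0$ in $H^0$, these are unconstrained. You may take $F_n(\omega,z^1z^2,z^3,\dots,z^n)=1$, and this is possible even with $F_n$ vanishing on shuffles, so passing to $C_\infty$ does not help. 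Thus the constant $\pm1$ can be moved to $0$. Only the class $[m_{n+1}]\in\mathrm{HH}^{n+1}(H^\bullet,H^\bullet)$ is invariant, and you give no argument that it is nonzero.

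The ``equivalently'' in Step 2 also fails. With the scalar classes $[z^1],\dots,[z^n]$ as consecutive entries, no Massey product is defined. The prescription $h(z^i\omega^I)=\omega^{I\setminus\lr{i}}$ is not a contracting homotopy: for $I\neq\setn$ the element $z^i\omega^I$ is not closed. The Koszul identity only exhibits $\omega^{J}$ as a primitive of the whole sum $\sum_{j\notin J}\pm z^j\omega^{J\cup\lr{j}}$.

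\textbf{A possible repair, and the paper's route.} Your Koszul intuition can be turned into a proof with an honest invariant: the matric Massey product $\lR{[\omega],K_1,\dots,K_n}$, where $K_j$ is the $j$-th Koszul differential matrix on $z^1,\dots,z^n$, so that $K_jK_{j+1}=0$ on the nose.
\begin{itemize}
\item Apart from the $K_j$ themselves and the first row, every entry of a defining system has negative degree and hence vanishes.
\item The first-row entries can be taken to be $(\pm\omega^{\setn\setminus J})_{|J|=j-1}$, and the output is $\pm\sum_k z^k\mu_k=\pm1$.
\item Since $H^1=\dots=H^{n-2}=0$, the only freedom affecting the output is a closed degree-$0$ row at the last stage. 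So the product is exactly $\pm1+(z^1,\dots,z^n)$.
\item In $(H^\bullet,0)$ the same product contains $0$. Invariance of matric Massey products under quasi-isomorphisms, which you would need to cite or prove by a lifting argument, then gives non-formality.
\end{itemize}
The paper argues differently and uses no Massey products. Lemma \ref{lem:omega-tree} writes $\omega=\sum_{T}\sigma(T)\prod_i(z^i)^{d^T_i-1}\bigwedge_{e\in T}\omega^{e}$, a polynomial in the degree-one forms $\omega^{e}$. Lemma \ref{Zhou-lemma} transports the Koszul system together with this relation along each arrow of a zigzag. In $(H^\bullet,0)$, the vanishing of $H^1$ (since $n\ge3$) kills every $\omega^e$, which forces $[\omega]=0$, a contradiction.
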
 

\begin{lem} \label{lem:omega-tree}
Let $\mathcal T_n$ be the set of maximal trees on the vertex set $\setn$; that is, each $T \in \mathcal T_n$ is an unoriented graph with $n-1$ edges and no cycles. Let $e$ be an edge with vertices $\lr{i,j}$, $i<j$. Denote $\omega^e=\omega^{\lr{i,j}}=\mu_i\bp\mu_j-\mu_j\bp\mu_i$. Then 
$$
\omega^{\setn}
= \sum_{i=1}^n (-1)^{i-1}\mu_i \bp\mu_1\cdots \widehat{\bp\mu_i}\cdots \bp\mu_n 
=\sum_{T\in \mathcal T_n} \sT \prod_{i=1}^n (z^i)^{d^T_i-1} \bigwedge_{e\in T} \omega^{e} 
$$
where $d^T_i$ is the degree of the vertex $i$ in the tree $T$, the wedge product is taken in lexicographical order of the pairs $(i,j)$, and $\sT\in \lr{\pm1}$ is a sign.
\end{lem}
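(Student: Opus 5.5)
The plan is to expand the right-hand side directly, reduce it to a sum over rooted trees, and then use Cayley's weighted tree formula together with the identity $\sum_i \rho_i = 1$. Here $n'=0$, so $\mu_i = 2\bar z^i/(2|\zz|^2)$ and $\rho_i = z^i\mu_i$. The fact that the $\rho_i$ sum to $1$ is exactly what the collating map $K$ used.

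\textbf{Step 1: expand a single tree.} Fix $T\in\mathcal T_n$. Each factor $\omega^e=\mu_i\bp\mu_j-\mu_j\bp\mu_i$ contributes one undifferentiated $\mu$ at one endpoint and one $\bp\mu$ at the other. Expanding $\bigwedge_{e\in T}\omega^e$ therefore gives a sum over the $2^{n-1}$ ways of choosing, for each edge, which endpoint receives the $\bp$. The product contains $n-1$ one-forms $\bp\mu_j$. Any term in which some $\bp\mu_j$ appears twice vanishes, so the surviving terms have $\bp\mu_j$ at $n-1$ distinct vertices, that is, at every vertex except one vertex $r$. Such a choice is the same as orienting $T$ toward the root $r$: every edge gives $\bp$ to the endpoint farther from $r$ and $\mu$ to the endpoint closer to $r$. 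In the term rooted at $r$, vertex $v\neq r$ carries $\mu_v^{d^T_v-1}$ and $r$ carries $\mu_r^{d^T_r}$. Multiplying by $\prod_i (z^i)^{d^T_i-1}$ gives
\[
\prod_i (z^i)^{d^T_i-1}\bigwedge_{e\in T}\omega^e=\sum_{r=1}^n \epsilon(T,r)\Big(\prod_{v}\rho_v^{d^T_v-1}\Big)\,\mu_r\,\bp\mu_1\cdots\widehat{\bp\mu_r}\cdots\bp\mu_n ,
\]
where $\epsilon(T,r)=\pm1$ is the sign produced by reordering the one-forms into increasing order.

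\textbf{Step 2: sign bookkeeping.} This is the step I expect to be the main obstacle. The aim is to show that there is a choice $\sigma(T)$ with $\sigma(T)\epsilon(T,r)=(-1)^{r-1}$ for every $r$. Equivalently, $(-1)^{r-1}\epsilon(T,r)$ must be independent of $r$.

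Since $T$ is connected, it suffices to compare two adjacent roots $r,r'$. Moving the root across the edge $e=\{r,r'\}$ changes only the factor $\omega^e$. There the term $\mu_r\bp\mu_{r'}$ is replaced by $\mu_{r'}\bp\mu_r$, which costs a sign $-1$ from the antisymmetry of $\omega^e$. In the resulting wedge product, $\bp\mu_r$ now sits where $\bp\mu_{r'}$ sat. Moving it to its increasing-order position relative to the omitted index $r'$ costs $(-1)^{|r-r'|-1}$, since it must pass exactly the $|r-r'|-1$ one-forms $\bp\mu_j$ with $j$ strictly between $r$ and $r'$. The total change is $(-1)^{|r-r'|}=(-1)^{r-r'}$, which matches $(-1)^{r-1}/(-1)^{r'-1}$. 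So $\sigma(T):=(-1)^{r-1}\epsilon(T,r)$ is well defined, and I would check this pass count carefully.

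\textbf{Step 3: sum over trees.} With this choice of $\sigma(T)$, the right-hand side becomes
\[
\sum_{r=1}^n(-1)^{r-1}\mu_r\,\bp\mu_1\cdots\widehat{\bp\mu_r}\cdots\bp\mu_n\cdot\sum_{T\in\mathcal T_n}\prod_v\rho_v^{d^T_v-1}.
\]
By the weighted Cayley formula $\sum_{T}\prod_v x_v^{d^T_v-1}=(x_1+\cdots+x_n)^{n-2}$, the inner sum equals $(\sum_v\rho_v)^{n-2}=1$. What remains is exactly the middle expression $\sum_i(-1)^{i-1}\mu_i\bp\mu_1\cdots\widehat{\bp\mu_i}\cdots\bp\mu_n$.

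\textbf{Step 4: first equality.} The remaining identity $\omega^{\setn}=\sum_i(-1)^{i-1}\mu_i\prod_{j\ne i}\bp\mu_j$ was already recorded in Section \ref{sec:chain-level} for $n'=0$, where all $\phi(j)=1$.
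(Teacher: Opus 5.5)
Your proposal is correct and is essentially the paper's proof. You expand $\bigwedge_{e\in T}\omega^e$ over roots $r$ into terms $\pm\prod_v\mu_v^{d^T_v-1}\,\mu_r\,\bp\mu_1\cdots\widehat{\bp\mu_r}\cdots\bp\mu_n$, choose $\sigma(T)$ so that $\sigma(T)\bigwedge_{e\in T}\omega^e=\prod_v\mu_v^{d^T_v-1}\,\omega^{\setn}$, multiply by $\prod_i(z^i)^{d^T_i-1}$ to turn $\mu$'s into $\rho$'s, and sum with the weighted Cayley formula and $\sum_i\rho_i=1$. Your Step 2 spells out the root-independence check that the paper only calls easy, and your normalization $\sigma(T)=(-1)^{r-1}\epsilon(T,r)$ is the right one: already for $n=2$, $\epsilon(T,r)$ alone changes sign with $r$, so the paper's displayed formula for $\sigma(T)$ has to be read with this factor $(-1)^{r-1}$ absorbed into its permutation sign.
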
 

\begin{proof}
We first determine the sign $\sigma(T)$. Let $T\in\mathcal T_n$ be a maximal tree on $\setn$. Choose any vertex $r\in\setn$ as the root. For each vertex $v\ne r$, let $p(v)$ denote its parent, that is, the neighbor of $v$ that is one step closer to $r$. Let $e_1<\cdots<e_{n-1}$ be all $n-1$ edges of $T$ in lexicographical order, and denote the vertices of $e_i$ by $\lr{v_i,p(v_i)}$.
Define
$$
\sT=(-1)^{\#\set{v\neq r}{v < p(v)}}  \cdot  \sgn \LR{v_1,\cdots,v_{n-1}}.
$$
It's easy to check $\sT$ is independent of the choice of the root. We have 
$$
\sT \bigwedge_{e\in T} \omega^{e}
= \prod_{i=1}^{n} \mu_i^{d^T_i-1}\omega^{\setn},
$$ 
$$
\sT\prod_{i=1}^n (z^i)^{d^T_i-1} \bigwedge_{e\in T} \omega^{e} 
= \prod_{i=1}^n \rho_i^{d^T_i-1}\omega^{\setn},
$$
and the sum over all $n^{n-2}$ maximal trees is
$$
\sum_{T\in \mathcal T_n} \sT \prod_{i=1}^n (z^i)^{d^T_i-1} \bigwedge_{e\in T} \omega^{e} 
= \LR{\sum_{i=1}^n \rho_i}^{n-2}\omega^{\setn} 
= \omega^{\setn}.
$$
\end{proof}

\begin{eg}\label{zhou-eg}
For $n=3$,   
$\omega^{\lr{1,2,3}}= \mu_1\bp\mu_2\bp\mu_3 - \mu_2\bp\mu_1\bp\mu_3 + \mu_3\bp\mu_1\bp\mu_2$, and 
\ali{
&    z_1(\mu_1\bp\mu_2-\mu_2\bp\mu_1)(\mu_1\bp\mu_3-\mu_3\bp\mu_1) + z_2(\mu_1\bp\mu_2-\mu_2\bp\mu_1)(\mu_2\bp\mu_3-\mu_3\bp\mu_2) + z_3(\mu_1\bp\mu_3-\mu_3\bp\mu_1)(\mu_2\bp\mu_3-\mu_3\bp\mu_2) \\
=& \, z_1\mu_1\omega^{\lr{1,2,3}} + z_2\mu_2\omega^{\lr{1,2,3}} +z_3\mu_3\omega^{\lr{1,2,3}}  
=\omega^{\lr{1,2,3}}.
}
\end{eg}

\begin{lem}\label{Zhou-lemma}
Let $A,B$ be non-negatively graded CDGAs with $H^i(A,d)=H^i(B,d)=0$ for $0<i<n-1$, and $f:B\to A$ be a quasi-isomorphism. Assume there exist degree $0$ elements $z^i\in A$ for $i=1,\cdots,n$, and degree $|I|-1$ elements $\omega^I\in A$ for each nonempty subset $I$ of $\lr{1,\cdots,n}$, which satisfy the relations
$$
d\omega^I = \sum_{j\notin I} (-1)^{(j,I)} z^j \omega^{I\cup \lr{j}},
\quad
\omega^{\setn}= \sum_{T\in \mathcal T_n} \sT \prod_{i=1}^n  (z^i)^{d^T_i-1} \bigwedge_{e\in T} \omega^{e}+ d(-).
$$
Moreover, $z^i$ and $\omega:=\omega^{\setn}$ are $d$-closed, which represent nontrivial cohomology classes in $H^\bullet(A,d)$. 

Then there exist $z^i_B,\omega^I_B\in B$ such that $z^i_B$ and $\omega_B:=\omega^{\setn}_B$ are $d$-closed, and
$$
f(z^i_B)=z^i, \quad f(\omega^{\setn}_B)=\omega^{\setn}+d(-), \quad
d\omega^I_B=\sum_{j\notin I} (-1)^{(j,I)} z^j_B \omega^{I\cup \lr{j}}_B, 
$$
$$
\omega^{\setn}_B
-\sum_{T\in \mathcal T_n} \sT \prod_{i=1}^n  (z^i_B)^{d^T_i-1} \bigwedge_{e\in T} \omega^{e}_B =d(-). 
$$   
\end{lem}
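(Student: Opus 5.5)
We lift the data through the quasi-isomorphism $f$ by induction on the codimension $n-|I|$, using that $f$ is surjective on cocycles up to coboundaries and injective on cohomology. First, since $f$ is a quasi-isomorphism in degree $0$ and $A,B$ are non-negatively graded, $H^0(B)=Z^0(B)\to Z^0(A)=H^0(A)$ is an isomorphism. So there are unique closed $z^i_B\in B^0$ with $f(z^i_B)=z^i$. Similarly, choose a closed $\omega_B\in B^{n-1}$ with $[f(\omega_B)]=[\omega]$. Changing the representative of $\omega$ by an exact term is allowed by the statement.

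We then construct $\omega^I_B$ for $|I|=n-1,n-2,\dots,1$ in decreasing order of $|I|$. We keep the invariant that $f(\omega^I_B)=\omega^I+\eta^I$, where the correction terms $\eta^I$ satisfy the same system of relations. Concretely, we work with the elements $\tilde\omega^I:=f(\omega^I_B)$ and compare them to $\omega^I$ through a "homotopy" family $h^I\in A^{|I|-2}$ with
\[
f(\omega^I_B)-\omega^I=dh^I\pm\sum_{j\notin I}(-1)^{(j,I)}z^jh^{I\cup\{j\}}.
\]
This is the standard way to lift a twisted-complex (Koszul-type) system along a quasi-isomorphism. At step $I$, the element $c^I:=\sum_{j\notin I}(-1)^{(j,I)}z^j_B\omega^{I\cup\{j\}}_B\in B^{|I|}$ is closed. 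The check is $d c^I=\sum_{j,k}\pm z^j_Bz^k_B\omega^{I\cup\{j,k\}}_B$, which vanishes by antisymmetry of the signs $(j,I)$; this is the usual Koszul identity $d^2=0$ and is the same check as in $A$. Its image $f(c^I)$ differs from $d(\omega^I+\text{corrections})$ by an exact element built from the $h$'s. Hence $f(c^I)$ is exact in $A$, so $[c^I]=0$ in $H^{|I|}(B)$ by injectivity of $H(f)$. Choose $\omega^I_B$ with $d\omega^I_B=c^I$. Then $f(\omega^I_B)-\omega^I-(\text{correction})$ is a cocycle of degree $|I|-1$. When $1\le|I|-1\le n-2$, this cohomology vanishes in $A$ by hypothesis, so it is exact and yields $h^I$. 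When $|I|=1$ the degree is $0$, and we adjust $\omega^{\{i\}}_B$ by a degree $0$ closed element (a lift) to kill the discrepancy exactly. The vanishing hypothesis $H^i=0$ for $0<i<n-1$ is used exactly here.

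Finally we verify the tree relation. Let
\[
R_B:=\omega_B-\sum_T\sigma(T)\prod_i(z^i_B)^{d^T_i-1}\bigwedge_{e\in T}\omega^e_B .
\]
Apply $d$: the wedge products of the $\omega^e_B$ are not closed in general, but their differentials are governed by the relations just lifted. The same combinatorial cancellation that makes the tree sum closed in $A$ (mod exact terms) holds formally in $B$, giving $dR_B=0$. This step rests on the identity $\sum_T\sigma(T)\prod\rho_i^{d_i-1}=(\sum\rho_i)^{n-2}$ in Lemma \ref{lem:omega-tree}. Then $f(R_B)$ equals $\omega-\sum_T(\dots)$ plus exact terms coming from the $h^e$ (by the Leibniz rule), and this is exact by hypothesis. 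Injectivity of $H^{n-1}(f)$ gives $R_B=d(-)$.

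The main obstacle is this last closedness check together with tracking the correction terms $h^I$ through products. The relations for $\omega^I_B$ hold strictly, but $f(\omega^e_B)$ agrees with $\omega^e$ only up to the homotopies, so one must show that the discrepancy in the tree sum is exact. I would first try to organize everything as a CDGA map from the free Koszul-type algebra generated by $z^i,\omega^I$, together with a cone/homotopy argument. A map out of this semi-free model into $A$ lifts to $B$ up to homotopy, with the homotopy compatible with products, so the strict relations on generators suffice and products follow automatically.
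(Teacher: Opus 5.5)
Your architecture matches the paper's. You lift $z^i$ and $\omega$, build $\omega^I_B$ by descending induction on $|I|$ with correction terms (your $h^I$, the paper's $\beta^I$), show the tree sum in $B$ is closed and maps to $\omega$ modulo exact terms, and conclude by injectivity of $H(f)$. The lifting half is sound, and at $|I|=n-1$ it is more careful than the paper. There the cocycle $c^I=\pm z^j_B\omega_B$ sits in degree $n-1$, where $H^{n-1}(B)\neq 0$. So one really needs your argument ($f(c^I)$ exact plus injectivity of $H(f)$), not the vanishing hypothesis on $B$.

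\textbf{The gap: closedness of $R_B$.} This step is the real content of the lemma, and both reasons you give for $dR_B=0$ fail.
\begin{itemize}
\item The identity $\sum_T\sigma(T)\prod_i\rho_i^{d^T_i-1}=(\sum_i\rho_i)^{n-2}$ concerns the specific forms $\mu_i,\rho_i$ on $\C^n\setminus\{0\}$. It has no meaning in an abstract $B$.
\item In $A$ the tree sum is closed only because the hypothesis says it equals $\omega+d(-)$. There is no cancellation there that could be transported to $B$.
\end{itemize}
What you actually need is that the tree sum $\tau$ is a cocycle in the free Koszul-type algebra $F$ on the $z^i,\omega^I$, using only $d\omega^e=\sum_k\pm z^k\omega^{e\cup\{k\}}$. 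The paper proves this by sorting the terms of $d\tau$ according to the triple $\{i,j,k\}$ indexing the factor $\omega^{\{i,j,k\}}$.
\begin{itemize}
\item A tree $T$ containing exactly one edge $e$ of the triangle on $\{i,j,k\}$ is paired with $T-e+e'$, where $e'$ is the other triangle edge reconnecting the two components of $T-e$. The $z$-monomials agree because the vertex degrees shift compensatingly, and the two contributions cancel.
\item Trees containing two triangle edges come in triples sharing the same remaining forest, and the three contributions sum to zero.
\end{itemize}
Your proposal never supplies this argument, though you correctly flag it as the main obstacle.

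\textbf{The image $f(R_B)$.} The claim that $f(R_B)$ equals $\omega-\sum_T(\cdots)$ plus exact terms ``by the Leibniz rule'' is also insufficient. We have $f(\omega^e_B)-\omega^e=dh^e\pm\sum_k z^kh^{e\cup\{k\}}$. The terms $z^kh^{e\cup\{k\}}$ are not exact. The terms $dh^e$ multiply the non-closed $\omega^{e'}$, so moving $d$ outward produces $h^e\,d\omega^{e'}$ terms. The paper again disposes of both kinds by the same tree cancellation.

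Your fallback repairs this part cleanly. The $h^I$ assemble into a CDGA homotopy $F\to A\otimes\C[t,dt]$ between the maps $\phi,\ f\circ\psi:F\to A$, where $\phi$ sends the generators to $z^i,\omega^I$ and $\psi$ sends them to $z^i_B,\omega^I_B$. Concretely, $\omega^I\mapsto \omega^I+t\,(f\omega^I_B-\omega^I)\pm dt\,h^I$. Hence $f\psi(x)$ and $\phi(x)$ are cohomologous for every cocycle $x$ of $F$. But this holds only for cocycles, so it still presupposes $d\tau=0$ in $F$.

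Once you prove that combinatorial identity, either your homotopy route or the paper's direct expansion completes the proof.
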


\begin{proof}
Let $[z^i_B], [\omega_B]$ be cohomology classes  in $H^\bullet(B,d)$ which are mapped to $[z^i], [\omega]$ in $H^\bullet(A,d)$. Since $A$ is non-negatively graded, we have $f(z^i_B)=z^i$. Moreover, $f(\omega^{\setn}_B)=\omega^{\setn}+d\beta$ for some $\beta$.

We construct $\omega^I_B$ by induction. Since 
$$
d\sum_{j\notin I} (-1)^{(j,I)} z^j_B \omega^{I\cup \lr{j}}_B
= \sum_{j\notin I} (-1)^{(j,I)} z^j_B \sum_{k\notin I\cup \lr{j}} (-1)^{(k,I\cup \lr{j})} z^k_B\omega^{I\cup \lr{j,k}}_B=0
$$
and $B$ has trivial $k$-th cohomology for $0<k<n-1$, we can find some $\omega^I_B$ such that
$$
d\omega^I_B=\sum_{j\notin I} (-1)^{(j,I)} z^j_B \omega^{I\cup \lr{j}}_B.
$$
Let 
$$
\widetilde \omega^{\setn}_B
=\sum_{T\in \mathcal T_n} \sT \prod_{i=1}^n  (z^i_B)^{d^T_i-1} \bigwedge_{e\in T} \omega^{e}_B.
$$
Note that $d\omega^{\lr{i,j}}_B=\sum_{k\ne i,j}\pm z^k\omega^{\lr{i,j,k}}$. Then
\ali{
d\widetilde \omega^{\setn}_B
=& d\sum_{T\in \mathcal T_n} \sT \prod_{i=1}^n  (z^i_B)^{d^T_i-1} \bigwedge_{e\in T} \omega^{e}_B \\
=& \sum_{T\in \mathcal T_n} \sT \prod_{i=1}^n  (z^i_B)^{d^T_i-1} \sum_{e\in T} \pm d\omega^{e}_B \bigwedge_{e'\in T, \,e'\ne e} \omega^{e'}_B \\
}
For each $i<j<k$, let us compute the term involving $\omega^{\lr{i,j,k}}$. Let $e_1,e_2,e_3$ be all edges whose vertices lies in $\lr{i,j,k}$. A maximal tree contains at most two of them since it does not have any cycle. There are two types of trees: 

\begin{itemize}
\item  
Type 1. The tree $T$ has exactly one edge $e\in \lr{e_1,e_2,e_3}$. If we remove the edge $e$ from $T$, we get a graph $T-e$ with two connected components, one of which contains two vertices in $\lr{i,j,k}$ and the other contains one.  Let $e'\in \lr{e_1,e_2,e_3}$ be the other edge such that $T-e+e'$ is connected. Then the total contribution from $T$ and $T-e+e'$ vanishes. 
See Figure \ref{fig:type1}. Here we used the dashed line to indicate two points in the same connected component of $T-e$.

\begin{figure}[H]
    \centering
\tikzset{every picture/.style={line width=0.75pt}} 

\begin{tikzpicture}[x=0.75pt,y=0.75pt,yscale=-1,xscale=1]

\draw  [fill={rgb, 255:red, 0; green, 0; blue, 0 }  ,fill opacity=1 ] (223.33,145.33) .. controls (223.33,143.12) and (225.12,141.33) .. (227.33,141.33) .. controls (229.54,141.33) and (231.33,143.12) .. (231.33,145.33) .. controls (231.33,147.54) and (229.54,149.33) .. (227.33,149.33) .. controls (225.12,149.33) and (223.33,147.54) .. (223.33,145.33) -- cycle ;
\draw  [fill={rgb, 255:red, 0; green, 0; blue, 0 }  ,fill opacity=1 ] (144.33,168.33) .. controls (144.33,166.12) and (146.12,164.33) .. (148.33,164.33) .. controls (150.54,164.33) and (152.33,166.12) .. (152.33,168.33) .. controls (152.33,170.54) and (150.54,172.33) .. (148.33,172.33) .. controls (146.12,172.33) and (144.33,170.54) .. (144.33,168.33) -- cycle ;
\draw  [fill={rgb, 255:red, 0; green, 0; blue, 0 }  ,fill opacity=1 ] (149.33,102.33) .. controls (149.33,100.12) and (151.12,98.33) .. (153.33,98.33) .. controls (155.54,98.33) and (157.33,100.12) .. (157.33,102.33) .. controls (157.33,104.54) and (155.54,106.33) .. (153.33,106.33) .. controls (151.12,106.33) and (149.33,104.54) .. (149.33,102.33) -- cycle ;
\draw    (153.33,102.33) -- (227.33,145.33) ;
\draw  [dash pattern={on 4.5pt off 4.5pt}]  (153.33,102.33) -- (148.33,168.33) ;
\draw  [fill={rgb, 255:red, 0; green, 0; blue, 0 }  ,fill opacity=1 ] (407,145.33) .. controls (407,143.12) and (408.79,141.33) .. (411,141.33) .. controls (413.21,141.33) and (415,143.12) .. (415,145.33) .. controls (415,147.54) and (413.21,149.33) .. (411,149.33) .. controls (408.79,149.33) and (407,147.54) .. (407,145.33) -- cycle ;
\draw  [fill={rgb, 255:red, 0; green, 0; blue, 0 }  ,fill opacity=1 ] (328,168.33) .. controls (328,166.12) and (329.79,164.33) .. (332,164.33) .. controls (334.21,164.33) and (336,166.12) .. (336,168.33) .. controls (336,170.54) and (334.21,172.33) .. (332,172.33) .. controls (329.79,172.33) and (328,170.54) .. (328,168.33) -- cycle ;
\draw  [fill={rgb, 255:red, 0; green, 0; blue, 0 }  ,fill opacity=1 ] (333,102.33) .. controls (333,100.12) and (334.79,98.33) .. (337,98.33) .. controls (339.21,98.33) and (341,100.12) .. (341,102.33) .. controls (341,104.54) and (339.21,106.33) .. (337,106.33) .. controls (334.79,106.33) and (333,104.54) .. (333,102.33) -- cycle ;
\draw    (332,168.33) -- (411,145.33) ;
\draw  [dash pattern={on 4.5pt off 4.5pt}]  (337,102.33) -- (332,168.33) ;

\draw (272,128) node [anchor=north west][inner sep=0.75pt]   [align=left] {$+$};
\draw (454,128) node [anchor=north west][inner sep=0.75pt]   [align=left] {$= \,\, 0$};
\draw (189.2,100.2) node [anchor=north west][inner sep=0.75pt]   [align=left] {$e$};
\draw (376.4,162.2) node [anchor=north west][inner sep=0.75pt]   [align=left] {$e'$};
\end{tikzpicture} 
    \caption{The total contribution from type 1 maximal trees vanishes.}
    \label{fig:type1}
\end{figure}

\item 
Type 2. A type 2 tree has two edges in $e_1,e_2,e_3$. If the two edges are removed, the graph  has three connected components, and there are three  maximal trees which contains it as a subgraph. The total contribution from them contains a factor
$$
z^i_B \LR{z^j_B\omega^{kij}_B\omega^{ij}_B - z^k_B\omega^{ki}_B\omega^{ijk}_B} + z^j_B\LR{z^k_B\omega^{ijk}_B\omega^{jk}_B - z^i_B\omega^{ij}_B\omega^{jki}_B} +z^k_B\LR{z^i_B\omega^{jki}_B\omega^{ki}_B - z^j_B\omega^{jk}_B\omega^{kij}_B}
= 0
$$
in $d\LR{z^i_B\omega^{ki}_B\omega^{ij}_B+z^j_B\omega^{ij}_B\omega^{jk}_B+z^k_B\omega^{jk}_B\omega^{ki}_B}$. 
See Figure \ref{fig:type2}. 
 
\begin{figure}[H]
    \centering  
\tikzset{every picture/.style={line width=0.75pt}} 

\begin{tikzpicture}[x=0.75pt,y=0.75pt,yscale=-1,xscale=1]

\draw  [fill={rgb, 255:red, 0; green, 0; blue, 0 }  ,fill opacity=1 ] (129,127) .. controls (129,124.79) and (130.79,123) .. (133,123) .. controls (135.21,123) and (137,124.79) .. (137,127) .. controls (137,129.21) and (135.21,131) .. (133,131) .. controls (130.79,131) and (129,129.21) .. (129,127) -- cycle ;
\draw  [fill={rgb, 255:red, 0; green, 0; blue, 0 }  ,fill opacity=1 ] (50,150) .. controls (50,147.79) and (51.79,146) .. (54,146) .. controls (56.21,146) and (58,147.79) .. (58,150) .. controls (58,152.21) and (56.21,154) .. (54,154) .. controls (51.79,154) and (50,152.21) .. (50,150) -- cycle ;
\draw  [fill={rgb, 255:red, 0; green, 0; blue, 0 }  ,fill opacity=1 ] (55,84) .. controls (55,81.79) and (56.79,80) .. (59,80) .. controls (61.21,80) and (63,81.79) .. (63,84) .. controls (63,86.21) and (61.21,88) .. (59,88) .. controls (56.79,88) and (55,86.21) .. (55,84) -- cycle ;
\draw    (59,84) -- (133,127) ;
\draw    (59,84) -- (54,150) ;
\draw  [fill={rgb, 255:red, 0; green, 0; blue, 0 }  ,fill opacity=1 ] (279,127) .. controls (279,124.79) and (280.79,123) .. (283,123) .. controls (285.21,123) and (287,124.79) .. (287,127) .. controls (287,129.21) and (285.21,131) .. (283,131) .. controls (280.79,131) and (279,129.21) .. (279,127) -- cycle ;
\draw  [fill={rgb, 255:red, 0; green, 0; blue, 0 }  ,fill opacity=1 ] (200,150) .. controls (200,147.79) and (201.79,146) .. (204,146) .. controls (206.21,146) and (208,147.79) .. (208,150) .. controls (208,152.21) and (206.21,154) .. (204,154) .. controls (201.79,154) and (200,152.21) .. (200,150) -- cycle ;
\draw  [fill={rgb, 255:red, 0; green, 0; blue, 0 }  ,fill opacity=1 ] (205,84) .. controls (205,81.79) and (206.79,80) .. (209,80) .. controls (211.21,80) and (213,81.79) .. (213,84) .. controls (213,86.21) and (211.21,88) .. (209,88) .. controls (206.79,88) and (205,86.21) .. (205,84) -- cycle ;
\draw    (209,84) -- (283,127) ;
\draw    (283,127) -- (204,150) ;
\draw  [fill={rgb, 255:red, 0; green, 0; blue, 0 }  ,fill opacity=1 ] (429,127) .. controls (429,124.79) and (430.79,123) .. (433,123) .. controls (435.21,123) and (437,124.79) .. (437,127) .. controls (437,129.21) and (435.21,131) .. (433,131) .. controls (430.79,131) and (429,129.21) .. (429,127) -- cycle ;
\draw  [fill={rgb, 255:red, 0; green, 0; blue, 0 }  ,fill opacity=1 ] (350,150) .. controls (350,147.79) and (351.79,146) .. (354,146) .. controls (356.21,146) and (358,147.79) .. (358,150) .. controls (358,152.21) and (356.21,154) .. (354,154) .. controls (351.79,154) and (350,152.21) .. (350,150) -- cycle ;
\draw  [fill={rgb, 255:red, 0; green, 0; blue, 0 }  ,fill opacity=1 ] (355,84) .. controls (355,81.79) and (356.79,80) .. (359,80) .. controls (361.21,80) and (363,81.79) .. (363,84) .. controls (363,86.21) and (361.21,88) .. (359,88) .. controls (356.79,88) and (355,86.21) .. (355,84) -- cycle ;
\draw    (354,150) -- (433,127) ;
\draw    (359,84) -- (354,150) ;

\draw (162,103) node [anchor=north west][inner sep=0.75pt]   [align=left] {$+$};
\draw (312,103) node [anchor=north west][inner sep=0.75pt]   [align=left] {$+$};
\draw (476,103) node [anchor=north west][inner sep=0.75pt]   [align=left] {$= \,\, 0$};

\end{tikzpicture}
    \caption{The total contribution from type 2 maximal trees vanishes.}
    \label{fig:type2}
\end{figure} 
\end{itemize}

Since $H^i(A,d)=0$ for $0<i<n-1$, we can inductively find $\beta^I\in A$ such that
$$
f\omega^I_B  
= \omega^I  +\sum_{j\notin I} (-1)^{(j,I)} z^j \beta^{I\cup\lr{j}} + d\beta^I.
$$ 
Let $\partial e$ denote the set of endpoints of an edge $e$. Then 
\ali{
  f\widetilde \omega^{I}_B  
=& \sum_{T\in \mathcal T_n} \sT \prod_{i=1}^n  (z^i)^{d^T_i-1} \bigwedge_{e\in T} f\omega^{e}_B \\
=& \sum_{T\in \mathcal T_n} \sT \prod_{i=1}^n  (z^i)^{d^T_i-1} \bigwedge_{e\in T} \LR{\omega^e +\sum_{j\notin \partial e} (-1)^{(j,\partial e)} z^j \beta^{\partial e \cup\lr{j}} + d\beta^e} \\
}
Similarly, the total contribution from all type 1 and type 2 graphs involving $\beta^{\partial e \cup\lr{j}}$ vanishes. As a result, 
\ali{
  f\widetilde \omega^{\setn}_B  
=& \sum_{T\in \mathcal T_n}\sT  \prod_{i=1}^n  (z^i)^{d^T_i-1} \bigwedge_{e\in T} \LR{\omega^e  + d\beta^e} \\
=& \sum_{T\in \mathcal T_n}\sT  \prod_{i=1}^n  (z^i)^{d^T_i-1} \bigwedge_{e\in T} \omega^e + \sum_{k=1}^{n-1} \sum_{i_1,\cdots,i_k} \sum_{\substack{T\in \mathcal T_n \\ e_{i_1},\cdots,e_{i_k}\in T}} \pm \sT \prod_{i=1}^n  (z^i)^{d^T_i-1}   \bigwedge_{j=1}^k d\beta^{e_j}  \bigwedge_{\substack{e\in T \\ e\ne e_{i_1},\cdots,e_{i_k}}} \omega^e \\
=& \omega + \sum_{k=1}^{n-1} \sum_{i_1,\cdots,i_k} \sum_{\substack{T\in \mathcal T_n \\ e_{i_1},\cdots,e_{i_k}\in T}} \pm \sT \prod_{i=1}^n  (z^i)^{d^T_i-1} d\LR{\beta^{e_1} \bigwedge_{j=2}^k d\beta^{e_j}} \bigwedge_{\substack{e\in T \\ e\ne e_{i_1},\cdots,e_{i_k}}} \omega^e \\
=& \omega + \sum_{k=1}^{n-1} \sum_{i_1,\cdots,i_k} \sum_{\substack{T\in \mathcal T_n \\ e_{i_1},\cdots,e_{i_k}\in T}} \pm \sT\prod_{i=1}^n  (z^i)^{d^T_i-1}  \beta^{e_1} \bigwedge_{j=2}^k d\beta^{e_j}  \,\, d\LR{\bigwedge_{\substack{e\in T \\ e\ne e_{i_1},\cdots,e_{i_k}}} \omega^e} +d(-)\\
=& \omega + \sum_{k=1}^{n-1} \sum_{i_1,\cdots,i_k} \beta^{e_1} \bigwedge_{i=2}^k d\beta^{e_i} \sum_{\substack{T\in \mathcal T_n \\ e_{i_1},\cdots,e_{i_k}\in T}} \pm \sT\prod_{i=1}^n  (z^i)^{d^T_i-1}  \sum_{\substack{e_0\in T \\ e_0\ne e_{i_1},\cdots,e_{i_k}}} d\omega^{e_0}    \bigwedge_{\substack{e\in T \\ e\ne e_0,e_{i_1},\cdots,e_{i_k}}} \omega^e +d(-)\\
=& \omega + d(-)\\
}
because the total contribution from all type 1 and type 2 graphs of $d\omega^{e_0}$ vanishes.  
Then 
$$
f\LR{[\omega^{\setn}_B - \widetilde \omega^{\setn}_B]}=[0],
$$
which implies $\omega^{\setn}_B - \widetilde \omega^{\setn}_B=d(-)$.
\end{proof} 

\begin{proof}[Proof of Proposition \ref{Cn-nonformal}]
Suppose there is a chain of quasi-isomorphisms of non-negatively graded CDGAs 
$$
(A^\bullet(\C^n\punctured ), \bp)=A_1 \xleftarrow{f_1} B_1 \xrightarrow{g_1} A_2 \xleftarrow{f_2} \cdots  \xrightarrow{g_{k-1}} A_{k}\xleftarrow{f_k} B_{k} \xrightarrow{g_k} A_{k+1}=(H^\bullet(\C^n\punctured),0).
$$
We can lift the system of elements from $A_i$ to $B_i$ using Lemma \ref{Zhou-lemma}, and push  them forward along $g_i$. Finally, we find 
$$
\widetilde \omega^{\setn}_{A_{k+1}}=\sum_{T\in \mathcal T_n} \sT \prod_{i=1}^n  (z^i_{A_{k+1}})^{d^T_i-1} \bigwedge_{e\in T} \omega^{e}_{A_{k+1}} 
$$ 
in $A_{k+1}=(H^\bullet(\C^n\punctured),0)$. However, the degree $1$ part of $A_{k+1}$ is ${0}$, so $\omega^{e}_{A_{k+1}}=0$, which implies $\widetilde \omega^{\setn}_{A_{k+1}}=0$, a contradiction. 
Similarly, $\A(\C^n \punctured )$ is non-formal. 
\end{proof}

\subsection{Non-formality for \texorpdfstring{$n'=1$, $n=1$}{n'=1, n=1}}\label{11}
\begin{prop}\label{11-nonformal}
$\A(\R\times \C \punctured )$ and $\Adef(\R\times \C \punctured )$ are non-formal. 
\end{prop}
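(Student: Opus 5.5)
The plan is to follow the strategy of Proposition \ref{Cn-nonformal}. I will write down a finite system of chain-level elements of $\A(\R\times\C\punctured)$, together with relations among them, that can be transported along any zigzag of quasi-isomorphisms. At the end of the zigzag, in $(H^\bullet,0)$, the relations will force the nonzero class $[\omega]$ to vanish. Here $n'=n=1$, $H^0=\C[z]$ (resp.\ $\Hol(\C)$), $H^1=\C[\partial]\omega$, and $\omega$ has degree $1$.

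\textbf{The model system.} The mixed Jouanolou model (Proposition \ref{prop:two-point-Jouanolou}) provides degree-$0$ elements $z$, $\mu_1$, $\mu_2$ with $\mu_1^2+z\mu_2=1$. For $I=\emptyset$ the cochain $\gamma^\emptyset$ gives $\omega^\emptyset=\mu_1$. The relation $\bd\omega^I=\sum_j\pm z^j\omega^{I\cup\{j\}}$ then reads $\bd\mu_1=\pm z\omega$. Differentiating $\mu_1^2+z\mu_2=1$ and using that $z$ is a non-zero-divisor yields $\bd\mu_2=\mp 2\mu_1\omega$. Finally, the formula $\omega=\sum_i\pm\mu_i\,\bd\mu_j\,\phi(j)$ expresses $\omega=c_1\mu_1\bd\mu_2+c_2\mu_2\bd\mu_1$ for explicit constants $c_1,c_2$. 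Substituting the two differential relations shows that this identity is consistent with $\mu_1^2+z\mu_2=1$, i.e.\ it is a genuine relation rather than a tautology. I will fix all signs and constants by a direct computation.

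\textbf{Transport along quasi-isomorphisms (analog of Lemma \ref{Zhou-lemma}).} Let $f:B\to A$ be a quasi-isomorphism of non-negatively graded CDGAs, and suppose $A$ contains $z,\mu_1,\mu_2,\omega$ with $z,\omega$ closed, $[\omega]\neq0$, and the relations $d\mu_1=z\omega$, $d\mu_2=-2\mu_1\omega$, $\mu_1^2+z\mu_2=1$, and $\omega=c_1\mu_1d\mu_2+c_2\mu_2d\mu_1+d(-)$. I will lift these step by step.
\begin{itemize}
\item Lift $[z]$ and $[\omega]$ to closed $z_B,\omega_B$. Since $f$ is injective on $H^0=\ker d|_{B^0}$, $f(z_B)=z$ exactly.
\item Since $f$ is injective on $H^1$ and $zω$ is exact in $A$, the class $[z_B\omega_B]$ vanishes. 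Choose $\mu_{1,B}$ with $d\mu_{1,B}=z_B\omega_B$.
\item Graded commutativity gives $\omega_B^2=0$, so $\mu_{1,B}\omega_B$ is closed. Its image differs from $\mu_1\omega$ by $(f\mu_{1,B}-\mu_1)\omega$ plus an exact term. The difference $f\mu_{1,B}-\mu_1$ is closed of degree $0$, so it lies in $\C[z]$. I will use the freedom $\mu_{1,B}\mapsto\mu_{1,B}+p(z_B)$ to make $[\mu_{1,B}\omega_B]=0$, and then choose $\mu_{2,B}$.
\item The element $\mu_{1,B}^2+z_B\mu_{2,B}$ is closed of degree $0$, so it is a polynomial in $z_B$. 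Using the residual freedom in $\mu_{2,B}$ (adding closed degree-$0$ elements), I will normalize it to $1$.
\item The last relation, $\omega_B-c_1\mu_{1,B}d\mu_{2,B}-c_2\mu_{2,B}d\mu_{1,B}\in\Im d$, follows exactly as at the end of the proof of Lemma \ref{Zhou-lemma}. The left side is closed, and its image under $f$ is exact because the corrections are of the form $d(\text{stuff})$ multiplied by closed elements. Injectivity on $H^1$ then gives exactness in $B$.
\end{itemize}
Pushing forward along the forward maps $g_i$ is immediate, since those maps preserve all relations on the nose.

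\textbf{Conclusion and main obstacle.} At the end of the zigzag $A_{k+1}=(H^\bullet,0)$ has zero differential. There $d\mu_{1}=d\mu_{2}=0$ and $\Im d=0$, so the last relation gives $\omega_{A_{k+1}}=0$. This contradicts $[\omega]\neq0$, so both $\A(\R\times\C\punctured)$ and $\Adef(\R\times\C\punctured)$ are non-formal; the constructible case is identical. I expect the main obstacle to be the lifting step for $\mu_{2,B}$ and the normalization $\mu_{1,B}^2+z_B\mu_{2,B}=1$. Unlike Lemma \ref{Zhou-lemma}, there is no vanishing of $H^1$ to absorb obstructions, because $H^1$ is exactly where $\omega$ lives. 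The argument must therefore exploit the degree-$0$ ambiguity $\C[z]$ in choosing $\mu_{1,B}$ and $\mu_{2,B}$, together with the explicit module structure $z\cdot\partial^k\omega$ from Proposition \ref{prop:local-cohomology}, to kill the class $[\mu_{1,B}\omega_B]$. If this direct adjustment does not work, my fallback is to replace $\mu_2$ by a suitable combination of $\mu_2$ and $\partial\omega$-primitives, choosing the correction so that the resulting obstruction class is one the $\C[z]$-ambiguity can cancel.
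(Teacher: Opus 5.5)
Your overall strategy matches the paper's. You transport the system $(z,\mu_1,\mu_2,\omega)$ along each quasi-isomorphism and conclude $\omega=0$ in $(H^\bullet,0)$. With the paper's $\omega$ one has $z\omega=-2\bd\mu_1$, $\mu_1\omega=\bd\mu_2$ and $\omega=3\mu_1\bd\mu_2+\bd(-)$; rescaling $\omega$ by $-\tfrac12$ gives your normalization $d\mu_1=z\omega$, $d\mu_2=-2\mu_1\omega$.

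\textbf{The gap: the lifting step for $\mu_{1,B}$ fails as written.} You only know $f(\omega_B)=\omega+d\beta$, and in general $\beta\neq0$. Then
\[
d(f\mu_{1,B}-\mu_1)=z\,d\beta,
\]
so $f\mu_{1,B}-\mu_1$ is neither closed nor in $H^0$. Likewise, in
\[
f(\mu_{1,B}\omega_B)-\mu_1\omega=(f\mu_{1,B}-\mu_1)\omega+f(\mu_{1,B})\,d\beta,
\]
the last term is not exact, because $f(\mu_{1,B})$ is not closed.

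The missing idea, which is the actual content of the paper's lemma, is to compare $f\mu_{1,B}$ with the corrected element $\mu_1+z\beta$. The element $\gamma=f\mu_{1,B}-\mu_1-z\beta$ is closed of degree $0$. Lift it to a closed $\gamma_B$ and replace $\mu_{1,B}$ by $\mu_{1,B}-\gamma_B$. Then $f\mu_{1,B}=\mu_1+z\beta$ on the nose, and
\[
f(\mu_{1,B}\omega_B)=(\mu_1+z\beta)(\omega+d\beta)=d\bigl(-\tfrac12\mu_2+\mu_1\beta+\tfrac12 z\beta^2\bigr),
\]
so $[\mu_{1,B}\omega_B]=0$ and $\mu_{2,B}$ exists.

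No module structure $z\cdot\partial^k\omega$ and no $\partial\omega$-primitives are needed. The entire obstruction is $[\gamma\omega]$ with $\gamma$ closed of degree $0$, and this $H^0$-adjustment kills it.

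Your justification of the last relation has the same defect, since $\beta$ and $\mu_1$ are not closed. The correct check uses only $d\mu_{2,B}=-2\mu_{1,B}\omega_B$:
\[
(\mu_1+z\beta)^2(\omega+d\beta)-\mu_1^2\omega=d\bigl(\mu_1^2\beta+z\mu_1\beta^2+\tfrac13 z^2\beta^3\bigr).
\]
Hence $f(\mu_{1,B}d\mu_{2,B})-\mu_1d\mu_2$ is exact, and injectivity on $H^1$ finishes.

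\textbf{The Jouanolou relation is unnecessary.} The relation $\mu_1^2+z\mu_2=1$, which you single out as your main obstacle, is not needed, and the paper never transports it. The contradiction in $(H^\bullet,0)$ uses only $\omega=c\,\mu_1d\mu_2+d(-)$.

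Your claim that this identity is ``a genuine relation rather than a tautology'' is also inaccurate. Given the two differential relations, $\mu_1^2+z\mu_2=1$ gives
\[
\mu_1d\mu_2=-2\mu_1^2\omega=-2\omega+2\mu_2d\mu_1,
\]
that is, $3\mu_1d\mu_2=-2\omega+2d(\mu_1\mu_2)$. So you are carrying redundant data.

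If you keep the Jouanolou relation anyway, the normalization needs an argument you did not give. Adding a closed $q$ to $\mu_{2,B}$ changes $c=\mu_{1,B}^2+z_B\mu_{2,B}$ only by $z_Bq$, so you must first show $c\equiv1$ modulo $z_BH^0(B)$. This follows from two facts:
\begin{enumerate}
\item $f(c)\in1+zA^0$;
\item $z$ is not invertible in $A^0$, because an inverse would be closed and hence invertible in $H^0$.
\end{enumerate}
The cleaner fix is to drop the relation and transport only $d\mu_1=z\omega$, $d\mu_2=-2\mu_1\omega$ and $\omega=c\,\mu_1d\mu_2+d(-)$, as the paper does.
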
 

Here we have  $\rho_1=\frac{x^2}{x^2+2z\bar z}$, $\rho_2=\frac{2z\bar z}{x^2+2z\bar z}$ and $\mu_1=\frac{x}{(x^2+2z\bar z)^{1/2}}$, $\mu_2=\frac{2\bar z}{x^2+2z\bar z}$. Then we have
\ali{
\omega^{1}
= \mu_1\bd \mu_2 - 2\mu_2\bd \mu_1  
= 3\mu_1\bd \mu_2+\bd (-), \\
}
\ali{
z\omega^{1}
=\frac{1}{\mu_1}(\rho_1\bd \rho_2-\rho_2\bd \rho_1)  
=-\frac{1}{\mu_1}\bd \rho_1 
=-2 \bd  \mu_1
}
\ali{
\mu_1\omega^1
=\frac{1}{z}(\rho_1\bd \rho_2-\rho_2\bd \rho_1) 
=\frac{1}{z} \bd \rho_2 
= \bd \mu_2
}

\begin{lem}
Let $A,B$ be non-negatively graded CDGAs, and let $f:B\to A$ be a quasi-isomorphism. Assume there exist $z, \mu_1,\mu_2\in A^0$ and $\omega^1\in A^1$ such that 
$$
\omega^1=3\mu_1 d\mu_2 +d(-), \quad
z\omega^{1}= -2d\mu_1, \quad
\mu_1\omega^1=d\mu_2.
$$
Moreover, $z,\omega^1$ are $d$-closed and represent nontrivial cohomology classes in $H^\bullet(A)$. 

Then there exist elements $z_B, \mu_{1,B},\mu_{2,B},\omega^1_B \in B$ such that $z_B,\omega^1_B$ are $d$-closed, and
$$
f(z_B)=z, \quad 
f(\omega^1_B)=\omega^1+d(-), \quad
z_B\omega^{1}_B= -2d\mu_{1,B}, \quad
\mu_{1,B}\omega^1_B=d\mu_{2,B}, \quad
\omega^1_B=3\mu_{1,B} d\mu_{2,B} +d(-).
$$ 
\end{lem}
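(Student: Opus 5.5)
The plan is to lift the data step by step along $f$. At each stage I use that $H^\bullet(f)$ is injective, so a $d$-closed element of $B$ whose image is exact in $A$ is exact in $B$. I also use that, since $A$ and $B$ are non-negatively graded, $H^0$ is just the space of $d$-closed degree-$0$ elements, so $f$ restricts to an isomorphism on closed degree-$0$ elements.

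\emph{Lifting $z$ and $\omega^1$.} First choose $z_B\in B^0$ and a closed $\omega^1_B\in B^1$ representing the preimages of $[z]$ and $[\omega^1]$. Then $f(z_B)=z$ exactly, because there are no degree $-1$ elements, and $f(\omega^1_B)=\omega^1+d\beta$ for some $\beta\in A^0$.

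\emph{Constructing $\mu_{1,B}$.} The closed class $z_B\omega^1_B$ maps to $z\omega^1+d(z\beta)=d(-2\mu_1+z\beta)$, which is exact. By injectivity there is $\mu_{1,B}$ with $-2\,d\mu_{1,B}=z_B\omega^1_B$. Then $d\big(f(\mu_{1,B})-\mu_1+\tfrac12 z\beta\big)=0$, so $f(\mu_{1,B})=\mu_1-\tfrac12 z\beta+c$ with $c$ closed of degree $0$. Replacing $\mu_{1,B}$ by $\mu_{1,B}-f^{-1}(c)$, using the isomorphism on closed degree-$0$ elements, I may assume $c=0$.

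\emph{Constructing $\mu_{2,B}$.} The element $\mu_{1,B}\omega^1_B$ is closed, since
\[
d(\mu_{1,B}\omega^1_B)=-\tfrac12 z_B\,\omega^1_B\omega^1_B=0
\]
by graded commutativity in odd degree. Its image is $(\mu_1-\tfrac12 z\beta)(\omega^1+d\beta)$. Rewriting $\mu_1 d\beta=d(\mu_1\beta)+\tfrac12 z\beta\,\omega^1$ and $z\beta\, d\beta=d(\tfrac12 z\beta^2)$, and using $\mu_1\omega^1=d\mu_2$, the whole image is exact. Hence $\mu_{1,B}\omega^1_B=d\mu_{2,B}$ for some $\mu_{2,B}$. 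Comparing differentials then gives
\[
f(\mu_{2,B})=\mu_2+\mu_1\beta-\tfrac14 z\beta^2+c'
\]
with $c'$ closed of degree $0$. As before, I can absorb $c'$ into $\mu_{2,B}$, although this is not actually needed, since $c'$ does not affect $d\mu_{2,B}$.

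\emph{The last relation (main obstacle).} It remains to show that the closed element $\omega^1_B-3\mu_{1,B}\,d\mu_{2,B}$ is exact. By injectivity it suffices to check that its image $\omega^1+d\beta-3(\mu_1-\tfrac12 z\beta)\,d(\mu_2+\mu_1\beta-\tfrac14 z\beta^2)$ is exact in $A$. Using $\omega^1-3\mu_1 d\mu_2=d(-)$, I will expand the remaining terms. Each should become exact after applying $z\omega^1=-2d\mu_1$, $\mu_1\omega^1=d\mu_2$, $dz=0$ and Leibniz: the $\beta$-linear terms cancel in the same way as in the $\mu_{2,B}$ step, and the $\beta^2$ and $\beta^3$ terms assemble into differentials of polynomials in $z,\mu_1,\beta$. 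This bookkeeping is the step I expect to be the main obstacle. If the direct expansion becomes unwieldy, I will instead first modify the lift $\omega^1_B$ (replace $\omega^1_B$ by $\omega^1_B-d\beta_B$ where possible), or note that the class of the difference is determined by a pairing with $H^0$ that vanishes on the image.
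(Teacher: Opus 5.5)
Your proposal follows the paper's proof step for step. It uses the same lifts of $z$ and $\omega^1$, the same correction of $\mu_{1,B}$ by a closed degree-zero lift so that $f(\mu_{1,B})=\mu_1-\tfrac12 z\beta$, the same $\mu_{2,B}$ with $f(d\mu_{2,B})=d(\mu_2+\mu_1\beta-\tfrac14 z\beta^2)$, and the same reduction of the last relation to exactness in $A$ via injectivity of $H^\bullet(f)$.

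The step you flagged as the obstacle closes exactly as you predicted, so the fallbacks are not needed. Writing $\omega^1=3\mu_1 d\mu_2+d\alpha$, one gets $3(\mu_1-\tfrac12 z\beta)\,d(\mu_2+\mu_1\beta-\tfrac14 z\beta^2)=\omega^1+d\bigl(-\alpha+3\mu_1^2\beta-\tfrac32\mu_1 z\beta^2+\tfrac14 z^2\beta^3\bigr)$, which is the paper's identity. You also asserted without justification that $3\mu_{1,B}\,d\mu_{2,B}$ is closed; this holds because $d\mu_{1,B}\,d\mu_{2,B}=-\tfrac12 z_B\mu_{1,B}(\omega^1_B)^2=0$.
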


\begin{proof}
Let $[z_B],[\omega^1_B]$ be cohomology classes  in $H^\bullet(B,d)$ which are mapped to $[z], [\omega^1]$ in $H^\bullet(A,d)$. Since $A$ is non-negatively graded, we have $f(z_B)=z$. Moreover, $f(\omega^{1}_B)=\omega^{1}+d\beta$ for some $\beta$.

Because $d(z_B\omega^{1}_B)=0$ and $f([z_B\omega^{1}_B])=[z(\omega^{1}+d\beta)]=[0]$, we have $[z_B\omega^{1}_B]=0$.  Hence  $z_B\omega^{1}_B= -2d \widetilde\mu_{1,B}$ for some $\widetilde\mu_{1,B}$. We have 
$$
d(f\widetilde\mu_{1,B}-\mu_1+\frac{1}{2}z\beta)
=fd\widetilde\mu_{1,B}-d\mu_1+\frac{1}{2}zd\beta
=-\frac{1}{2}f(z_B\omega^{1}_B)+\frac{1}{2}z\omega^1+\frac{1}{2}zd\beta
=0
$$
so $\gamma:=f\widetilde\mu_{1,B}-\mu_1+\frac{1}{2}z\beta$ is a $d$-closed element in $A$. We can find some $[\gamma_B]\in H^\bullet(B,d)$ such that $f([\gamma_B])=[\gamma]$. We have $f(\gamma_B)=\gamma$ because $B$ is non-negatively graded. Let $\mu_{1,B}=\widetilde\mu_{1,B} - \gamma_B$. Then 
$$
f\mu_{1,B}
=f\widetilde\mu_{1,B} - f\gamma_B
=\LR{\gamma+\mu_1-\frac{1}{2}z\beta}-\gamma
=\mu_1-\frac{1}{2}z\beta,
$$
$$
d\mu_{1,B}
=d\widetilde\mu_{1,B} - d\gamma_B
=-\frac{1}{2}z_B\omega^{1}_B.
$$
We have $d(\mu_{1,B}\omega^{1}_B)=-\frac{1}{2}z_B\omega^{1}_B\omega^{1}_B=0$, and 
\ali{
f(\mu_{1,B}\omega^{1}_B) 
&=\LR{\mu_1-\frac{1}{2}z\beta}\LR{\omega^{1}+d\beta} \\
&=\mu_1\omega^{1} + \mu_1d\beta -\frac{1}{2}z\beta\omega^{1} -\frac{1}{2}z\beta d\beta \\
&=d\mu_2 + \mu_1d\beta +\beta d\mu_1  -\frac{1}{2}z\beta d\beta \\
&=d\LR{\mu_2+\mu_1\beta-\frac{1}{4}z\beta^2}.
} 
So $[\mu_{1,B}\omega^{1}_B]=[0]$ and we can find $\mu_{1,B}\omega^{1}_B=d\mu_{2,B}$ for some $\mu_{2,B}$.  

Let $\widetilde \omega^1_B=3\mu_{1,B}d\mu_{2,B}$, we see 
$$
d\widetilde \omega^1_B= 3d\mu_{1,B}d\mu_{2,B}=3\LR{-\frac{1}{2}z_b\omega^{1}_B}(\mu_{1,B}\omega^1_B)=0.
$$
Denote $\omega^1=3\mu_1 d\mu_2 +d\alpha$, then we have 
\ali{
f\widetilde \omega^1_B
&=3f\mu_{1,B}fd\mu_{2,B} \\
&=3\LR{\mu_1-\frac{1}{2}z\beta} d\LR{\mu_2+\mu_1\beta-\frac{1}{4}z\beta^2} \\
&=\omega^1+d\LR{-\alpha+3\mu_1^2\beta-\frac{3}{2}\mu_1z\beta^2+\frac{1}{4}z^2\beta^3}.
}
So $\omega^1_B=\widetilde \omega^1_B+d(-)$.
\end{proof}

\begin{proof}[Proof of Proposition \ref{11-nonformal}]
If $(\A(\R\times \C\punctured ), \d )$ is linked to  $A_{k+1}=(H^\bullet(\R\times \C\punctured),d=0)$ by  quasi-isomorphisms of non-negatively graded CDGAs, then we find $[\omega^1_{A_{k+1}}]=[3\mu_{1,{A_{k+1}}}d\mu_{2,{A_{k+1}}}]=[0]$, contradiction. Similarly, $\Aconk(\R\times \C\punctured )$ is non-formal.
\end{proof}

\subsection{Non-formality for \texorpdfstring{$n'=1$, $n\ge2$}{n'=1, n≥2}}\label{1>1}

Throughout this section we fix $n\ge2$, and set
$$
R=\C[z^1,\ldots,z^n],\qquad
I=(z^1,\ldots,z^n),\qquad
k=R/I,
$$
and write
$$
\HH=H^\bullet(J_{1,n}) 
=R\oplus H_I^n(R)[-n], \qquad 
\HH^0=R, \quad \HH^n=H_I^n(R)[-n].
$$
where $J_{1,n}$ is the mixed Jouanolou model introduced in Section \ref{sec:chain-level}, and $H_I^n(R)$ is the local cohomology of $R$ with support in $I$. 
\begin{lemma}\label{lemma-R-zigzag}
Suppose that $J_{1,n}$ and $\HH$ are connected by a zigzag of quasi-isomorphisms of CDGAs over $\mathbb C$.  Then there exist
\begin{itemize}
\item a CDGA $Q$,
\item CDGA quasi-isomorphisms $p:Q\to J_{1,n}$, $g:Q\to \HH$,
\item a CDGA map $h:R\to Q$,
\item an automorphism $\sigma: R\to R$,
\end{itemize}
such that 
\begin{itemize}
\item $p$ is $R$-linear,
\item $g$ is $R$-linear after precomposing the standard $R$-action on $\HH$ with $\sigma$,
\item $\sigma(I)=I$,
\item the following diagram commutes:
\begin{center}
\begin{tikzcd}
{J_{1,n}} & Q \arrow[l, "p"'] \arrow[r, "g"]                       & \HH                \\
  & R  \arrow[r, "\sigma"'] \arrow[u, "h"] & \HH^0=R \arrow[u, hook]
\end{tikzcd}
\end{center}
\end{itemize}
\end{lemma}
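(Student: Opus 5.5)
The plan is to exploit the model category structure on CDGAs over $\C$: every object is fibrant, and cofibrant objects (e.g.\ Sullivan-type / quasi-free algebras) can be used as sources. If $J_{1,n}$ and $\HH$ are connected by a zigzag of quasi-isomorphisms, they are isomorphic in the homotopy category. Hence there is a single cofibrant CDGA $Q_0$ with quasi-isomorphisms $p_0:Q_0\to J_{1,n}$ and $g_0:Q_0\to \HH$. Concretely, take $Q_0\to J_{1,n}$ to be a cofibrant replacement; a homotopy-category isomorphism $J_{1,n}\cong\HH$ is then represented by an honest map $g_0:Q_0\to\HH$, because $\HH$ is fibrant. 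This $g_0$ is a quasi-isomorphism by two-out-of-three in the homotopy category.

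To build $R$-linearity into $p$, I will choose the cofibrant replacement relative to $R$. Regard $R=\C[z^1,\dots,z^n]$ (degree $0$, zero differential) as the CDGA mapping to $J_{1,n}$ via $z^k\mapsto z^k$. Factor $R\to J_{1,n}$ as a cofibration $R\to Q$ followed by a trivial fibration $p:Q\to J_{1,n}$. Then $Q$ is cofibrant (since $R$ is a free CDGA, hence cofibrant), $h:R\to Q$ is the cofibration, and $p\circ h$ is the structure map. This makes $p$ $R$-linear, and the left square commutes.

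Next I produce $g$. As above, there is a map $g':Q\to\HH$ which is a quasi-isomorphism and realizes the given homotopy equivalence. Consider the composite $g'\circ h:R\to\HH$. Since $R$ is concentrated in degree $0$ and $\HH^{<0}=0$, this map lands in degree-$0$ cocycles, i.e.\ in $\HH^0=R$. It is therefore an algebra endomorphism $\sigma:R\to R$, and the right square commutes tautologically with $g=g'$. Moreover, $g$ is $R$-linear when $\HH$ is given the $R$-action twisted by $\sigma$: for $r\in R$ and $q\in Q$ we have $g(h(r)q)=\sigma(r)g(q)$.

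The main obstacle is showing that $\sigma$ is an automorphism with $\sigma(I)=I$. First, $H^0(h)$ followed by $H^0(p)$ is the identity of $R=H^0(J_{1,n})$, so $H^0(h):R\to H^0(Q)$ is an isomorphism. Also $H^0(g)$ is an isomorphism $H^0(Q)\to\HH^0=R$. Hence $\sigma=H^0(g)\circ H^0(h)$ is an automorphism of $R$.

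For $\sigma(I)=I$, I will use the module structure on top cohomology, which is intrinsic up to this twist. In $J_{1,n}$ the $R$-module $H^n=H^n_I(R)$ is $I$-torsion, and its annihilator-support is exactly the maximal ideal $I$: every element is killed by a power of $I$, and the socle is $k=R/I$. Via $p$ and $g$, this gives an isomorphism of $R$-modules $H^n(J_{1,n})\cong \sigma^*H^n_I(R)$. The latter is $\sigma^{-1}(I)$-torsion with support $\{\sigma^{-1}(I)\}$, so $\sigma^{-1}(I)=I$.

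Here I must double check that the $H^0$-action on $H^n$ is compatible: the $H^0(Q)$-module structure on $H^n(Q)$ is transported by both $p$ and $g$ as ring and module maps. This is where the care lies, especially in identifying the support of $H^n_I(R)$ as the single closed point $I$, which is standard since $H^n_I(R)\cong R_{z^1\cdots z^n}/\sum_j R_{z^1\cdots\widehat{z^j}\cdots z^n}$ is $I$-power torsion and nonzero.
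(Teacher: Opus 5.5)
Your proposal is correct and follows essentially the paper's argument: a cofibrant $Q$ with trivial fibration $p:Q\to J_{1,n}$ and a map $h:R\to Q$ making $p$ $R$-linear, a quasi-isomorphism $g:Q\to\HH$ obtained from the zigzag, $\sigma=H^0(g)\circ H^0(h)$ shown to be an automorphism via $H^0(j)$ and $H^0(p)$, and $\sigma(I)=I$ obtained by comparing the supports of $H^n_I(R)$ and its $\sigma$-twist. The only difference is cosmetic: you get $h$ by factoring $R\to J_{1,n}$ as a cofibration followed by a trivial fibration, whereas the paper first takes a cofibrant replacement of $J_{1,n}$ and then gets $h$ by lifting, using that $\C\to R$ is a cofibration.
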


\begin{proof}
The map $\mathbb C\to R$ is a cofibration in the model category of CDGAs because $R$ is freely generated as a commutative algebra by degree-zero elements. Choose a cofibrant replacement
$$
p:Q\lra J_{1,n}
$$
which is a trivial fibration. There exists a quasi-isomorphism $g:Q\to \HH$ by the zigzag.
	
The lifting problem
\begin{center}
\begin{tikzcd}
\C \arrow[d] \arrow[r]         & Q \arrow[d, "p"] \\
R \arrow[r, "j"', hook] \arrow[ru, "h", dashed] & {J_{1,n}}       
\end{tikzcd}
\end{center}
has a solution $h:R\to Q$, so $p$ is $R$-linear. Moreover, both $H^0(j)$ and $H^0(p)$ are isomorphisms, so $H^0(h)$ is an isomorphism. Composing with the isomorphism $H^0(g)$, we get the isomorphism $\sigma:R\to R$. 
	
Put $M=H_I^n(R)$. Every \v Cech monomial $\frac{1}{(z^1)^{k_1}\cdots (z^n)^{k_n}}$ in $M$ is annihilated by a power of every $z^i$, so we have 
$$
\operatorname{Supp}_R(M)=\{I\}.
$$
Let ${}_{\sigma}M$ be the $R$-module with the $\sigma$-twisted action, i.e., $r\cdot m=\sigma(r)m$. 
Note that $H^n(g)\circ H^n(p)^{-1}: M \to {}_{\sigma}M$ is an isomorphism. Consequently, we have 
$$
\{\sigma^{-1}(I)\}
=\operatorname{Supp}_R({}_{\sigma}M)
=\operatorname{Supp}_R(M)
=\{I\}.
$$
Thus $\sigma(I)=I$.  
\end{proof}

\begin{prop}\label{prop:formal-1>1}
We have
\ali{
H^\bullet(k\otimes_R^{\mathbf L}J_{1,n}) &\cong k\times k, \\
H^\bullet(k\otimes_R^{\mathbf L} \HH ) &\cong k[\epsilon]/(\epsilon^2)
}
as graded algebras concentrated in degree $0$. 
Consequently, $J_{1,n}$ and $\Hconk(\R\times \C^n \punctured)$ are non-formal.
\end{prop}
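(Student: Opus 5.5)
The plan is to compute both derived fibres over the origin $0\in\C^n$ with the Koszul resolution of $k$. Non-formality then follows by comparing them through the zigzag of Lemma \ref{lemma-R-zigzag}.

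\emph{Step 1: the fibre of $J_{1,n}$.} Write $J_{1,n}=\Gamma(X,\Omega^\bullet_{X/S})$ with $X=\Spec B$, as in the proof of Proposition \ref{prop:coho-Jouanolou}. The map $X\to S$ is smooth, so $B$ is flat over $R$ and each $\Omega^p_{B/R}$ is a projective $B$-module, hence flat over $R$. Therefore $k\otimes^{\mathbf L}_R J_{1,n}\simeq k\otimes_R J_{1,n}=\Omega^\bullet_{B_0/k}$, where
$$B_0=B/IB=k[y,w^1,\dots,w^n]/(y^2-1).$$
The special fibre is thus $\{y=\pm1\}\times\mathbb A^n$, a disjoint union of two affine spaces. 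Its algebraic de Rham cohomology is $k\times k$, concentrated in degree $0$ and spanned by the two idempotents $(1\pm y)/2$. This is an isomorphism of algebras.

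\emph{Step 2: the fibre of $\HH$.} Use the Koszul CDGA
$$K=R[\xi_1,\dots,\xi_n],\qquad |\xi_i|=-1,\qquad d\xi_i=z^i .$$
It is a semi-free $R$-CDGA resolution of $k$, so $k\otimes^{\mathbf L}_R\HH\simeq K\otimes_R\HH$ as CDGAs. The summand $R$ contributes $k$ in degree $0$. The summand $M[-n]$, with $M=H^n_I(R)$, contributes $\operatorname{Tor}^R_\bullet(k,M)$ shifted by $n$. Since $M$ is the injective hull of $k$, one has $\operatorname{Tor}_i^R(k,M)=k$ for $i=n$ and $0$ otherwise. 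This can be checked directly from the Koszul complex on $z^i$ acting on $\C[1/z^i]\cdot\frac{1}{z^1\cdots z^n}$. The result is a single class in total degree $0$, represented by
$$\epsilon=\xi_1\cdots\xi_n\otimes\frac{1}{z^1\cdots z^n}.$$
Then $\epsilon^2=0$ already at the chain level, because $M\cdot M=0$ in $\HH$ (and also $\xi_i^2=0$). Hence $H^\bullet(k\otimes^{\mathbf L}_R\HH)\cong k[\epsilon]/(\epsilon^2)$.

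\emph{Step 3: non-formality.} Suppose $J_{1,n}$ and $\HH$ were linked by a zigzag of CDGA quasi-isomorphisms, and take $Q,p,g,h,\sigma$ from Lemma \ref{lemma-R-zigzag}.
\begin{itemize}
\item Since $p$ is an $R$-linear quasi-isomorphism and $K$ is $R$-semi-free, $K\otimes_R Q\to K\otimes_R J_{1,n}$ is a quasi-isomorphism of CDGAs.
\item The map $g$ is $R$-linear for the $\sigma$-twisted action on $\HH$. So $g$ induces a quasi-isomorphism $K\otimes_R Q\to K^\sigma\otimes_R\HH$, where $K^\sigma$ is the Koszul resolution of the twisted module ${}_\sigma k$.
\item Because $\sigma(I)=I$, we have ${}_\sigma k\cong R/\sigma^{-1}(I)=k$. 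Hence $K^\sigma\otimes_R\HH$ computes $k\otimes^{\mathbf L}_R\HH$ up to the algebra automorphism induced by $\sigma$.
\end{itemize}
Taking cohomology rings gives $k\times k\cong k[\epsilon]/(\epsilon^2)$ as $k$-algebras. This is impossible: the first ring is reduced with a nontrivial idempotent, while the second is local with a nonzero nilpotent. So $J_{1,n}$ is non-formal.

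By Proposition \ref{prop:two-point-Jouanolou}, $J_{1,n}\simeq\Aconk(\R\times\C^n\punctured)$, with cohomology $\HH\cong\Hconk(\R\times\C^n\punctured)$. Therefore the constructible CDGA is non-formal as well.

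The main obstacle I expect is Step 3. One must make sure that derived base change along $h$ is a well-defined homotopy invariant of $R$-CDGAs, and that the twist by $\sigma$ genuinely does not change the derived fibre's cohomology ring. The condition $\sigma(I)=I$ from Lemma \ref{lemma-R-zigzag} is exactly what is needed for the twist. I would handle this by working throughout with the explicit semi-free $K$, so that every base change is an honest tensor product of CDGAs and only the R-linear quasi-isomorphisms $p$ and $g$ need to be transported.
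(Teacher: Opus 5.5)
Your proposal is correct and follows the paper's proof step by step. You use flat base change along the smooth map $R\to B$ to get $k\times k$ for $J_{1,n}$, the Koszul CDGA to get $k[\epsilon]/(\epsilon^2)$ with $\epsilon$ represented by $\xi_1\cdots\xi_n\otimes\frac{1}{z^1\cdots z^n}$, and then Lemma \ref{lemma-R-zigzag} with $\sigma(I)=I$ to carry the Koszul base change across the zigzag. Along the way you also fill in details the paper leaves implicit: termwise flatness of $\Omega^p_{B/R}$, boundedness of $K$ so that $K\otimes_R(-)$ preserves quasi-isomorphisms, and the identification of the $\sigma$-twisted fibre.
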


\begin{proof}
Set
$$
B=R[y,w^1,\ldots,w^n]\Big/\left(y^2+\sum_i z^iw^i-1\right).
$$
Then $J_{1,n}=\Omega^\bullet_{B/R}$. The smoothness of $R\to B$ implies 
\ali{
k\otimes_R^{\mathbf L}J_{1,n} 
&\cong  k\otimes_RJ_{1,n} 
\cong \frac{k[y,w^1,\ldots,w^n,dw^1,\ldots,dw^n]}{(y^2-1)} \\
& \cong k[w^1,\ldots,w^n,dw^1,\ldots,dw^n] \otimes_k k[y]/(y^2-1) \\
&\cong k[w^1,\ldots,w^n,dw^1,\ldots,dw^n] \otimes_k (k\times k)
}  
has cohomology $k\times k$.

Let  
$$
K=R[\xi_1,\ldots,\xi_n],
\qquad |\xi_i|=-1,\qquad d\xi_i=z^i
$$
be the Koszul complex, which is a semi-free commutative $R$-CDGA resolving $k$. Set $R_i=\C[z^i]$, $K_i=R_i[\xi_i]$, $d\xi_i=z^i$ and $M_i=\oplus_{j\ge1} k[(z^i)^{-j}]$, so 
$$
H_I^n(R)\cong M_1\otimes_k\cdots\otimes_kM_n. 
$$
Then 
\ali{
H^\bullet \LR{ k\otimes_R^{\mathbf L} \LR{H_I^n(R)}} 
&\cong H^\bullet \LR{K\otimes_R M_1\otimes_k\cdots\otimes_kM_n} \\
&\cong \bigotimes_{i=1}^{n} H^\bullet (K_i\otimes_{R_i} M_i) 
\cong \bigotimes_{i=1}^{n} k \\
&\cong k.
} 
Thus we have  $H^\bullet(k\otimes_R^{\mathbf L} \HH ) \cong k[\epsilon]/(\epsilon^2)$ with $\epsilon = \Lr{\frac{\xi_1\cdots\xi_n}{z^1\cdots z^n}}$. 
Moreover, if the action is twisted by an automorphism $\sigma$ preserving $I$, then $\sigma$ preserves the residue augmentation $R\to k$.  Replacing the Koszul generators $z^i$ by $\sigma(z^i)$ therefore gives an isomorphic computation.

Suppose $J_{1,n}$ is formal, then applying $K\otimes_R(-)$ to the quasi-isomorphism in Lemma \ref{lemma-R-zigzag} yields an algebra isomorphism $k\times k \cong  k[\epsilon]/(\epsilon^2)$, contradiction.
\end{proof}

\begin{prop}\label{prop:formal-1>1-2}
The CDGA $A_{1,n}:=
A^\bullet
\LR{\mathbb R\times\mathbb C^n\setminus\{0\}}$ is non-formal.
\end{prop}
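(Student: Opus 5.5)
The plan is to mirror the proof of Proposition \ref{prop:formal-1>1}, replacing $J_{1,n}$ by $A_{1,n}$ and the polynomial ring $R$ by a suitable degree-zero subalgebra. The cleanest route is to use the inclusion $R=\C[z^1,\ldots,z^n]\hookrightarrow \Hol(\C^n)=H^0(A_{1,n})$. We then compare derived fibres at the origin. Concretely, we regard $A_{1,n}$ as an $R$-CDGA via $z^i\mapsto z^i$, and $H^\bullet(A_{1,n})=\Hol(\C^n)\oplus\overline{\C[\PPi]\omega}$ as an $R$-algebra via the same map.

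\textbf{Step 1: reduce formality to an $R$-linear zigzag.} Suppose $A_{1,n}$ is formal. Run the argument of Lemma \ref{lemma-R-zigzag}: take a cofibrant replacement $p:Q\to A_{1,n}$ and lift $R\to A_{1,n}$ to $h:R\to Q$. The resulting map $\sigma:R\to H^0(g)\circ H^0(h)$ now lands in $\Hol(\C^n)$ rather than $R$. So $\sigma$ is only a ring homomorphism $R\to\Hol(\C^n)$, given by an entire map $F=(\sigma(z^1),\ldots,\sigma(z^n))$, and we need to show that $F$ is compatible with the point $0$. To do this, compare supports via the top-degree class. For $\omega$ we have $z^i\partial^{\xbeta}\omega\to\partial^{\xbeta-e_i}\omega$, so every class is killed by a power of each $z^i$. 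Transporting this through $H^{n}(g)\circ H^n(p)^{-1}$, every element of $\overline{\C[\PPi]\omega}$ must be killed by powers of $\sigma(z^i)$. Evaluating on $\omega$ and using the residue pairing, this forces $\sigma(z^i)(0)=0$. Hence both $R$-structures use augmentations at $0$.

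\textbf{Step 2: compute derived fibres.} Apply $K\otimes_R(-)$, where $K$ is the Koszul complex with $d\xi_i=z^i$, on both sides.

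For $H^\bullet$, take the tensor product with $\Hol(\C^n)$. Since the $z^i$ form a regular sequence in $\Hol(\C^n)$ (Hefer/Koszul exactness for entire functions), $K\otimes_R\Hol(\C^n)\simeq\C$, the value at $0$. For the top piece, the $z^i$ act on $\overline{\C[\PPi]\omega}\cong\Hol_-(\C\setminus 0)^{\hotimes n}$ as in the polynomial case, so the Koszul cohomology is $\C$, generated by $\xi_1\cdots\xi_n\omega$, with square zero. This gives $\C[\epsilon]/\epsilon^2$, as before. If $\sigma$ is twisted, replacing $z^i$ by $\sigma(z^i)$ in Koszul uses a regular sequence vanishing at $0$ and gives the same answer.

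For $A_{1,n}$, we need $H^\bullet(K\otimes_R A_{1,n})\cong\C\times\C$. The idea is that the derived fibre over $\zz=0$ should be the two half-lines $\lr{x>0}$ and $\lr{x<0}$. I would prove this by comparison: the Jouanolou map $J_{1,n}\hookrightarrow\Adef\hookrightarrow A_{1,n}$ is $R$-linear. Use the \v Cech cover from Proposition \ref{prop:coh-local} and a Koszul spectral sequence. On $U_I$ of the form $\R_\pm\times\C^{a}\times(\C\setminus0)^b$, the Koszul complex over $\Hol(U_I)$ is acyclic whenever some $z^i$ is invertible. It is $\C$ (locally constant functions on $\R_\pm$ at $z=0$) for $U_{\lr{1}}$ and $U_{\lr{2}}$. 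This yields $\C\times\C$ in degree $0$ with componentwise product, consistent with the ring structure.

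\textbf{Conclusion and main obstacle.} The zigzag $A_{1,n}\xleftarrow{p}Q\xrightarrow{g}H^\bullet$ consists of $R$-linear quasi-isomorphisms, up to $\sigma$, between semi-free-enough objects. Tensoring with $K$ then preserves quasi-isomorphisms, because $K$ is semi-free over $R$. This gives an algebra isomorphism $\C\times\C\cong\C[\epsilon]/\epsilon^2$, a contradiction. I expect the main obstacle to be Step 2 on the analytic side. Koszul-type exactness for smooth $\bd$-forms over $\Hol$ needs the nonconstructible \v Cech–Koszul computation, including the topological completions. It may be easier to avoid it by showing directly that $H^\bullet(A_{1,n})$ and $A_{1,n}$ are both flat enough that the plain tensor product computes the answer. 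A fallback is to show that $\Adef(\R\times\C^n\setminus0)\to A_{1,n}$ induces the Koszul-fibre isomorphism, via density.
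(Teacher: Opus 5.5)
Your route is essentially the paper's. You transport a degree-zero algebra of functions through the zigzag so that the arrows become linear up to a twist $\sigma$. You then compare derived fibres at $\zz=0$ with the Koszul complex: $\C\times\C$ for $A_{1,n}$ against $\C[\epsilon]/(\epsilon^2)$ for $H^\bullet(A_{1,n})$. There are two differences. The paper transports all of $E=H^0(A_{1,n})=\Hol(\C^n)$ and uses an automorphism of $E$ fixing $\mathfrak m_0$, whereas you lift only the free algebra $R$ through a cofibrant replacement; since $K_E\otimes_E(-)=K\otimes_R(-)$ the computations agree. Your \v Cech--Koszul spectral sequence also supplies the identification $H^\bullet(K\otimes_R A_{1,n})\cong\C\times\C$, which the paper only asserts.

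However, Step 1 fails as written. You claim every class in $\overline{\C[\PPi]\omega}$ is killed by a power of each $z^i$, but this holds only on the dense subspace $\C[\PPi]\omega$. Since $z^1$ acts on $H^n(A_{1,n})$ as the shift $\partial_1^k\omega\mapsto -k\,\partial_1^{k-1}\omega$, the class $\eta=\sum_{k\ge0}\frac{(-1)^k}{(k!)^2}\partial_1^k\omega$ satisfies $(z^1)^N\eta\neq0$ for every $N$; its \v Cech representative involves $e^{1/z^1}$. So you cannot deduce that $\omega$ is annihilated by a power of $\sigma(z^i)$.

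The repair is to use $\psi=H(g)\circ H(p)^{-1}$ directly. We have $\sigma(z^i)\psi(\omega)=\psi(z^i\omega)=0$ with $\psi(\omega)\neq0$. Suppose $\sigma(z^i)(0)\neq0$. Then $\zz^{\xalpha}/\sigma(z^i)$ is holomorphic on a small ball $B$ around $0$. Writing $\sigma(z^i)\psi(\omega)=\bd\beta$ gives $\zz^{\xalpha}\psi(\omega)=\bd\big(\zz^{\xalpha}\sigma(z^i)^{-1}\beta\big)$ on $B\setminus\{0\}$. Hence every residue integral of $\psi(\omega)$ over a sphere in $B$ vanishes, and $\psi(\omega)$ is exact by Proposition \ref{oint-kernel}, a contradiction. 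Equivalently, $H^n(A_{1,n})$ is a module over the stalk $\OO_{\C^n,0}$, and germs not vanishing at $0$ act invertibly on it.

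Similarly, in Step 2 the claim that replacing $z^i$ by $\sigma(z^i)$ gives the same answer does not follow from $\sigma(z^i)(0)=0$ alone. For example, $F_1=z^1(z^1-1)$ would give $H(K\otimes\Hol(\C^n))\cong\C^2$. You have two ways to close this:
\begin{itemize}
\item Note that $\sigma$ is the restriction to $R$ of the $\C$-algebra automorphism $H^0(g)\circ H^0(p)^{-1}$ of $\Hol(\C^n)$, so $F$ is a biholomorphism fixing $0$.
\item Or bypass the issue: once $F(0)=0$, you have $\sigma(z^i)\omega=\sigma(z^i)(0)\,\omega=0$ in $H^n$. Nothing of degree $-1$ maps onto $\xi_1\cdots\xi_n\otimes H^n$, so $[\xi_1\cdots\xi_n\omega]$ is a nonzero square-zero element of $H^0(K\otimes_R{}_\sigma H^\bullet(A_{1,n}))$. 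Since $\C\times\C$ is reduced, the two algebras cannot be isomorphic.
\end{itemize}
With these two corrections your argument goes through.
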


\begin{proof}[Sketch of proof]
Let $E=H^0(A_{1,n})$. If $A_{1,n}$ is connected to $(H^\bullet(A_{1,n}),0)$ by a zigzag of quasi-isomorphisms of nonnegatively graded CDGAs, then each CDGA in the zigzag contains a copy of $E$, and the $E$-CDGA structure can be transported through the zigzag so that every arrow is $E$-linear, up to an automorphism
$$
\sigma:E\lra E
$$
which fixes the maximal ideal $\mathfrak m_0=(z^1,\cdots,z^n)$. Define $k_0=E/\mathfrak m_0$ and  
$$
K_E=E[\xi_1,\ldots,\xi_n], \qquad |\xi_i|=-1,\qquad d\xi_i=z^i.
$$
We have 
\ali{
H^\bullet(K_E\otimes_EA_{1,n}) &\cong k_0\times k_0, \\
H^\bullet\!\left(
K_E\otimes_EH^\bullet(A_{1,n})
\right)
&\cong k_0[\varepsilon]/(\varepsilon^2).
} 
These algebras are not isomorphic, thus $A_{1,n}$ is non-formal.
\end{proof}

\subsection{Formality for \texorpdfstring{$n'\ge2$, $n\ge1$}{n'≥2, n≥1}}\label{>1>0}

\begin{prop}\label{formality->1>0}
For $n'\ge2$, $n\ge 1$, $(\A(\paka), \d )$ and $(\Adef(\paka), \d )$ are formal. 
\end{prop}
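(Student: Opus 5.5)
The plan is to build a sub-CDGA $S\subset \Adef(\paka)$ with two properties. First, the inclusion $S\hookrightarrow \Adef(\paka)$ is a quasi-isomorphism. Second, $S$ has \emph{square-zero positive part}: $S^{>0}\cdot S^{>0}=0$ at the chain level. Given such an $S$, formality follows from the zigzag
\begin{equation*}
\Adef(\paka)\hookleftarrow S \lra (\Hdef(\paka),0),
\end{equation*}
where the second map is the identity on $S^0=\C[\ZZi]$, sends a degree $n'+n-1$ element to its class, and kills $S^j$ for $0<j<n'+n-1$. This map is a chain map, because $\bd$ of a degree $n'+n-2$ element is exact and so has zero class. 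It is multiplicative: $S^{>0}\cdot S^{>0}=0$ on the source side, and on the target side $\Hdef(\paka)$ has the same shape (degree $0$ times positive degree is just the $\C[\ZZi]$-action on classes, and $\partial^{\xbeta_1}\omega\cdot\partial^{\xbeta_2}\omega=0$ by Proposition \ref{prop:local-cohomology}). It is a quasi-isomorphism as soon as the inclusion of $S$ is. The same $S$ also works for $\A(\paka)$ after completing with respect to the nuclear topology, using the description of $H^\bullet(\paka)$ in Proposition \ref{prop:coh-local}.

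For $S$ I would take the image of the mixed Jouanolou model, or more economically the following. Let $S^0=\C[\ZZi]$, and let $S^{>0}$ be the $\C[\ZZi]$-span of the forms $\partial^{\xbeta}\omega^I$ for all multi-indices $\xbeta$ and all subsets $I\subset\setn$ (including $I=\emptyset$, which is allowed since $n'\ge 1$). Three properties of $S$ follow from Section \ref{sec:chain-level}.
\begin{itemize}
\item $S$ is closed under $\bd$. This uses $\bd\omega^I=\sum_{j\notin I}\pm z^j\omega^{I\cup\lr{j}}$ together with $[\partial_i,\bd]=0$ and $[\partial_i,z^j]=\delta_{ij}$; these show that $\bd\,\partial^{\xbeta}\omega^I$ lies again in the $\C[\ZZi]$-span.
\item $S$ is closed under multiplication by $\C[\ZZi]$, by construction.
\item The cohomology of $S$ agrees with $\Hdef(\paka)$. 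I would deduce this from the Koszul/\v Cech shape of $S$. The complex spanned by the $\omega^I$ with differential $\sum_j z^j\omega^{I\cup\lr{j}}$ is a Koszul-type complex, and its only cohomology sits in top degree, where it is $\C[\PPi]\omega$ modulo the relations of Proposition \ref{prop:local-cohomology}. Alternatively, I would argue directly via Proposition \ref{prop:two-point-Jouanolou}, by checking that $S$ is exactly the image of $J_{n',n}$ under the $D_{\C^n}$-module map there. In that case $S\cong J_{n',n}$, and the quasi-isomorphism statement is already proved.
\end{itemize}

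The key step, and the only place $n'\ge2$ enters, is the vanishing $S^{>0}\cdot S^{>0}=0$. From the explicit formula
\begin{equation*}
\omega^I=\sum_{i\in\widehat I}\pm\mu_i\prod_{j\in\widehat I\setminus\lr{i}}\phi(j)\bd\mu_j ,
\end{equation*}
every term of every $\omega^I$ contains the one-forms $\bd\mu_l$ for all real indices $l\in\lr{1,\dots,n'}$ except at most one. Given a product of a term of $\omega^I$ and a term of $\omega^J$, the two omitted real indices leave at least one $l$ with $\bd\mu_l$ appearing in both factors, because $n'\ge2$. Since $\bd\mu_l$ is an odd one-form, $(\bd\mu_l)^2=0$, so $\omega^I\omega^J=0$.

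The main obstacle is extending this vanishing from the $\omega^I$ to the holomorphic derivatives $\partial^{\xbeta}\omega^I$, since $\partial_{z^s}$ does not preserve the monomial form in the $\mu$'s. My first attempt is to work inside $J_{n',n}$ with the $D_{\C^n}$-action $\partial_{z^s}y^l=-\tfrac12 y^lw^s$, $\partial_{z^s}w^k=-w^kw^s$. I would show that each $\partial^{\xbeta}\omega^I$ is a $J^0$-linear combination of monomials that still contain $\bfd y^l$ for all but at most one $l$, so the repeated-$\bfd y^l$ argument applies verbatim. If that bookkeeping fails, the fallback is to enlarge the degree bound. Every element of $S^{>0}$ has degree at least $n'-1$, and two such elements multiply into degree $\ge 2(n'+n-1)$ whenever both have top degree $n'+n-1$, which exceeds $n'+n$ for $n'+n>2$. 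I would then reorganize $S$ by an $R$-linear homotopy transfer, so that only top-degree generators and their primitives remain and all products in $S^{>0}$ land above the top degree.
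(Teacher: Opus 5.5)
Your architecture is the same as the paper's. The paper takes the sub-CDGA $J'_{n',n}\subset J_{n',n}$ generated by the $z^i$ and the $\partial^{\xbeta}\omega^I$, uses that its positive part squares to zero, and projects to $\Hdef(\paka)$ by dividing out the differential ideal generated by the $\partial^{\xbeta}\omega^J$ with $J\neq\setn$. That projection is exactly your map $S\to(\Hdef(\paka),0)$.

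The gap is in the step you call key. Your pigeonhole count needs $n'\ge 3$. For $n'=2$, take the term of $\omega^I$ with $i=1$ (which lacks $\bd\mu_1$) and the term of $\omega^J$ with $i=2$ (which lacks $\bd\mu_2$). Their product has no repeated real one-form, and it is nonzero: for instance $\mu_1\bd\mu_2\cdot\mu_2\bd\mu_1=-\mu_1\mu_2\,\bd\mu_1\bd\mu_2\neq0$, since $\bd\mu_1\bd\mu_2=\tfrac14\bd\omega^{\emptyset}\neq0$. So $\omega^I\omega^J=0$ is a cancellation between the two cross terms, not a term-by-term fact. The paper handles this as follows. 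Because $\phi(1)=\phi(2)$, the $i=1,2$ terms of every $\omega^I$ combine into a multiple of the single odd one-form $\alpha=\mu_1\bd\mu_2-\mu_2\bd\mu_1$. Every remaining term contains $\beta=\bd\mu_1\bd\mu_2$. Since $\alpha^2=\alpha\beta=\beta^2=0$, the ideal generated by $\alpha$ and $\beta$ in the algebra of forms squares to zero, and it contains every $\omega^I$. This is where $n'\ge2$ enters.

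Your plan for the derivatives has the same problem. From $\partial_{z^s}\mu_l=-\tfrac12\mu_{n'+s}\mu_l$ for real $l$ you get $\partial_{z^s}\bd\mu_l=-\tfrac12(\mu_{n'+s}\bd\mu_l+\mu_l\bd\mu_{n'+s})$. So a single derivative of $\mu_1\bd\mu_2$ already contains $-\tfrac12\mu_1\mu_2\,\bd\mu_{n'+s}$, which has no real $\bd\mu_l$ at all. In $J_{n',n}$, likewise, $\partial_{z^s}(y^1\bfd y^2)$ contains $-\tfrac12y^1y^2\bfd w^s$. The property you want to track is not preserved monomial by monomial, and even if you rearranged to recover it, the pigeonhole would still fail at $n'=2$.

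The fallback does not help either. The degree count only kills products of two top-degree elements, or all products when $2n'-2>n'+n$. Homotopy transfer gives $C_\infty$ data on cohomology, not a zigzag of strict CDGA quasi-isomorphisms.

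What does work, and the paper leaves it implicit, is that the ideal generated by $\alpha,\beta$ is stable under the even derivation $\partial_{z^s}$, which commutes with $\bd$:
\[
\partial_{z^s}\alpha=-\mu_{n'+s}\,\alpha,\qquad \partial_{z^s}\beta=-\mu_{n'+s}\,\beta+\tfrac12\,\alpha\,\bd\mu_{n'+s}.
\]
Hence every $\partial^{\xbeta}\omega^I$ lies in this square-zero ideal, which gives $S^{>0}\cdot S^{>0}=0$. This is also what makes $S$ closed under products at all.

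Finally, your alternative route to $H^\bullet(S)$ is not available. $S$ is a proper subalgebra of the image of $J_{n',n}$: that image contains $\mu_l$ in degree $0$ and is not square-zero, since $\bd\mu_1\bd\mu_2\neq0$. So $S\cong J_{n',n}$ is false, and you still have to prove that $S\hookrightarrow J_{n',n}$ is a quasi-isomorphism. The paper does this by comparing $S^{>0}$ with the augmented \v{C}ech complex of the cover $\lr{z^i\neq0}$, which is your Koszul-type route.
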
 

\begin{proof}
Recall 
$\widehat I=\lr{1,\cdots,n'} \cup (n'+I)$ and
\ali{
\omega^I
=& \sum_{i\in \widehat I} \pm \mu_i \prod_{j\in \widehat I \setminus \lr{i}} \bd \mu_j \phi(j) \\
=& (\mu_{1} \bd \mu_{2} - \mu_{2}\bd \mu_{1}) (-) + (\bd \mu_{1}\bd \mu_{2}) (-).
}
We have 
\ali{
& (\mu_{1} \bd \mu_{2} - \mu_{2}\bd \mu_{1})^2 \\
=& (\mu_{1} \bd \mu_{2} - \mu_{2}\bd \mu_{1})(\bd \mu_{1}\bd \mu_{2}) \\
=& (\bd \mu_{1}\bd \mu_{2})^2=0.
}
Consequently, $\omega^I\omega^J=0$ for any $I,J$. Let $J_{n',n}'$ be the differential graded subalgebra of $J_{n',n}$ generated by $z^i$ and $\partial^I\omega_J$. 

Let $(C^\bullet(\U,A^0(\paka)), \delta)$ be the \v Cech complex defined in Section  \ref{sec:local-coho} and 
\ali{
&(\tilde  C^\bullet(\U,A^0(\paka)), \delta): \\
& \qquad\qquad  0 \lra \ker \delta_0 \lra  C^0(\U,A^0(\paka)) \overset{\delta_0}{\lra} C^1(\U,A^0(\paka))  \overset{\delta_1}{\lra} \cdots
}
be the augmented complex. Let 
$$
\widetilde\gamma_I=\frac{1}{z^{i_1}\cdots z^{i_k}} \in \OO\LR{U_{i_1}\cap \cdots \cap U_{i_k}}, \quad \text{for } I=\lr{i_1,\cdots ,i_k},
$$ 
$$
\widetilde\gamma_{\emptyset}= \LR{1}_{1\le i\le n} \in \bigoplus_{i=1}^n \OO(U_i). 
$$ 
Then the cohomology of the augmented complex is given by 
$$
H^\bullet(\tilde C^\bullet(\U,A^0(\paka)), \delta)=\begin{cases}
\C[\PPi ]\widetilde\gamma_{\setn} & \bullet =n-1 \\
0 & \text{else}
\end{cases}
$$  
and there is an isomorphism of chain complexes 
\ali{
(\tilde C^\bullet(\U,A^0(\paka)), \delta)[-n'] &\lra (J_{n',n}'^{\bullet>0}, \bfd) \\
\widetilde\gamma_I & \lmt \omega^I
}   
Thus, we have
\ali{
H^\bullet(J_{n',n}'^\bullet, \bfd) 
&= H^0(J_{n',n}'^\bullet, \bfd)  \oplus H^\bullet(J_{n',n}'^{\bullet>0}, \bfd) \\
&= \C[\ZZi ] \oplus \C[\PPi ]\omega^{\setn} \\
&= H^\bullet((\paka).
} 
Moreover, the embedding 
$$
(J_{n',n}'^\bullet, \bfd) \islra (J_{n',n}^\bullet, \bfd) 
$$
is a quasi-isomorphism. Moreover, when modulo the differential ideal  of $J_{n',n}'^\bullet$ generated by $\partial^I\omega_J$ for $J\ne \lr{1,\cdots, n}$, we get a quasi-isomorphism
$$
(J_{n',n}'^\bullet, \bfd) \slra 
H^\bullet(J_{n',n}'^\bullet, \bfd) =
\Hdef(\paka).
$$
Since the embedding 
$$
(J_{n',n}^\bullet,\bfd) \islra (\Adef(\paka),\d)
$$ 
is a quasi-isomorphism, we see that $\Adef(\paka)$ is formal.  Moreover, we can complete $J_{n',n}$ using the nuclear topology, and a similar argument shows $\A(\paka)$ is formal. 
\end{proof}

\begin{rem} 
$\Adef(\paka)$ is also formal as a CDGA in the category of $D_{\C^n}$-modules, as all quasi-isomorphisms in the proof of Proposition \ref{formality->1>0} preserve the  $D_{\C^n}$-module structure. 
\end{rem}

\section{Cohomology of Configuration Spaces}\label{sec:cohomology-conf}


The plan of this section is as follows.
\begin{itemize}
\item \ref{sec:cohomology-groups}: We compute the cohomology groups $H^\bullet(\maka)$ and $\Hdef(\maka)$.
\item \ref{sec:ring-structure-conf}: We determine the ring structures on $H^\bullet(\maka)$ and $\Hdef(\maka)$.
\end{itemize}


\subsection{Cohomology Groups}\label{sec:cohomology-groups}

Let $\phi_{ij}$ be the map
\ali{
\phi_{ij}: \quad \maka &\lra \paka \\
  (\ZZ_1,\cdots,\ZZ_m)       &\lmt \ZZ_i-\ZZ_j
}
and denote $\omega_{ij}=\phi_{ij}^*\omega$. 

\begin{thm}\label{thm:conf-space-cohomomogy}
The cohomology group of $\A(\maka)$ is given by
\ali{
&H^\bullet(\maka) \\
=&\hotimes_{j=1}^m \LR{\Hol(\C^n_j)\oplus \oplus_{i=1}^{j-1} \overline{\C[\partial_{z^1_j},\cdots, \partial_{z^n_j}]\omega_{ij}}} \\
=&\oplus_{k=0}^{m-1}\oplus_{\substack{1< j_1<\cdots<j_{k}\le m\\ i_1<j_1,\cdots,i_k<j_k}} \LR{\hotimes_{l\ne j_1,\cdots,j_k}\Hol(\C^n_l)} \hotimes \LR{\hotimes_{l=1}^k\overline{\C[\partial_{z_{j_l}^1},\cdots, \partial_{z_{j_l}^n}]\omega_{i_lj_l}}}
}
where $\Hol(\C^n_i)$ denotes the space of holomorphic functions of $\zz_i\in \C^n$, 
and the cohomology group of \\$\Adef(\maka)$ is
\ali{
&\Hdef(\maka) \\
=&\otimes_{j=1}^m \LR{\C[z_j^1,\cdots,z_j^n]\oplus \oplus_{i=1}^{j-1} {\C[\partial_{z^1_j},\cdots, \partial_{z^n_j}]\omega_{ij}}} \\
=&\oplus_{k=0}^{m-1}\oplus_{\substack{1< j_1<\cdots<j_{k}\le m\\ i_1<j_1,\cdots,i_k<j_k}} \LR{\otimes_{l\ne j_1,\cdots,j_k}\C[z_l^1,\cdots,z_l^n]} \otimes \LR{\otimes_{l=1}^k{\C[\partial_{z_{j_l}^1},\cdots, \partial_{z_{j_l}^n}]\omega_{i_lj_l}}}
}
which is naturally identified as a dense subspace of the former.
\end{thm}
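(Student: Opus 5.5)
We argue by induction on $m$, using the forgetful projection
$$
\pi_m:\maka \lra \Conf_{m-1}(\aka),\qquad (\ZZ_1,\cdots,\ZZ_m)\lmt(\ZZ_1,\cdots,\ZZ_{m-1}),
$$
whose fiber over a point is $\aka\setminus\lr{\ZZ_1,\cdots,\ZZ_{m-1}}$. The base case $m=2$ follows from the product decomposition $\Conf_2(\aka)\cong(\aka)\times(\paka)$ via $(\ZZ_1,\ZZ_2)\mapsto(\ZZ_1,\ZZ_2-\ZZ_1)$. We apply Proposition \ref{prop:coh-local} to the second factor and the Poincar\'e lemmas (Lemma \ref{lem:Ck-Poincare}, Lemma \ref{lem:definable-Poincare}) to the first, together with a K\"unneth formula. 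Under this identification the derivatives $\partial^{\xbeta}\omega$ become $\partial_{\zz_2}^{\xbeta}\omega_{12}$, up to the sign convention relating $\phi_{12}$ and $\ZZ_2-\ZZ_1$. The inductive step is a Leray--Hirsch type statement. We claim that the cohomology of $\maka$ is the cohomology of the base, tensored (completed, in the smooth case) with the fiberwise cohomology
$$
\Hol(\C^n_m)\oplus\bigoplus_{i<m}\overline{\C[\partial_{\zz_m}]\omega_{im}},
$$
respectively $\C[\zz_m]\oplus\bigoplus_{i<m}\C[\partial_{\zz_m}]\omega_{im}$. Iterating the claim gives the first displayed formula, and expanding the tensor product gives the second.

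To prove the fiber computation and the claim, we use a \v Cech argument modeled on the proof of Proposition \ref{prop:coh-local}. We cover $\maka$ by the open sets $U^{ij}_a$, for $i<j$ and $a$ ranging over the $2n'+n$ indices, defined by $x^a_i-x^a_j>0$, by $x^a_i-x^a_j<0$, or by $z^a_i-z^a_j\neq0$. These sets cover $\maka$ because every pair of distinct points has some nonzero coordinate difference. A finite intersection of such sets is, after a linear change of coordinates, a product of factors $\R$, $\R_{>0}$, $\C$ and $\C\setminus\lr{p_1,\cdots,p_u}$, up to issues with chains of inequalities.

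For this step it is cleaner to use the fibration rather than a global cover. We cover only the fiber direction, relative to the base, by the sets $\lr{x^a_m-x^a_i\gtrless0}$ and $\lr{z^a_m\ne z^a_i}$ for $i<m$. Over a polydisk-type open set in the base, a parametrized version of Lemma \ref{lem:definable-Poincare} kills the positive-degree $\bd$-cohomology in the fiber variables. This relies on the constructible $\bp$-inverses of Proposition \ref{prop:Constructible-poincare-disk} and Corollary \ref{cor:poincare-punctured-disk}, which depend constructibly on the parameters, since parametric integration preserves constructibility. The relative \v Cech complex then has cohomology equal to the sum of the contributions from each puncture $\ZZ_i$. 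Each contribution is identified with the two-point answer by the local computation near $\ZZ_m=\ZZ_i$, with the punctures separated by a partition-of-unity argument in the fiber. The representatives are the global forms $\partial^{\xbeta}_{\zz_m}\omega_{im}$, which are $\bd$-closed on all of $\maka$. We then identify the classes using residues: the residue $\oint$ around $\ZZ_i$ in the $\ZZ_m$-variable pairs nondegenerately with $\C[\zz_m]$, and the class $\omega_{im}$ has zero residue around $\ZZ_j$ for $j\ne i$. This shows that the classes are independent and span.

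Once the fiber cohomology is known and has global closed representatives, the Leray--Hirsch step follows from a spectral sequence, or a filtration, of $\A(\maka)$ by degree in the base variables. The $E_1$-page is the base forms with coefficients in the fiber cohomology. Since the fiber classes lift to global closed forms with holomorphic coefficients in $\zz_m$, the sequence degenerates. Multiplication by these lifts gives the isomorphism from the base cohomology tensored with the fiber cohomology. The main obstacle is rigor in this relative step. We must show that the fiber cohomology forms a trivial system over the base, with $\Hol(\C^n_m)$ varying holomorphically, and that the constructible (respectively smooth) solutions of the relative $\bd$-equation can be chosen globally and with the correct regularity $C^{k-\bullet}$. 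Finally, the density claim follows factor by factor from the density statement in Proposition \ref{prop:coh-local}, because completed tensor products of dense subspaces are dense.
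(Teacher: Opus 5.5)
Your proposal follows essentially the paper's route: induction along the forgetful map $\pi$, direct images $R^q\pi_*\OO$ concentrated in $q=0,\,n'+n-1$ with fiber basis read off from Proposition \ref{prop:coh-local}, and degeneration of the Leray spectral sequence because the classes $\partial^{\xbeta}_{\zz_m}\omega_{im}$ are global closed forms. The one real difference is how $R\pi_*\OO$ is computed: the paper uses the triangle $R\Gamma_Z\OO\to\OO\to Rj_*\OO$ on $\Conf_{m-1}(\aka)\times(\aka)$, where $Z=\bigsqcup_{i<m}\Delta_i$ is a disjoint union of diagonals, each reduced to the two-point case in the variable $\ZZ_i-\ZZ_m$; this is the clean form of your ``separate the punctures'' step, and it avoids your relative \v Cech cover, whose intersections such as $\lr{z^1_m\ne z^1_1}\cap\lr{z^1_m\ne z^1_2}$ degenerate over $z^1_1=z^1_2$, just as in the global cover, and so are not products to which Lemma \ref{lem:definable-Poincare} applies.
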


\begin{proof}
In this proof,  $\OO$ denotes one of the following sheaves
\begin{itemize}
\item $\Hol$: the sheaf of smooth $\bd$-closed functions on a 
manifold;
\item $\Ocon$: the sheaf of constructible $C^k$ $\bd$-closed functions, which is viewed as a sheaf on a mixed real-complex algebraic variety with the Zariski topology.
\end{itemize}
$\Hol$ or $\Ocon$. We also denote $Y=\R^{n'} \times \C^n$. Let
\ali{
\pi: \Conf_m(Y) &\lra \Conf_{m-1}(Y) \\
(\ZZ_1,\cdots,\ZZ_m) & \lmt (\ZZ_1,\cdots,\ZZ_{m-1})
}
be the projection onto the first $m-1$ components. Also let 
$$
p: \Conf_{m-1}(Y) \times Y \lra \Conf_{m-1}(Y)
$$
be the projection onto the first component. 

$\Conf_m(Y)$ is an open subset of $\Conf_{m-1}(Y) \times Y$
\ali{
j: \Conf_m(Y) &\hlra \Conf_{m-1}(Y) \times Y \\
(\ZZ_1,\cdots,\ZZ_m) & \lmt ((\ZZ_1,\cdots,\ZZ_{m-1}),\ZZ_m)
}
and its complement $Z=\Conf_{m-1}(Y) \times Y \setminus \Conf_m(Y)$ is the union of $m-1$ diagonals
$$
Z= \bigsqcup_{i=1}^{m-1} \Delta_i, \quad \Delta_i=\set{((\ZZ_1,\cdots,\ZZ_{m-1}),\ZZ_m) \in \Conf_{m-1}(Y) \times Y}{\ZZ_i=\ZZ_m}.
$$ 
There is a distinguished triangle associated with the open immersion $j:\Conf_m(Y) \hookrightarrow \Conf_{m-1}(Y) \times Y$ and its complement $Z$ 
$$
R\Gamma_Z\OO_{\Conf_{m-1}(Y) \times Y} \to \OO_{\Conf_{m-1}(Y) \times Y} \to Rj_* \OO_{\Conf_m(Y)} \overset{+1}{\to}.
$$
Apply $Rp_*$ to the above triangle, we get
$$
Rp_*R\Gamma_Z\OO_{\Conf_{m-1}(Y) \times Y} \to Rp_*\OO_{\Conf_{m-1}(Y) \times Y} \to Rp_*Rj_* \OO_{\Conf_m(Y)} \overset{+1}{\to}
$$
Since $Y$ is affine, the morphism $p$ is affine, so the first term becomes 
$$
Rp_*R\Gamma_Z\OO_{\Conf_{m-1}(Y) \times Y} \cong Rp_*\Gamma_Z\OO_{\Conf_{m-1}(Y) \times Y}
$$  
Moreover, the second term 
$$
Rp_*\OO_{\Conf_{m-1}(Y) \times Y}\cong \OO_{\Conf_{m-1}(Y)} \otimes \Gamma(Y,\OO_Y)
$$
is concentrated in dimension $0$.  Since $\pi=p\circ j$, the third term is 
$$
Rp_* Rj_*\OO_{\Conf_m(Y)}  \cong  R\pi_* \OO_{\Conf_m(Y)}.
$$

Since $\Delta_i$ are pairwise disjoint, we have
$$
R\Gamma_Z(\OO_{\Conf_{m-1}(Y) \times Y}) \cong \bigoplus_{i=1}^{m-1} R\Gamma_{\Delta_i}(\OO_{\Conf_{m-1}(Y) \times Y}).
$$
Each term in the summand is supported on the closed subvariety $\Delta_i$. We have $R^q\Gamma_{\Delta_i}(\OO_{\Conf_{m-1}(Y) \times Y})=0$ unless $q= n'+n$. Moreover, each $R^{n'+n}\Gamma_{\Delta_i}(\OO_{\Conf_{m-1}(Y) \times Y})$ is a locally free sheaf of infinite rank on $\Conf_{m-1}(Y)$. If we denote $\ZZ_{im}=\ZZ_i-\ZZ_m=(z^{im}_1,\cdots,z^{im}_n)$, then a basis of each fiber is, by Proposition \ref{prop:coh-local},  
\begin{itemize}
\item For $\OO=\Hol$: $\Hol(\C^n_m)\oplus \oplus_{i=1}^{m-1} \overline{\C[\partial_{z^1_i},\cdots, \partial_{z^n_i}]\omega_{im}}$
\item For $\OO=\Ocon$: $\C[z_m^1,\cdots,z_m^n]\oplus \oplus_{i=1}^{m-1} {\C[\partial_{z^1_i},\cdots, \partial_{z^n_i}]\omega_{im}}$
\end{itemize}
This completely describes $R^q\pi_*\OO_{\Conf_m(Y)}$.

Apply the Leray spectral sequence to the projection $\pi: \Conf_m(Y) \to \Conf_{m-1}(Y)$ 
$$
E_2^{p,q} = H^p(\Conf_{m-1}(Y), R^q\pi_*\OO_{\Conf_m(Y)}) \Longrightarrow H^{p+q} (\Conf_m(Y), \OO_{\Conf_m(Y)}),
$$
we see $E_2^{p,q}= 0$ unless $q=0,n'+n-1$. Since generators in $E_2^{0,n'+n-1}$ are globally defined cohomology classes on $\Conf_m(Y)$, the spectral sequence degenerates at the $E_2$ page. 
\end{proof}

\begin{rem}
$\Hdef(\maka)$ carries a natural $D_{\C^{mn}}$-module structure.
\end{rem}

\subsection{Ring Structures}\label{sec:ring-structure-conf}

Clearly, $H^\bullet(\maka)$ is topologically generated as a ring by the cohomology classes in $\Hol(\C^{mn})$ and $\overline{\C[ \partial_{z_j^1},\cdots, \partial_{z_j^n}]\omega_{ij}}$ for all $i<j$. Let 
$$
\lR{\Hol(\C^{mn}),\,\, \overline{\C[ \partial_{z_j^1},\cdots, \partial_{z_j^n}]\omega_{ij}}} 
$$
be the topological subring of $\A(\maka)$ generated by these differential forms, which is a free ring modulo the ideal generated by $\LR{\overline{\C[ \partial_{z_j^1},\cdots, \partial_{z_j^n}]\omega_{ij}}}^2$ for each $i<j$. 
There is a natural surjective map of graded topological rings
\begin{equation*}\label{equ:coho-generators}
\lR{\Hol(\C^{mn}),\,\, \overline{\C[ \partial_{z_j^1},\cdots, \partial_{z_j^n}]\omega_{ij}}} 
\lra H^\bullet(\maka).    \tag{*} 
\end{equation*} 
We saw in Proposition \ref{prop:local-cohomology} that in $H^\bullet (\paka)$, the following relation holds:
$$
\zz^{\xalpha}\partial^{\xbeta}\omega=
\begin{cases}
(-1)^{\alpha_1+\cdots +\alpha_n} \prod_{i=1}^n \frac{\beta_i!}{(\beta_i-\alpha_i)!} \, \partial^{\xbeta-\xalpha}\omega & \alpha_i\le \beta_i \text{ for all } i \\
0 & \alpha_i>\beta_i \text{ for some } i \\
\end{cases}.
$$ 
Pulling back along the projection 
\ali{
\phi_{ij}: \quad \maka &\lra \paka \\
  (\ZZ_1,\cdots,\ZZ_m)       &\lmt \ZZ_i-\ZZ_j
}
gives, in $H^\bullet (\maka)$,
$$
\zz_{ij}^{\xalpha}\partial_{\zz_j}^{\xbeta}\omega_{ij}=
\begin{cases}
(-1)^{\alpha_1+\cdots +\alpha_n} \prod_{i=1}^n \frac{\beta_i!}{(\beta_i-\alpha_i)!} \, \partial_{\zz_j}^{\xbeta-\xalpha}\omega_{ij} & \alpha_i\le \beta_i \text{ for all } i \\
0 & \alpha_i>\beta_i \text{ for some } i \\
\end{cases}.
$$ 
Here $\zz_{ij}=\zz_i-\zz_j$ and   $\partial_{\zz_j}^{\xbeta}=\partial_{z_j^1}^{\beta_1}\cdots \partial_{z_j^n}^{\beta_n}$. We have used $\partial_{z_i^k}\omega_{ij}=-\partial_{z_j^k}\omega_{ij}$.

\begin{rem}\label{rem:Arnold}
As in the real case, we have Arnold relations. Heuristically speaking, $\bd\omega_{ij}=\delta_{ij}$ is a delta distribution, and we have 
\ali{
& \bd \int_{\ZZ_l} d\zz_l\,  \omega_{li}\omega_{lj}\omega_{lk} \\
=& \int_{\ZZ_l} d\zz_l \, (\delta_{li}\omega_{lj}\omega_{lk}+\delta_{lj}\omega_{lk}\omega_{li}+\delta_{lk}\omega_{li}\omega_{lj}) \\
=& \omega_{ij}\omega_{jk}+\omega_{jk}\omega_{ki}+\omega_{ki}\omega_{ij},
} 
so $\omega_{ij}\omega_{jk}+\omega_{jk}\omega_{ki}+\omega_{ki}\omega_{ij}$ is $\bd$-exact. 
In Section \ref{sec:Regularized-Integrals}, we construct the regularized integral $\dashint$ to give a rigorous proof. In general, for $f(\partial_{\zz_i})\omega_{li} \in \overline{\C[ \partial_{z_i^1},\cdots, \partial_{z_i^n}]\omega_{li}}$ and $g(\partial_{\zz_j})\omega_{lj}$, $h(\partial_{\zz_k})\omega_{lk}$ similarly defined, we have 
\ali{
& \bd \dashint_{\ZZ_l} d\zz_l\,  f(\partial_{\zz_i})\omega_{li}g(\partial_{\zz_j})\omega_{lj}h(\partial_{\zz_k})\omega_{lk} \\
=& g(\partial_{\zz_j})(f(\partial_{\zz_i})\omega_{ij}h(\partial_{\zz_k})\omega_{jk})+h(\partial_{\zz_k})(g(\partial_{\zz_j})\omega_{jk}f(\partial_{\zz_i})\omega_{ki})+f(\partial_{\zz_i})(h(\partial_{\zz_k})\omega_{ki}g(\partial_{\zz_j})\omega_{ij}).
} 
\end{rem}

Using the basis given in Theorem \ref{thm:conf-space-cohomomogy}, we see the kernel of (\ref{equ:coho-generators}) is precisely the one generated by $\zz_{ij}^{\xalpha}\partial_{\zz_j}^{\xbeta}\omega_{ij}$ with $\alpha_k>\beta_k$ for some $k$, and the Arnold relations. In conclusion, we have

\begin{thm} \label{thm:coho-ring-conf}
$H^\bullet(\maka)$ is topologically generated as a ring by 
$$
\Hol(\C^{mn})\oplus \oplus_{i<j} \overline{\C[\partial_{z^1_j},\cdots, \partial_{z^n_j}]\omega_{ij}},
$$ 
subject to the following relations:
\begin{itemize}
\item $f(\partial_{\zz_j})\omega_{ij}\cdot g(\partial_{\zz_j})\omega_{ij}=0$, for $f(\partial_{\zz_j})\omega_{ij}, g(\partial_{\zz_j})\omega_{ij}\in \overline{\C[\partial_{z^1_j},\cdots, \partial_{z^n_j}]\omega_{ij}}$,
\item $ 
\zz_{ij}^{\xalpha}\partial_{\zz_j}^{\xbeta}\omega_{ij}=
\begin{cases}
(-1)^{\alpha_1+\cdots +\alpha_n} \prod_{i=1}^n \frac{\beta_i!}{(\beta_i-\alpha_i)!} \, \partial_{\zz_j}^{\xbeta-\xalpha}\omega_{ij} & \alpha_i\le \beta_i \text{ for all } i \\
0 & \alpha_i>\beta_i \text{ for some } i \\
\end{cases},
$ 
\item the Arnold relation:
$$g(\partial_{\zz_j})(f(\partial_{\zz_i})\omega_{ij}h(\partial_{\zz_k})\omega_{jk})+h(\partial_{\zz_k})(g(\partial_{\zz_j})\omega_{jk}f(\partial_{\zz_i})\omega_{ki})+f(\partial_{\zz_i})(h(\partial_{\zz_k})\omega_{ki}g(\partial_{\zz_j})\omega_{ij})=0.$$
\end{itemize}
\end{thm}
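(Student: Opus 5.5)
Our plan is to compare the abstract algebra $\mathcal{R}$, defined by the generators and the three families of relations in the statement, with $H^\bullet(\maka)$. We first build a map $\mathcal{R}\to H^\bullet(\maka)$ from the surjection (*), then show it is injective by matching bases with Theorem \ref{thm:conf-space-cohomomogy}.

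\textbf{Well-definedness.} We check that each relation holds in cohomology.

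The first relation comes from $\omega^I\omega^J=0$ in $\A(\paka)$, established in the proof of Proposition \ref{formality->1>0}, together with $\partial^{\xbeta}\omega\cdot\partial^{\xbeta'}\omega=0$ from Proposition \ref{prop:local-cohomology}. We pull these back along $\phi_{ij}$. This holds on the nose for smooth representatives, and for closures by continuity of the product.

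The second relation is the pullback along $\phi_{ij}$ of Proposition \ref{prop:local-cohomology}. Here we use that $\phi_{ij}^*$ is a CDGA map and that $\partial_{z_i^k}\omega_{ij}=-\partial_{z_j^k}\omega_{ij}$.

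The Arnold relation is the substantive point. We would rigorize Remark \ref{rem:Arnold}: define $\beta=\dashint_{\ZZ_l} d\zz_l\, f\omega_{li}\,g\omega_{lj}\,h\omega_{lk}$ via the regularized fiber integral of Section \ref{sec:Regularized-Integrals}. A Stokes/residue formula then shows that $\bd\beta$ equals the sum of the collision boundary contributions $\ZZ_l\to\ZZ_i,\ZZ_j,\ZZ_k$. Each such contribution is evaluated by the residue map $\oint$ of Section \ref{sec:residues}: the Taylor expansion of the remaining two factors about the collision point produces exactly the operators $f(\partial_{\zz_i})$ etc. applied to the product. Contributions from infinity and from higher collisions must be shown to vanish, using the decay $|\ZZ|^{-(n'+2n-1)}$ of $\omega$ and a dimension count.

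\textbf{Injectivity.} Using the first two relations, every monomial reduces to the form (polynomial or holomorphic function) $\times\prod_l \partial^{\xbeta_l}\omega_{i_lj_l}$. Using Arnold, we rewrite any product so that each target index $j$ appears at most once with $i_l<j_l$. This is the standard Arnold normal-form argument, by induction on the largest index $m$: at most one $\omega_{\cdot m}$ factor survives.

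The resulting spanning set is exactly the basis of Theorem \ref{thm:conf-space-cohomomogy}, namely the tensor product over $j$ of $\C[\zz_j]\oplus\bigoplus_{i<j}\C[\partial_{\zz_j}]\omega_{ij}$. That theorem says these classes are linearly independent (from the degenerate Leray spectral sequence), so the map is an isomorphism.

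Two points need care in the reduction. First, a coefficient $z_i$ multiplying $\omega_{ij}$ is rewritten as $z_j+z_{ij}$ and reduced by the second relation. Second, we must track the $\partial$-operators in Arnold so that derivatives land on the $j$ variable.

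\textbf{Smooth case.} Here everything holds topologically: we take closures and use density of $\Hdef(\maka)$ together with continuity of the maps.

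\textbf{Main obstacle.} We expect the hard step to be the rigorous Arnold relation. The regularized integral must commute with $\bd$ up to boundary terms, and the boundary terms must be identified with the differential operators, which requires the asymptotic analysis of the Bochner–Martinelli kernel.
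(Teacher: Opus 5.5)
Your proposal is correct and follows essentially the same route as the paper. You verify the three relations: the square-zero and local relations by pulling back along $\phi_{ij}$ (Proposition \ref{prop:local-cohomology}), and the Arnold relation by the regularized fiber integral of Remark \ref{rem:Arnold}. You then identify the kernel of the surjection (*) by comparing with the basis of Theorem \ref{thm:conf-space-cohomomogy}; your Arnold normal-form reduction just spells out what the paper asserts in one line.

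One small slip: that basis has degree-zero factors depending only on non-target variables, so to land in it literally you should eliminate the target via $\zz_j=\zz_i-\zz_{ij}$, rather than rewrite $z_i$ as $z_j+z_{ij}$. This is harmless, since the two normal forms differ by a triangular change of basis.
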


\begin{cor} \label{cor:Hdef-group-structure}
$\Hdef(\maka)$ is isomorphic to the subring of $H^\bullet(\maka)$ generated by 
$$
\C[z_j^1,\cdots,z_j^n]\oplus \oplus_{i<j} {\C[\partial_{z^1_j},\cdots, \partial_{z^n_j}]\omega_{ij}},
$$
subject to the following relations
\begin{itemize}
\item $f(\partial_{\zz_j})\omega_{ij}\cdot g(\partial_{\zz_j})\omega_{ij}=0$, for $f(\partial_{\zz_j})\omega_{ij}, g(\partial_{\zz_j})\omega_{ij}\in {\C[\partial_{z^1_j},\cdots, \partial_{z^n_j}]\omega_{ij}}$,
\item $
\zz_{ij}^{\xalpha}\partial_{\zz_j}^{\xbeta}\omega_{ij}=
\begin{cases}
(-1)^{\alpha_1+\cdots +\alpha_n} \prod_{i=1}^n \frac{\beta_i!}{(\beta_i-\alpha_i)!} \, \partial_{\zz_j}^{\xbeta-\xalpha}\omega_{ij} & \alpha_i\le \beta_i \text{ for all } i \\
0 & \alpha_i>\beta_i \text{ for some } i \\
\end{cases},
$
\item the Arnold relation: 
$$g(\partial_{\zz_j})(f(\partial_{\zz_i})\omega_{ij}h(\partial_{\zz_k})\omega_{jk})+h(\partial_{\zz_k})(g(\partial_{\zz_j})\omega_{jk}f(\partial_{\zz_i})\omega_{ki})+f(\partial_{\zz_i})(h(\partial_{\zz_k})\omega_{ki}g(\partial_{\zz_j})\omega_{ij})=0.$$
\end{itemize}
\end{cor}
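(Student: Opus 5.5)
The plan is to deduce the corollary from Theorem \ref{thm:conf-space-cohomomogy} and Theorem \ref{thm:coho-ring-conf}, in three steps: show the map $\Hdef(\maka)\to H^\bullet(\maka)$ is injective, identify its image with the subring generated by the listed classes, and check that the listed relations are the only ones.

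\emph{Step 1: the comparison map.} The inclusion $\Aconk(\maka)\hookrightarrow\A(\maka)$ is a CDGA map, so it induces a ring homomorphism $\Hdef(\maka)\to H^\bullet(\maka)$. By Theorem \ref{thm:conf-space-cohomomogy}, both groups have explicit tensor-product bases built from the same representatives: polynomials in the $z_l$ and forms $\partial^{\xbeta}_{\zz_j}\omega_{ij}$. Both decompositions come from the same Leray spectral sequence applied to $\pi$, and the comparison map respects these decompositions. Hence the map sends basis to basis, and is injective onto the dense algebraic (uncompleted) span. This already identifies $\Hdef(\maka)$, as a ring, with the subring of $H^\bullet(\maka)$ spanned by these classes.

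\emph{Step 2: generation.} We check that this subring is generated by $\C[z_j^1,\dots,z_j^n]$ and the $\C[\partial_{\zz_j}]\omega_{ij}$. Every basis element is a product of a polynomial with classes $f_l(\partial_{\zz_{j_l}})\omega_{i_lj_l}$ for distinct $j_l$, so generation is immediate.

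\emph{Step 3: the relations.} We must show that the listed relations hold and that they generate the full ideal of relations.
\begin{itemize}
\item The first two relations hold in $\Hdef(\maka)$ because they are pulled back along $\phi_{ij}$ from Proposition \ref{prop:local-cohomology}, which already covers the constructible case.
\item For the Arnold relation, we need it to hold already in $\Hdef$, not only in the completion. By injectivity from Step 1 it suffices that it holds in $H^\bullet(\maka)$, which Theorem \ref{thm:coho-ring-conf} gives. Alternatively, the regularized integral of Remark \ref{rem:Arnold} produces a constructible primitive, since constructibility is preserved by parametric integration \cite{int-closed}.
\item Completeness is the main point. Take the free graded-commutative algebra on the generators, modulo the ideal $\mathcal I$ generated by the three relations. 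Using the second relation we move all polynomial factors in $\zz_{ij}$ past the $\omega_{ij}$. Using the Arnold relation we rewrite any product of $\omega$'s in normal form, where each $j$ appears at most once as a larger index, exactly as in Arnold's classical argument. This shows the quotient by $\mathcal I$ is spanned by the images of the basis of Theorem \ref{thm:conf-space-cohomomogy}. Since those images are linearly independent in $\Hdef$, the surjection from the quotient onto $\Hdef$ is an isomorphism.
\end{itemize}

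The step I expect to be the main obstacle is the spanning argument. With derivatives $f(\partial_{\zz_i})$ present, the straightening must keep track of how $\partial_{\zz_i}$ and $\partial_{\zz_j}$ act across $\omega_{ij}$, using $\partial_{z_i}\omega_{ij}=-\partial_{z_j}\omega_{ij}$. It must also be shown to terminate on polynomial, non-completed objects. I would first run the induction on $m$ along the projection $\pi$, mirroring the Leray argument: assume the statement for $m-1$, then straighten only the factors involving $\ZZ_m$.
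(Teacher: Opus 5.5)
Your route is the paper's: the basis of Theorem~\ref{thm:conf-space-cohomomogy}, the local relations pulled back along $\phi_{ij}$, the Arnold relation, and a spanning argument that the paper compresses into the single sentence preceding Theorem~\ref{thm:coho-ring-conf}. Two steps, however, fail as written.

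\emph{The second relation.} Pulling back Proposition~\ref{prop:local-cohomology} along $\phi_{ij}$ does not give the relation as printed. We have $\phi_{ij}^*\zz^{\xalpha}=\zz_{ij}^{\xalpha}$, but $\phi_{ij}^*(\partial^{\xbeta}\omega)=\partial_{\zz_i}^{\xbeta}\omega_{ij}=(-1)^{\beta_1+\cdots+\beta_n}\partial_{\zz_j}^{\xbeta}\omega_{ij}$. So for $\xalpha\le\xbeta$ the pullback reads
\[
\zz_{ij}^{\xalpha}\partial_{\zz_j}^{\xbeta}\omega_{ij}=\prod_{r=1}^n\frac{\beta_r!}{(\beta_r-\alpha_r)!}\,\partial_{\zz_j}^{\xbeta-\xalpha}\omega_{ij},
\]
without the factor $(-1)^{\alpha_1+\cdots+\alpha_n}$. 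You can check this directly. First, $z^1_{ij}\partial_{z^1_j}\omega_{ij}=\partial_{z^1_j}(z^1_{ij}\omega_{ij})+\omega_{ij}$. The first term is exact, because $z^1\omega=\bd\,\omega^{\setn\setminus\lr{1}}$ has a real-analytic primitive. Hence $[z^1_{ij}\partial_{z^1_j}\omega_{ij}]=+[\omega_{ij}]$, whereas the printed relation gives $-[\omega_{ij}]$, and $[\omega_{ij}]\ne0$ by Theorem~\ref{thm:conf-space-cohomomogy}. So the second bullet, as stated, is false whenever $\alpha_1+\cdots+\alpha_n$ is odd. The sign $(-1)^{\alpha_1+\cdots+\alpha_n}$ belongs to the version with $\partial_{\zz_i}$, or equivalently with $\zz_{ji}$ in place of $\zz_{ij}$. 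The paper's derivation in Section~\ref{sec:ring-structure-conf} makes the same slip. Your spanning argument only uses this relation to absorb $\zz_{ij}$-factors, so it survives with the corrected sign. But the pullback cannot prove the bullet as printed.

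\emph{The comparison map.} There is no CDGA inclusion $\Aconk(\maka)\hookrightarrow\A(\maka)$. Constructible forms only have $C^{k-\bullet}$ coefficients, and, as the paper notes, smooth constructible functions are real analytic, which is why $k<\infty$. So Step 1 does not produce a ring map $\Hdef(\maka)\to H^\bullet(\maka)$. Consequently, your shortcut that the Arnold relation holds in $\Hdef(\maka)$ by injectivity is unsupported. There are two ways to repair this:
\begin{itemize}
\item compare both complexes inside the complex of $C^{k-\bullet}$ forms, which requires knowing that it computes the same cohomology as $\A$; or
\item more simply, avoid any map and run Step 3 twice. Once in $\Hdef(\maka)$, using its basis from Theorem~\ref{thm:conf-space-cohomomogy}, Proposition~\ref{prop:local-cohomology}, and your constructible regularized-integral primitive for the Arnold relation (Proposition~\ref{prop:constructibility-reg-int}). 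Once in the subring of $H^\bullet(\maka)$ generated by the same real-analytic representatives, using Theorem~\ref{thm:coho-ring-conf}. Both then have the same presentation, which is exactly the asserted isomorphism.
\end{itemize}

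Finally, the straightening you flag as the main obstacle is manageable. Put $h=1$ in the Arnold relation, with $i<j<k$. Since $\partial_{\zz_j}\omega_{jk}=-\partial_{\zz_k}\omega_{jk}$ and $\partial_{\zz_i}\omega_{ki}=-\partial_{\zz_k}\omega_{ki}$, the middle term is an arbitrary product of a generator on $\omega_{jk}$ with one on $\omega_{ik}$. The Leibniz expansions of the other two terms contain only pairs with larger indices $j$ and $k$. Each rewrite therefore strictly decreases the multiset of larger indices while keeping the derivative degree fixed, so the rewriting terminates.
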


\begin{rem} \label{rem:local-relations}
The second relation in cohomology
$$
\zz_{ij}^{\xalpha}\partial_{\zz_j}^{\xbeta}\omega_{ij}=
\begin{cases}
(-1)^{\alpha_1+\cdots +\alpha_n} \prod_{i=1}^n \frac{\beta_i!}{(\beta_i-\alpha_i)!} \, \partial_{\zz_j}^{\xbeta-\xalpha}\omega_{ij} & \alpha_i\le \beta_i \text{ for all } i \\
0 & \alpha_i>\beta_i \text{ for some } i \\
\end{cases},
$$
can be proved directly using regularized integrals. For example, 
$$
\bd  \dashint_{\ZZ_k} d\zz_k \, z^a_k \omega_{ki}\omega_{kj} =z^a_{ij}\omega_{ij},
$$
$$
\bd \dashint_{\ZZ_k} d\zz_k \,  z^a_k \omega_{ki}\partial_{z^a_j}\omega_{kj}
=\omega_{ij} + z^a_{ij} \partial_{z^a_j}\omega_{ij}.
$$
Consequently, any $\bd$-exact term has a $\bd$-preimage represented by a regularized integral. This suggests packaging algebraic relations into the CDGA of diagrams, which leads to the proof of the formality theorem 
in Section \ref{sec:formality-conf}.
\end{rem}

\section{Regularized Integrals and Residues on Configuration Spaces}\label{sec:Regularized-Integrals}
In this section, we define regularized integrals and residues of differential forms on $\maka$ whose singularities arise from Bochner kernels, which are used to construct the configuration space integral $\GIK(\Gamma)$ in Section \ref{sec:Conf-Space-Integrals}.

The plan of this section is as follows.
\begin{itemize}
\item \ref{sec:cmaka}: We define the compactification $\cmaka$
of $\maka$.
\item \ref{sec:Asymptotic-Bochner-kernel}: 
We study the  singularities of $\omega_{BM}$  when lifted to $\cmaka$.
\item \ref{sec:regints}: 
We define regularized integrals and residues.
\end{itemize}

\subsection{Compactification of Configuration Spaces}\label{sec:cmaka} 
\subsubsection{Mixed Projective Spaces}\label{sec:projective-space}

Let the coordinates on $\R^{n'}\times \C^{n+1}$ be denoted by $(\xx,\zz)=(x^1,\cdots,x^{n'},z^0,\cdots,z^{n})$. 
The group $\C^* \cong \R_{+} \times S^1$ acts on $\R^{n'}\times \C^{n+1}\punctured$ by 
$$
(r, e^{i\theta}) \cdot (\xx,\zz) = (r\xx,re^{i\theta}\zz).
$$ 
Denote an equivalence class by $[\xx:\zz]$ and the set of equivalence classes by $P^{n',n}$. $P^{n',n}$ is an orbifold with an open cover $\lr{U_i,V_j^{\pm}}$ where
$$
U_i=\set{[\xx:\zz]}{z^i\ne 0}
=\lr{[x^1:\cdots:x^{n'}:z^0:\cdots :1:\cdots:z^{n}]}
\cong \aka,
$$
$$
V_j^{\pm}=\set{[\xx:\zz]}{x^j/|x^j| =\pm 1} =\lr{[x^1:\cdots:\pm 1:\cdots:x^{n'}:z^0:\cdots:z^{n}]}
\cong \R^{n'-1} \times (\C^{n+1}/S^1).
$$
Notice that $\C^{n+1}/S^1$ is singular at $0$. Indeed, it is a cone with regular points $(\C^{n+1}\punctured)/S^1 \cong \R_+ \times \C\P^{n}$. 
To obtain a smooth version, we blow up the singular locus $\lr{[\xx:0]}$. After blowup, $V_j^\pm$ is replaced by  
$$
\widetilde V_j^\pm = \set{([\xx:\zz],[\ww])\in P^{n',n} \times \C\P^{n}}{x^j=\pm 1, \,\zz\in [\ww]}.
$$
Now $\widetilde V_j \cong \R^{n'-1} \times \R_{\ge 0} \times \C\P^{n}$ is smooth. Moreover, $\widetilde V_j^\pm$ has an open cover $\lr{\widetilde V_{jk}^\pm}$, where 
\ali{
\widetilde V_{jk}^\pm
=& \set{([\xx:\zz],[\ww]) \in \widetilde V_j^\pm }{w^k\ne 0} \\ 
=& \set{([x^1:\cdots:\pm 1:\cdots:x^{n'}:z^0:\cdots:z^{n}],[w^0:\cdots:1:\cdots:w^{n}])}{\zz\in[\ww]} \\
\cong& \lr{([x^1:\cdots:\pm 1:\cdots:x^{n'}],z^k,[w^0:\cdots:1:\cdots:w^{n}])}  \\
\cong& \R^{n'-1} \times \C/S^1 \times \C^{n} \\
\cong& \R^{n'-1} \times \R_{\ge 0} \times \C^{n}.
} 
Consequently, we define the mixed projective space
$$
\P^{n',n} = \set{([\xx:\zz],[\ww])\in P^{n',n} \times \C\P^{n}}{\zz\in [\ww]}
$$
which is a smooth manifold with boundary,  with an open cover $\lr{U_i, \widetilde V_{jk}^\pm}$. Moreover, $\P^{n',n}$ is orientable.

\subsubsection{Blowup}\label{sec:blow-up}
To study the singularity of the Bochner kernel, we need to blow up the origin $0\in \aka $. 
If we simply replace $\lr{0}$ by a projective space $P^{n',n-1}$, the result
$$
\BL_0(\aka) = \set{((\xx,\zz), [\yy,\ww]) \in (\aka ) \times P^{n',n-1} }{(\xx,\zz)\in [\yy,\ww]}
$$
is only an orbifold. 
To get a smooth one, we replace $\R^{n'}\times \lr{0} \subset \aka $ by $\R^{n'}\times \C\P^{n-1}$ and get
$$
\BL_{\R^{n'}\times \lr{0}} (\aka) 
=\set{((\xx,\zz),[\vv])\in (\aka)\times \C\P^{n-1}}{\zz\in [\vv]}.
$$
Take a further blowup at 
$$
\C\P^{n-1} \cong 
\set{((0,\zz),[\vv])\in (\aka)\times \C\P^{n-1}}{\zz\in[\vv]} \subset \BL_{\R^{n'}\times \lr{0}} (\aka),
$$
we get the smooth blowup
\ali{
\widetilde{\BL}_0(\aka)
&=\BL_{\C\P^{n-1}} \BL_{\R^{n'}\times \lr{0}} (\aka) \\
&=\set{((\xx,\zz),[\yy,\ww],[\vv]) \in (\aka ) \times P^{n',n-1}\times \C\P^{n-1}}{(\xx,\zz)\in [\yy,\ww],\ww\in[\vv]} \\
&=\set{((\xx,\zz),([\yy,\ww],[\vv])) \in (\aka )\times \P^{n',n-1}}{(\xx,\zz)\in [\yy,\ww],\ww\in[\vv]}
}
with a natural projection 
$$
p: \widetilde{\BL}_0(\aka) \lra \aka, \qquad p^{-1}(0) \cong \P^{n',n-1}.
$$

\subsubsection{The Fulton-MacPherson Compactification}
The Fulton-MacPherson compactification \cite{compactification} extends to compactification spaces of a manifold with boundary, or more generally, with corners \cite{blowup-corner}. For a manifold $X$, there is a natural embedding
$$
\Conf_m(X) \subset X^m \times \prod_{|S|\ge 2} \BL_{\Delta} X^S
$$
where $S\subset \lr{1,\cdots,m}$ and $\BL_{\Delta} X^S$ is the blowup of $X^S$ at the small diagonal. The compactification $F_m(X)$ is defined to be the closure of $\Conf_m(X)$ in the product. Denote by $F_M(X)$ the space constructed from any finite set $M$, so $F_M(X)=F_m(X)$ when $M$ is identified with  $\lr{1,\cdots,m}$. There is a canonical map from $F_M(X)$ to $F_S(X)$ for each $S\subset M$.

For each subset $S\subset \lr{1,\cdots,m}$ with $|S|\ge 2$, there is a divisor $D_S$ that describes the locus where the points in $S$ move close together and collide. $D_S \cap D_{S'}$ is nonempty iff $S$ and $S'$ are either disjoint or one is contained in the other.

\subsubsection{Compactification of $\maka$}
In this section we define the compactification $\cmaka$ of $\maka$.

Heuristically, the embedding $\aka\cong U_0=\set{[\xx:\zz]}{z^0\ne 0} \subset \P^{n',n}$ induces a dense embedding 
\ali{
\Conf_m(\aka) \hlra  \Conf_m(\P^{n',n}) \hlra F_m(\P^{n',n}).   
}  
However, $F_m(\P^{n',n})$ is only an orbifold, whose singular loci come from degenerate $S^1$ actions (see Section \ref{sec:projective-space}). To get a smooth space, we need to take a further blowup on the locus that complex coordinates of some points coincide.  

Recall
$$
\P^{n',n} = \set{([\xx:\zz],[\ww])\in P^{n',n} \times \C\P^{n}}{\zz\in [\ww]}.
$$
Let $(([\xx_1:\zz_1],[\ww_1]),\cdots,([\xx_m:\zz_m],[\ww_m]))$ be coordinates on
$(\P^{n',n})^m$, and 
$$
\Delta_{ij}=\lr{([\xx_i:\zz_i],[\ww_i])=([\xx_j:\zz_j],[\ww_j])}, \quad
\Delta'_{ij}=\lr{[\ww_i]=[\ww_j]} \subset (\P^{n',n})^m. 
$$
In the following, the blowup of an union of subspaces are all carried out in a similar manner as the Fulton-MacPherson compactification, and we omit the restrictions of the subspaces to blowup, for example, $\BL_{\bigcup_{i<j} \Delta'_{ij}} (\aka)^m$ means $\BL_{\bigcup_{i<j} \Delta'_{ij}|_{(\aka)^m}} (\aka)^m$. 

Successive blowups along $\bigcup_{i<j}\Delta'_{ij}$ give a space 
$$
\BL_{\bigcup_{i<j} \Delta'_{ij}} (\P^{n',n})^m
$$
with a natural projection 
$$
p': \BL_{\bigcup_{i<j} \Delta'_{ij}} (\P^{n',n})^m \lra (\P^{n',n})^m
$$
Successive blowups along $p'^{-1}\LR{\bigcup_{i<j}\Delta'_{ij}}$ gives a space  
$$
\cmaka := \BL_{p'^{-1}\LR{\bigcup_{i<j}\Delta'_{ij}}} \BL_{\bigcup_{i<j} \Delta'_{ij}} (\P^{n',n})^m.
$$
There are natural projections $\cConf_{m}(\aka) \to \cConf_{m-p}(\aka)$.
Moreover, the following diagram commutes:
\begin{center}
\begin{tikzcd}
\BL_{\bigcup_{i<j} \Delta'_{ij}} \maka \arrow[r, hook] & \BL_{p'^{-1}\LR{\bigcup_{i<j}\Delta'_{ij}}} \BL_{\bigcup_{i<j} \Delta'_{ij}} (\aka)^m \arrow[d] \arrow[r, hook] & \cmaka \arrow[d]  \\
& \BL_{\bigcup_{i<j} \Delta'_{ij}} (\aka)^m \arrow[d] \arrow[r, hook] & \BL_{\bigcup_{i<j} \Delta'_{ij}} (\P^{n',n})^m \arrow[d]   \\
& (\aka)^m \arrow[r, hook] & {(\P^{n',n})^m}
\end{tikzcd}
\end{center}


\subsubsection{Compactification of Fibers}\label{sec:cfiber}
In this section, we give a substitute for the projection 
\ali{
\pr: \Conf_{m}(\aka) &\lra \Conf_{m-p}(\aka) \\
(\ZZ_1,\cdots,\ZZ_m) &\lmt (\ZZ_1,\cdots,\ZZ_{m-p})
}
which is a smooth fiber bundle with compact fibers. 

The compactification gives the following commutative diagram
\begin{center}
\begin{tikzcd}
\cmaka \arrow[r, "\overline{\pr}"]   & \cConf_{m-p}(\aka)     \\
\BL_{\bigcup_{i<j} \Delta'_{ij}} \maka \arrow[d, "q"'] \arrow[r] \arrow[u, hook, "j"] & \BL_{\bigcup_{i<j} \Delta'_{ij}} \Conf_{m-p}(\aka) \arrow[d, "p"] \arrow[u, hook, "i"'] \\
\maka \arrow[r, "\pr"]     & \Conf_{m-p}(\aka)   
\end{tikzcd}
\end{center} 
where $i,j$ are dense embeddings. 
Note that the projection between compactified configuration spaces
$$
\overline{\pr}: \cmaka \lra \cConf_{m-p}(\aka)
$$
is not locally trivial in general; the fibers over the exceptional divisors are not homeomorphic to those on the dense open subset $\Im i$ of the image of $i$.  

Define $C^{n',n}_{m,p}=\overline{\pr}^{-1}(\Im i)$. 
Restrict $\overline{\pr}$ to  $\Im i$, we get a pullback diagram
\begin{center}
\begin{tikzcd}
\cmaka \arrow[r, "\overline{\pr}"]    & \cConf_{m-p}(\aka)   \\
C^{n',n}_{m,p} \arrow[u, hook] \arrow[r, "\widetilde\pr"] & \BL_{\bigcup_{i<j} \Delta'_{ij}} \Conf_{m-p}(\aka) \arrow[u, hook, "i"']
\end{tikzcd}
\end{center}
where 
$$
\widetilde\pr: C^{n',n}_{m,p} \lra \BL_{\bigcup_{i<j} \Delta'_{ij}} \Conf_{m-p}(\aka)
$$
is a smooth fiber bundle with compact fibers.

\subsection{Asymptotic Behavior of the Generalized Bochner-Martinelli Kernel}\label{sec:Asymptotic-Bochner-kernel}
 
The generalized Bochner-Martinelli kernel $\omega_{BM}$ has a singularity at the origin $0\in\aka$, and on a sphere 
$$
S^{n'+2n-1}_{0,r} = \set{(\xx,\zz)\in \aka }{|\xx|^2+|\zz|^2=r^2}
$$
centered at $0$, we have 
$$
\int_{S^{n'+2n-1}_{0,r}} \omega_{BM} = 1.
$$
Indeed, the same statement holds for the hyperplane at infinity. Let  $H=\set{[\xx:\zz]\in P^{n',n}}{z^0=0}$ which is the complement of the inclusion 
\ali{
\aka  &\hlra P^{n',n} \\
(x^1,\cdots,x^{n'},\ZZi ) &\lmt [x^1:\cdots:x^{n'}:1:z^1:\cdots:z^n]
} 
Consider an open neighborhood of $H$ that is homeomorphic to the unit disk bundle of $H$, whose boundary is a sphere $S^{n'+2n-1}_{H,r}=\set{[x^1:\cdots:x^{n'}:r:z^1:\cdots:z^n]\in P^{n',n}}{\sum_{i=1}^{n'}(x^i)^2+\sum_{i=1}^{n'}|z^i|^2=1}$ around $H$, and we have 
$$
\int_{S^{n'+2n-1}_{H,r}} \omega_{BM} = -1
$$
because  
$$
\LR{\int_{S^{n'+2n-1}_{0,r}} + \int_{S^{n'+2n-1}_{H,r}}} \omega_{BM}
=\int_{\partial U} \omega_{BM}
=\int_U d\omega_{BM}
=0
$$
where $U$ is the open region between two spheres. 

Indeed, in both cases we have line bundles $\mathcal O(-1)$ and $\mathcal O(1)$ and  Hopf fibrations
\begin{center}
\begin{tikzcd}
S^1 \arrow[r, hook] & \BL_{\xx=0}S^{n'+2n-1} \arrow[d] \\
              & \P^{n',n-1}             
\end{tikzcd}
\end{center}
Below we will show $\omega_{BM}|_{S^{n'+2n-1}}= d\theta \wedge \dVol_{P^{n',n-1}}$, where $\theta\in S^1$ is the fiber coordinate, and $\dVol_{P^{n',n-1}}$ is a volume form on $P^{n',n-1}$. 

\subsubsection{Asymptotic Behavior at \texorpdfstring{$0$}{0}}\label{sec:Asymptotic-zero}
Define a pair of conjugate vector fields
$$
V=\sum_{i=1}^n 2z^i\partial_{z^i}+\sum_{i=1}^{n'} x^i\partial_{x^i}, \quad 
\bar V=\sum_{i=1}^n 2\bar z^i\partial_{\bar z^i}+\sum_{i=1}^{n'} x^i\partial_{x^i}.
$$
Since $[V,\bar V]=0$, we have the commutation relations
$$
\Lr{\mathcal L_{V},\iota_{V}}=
\Lr{\mathcal L_{V},\iota_{\bar V}}=
\Lr{\mathcal L_{\bar V},\iota_{V}}=
\Lr{\mathcal L_{\bar V},\iota_{\bar V}}=0.
$$ 
Let $\dVol_{\aka}$ be the standard volume form on $\aka$. A direct verification shows that
$$
\mathcal L_{V} \LR{\frac{\dVol_{\aka}}{(\rSquare )^{n + \frac{n'}{2}}} }
=\mathcal L_{\bar V} \LR{\frac{\dVol_{\aka}}{(\rSquare )^{n + \frac{n'}{2}}}}
=0
$$
and the generalized Bochner-Martinelli kernel $\omega_{BM}$ is given by
$$
\omega_{BM}=\iota_{\bar V} \LR{\frac{\dVol_{\aka}}{(\rSquare )^{n + \frac{n'}{2}}}}.
$$

Let 
$$
\alpha=\frac{\sum_{i=1}^n \bar z^i dz^i +\sum_{i=1}^{n'}x^idx^i }{\rSquare }, \quad 
\bar \alpha=\frac{\sum_{i=1}^n  z^i d\bar z^i +\sum_{i=1}^{n'}x^idx^i }{\rSquare }.
$$
Then we have
$$
\iota_{V}\alpha=\iota_{\bar V}\bar \alpha=1, \quad
\iota_{V}\bar \alpha=\iota_{\bar V}\alpha=\frac{|\xx|^2}{\rSquare },
$$
\ali{
\alpha\wedge \iota_{V}\iota_{\bar V} \LR{\frac{\dVol_{\aka}}{(\rSquare )^{n + \frac{n'}{2}}}}
=&  (\iota_{V}\alpha) \iota_{\bar V} \LR{\frac{\dVol_{\aka}}{(\rSquare )^{n + \frac{n'}{2}}}} 
- (\iota_{\bar V}\alpha) \iota_{V} \LR{\frac{\dVol_{\aka}}{(\rSquare )^{n + \frac{n'}{2}}}} \\
=& \omega_{BM} 
- \frac{|\xx|^2}{\rSquare }  \iota_{V} \LR{\frac{\dVol_{\aka}}{(\rSquare )^{n + \frac{n'}{2}}}},  
}
\ali{
\bar \alpha\wedge \iota_{V}\iota_{\bar V} \LR{\frac{\dVol_{\aka}}{(\rSquare )^{n + \frac{n'}{2}}}}
=&   (\iota_{V}\bar \alpha) \iota_{\bar V} \LR{\frac{\dVol_{\aka}}{(\rSquare )^{n + \frac{n'}{2}}}} 
- (\iota_{\bar V}\bar \alpha) \iota_{V} \LR{\frac{\dVol_{\aka}}{(\rSquare )^{n + \frac{n'}{2}}}} \\
=&  \frac{|\xx|^2}{\rSquare } \omega_{BM} 
- \iota_{V}\LR{\frac{\dVol_{\aka}}{(\rSquare )^{n + \frac{n'}{2}}}}. 
} 
Combining the two preceding equations yields
\ali{
\LR{\frac{\rSquare }{|\xx |^2} \alpha - \bar \alpha}   \wedge \iota_{V}\iota_{\bar V} \LR{\frac{\dVol_{\aka}}{(\rSquare )^{n + \frac{n'}{2}}}}
= \LR{\frac{\rSquare }{|\xx |^2} -\frac{|\xx|^2}{\rSquare }}   \omega_{BM}.
}
So
\ali{
\omega_{BM}
=&\frac{(\rSquare ) ((\rSquare ) \alpha - |\xx |^2 \bar \alpha)}{4|\zz|^2(|\zz|^2 + |\xx |^2)} 
 \wedge \iota_{V}\iota_{\bar V} \LR{\frac{\dVol_{\aka}}{(\rSquare )^{n + \frac{n'}{2}}}} \\ 
}
The first factor simplifies to
\ali{
&\frac{(\rSquare ) ((\rSquare ) \alpha - |\xx |^2 \bar \alpha)}{4|\zz|^2(|\zz|^2 + |\xx |^2)} \\
=&\frac{(\rSquare )\sum_{i}\bar z^idz^i - |\xx |^2 \sum_i z^id\bar z^i + 2 |\zz|^2 \sum_i x^idx^i }{4|\zz|^2(|\zz|^2 + |\xx |^2)} \\
=&\frac{(|\zz|^2 + |\xx |^2)(\sum_{i}\bar z^i dz^i -\sum_i z^i d\bar z^i)  + |\zz|^2 (\sum_{i}\bar z^idz^i +\sum_i z^id\bar z^i+ 2 \sum_i x^idx^i)}{4|\zz|^2(|\zz|^2 + |\xx |^2)} \\
=&\frac{\sum_{i}\bar z^idz^i -\sum_i z^id\bar z^i}{4|\zz|^2} + \frac{\sum_{i}\bar z^idz^i +\sum_i z^id\bar z^i+2\sum_i x^idx^i}{4(|\zz|^2 + |\xx |^2)} \\
=& \frac12 (id\theta +d\log r) \\
=& \frac12 d\log z
} 
Here $z\in \C^*$ is the fiber coordinate of the $\C^*$-action on the blowup of $\aka$ at $0$ (see \cite{Griffiths-Harris}, page 373 for the complex case). If we take a further blowup of $\R^{n'}\times \lr{0}$, the $S^1$ action will be nondegenerate, and $d\theta$ lifts to a smooth form on the blowup of the sphere. 

Let 
$$
\tau= \frac12\iota_{V} \omega_{BM} 
= \frac12\iota_{V}\iota_{\bar V} \LR{\frac{\dVol_{\aka}}{(\rSquare )^{n + \frac{n'}{2}}}}.
$$
Then we have
$$
\iota_{V}\tau= \iota_{\bar V}\tau 
=\mathcal L_{V} \tau =\mathcal L_{\bar V} \tau
=d\tau=0.
$$ 
As a result, $\tau$ descends to a top form on the quotient space $P^{n',n-1}$, which is a volume form. It lifts to a smooth form on $\P^{n',n-1}$ after a further blowup. 

Consequently, we see $\omega_{BM}$ takes the form
$$
\omega_{BM} = d\log z \wedge \dVol_{P^{n',n-1}}.
$$
When restricted to a sphere $S^{n'+2n-1}_0$, it becomes 
$$
id\theta \wedge \dVol_{P^{n',n-1}}.
$$
As a result, the integral
$$
\int_{S^{n'+2n-1}}\omega_{BM}
$$
is interpreted as 
$$
\int_{P^{n',n-1}} \int_{S^1}id\theta \wedge \dVol_{P^{n',n-1}}
$$



\subsubsection{Asymptotic Behavior at Infinity}\label{sec:bochner-infty}
In this section, we examine the behavior of $\omega$ near the hyperplane $H$ at infinity in 
$$
\paka \hlra P^{n',n} = \aka \cup H, \qquad H=\set{[\xx:\zz]\in P^{n',n}}{z^0=0}.
$$
The description is more complicated than that at $0$, because the coordinate changes on mixed projective spaces are not biholomorphic. 
 

Since the chart $\aka=U_0=\lr{z^0\ne 0}$ does not intersect with $H$, we need to use another chart, for example, $U_p=\set{[\xx:\zz]}{z^p\ne 0}$. On $U_0\cap U_p$, the coordinate transformation is given by
\ali{
\lr{[x^1:\cdots:x^{n'}:z^0:\cdots :z^{p-1}:1:z^{p+1}:\cdots:z^{n}]} 
=\lr{\Lr{\frac{x^1}{|z^0|}:\cdots:\frac{x^{n'}}{|z^0|}:1:\frac{z^1}{z^0}\cdots :\frac{1}{z^0}:\cdots:\frac{z^n}{z^0}}}.
}
Let $(\yy_k,\ww_k)$ denote points in $\aka$. For $(\xx,\zz)\in U_0\cap U_p$, we have 
$$
(\xx-\yy_k,\zz-\ww_k)=\LR{\frac{x^1}{|z^0|}-y_k^1,\cdots,\frac{x^{n'}}{|z^0|}-y_k^{n'},\frac{z^1}{z^0}-w_k^1,\cdots,\frac{1}{z^0}-w_k^p,\cdots,\frac{z^n}{z^0}-w_k^n}.
$$ 
Recall $\omega$ is of the form 
$$
\omega=  \frac{\frac{2^n \Gamma \left( n + \frac{n'}{2} \right)}{\pi^{n + \frac{n'}{2}}} }{(\rSquare )^{n + \frac{n'}{2}}} \cdot 
\left( \sum_{i = 1}^n (- 1)^{i - 1} 2\bar z^i \left(
    \prod_{j \neq i} d \bar z^j \right) d^{n'} x +
 \sum_{i = 1}^{n'} (- 1)^{n + i - 1} x^i d^n
    \bar z \left( \prod_{j \neq i} d x^j \right) \right).
$$
Then 
\ali{
&\omega(\xx-\yy_k,\zz-\ww_k) \\
=&  \frac{2^n \Gamma \left( n + \frac{n'}{2} \right)}{\pi^{n + \frac{n'}{2}}}     \frac{1}{\LR{2\sum_{i\ne 0,p} \abs{\frac{z^i}{z^0}-w_k^i}^2 + \abs{\frac{1}{z^0}-w_k^p}^2+ \sum_{i=1}^{n'} \abs{\frac{x^i}{|z^0|}-y_k^i}^2 }^{n + \frac{n'}{2}}} \cdot \\
&  \left\{\left[  \sum_{i\ne 0,p} (- 1)^{i - 1} 2\LR{\frac{\bar z^i}{\bar z^0}-\bar w_k^i} \LR{\prod_{\substack{j\ne 0,i\\j<p}} d\LR{\frac{\bar z^j}{\bar z^0}-\bar w_k^j}} d\LR{\frac{1}{\bar z^0}-\bar w_k^p} \LR{\prod_{\substack{j\ne 0,i\\j>p}} d\LR{\frac{\bar z^j}{\bar z^0}-\bar w_k^j}}    \right.\right. \\
&\qquad + \left. (- 1)^{p - 1} \LR{\frac{1}{\bar z^0}-\bar w_k^p} \prod_{j\ne 0,p } d\LR{\frac{\bar z^j}{\bar z^0}-\bar w_k^j} \right] \prod_{i=1}^{n'} d\LR{\frac{x^i}{|z^0|}-y_k^i} \\
& + \left. \sum_{i = 1}^{n'} (- 1)^{n + i - 1} \LR{\frac{x^i}{|z^0|}-y_k^i} \prod_{j\ne 0,i}  d\LR{\frac{x^j}{|z^0|}-y_k^j} \LR{\prod_{\substack{j\ne 0,i\\j<p}} d\LR{\frac{\bar z^j}{\bar z^0}-\bar w_k^j}} d\LR{\frac{1}{\bar z^0}-\bar w_k^p} \LR{\prod_{\substack{j\ne 0,i\\j>p}} d\LR{\frac{\bar z^j}{\bar z^0}-\bar w_k^j}} \right\} \\ 
\overset{(1)}{=}&   \frac{2^n \Gamma \left( n + \frac{n'}{2} \right)}{\pi^{n + \frac{n'}{2}}}     \frac{(z^0)^n}{\LR{2\sum_{i\ne 0,p} \abs{z^i-w_k^iz^0}^2 + \abs{1 - w_k^p z^0}^2+ \sum_{i=1}^{n'} \abs{x^i - y_k^i |z^0|}^2 }^{n + \frac{n'}{2}}} \cdot \\
&  \left\{\left[  \sum_{i\ne 0,p} (- 1)^{i - 1} 2\LR{\bar z^i - \bar w_k^i \bar z^0} \LR{\prod_{\substack{j\ne 0,i\\j<p}} d\LR{\bar z^j - \bar w_k^j \bar z^0}} d\LR{1 - \bar w_k^p \bar z^0} \LR{\prod_{\substack{j\ne 0,i\\j>p}} d\LR{\bar z^j - \bar w_k^j \bar z^0}}    \right.\right. \\
&\qquad + \left. (- 1)^{p - 1} \LR{1 - \bar w_k^p \bar z^0} \prod_{j\ne 0,p } d\LR{\bar z^j - \bar w_k^j \bar z^0} \right] \prod_{i=1}^{n'} d\LR{x^i - y_k^i |z^0|} \\
& + \left. \sum_{i = 1}^{n'} (- 1)^{n + i - 1} \LR{x^i - y_k^i |z^0|} \prod_{j\ne 0,i}  d\LR{x^j - y_k^j |z^0|} \LR{\prod_{\substack{j\ne 0,i\\j<p}} d\LR{\bar z^j - \bar w_k^j \bar z^0}} d\LR{1 - \bar w_k^p \bar z^0} \LR{\prod_{\substack{j\ne 0,i\\j>p}} d\LR{\bar z^j - \bar w_k^j \bar z^0}} \right\} \\
& + O\LR{\frac{d\bar z^0}{\bar z^0},\frac{d|z^0|}{|z^0|}} \\
\overset{(2)}{=}&  \frac{2^n \Gamma \left( n + \frac{n'}{2} \right)}{\pi^{n + \frac{n'}{2}}}     \frac{(z^0)^n}{\LR{2\sum_{i\ne 0,p} \abs{z^i-w_k^iz^0}^2 + \abs{1 - w_k^p z^0}^2+ \sum_{i=1}^{n'} \abs{x^i - y_k^i |z^0|}^2 }^{n + \frac{n'}{2}}}      \cdot (- 1)^{p - 1}  \prod_{j\ne 0,p } d \bar z^j     \prod_{i=1}^{n'} d x^i  \\ 
& + O\LR{\frac{d\bar z^0}{\bar z^0},\frac{d|z^0|}{|z^0|}, \bar z^0, |z^0|}  
}
Here in step (1) we extract all terms involving $\bar z^0 d\frac1{\bar z^0}$ or $|z^0| d\frac1{|z^0|}$, and in step (2) we extract all terms involving $d(\bar w_k^i\bar z^0)$ or  $d(y_k^i|z^0|)$. 
 
The behavior of $\omega$ at charts $V^{\pm}_q$ is similar, which we omit  to avoid repetition.

\subsubsection{The Kernel $\omega$ lifted to $\cmaka$}
When lifted to $\cConf_2(\aka)$, the space of compactified configurations of $\ZZ_i$ and $\ZZ_j$, the singular loci of $\omega_{ij}$ are the union of the diagonal $\Delta$ and infinite planes. After a pull-back along the canonical projection
\ali{
\cmaka \lra \cConf_2(\aka),
}
we see $\omega_{ij}$ is singular on the divisor $D_{S}$ iff $\lr{i,j}\subset S$.

\subsection{Regularized Integrals and Residues}\label{sec:regints}

\subsubsection{Principal Values and Residues}\label{sec:PV-Res}
In this section, we review the theory of principal values and residues from \cite{higher-residue}.

Let $W\subset \C^q$ be an open subset. Let $\xi$ and $\theta$ be compactly supported smooth differential forms on $W$, with degree $2q$ and $2q-1$, and $\varphi$ be a holomorphic function on $W$. It is shown in \cite{higher-residue} that the limits 
$$
\lim_{r\to 0} \int_{|\varphi|>r} \frac{\xi}{\varphi}
\quad \text{and} \quad
\lim_{r\to 0} \int_{|\varphi|=r} \frac{\theta}{\varphi}
$$
exist. Using a partition of unity, these local currents  patch together to globally defined principal values $\PV$ and residues $\Res$ on a complex space $X$. Moreover, the Stokes theorem implies that 
$$
\PV \circ  d = \Res 
$$
on $2q-1$ forms.

\begin{eg}[Local description of residue]\label{eg:local-residue}
Let $W\subset\C^q$ be an open subset, and let $Y_1,\cdots,Y_k$ be normal crossing divisors defined by local equations $z^1,\cdots,z^k$. Let $\theta$ be a compactly supported smooth $2q-1$ form on $W$. 

We compute the residue $\Res \frac{\theta}{(z^1)^{a_1}\cdots (z^k)^{a_k}}$. By \cite{higher-residue}, Proposition 6.5, $\Res$ vanishes on $(q-1,q)$-forms, and on $(q,q-1)$-forms, $\Res=\sum_{i=1}^k \Res_i$,  where  
$$
\Res_i \LR{\frac{f}{(z^1)^{a_1}\cdots (z^k)^{a_k}} dz^l \wedge \prod_{j\ne l} dz^j d\bar z^j }
=\begin{cases}
\frac{2\pi i}{(a_i-1)!} \PV_{Y_i \cap W} \LR{\frac{\LR{\partial_{z^i}^{a_i-1}f}|_{Y_i \cap W}}{(z^1)^{a_1}\cdots\widehat{(z^i)^{a_i}}\cdots (z^k)^{a_k}} \prod_{j\ne i} dz^j d\bar z^j} & l=i\\
0 & l\ne i 
\end{cases}
$$
is the residue along $Y_i$. In other words, $\Res$ is characterized by annihilating d and extracting the coefficient of $\frac{dz^i}{z^i}$  along $Y_i$. 
\end{eg}

In this paper, we will use a slight generalization of $\PV$ and $\Res$ to forms with polynomial dependence on $\bar z^i,|z^i|, \frac{d\bar z^i}{\bar z^i},\frac{d|z^i|}{|z^i|}$. 
The following lemma shows $\PV$ and $\Res$ still works. 

\begin{lem}
For integers $b,c\ge 0$, we have 
\ali{
\lim_{\epsilon\to 0}\int_{|z|=\epsilon} z^a \bar z^b |z|^c \frac{dz}{z} 
&=\lim_{\epsilon\to 0} r^{a+b+c} \int_0^{2\pi} e^{(a-b)i\theta} id\theta
=2\pi i \delta_{a,0}\delta_{b,0}\delta_{c,0}, \\
\lim_{\epsilon\to 0}\int_{|z|=\epsilon} z^a \bar z^b |z|^c \frac{d\bar z}{\bar z} 
&=\lim_{\epsilon\to 0} r^{a+b+c} \int_0^{2\pi} e^{(a-b)i\theta} (-id\theta)
=-2\pi i \delta_{a,0}\delta_{b,0}\delta_{c,0}, \\
\lim_{\epsilon\to 0}\int_{|z|=\epsilon} z^a \bar z^b |z|^c \frac{d|z|}{|z|} 
&=0.
} 
\end{lem}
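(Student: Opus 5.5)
The plan is to parametrize the circle $|z|=\epsilon$ in polar coordinates, $z=\epsilon e^{i\theta}$ with $0\le\theta\le 2\pi$. Then $\bar z=\epsilon e^{-i\theta}$ and $|z|=\epsilon$. Restricted to the circle, the three logarithmic one-forms are
\[
\frac{dz}{z}=i\,d\theta,\qquad \frac{d\bar z}{\bar z}=-i\,d\theta,\qquad \frac{d|z|}{|z|}=0 .
\]
This pullback computation is the key step, and it gives the third identity immediately: $|z|$ is constant on the circle, so $d|z|$ restricts to zero and the integrand vanishes identically for every $\epsilon$.

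For the first two identities I substitute the parametrization into the integrand. The monomial becomes
\[
z^a\bar z^b|z|^c=\epsilon^{a+b+c}e^{i(a-b)\theta},
\]
so each integral equals $\pm i\,\epsilon^{a+b+c}\int_0^{2\pi}e^{i(a-b)\theta}\,d\theta$. By orthogonality of characters on $S^1$, the $\theta$-integral is $2\pi\delta_{a,b}$.

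Now I take the limit $\epsilon\to0$, using $a,b,c\ge0$ (I read the lemma as also assuming $a\ge 0$, which the stated formula needs). If $a\ne b$, the integral is already zero for every $\epsilon$. If $a=b$, the integral equals $\pm 2\pi i\,\epsilon^{2a+c}$. This tends to $0$ unless $2a+c=0$, which forces $a=b=c=0$, and in that case the integral is the constant $\pm2\pi i$. This gives exactly $\pm2\pi i\,\delta_{a,0}\delta_{b,0}\delta_{c,0}$, with the sign $+$ for $dz/z$ and $-$ for $d\bar z/\bar z$.

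I expect no serious obstacle. The only care needed is the sign and orientation convention: the circle carries the counterclockwise boundary orientation, and $d\bar z/\bar z=\overline{dz/z}=-i\,d\theta$ produces the minus sign. I will also remark that the nonnegativity of the exponents is what makes the limit exist, since a negative total degree $a+b+c<0$ with $a=b$ would diverge. This is why the lemma justifies extending $\PV$ and $\Res$ to forms with polynomial dependence on $\bar z^i$ and $|z^i|$ and on the logarithmic forms $\frac{d\bar z^i}{\bar z^i}$ and $\frac{d|z^i|}{|z^i|}$: by the computation above, only the pure $\frac{dz^i}{z^i}$ component contributes to the residue.
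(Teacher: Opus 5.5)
Your proof is correct and is the paper's own argument; the lemma's display already is this computation: parametrize $z=\epsilon e^{i\theta}$, use orthogonality of $e^{i(a-b)\theta}$, and note that $d|z|$ restricts to zero on the circle. One correction is that the extra hypothesis $a\ge 0$ is not needed (the lemma assumes only $b,c\ge 0$): for $a\neq b$ the integral vanishes for every $\epsilon$, while $a=b$ forces $a=b\ge 0$, so your argument already proves the formula for every integer $a$ (the case relevant to residues, where $z$ occurs with negative powers), and your divergence caveat for $a+b+c<0$ never arises.
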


\begin{prop}\label{prop:constructibility-reg-int}
Parametric regularized integrals preserve constructibility.
\end{prop}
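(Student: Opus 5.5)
The plan is to reduce the statement to the stability of the class of constructible functions under parametric integration, which is the theorem of Cluckers--Miller \cite{int-closed}. Concretely, let $\alpha$ be a constructible form on $\maka$, or on $C^{n',n}_{m,p}$ after pullback, with singularities of Bochner--Martinelli type along the boundary divisors. Write the regularized fiber integral $\dashint$ along $\widetilde\pr$ as a limit:
\[
\dashint \alpha \;=\; \lim_{\epsilon\to 0}\Bigl(\int_{\{\text{all } |\varphi_S|\ge \epsilon\}} \alpha \;-\; (\text{counterterms})\Bigr).
\]
Here the $\varphi_S$ are definable local defining functions of the divisors $D_S$ and of the divisors at infinity. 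The counterterms are finite sums of terms $c_{a,b}(y)\,\epsilon^{a}(\log\epsilon)^b$ which, by the lemma on $\int_{|z|=\epsilon} z^a\bar z^b|z|^c\,dz/z$ and Example \ref{eg:local-residue}, appear only with nonpositive $a$. The first step is to check that all the data are definable in $\R_{an}$ after fixing a finite definable atlas of $\cmaka$: the compactification, the blowup charts, the defining functions $\varphi_S$, and the cutoff region. This holds because these are semialgebraic constructions of Fulton--MacPherson type, and by the computations in Section \ref{sec:Asymptotic-Bochner-kernel} the lifted kernel has coefficients that are constructible, with the powers $(\cdot)^{n+n'/2}$ definable. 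Using a definable (constructible $C^k$) partition of unity, I reduce to a single chart with normal crossing coordinates.

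In a single chart the truncated integral $F(y,\epsilon)=\int_{\{|\varphi|\ge\epsilon\}}\alpha$ is a parametric integral of a constructible function, with $\epsilon$ as an additional parameter. By \cite{int-closed}, $F$ is constructible in $(y,\epsilon)$. Next I need the asymptotic structure of constructible functions in one variable $\epsilon\to0$ uniformly in parameters: for fixed $y$, the function $F(y,\epsilon)$ admits an expansion
\[
F(y,\epsilon)\sim \sum_{r,b} c_{r,b}(y)\,\epsilon^{r}(\log\epsilon)^{b},
\]
with exponents $r$ ranging over a finite set of rationals independent of $y$, as in the preparation theorem of \cite{int-closed} (and \cite{poly-bound2}, Proposition 5.2). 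The coefficients $c_{r,b}(y)$ are constructible, since they can be extracted by limits of constructible functions. The regularized integral is exactly the constant term $c_{0,0}(y)$. Taking the limit $\epsilon\to0$ of a constructible function of $(y,\epsilon)$ yields a constructible function of $y$; this is again a consequence of the Cluckers--Miller results, since pointwise limits of constructible families are constructible. Applying this coefficientwise to the form gives constructible coefficients. For the differentiability class $C^{k-s}$, I use that differentiating under $\dashint$ in the base variables commutes with regularization, because the divisors are transverse to the base directions on $C^{n',n}_{m,p}$.

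I expect the main obstacle to be the step isolating the constant term. One has to know that the subtracted counterterms, equivalently the divergent coefficients $c_{r,b}$ with $r<0$ or with $r=0,b>0$, are themselves constructible and uniform in $y$. This requires a uniform-in-parameters preparation result rather than pointwise asymptotics. My first attempt would be to invoke the Lion--Rolin / Cluckers--Miller preparation theorem for constructible functions on cells, which writes $F$ on each cell as a finite sum of constructible coefficients times $|\epsilon-\theta(y)|^{r}\log^b$ times units. Near $\epsilon=0$ one has $\theta=0$ after cell refinement, so the coefficients can be read off directly.
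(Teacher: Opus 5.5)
Your outline is sound, but it takes a different route from the paper. The paper never passes to a limit. It uses the Fubini theorem for regularized integrals \cite{Regularized-integral} to reduce to one complex variable, and then integrates by parts:
\[
\dashint_{\C}\dVol_z\,\frac{f(z,\xx)}{z^{l}}=\frac{(-1)^{l-1}}{(l-1)!}\dashint_{\C}\dVol_z\,\frac{\partial_z^{l-1}f(z,\xx)}{z}.
\]
The right-hand side is an absolutely convergent integral of a constructible function, since derivatives of constructible functions are constructible, so \cite{int-closed} applies directly. The hypothesis $l<k$ keeps track of the derivatives consumed.

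You instead apply \cite{int-closed} to the truncated integrals $F(y,\epsilon)$ and then let $\epsilon\to0$. This needs neither Fubini for regularized integrals nor the holomorphic shape of the poles. It therefore treats the dependence on $\bar z$, $|z|$, $d\bar z/\bar z$ and several normal crossing divisors uniformly. The price is a stronger input, and three points need tightening.

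\emph{The real input is limit-stability.} The obstacle you anticipate largely disappears: the regularized integrals here are Herrera--Lieberman principal values \cite{higher-residue}, whose limits exist with no counterterms. What you actually need is that $y\mapsto\lim_{\epsilon\to0}F(y,\epsilon)$ is constructible. That is true, but it is not the integration theorem of \cite{int-closed}. It follows from the Cluckers--Miller preparation theorem: expand the units on the cells adjacent to $\epsilon=0$ and read off the coefficient of $\epsilon^0(\log\epsilon)^0$. You should state and prove it as such.

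\emph{The truncation must match the definition.} Use torus-invariant domains such as $|\varphi|>\epsilon$ with $\varphi$ a monomial in normal crossing coordinates, not arbitrary definable defining functions. For example, take $\bar z z^{-3}$ near $0$, with $z=x+iy$. Removing the square $\max(|x|,|y|)<\epsilon$ instead of the disc $|z|<\epsilon$ shifts the limit by $-\int_0^{2\pi}e^{-4i\theta}\log\bigl(1/\max(|\cos\theta|,|\sin\theta|)\bigr)\,d\theta\neq0$.

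\emph{Regularity.} Your step ``differentiate under $\dashint$ in the base variables'' needs a uniform-convergence argument. The paper's integration by parts makes this automatic, because the resulting integrand is absolutely integrable.
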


\begin{proof}
Using the Fubini Theorem for regularized integrals \cite{Regularized-integral}, the proof reduces to the one-dimensional case. Let $f(z,\xx)$ be a constructible $C^k$ function on $\C\times \R^p$ with compact support on $\C\times \lr{\xx_0}$ for every $\xx_0\in\R^p$, and $l<k$. Using integration by parts, we have
$$
\dashint_{\C} \dVol_z \, \frac{f(z,\xx)}{z^l} 
=\frac{(-1)^{l-1}}{(l-1)!} \dashint_{\C} \dVol_z \, \frac{\partial_z^{l-1}f(z,\xx)}{z}.
$$
The right hand side converges absolutely, so the result is a constructible function on $\R^p$ \cite{int-closed}.
\end{proof}

\begin{rem}
Parametric residues also preserve constructibility. To show $\Res_i$ in Example \ref{eg:local-residue} preserves constructibility, we first take regularized integrals with respect to $z^1,\cdots,\widehat z^i,\cdots, z^k$, then the  constructibility comes from the one-dimensional Stokes theorem \cite{Regularized-integral}
$$
\dashint_{\Sigma} d(-) = -2\pi i\Res_{\Sigma} (-) + \int_{\partial\Sigma} (-).
$$
\end{rem}

\subsubsection{Regularized Integrals and Residues on Compactified Configuration Spaces}\label{sec:reg-conf}

In this section we define the regularized integrals and residues on compactified configuration spaces which will be used  in Section \ref{sec:Conf-Space-Integrals}. 

The projection
\ali{
\pr: \Conf_{m}(\aka) &\lra \Conf_{m-p}(\aka) \\
(\ZZ_1,\cdots,\ZZ_m) &\lmt (\ZZ_1,\cdots,\ZZ_{m-p})
}
has non-compact fibers, so in Section \ref{sec:cfiber} we constructed a replacement
$$
\widetilde\pr: C^{n',n}_{m,p} \lra \BL_{\bigcup_{i<j} \Delta'_{ij}} \Conf_{m-p}(\aka)
$$
which is a smooth fiber bundle with compact fibers, 
together with a commutative diagram
\begin{center}
\begin{tikzcd}
\cmaka \arrow[rr, "\overline{\pr}"]  & & \cConf_{m-p}(\aka)     \\
 & C^{n',n}_{m,p} \arrow[lu, hook] \arrow[rd, "\widetilde\pr"] & \\
\BL_{\bigcup_{i<j} \Delta'_{ij}} \maka \arrow[d, "q"'] \arrow[rr] \arrow[ru, hook, "j'"] \arrow[uu, hook, "j"] & & \BL_{\bigcup_{i<j} \Delta'_{ij}} \Conf_{m-p}(\aka) \arrow[d, "p"] \arrow[uu, hook, "i"'] \\
\maka \arrow[rr, "\pr"]    & & \Conf_{m-p}(\aka)   
\end{tikzcd}
\end{center}  

Next, we define regularized integrals and residues associated with the projection
\ali{
\pr: \Conf_{m}(\aka) &\lra \Conf_{m-p}(\aka) \\
(\ZZ_1,\cdots,\ZZ_m) &\lmt (\ZZ_1,\cdots,\ZZ_{m-p})
}
For residues, we furthermore need a divisor $D$. 
They map a singular form in
$$
\lR{\Hol(\C^{mn}),\,\, \overline{\C[ \partial_{z_j^1},\cdots, \partial_{z_j^n}]\omega_{ij}}} \subset A^\bullet(\maka)
$$
(see Section \ref{sec:ring-structure-conf} for definition) to 
a differential form in 
$$A^\bullet(\Conf_{m-p}(\aka)).$$

Let $\alpha\in \lR{\Hol(\C^{mn}),\,\, \overline{\C[ \partial_{z_j^1},\cdots, \partial_{z_j^n}]\omega_{ij}}}$. The differential form $q^*\alpha$ on $\BL_{\bigcup_{i<j} \Delta'_{ij}} \maka$ can be viewed as a densely defined form on $C^{n',n}_{m,p}$. 

After compactification, the singular locus of each Bochner kernel becomes a one-dimensional complex direction, i.e., the fiber coordinate $\C^*$ acting on the origin or a coordinate chart of the infinite plane of $\aka$. Moreover, on $\cmaka$, all these divisors on which some $\omega_{ij}$ are singular 
are normal crossing. Consequently, the singularity of $q^*\alpha$ on $C^{n',n}_{m,p}$ along the projection $\widetilde\pr$ is of the form in Section \ref{sec:PV-Res}, and we may use a partition of unity (smooth or constructible $C^k$) and then apply the principal values and residues in Section \ref{sec:PV-Res}, followed by an integral of compactly supported differential forms. 
The choice of partition of unity will not affect the result. 

The local principal values or residues along fibers of
$$
\widetilde\pr: C^{n',n}_{m,p} \lra \BL_{\bigcup_{i<j} \Delta'_{ij}} \Conf_{m-p}(\aka)
$$
glue together to give a differential form on $\BL_{\bigcup_{i<j} \Delta'_{ij}} \Conf_{m-p}(\aka)$. By Lemma \ref{lem:form-descent} below, it descends to a differential form on $\Conf_{m-p}(\aka)$, which is denoted by $\dashint_{fibers}\alpha$ or $\GlobalRes_D \alpha$.

\begin{lem}\label{lem:form-descent}
Let $p:M\to N$ be a projection and $\alpha$ be a differential form on $M$. Then $\alpha$ is of the form $p^*\beta$ for some $\beta$ on $N$ iff $\LL_V\alpha=\iota_V\alpha=0$ for every vector field $V$ on $M$ which is killed by $p_*$.
\end{lem}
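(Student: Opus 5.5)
The plan is to prove the two implications separately. Throughout, ``projection'' means a surjective submersion $p:M\to N$ with connected fibers; these are exactly the hypotheses under which the statement can hold, and they are the ones met in the application after restricting to the open dense locus where $p$ is a submersion. A vector field $V$ with $p_*V=0$ is called vertical.

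\emph{Necessity.} Suppose $\alpha=p^*\beta$ and $V$ is vertical. At each point $x$ we have $(\iota_V p^*\beta)_x=\beta_{p(x)}(p_*V_x,\dots)=0$, so $\iota_V\alpha=0$. Since $d$ commutes with pullback, $d\alpha=p^*d\beta$, and the same pointwise argument gives $\iota_V d\alpha=0$. Cartan's formula $\LL_V=d\iota_V+\iota_V d$ then yields $\LL_V\alpha=0$.

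\emph{Sufficiency, local step.} By the local normal form for submersions, near any point of $M$ there are coordinates $(x,y)$ in which $p(x,y)=x$. The fields $\partial_{y^j}$ are vertical locally; to stay within globally defined vertical fields, multiply them by a cutoff $\chi$. Then $\iota_{\chi\partial_{y^j}}\alpha=0$ shows that no $dy^j$ appears in $\alpha$, so $\alpha=\sum_I f_I(x,y)\,dx^I$. Given that, $\LL_{\chi\partial_{y^j}}\alpha=\chi\,\partial_{y^j}\alpha$ on the region where $\chi=1$, and this vanishes, so $\partial_{y^j}f_I=0$. Hence each $f_I$ is independent of $y$ on a product chart, and $\alpha=p^*\beta_U$ there, with $\beta_U=\sum_I f_I(x)\,dx^I$.

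\emph{Sufficiency, gluing.} Define $\beta$ pointwise: for $b\in N$, choose $x\in p^{-1}(b)$ and set $\beta_b(w_1,\dots)=\alpha_x(\tilde w_1,\dots)$, where $\tilde w_i$ are any lifts of the $w_i$ under the surjection $p_*:T_xM\to T_bN$. The vanishing $\iota_V\alpha=0$ makes this independent of the choice of lifts. The local step shows that the resulting covector depends only locally constantly on $x$ within the fiber, and connectedness of the fibers makes it independent of $x$. Smoothness of $\beta$ follows from local sections of $p$: near $b$ one has $\beta=s^*\alpha$ for a local section $s$. By construction $p^*\beta=\alpha$.

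The main obstacle is the gluing step. It requires connected fibers and surjectivity, and in the intended application $p$ is a blowdown, which fails to be a submersion along the exceptional divisor. There I would first descend $\alpha$ over the open dense set where $p$ is a submersion with connected fibers (for blowups the fibers are points or projective spaces, hence connected). I would then argue that the resulting $\beta$ extends smoothly, either because $\alpha$ was already smooth on $\Conf_{m-p}(\aka)$ off that locus or by continuity of the glued principal values.
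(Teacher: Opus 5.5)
The paper states this lemma without proof; it is used only once, to descend the glued fiberwise principal values from $\BL_{\bigcup_{i<j}\Delta'_{ij}}\Conf_{m-p}(\aka)$ to $\Conf_{m-p}(\aka)$. So there is no argument of the authors' to compare yours with.

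For a surjective submersion with connected fibers, your proof is correct and is the standard one:
\begin{itemize}
\item necessity follows from Cartan's formula;
\item the normal form $p(x,y)=x$, together with the cut-off vertical fields $\chi\partial_{y^j}$, removes first the $dy$-components and then the $y$-dependence of the coefficients;
\item $\beta$ is well defined pointwise because the fibers are connected, and it is smooth because $\beta=s^*\alpha$ for any local section $s$.
\end{itemize}
Your extra hypotheses are not cosmetic: the statement as literally written is false without them. For the fold map $N\sqcup N\to N$, the only vector field killed by $p_*$ is $0$, so the criterion is vacuous. Yet a form that differs on the two sheets is not a pullback. Likewise, without surjectivity $\beta$ need not extend from $p(M)$ to $N$.

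Your last paragraph raises a real point about how the paper uses the lemma, and it can be stated more sharply. In the application $p$ is a blowdown, hence a diffeomorphism off the exceptional divisor $E$. By continuity, the only vector field on the blowup killed by $p_*$ is $V=0$. So the lemma's criterion is vacuous there and cannot be what justifies the descent.

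Your proposed repair (descend on the dense locus where $p$ is a submersion, then extend) has the right shape. However, on that locus $p$ is a diffeomorphism, so all the content is in the extension. That extension follows neither from the lemma nor from constancy along the fibers of $E$. Already on the real oriented blowup $[0,\infty)\times S^{k-1}\to\R^k$, the smooth function $r$ is constant on $E$ but descends to $|x|$, which is not smooth. Treat that step as an unproved claim about the paper's construction, not as part of the proof of this lemma.
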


\begin{rem}


Regularized integrals and residues can be defined similarly if we replace holomorphic coefficients by smooth differential forms with compact support.

\end{rem}

\section{Formality of Configuration Spaces}\label{sec:formality-conf}
In this section $n'\ge2$, $n\ge1$ are fixed integers, unless specified otherwise. The plan of this section is as follows.
\begin{itemize}
\item \ref{sec:diagram-complex}: We define $\cD(A)$, the CDGA of admissible diagrams.
\item \ref{sec:Conf-Space-Integrals}: We construct the configuration space integral
$$
\operatorname{I}: \cD(A)  \lra \DefinableChainsA  
$$
which is a morphism of CDGAs.
\item \ref{sec:formality-thm}: Since $\cD(A)$ is quasi-isomorphic to both $\DefinableChainsA$ and its cohomology, we conclude that $\DefinableChainsA$ is formal. Similarly, $\A(\maka)$ is formal.
\end{itemize}

\subsection{The CDGA of Admissible Diagrams}\label{sec:diagram-complex}


\subsubsection{Construction of the CDGA of Admissible Diagrams}

In this section, we define the CDGA of admissible diagrams, following \cite{Formality-little-disks}. 
The main difference between our diagrams and those in \cite{Formality-little-disks} is that both vertices and edges carry weights, which encode generators of $\Hdef(\maka)$. Moreover, our admissible diagrams are allowed to have bivalent internal vertices with non-constant weights to cancel relations in Remark \ref{rem:local-relations}.

\begin{defn}\label{def:diagram}
A diagram consists of the data $\Gamma=(A_\Gamma,I_\Gamma,E_\Gamma,s_\Gamma,t_\Gamma, W^V_{\Gamma},W^E_{\Gamma})$, where
\begin{itemize}
\item $A_\Gamma$ is a finite set of external vertices.
\item $I_\Gamma$ is a linearly ordered finite set of internal vertices, disjoint from $A_\Gamma$.

Denote by $V_\Gamma:= A_\Gamma \sqcup  I_\Gamma$ the set of all vertices.
\item $E_\Gamma$ is a linearly ordered finite set of edges.  
\item $s_\Gamma,t_\Gamma:E_\Gamma \to V_\Gamma$ are the source and the target. Both of them are endpoints of edges. 
\item $W^V_\Gamma:V_\Gamma \to \C[\ZZi ]$ is the weight of vertices.
\item $W^E_\Gamma:E_\Gamma \to \C[\partial_{1},\cdots,\partial_{n}]$ is the weight of edges. 
\end{itemize}  
We adopt the following terminology and notation:
\begin{itemize}
\item Denote the set of endpoints of an edge $e$ by $\partial e$.
\item We extend the linear order of $I_\Gamma$ to a partial order on $V_\Gamma$ by letting $a<i$ when $a\in A_\Gamma$ and $i\in I_\Gamma$. 
\item Two distinct vertices are said to be adjacent if they are the endpoints of an edge.
\item We say that the edge $e$ is oriented from $s_\Gamma(e)$ to $t_\Gamma(e)$.
\item We divide the set of edges into the following four families:
\begin{itemize}
    \item a loop is an edge whose endpoints are identical;
    \item a chord is an edge between two distinct external vertices;
    \item a dead end is an edge that is not a loop and such that at least one of its endpoints is internal and has only one adjacent vertex;
    \item a contractible edge is an edge that is neither a chord, nor a loop, nor a dead end.
\end{itemize}
\item The {valence} of a vertex is the number of edges for which the vertex is
an endpoint, with loops adding two to the valence.
\item An edge $e$ is {simple} if there exists no other edge with the same set of endpoints.
\item {Double edges} are distinct edges with the same set of endpoints. 
\item Two vertices $v$ and $w$ are {connected} if there exists a path of edges joining them (ignoring orientations). 
\item Given a finite set $A$, a  {diagram on $A$} is a diagram $\Gamma$ such that $A_\Gamma=A$.
\item A diagram on $A$ is a {unit} if it has no internal vertices or edges.  We denote a unit by $\boldsymbol{1}$. 
\item Two diagrams  $\Gamma$ and  $\Gamma'$ are {isomorphic} if $E_\Gamma=E_{\Gamma'}$ and there exist two order-preserving bijections $\phi_E\colon E_\Gamma\cong E_{\Gamma'}$ and $\phi_I\colon I_\Gamma\cong I_{\Gamma'}$. 
\item We will abuse notation by denoting a diagram and its isomorphism class by the same letter $\Gamma$.
\end{itemize}
\end{defn}

\begin{defn} 
The space of diagrams on a set $A$ is the free $\C$-vector space $\GD(A)$ generated by  isomorphism classes of diagrams with external vertex set $A$, modulo the equivalence relation generated by edge reversals and transpositions in the linear orders of internal vertices and edges. This equivalence relation is $\C$-linear with respect to weights.  
\end{defn}
See \cite{Formality-little-disks}, Definition 6.5 for the sign convention. 

\begin{eg}\label{eg:diagram}
Consider the diagram in Figure \ref{fig:diagram}. All the external vertices are drawn on a horizontal line which is not a part of the diagram. We have omitted any trivial weight $1$. The picture represents a diagram $\Gamma$ with
\begin{itemize}
\item the set of external vertices $A_\Gamma=\lr{1,\cdots,5}$,
\item the set of internal vertices $I_\Gamma=\lr{6,\cdots,12}$,
\item the set $E_\Gamma$ of $11$ edges, with one loop, three dead ends, a chord, a pair of double contractible edges, and four simple contractible edges.
\item nontrivial weights of vertices $W^V_\Gamma(1)=(z^1)^2$, $W^V_\Gamma(12)=z^1z^2$,
\item nontrivial weights of edge $W^E_\Gamma((2,3))=\partial_1$, $W^E_\Gamma((8,9))=\partial_1\partial_2$.
\end{itemize}
\end{eg}

\begin{figure}[H]
    \centering
\tikzset{every picture/.style={line width=0.75pt}} 

\begin{tikzpicture}[x=0.75pt,y=0.75pt,yscale=-1,xscale=1]

\draw [color={rgb, 255:red, 128; green, 128; blue, 128 }  ,draw opacity=0.41 ]   (122.5,185) -- (439.5,185) ;
\draw  [fill={rgb, 255:red, 0; green, 0; blue, 0 }  ,fill opacity=1 ] (155,185) .. controls (155,183.34) and (156.34,182) .. (158,182) .. controls (159.66,182) and (161,183.34) .. (161,185) .. controls (161,186.66) and (159.66,188) .. (158,188) .. controls (156.34,188) and (155,186.66) .. (155,185) -- cycle ;
\draw  [fill={rgb, 255:red, 0; green, 0; blue, 0 }  ,fill opacity=1 ] (153.83,74.5) .. controls (153.83,72.84) and (155.18,71.5) .. (156.83,71.5) .. controls (158.49,71.5) and (159.83,72.84) .. (159.83,74.5) .. controls (159.83,76.16) and (158.49,77.5) .. (156.83,77.5) .. controls (155.18,77.5) and (153.83,76.16) .. (153.83,74.5) -- cycle ;
\draw  [fill={rgb, 255:red, 0; green, 0; blue, 0 }  ,fill opacity=1 ] (195,127) .. controls (195,125.34) and (196.34,124) .. (198,124) .. controls (199.66,124) and (201,125.34) .. (201,127) .. controls (201,128.66) and (199.66,130) .. (198,130) .. controls (196.34,130) and (195,128.66) .. (195,127) -- cycle ;
\draw  [fill={rgb, 255:red, 0; green, 0; blue, 0 }  ,fill opacity=1 ] (335,185) .. controls (335,183.34) and (336.34,182) .. (338,182) .. controls (339.66,182) and (341,183.34) .. (341,185) .. controls (341,186.66) and (339.66,188) .. (338,188) .. controls (336.34,188) and (335,186.66) .. (335,185) -- cycle ;
\draw  [fill={rgb, 255:red, 0; green, 0; blue, 0 }  ,fill opacity=1 ] (275,185) .. controls (275,183.34) and (276.34,182) .. (278,182) .. controls (279.66,182) and (281,183.34) .. (281,185) .. controls (281,186.66) and (279.66,188) .. (278,188) .. controls (276.34,188) and (275,186.66) .. (275,185) -- cycle ;
\draw  [fill={rgb, 255:red, 0; green, 0; blue, 0 }  ,fill opacity=1 ] (215,185) .. controls (215,183.34) and (216.34,182) .. (218,182) .. controls (219.66,182) and (221,183.34) .. (221,185) .. controls (221,186.66) and (219.66,188) .. (218,188) .. controls (216.34,188) and (215,186.66) .. (215,185) -- cycle ;
\draw  [fill={rgb, 255:red, 0; green, 0; blue, 0 }  ,fill opacity=1 ] (395,185) .. controls (395,183.34) and (396.34,182) .. (398,182) .. controls (399.66,182) and (401,183.34) .. (401,185) .. controls (401,186.66) and (399.66,188) .. (398,188) .. controls (396.34,188) and (395,186.66) .. (395,185) -- cycle ;
\draw  [fill={rgb, 255:red, 0; green, 0; blue, 0 }  ,fill opacity=1 ] (261.5,118.5) .. controls (261.5,116.84) and (262.84,115.5) .. (264.5,115.5) .. controls (266.16,115.5) and (267.5,116.84) .. (267.5,118.5) .. controls (267.5,120.16) and (266.16,121.5) .. (264.5,121.5) .. controls (262.84,121.5) and (261.5,120.16) .. (261.5,118.5) -- cycle ;
\draw  [fill={rgb, 255:red, 0; green, 0; blue, 0 }  ,fill opacity=1 ] (275,67) .. controls (275,65.34) and (276.34,64) .. (278,64) .. controls (279.66,64) and (281,65.34) .. (281,67) .. controls (281,68.66) and (279.66,70) .. (278,70) .. controls (276.34,70) and (275,68.66) .. (275,67) -- cycle ;
\draw    (198,127) -- (158,185) ;
\draw    (198,127) -- (218,185) ;
\draw    (198,127) -- (264.5,118.5) ;
\draw    (198,127) .. controls (210.5,102.25) and (244,98.25) .. (264.5,118.5) ;
\draw    (264.5,118.5) -- (278,185) ;
\draw    (221,185) .. controls (230.5,169.25) and (262,167.25) .. (278,185) ;
\draw  [fill={rgb, 255:red, 0; green, 0; blue, 0 }  ,fill opacity=1 ] (378.67,57.5) .. controls (378.67,55.84) and (380.01,54.5) .. (381.67,54.5) .. controls (383.32,54.5) and (384.67,55.84) .. (384.67,57.5) .. controls (384.67,59.16) and (383.32,60.5) .. (381.67,60.5) .. controls (380.01,60.5) and (378.67,59.16) .. (378.67,57.5) -- cycle ;
\draw  [fill={rgb, 255:red, 0; green, 0; blue, 0 }  ,fill opacity=1 ] (344.17,65.5) .. controls (344.17,63.84) and (345.51,62.5) .. (347.17,62.5) .. controls (348.82,62.5) and (350.17,63.84) .. (350.17,65.5) .. controls (350.17,67.16) and (348.82,68.5) .. (347.17,68.5) .. controls (345.51,68.5) and (344.17,67.16) .. (344.17,65.5) -- cycle ;
\draw    (347.17,65.5) .. controls (350.67,58.25) and (363.17,51.75) .. (381.67,57.5) ;
\draw    (347.17,65.5) .. controls (365.67,68.75) and (371.67,66.25) .. (381.67,57.5) ;
\draw  [fill={rgb, 255:red, 0; green, 0; blue, 0 }  ,fill opacity=1 ] (400,138) .. controls (400,136.34) and (401.34,135) .. (403,135) .. controls (404.66,135) and (406,136.34) .. (406,138) .. controls (406,139.66) and (404.66,141) .. (403,141) .. controls (401.34,141) and (400,139.66) .. (400,138) -- cycle ;
\draw    (278,67) -- (264.5,118.5) ;
\draw    (403,138) .. controls (416.5,136.25) and (434.5,120.25) .. (426,111.75) .. controls (417.5,103.25) and (405.21,126.37) .. (403,138) -- cycle ;
\draw    (403,138) -- (398,185) ;

\draw (152,189) node [anchor=north west][inner sep=0.75pt]   [align=left] {$1$};
\draw (212,189) node [anchor=north west][inner sep=0.75pt]   [align=left] {$2$};
\draw (272,189) node [anchor=north west][inner sep=0.75pt]   [align=left] {$3$};
\draw (332,189) node [anchor=north west][inner sep=0.75pt]   [align=left] {$4$};
\draw (392,189) node [anchor=north west][inner sep=0.75pt]   [align=left] {$5$};
\draw (142.67,71) node [anchor=north west][inner sep=0.75pt]   [align=left] {$6$};
\draw (186.5,115) node [anchor=north west][inner sep=0.75pt]   [align=left] {$7$};
\draw (271.33,115.33) node [anchor=north west][inner sep=0.75pt]   [align=left] {$8$};
\draw (266.33,51.33) node [anchor=north west][inner sep=0.75pt]   [align=left] {$9$};
\draw (328.33,67.33) node [anchor=north west][inner sep=0.75pt]   [align=left] {$10$};
\draw (387.33,53.67) node [anchor=north west][inner sep=0.75pt]   [align=left] {$11$};
\draw (378.67,129.67) node [anchor=north west][inner sep=0.75pt]   [align=left] {$12$};
\draw (130,166) node [anchor=north west][inner sep=0.75pt]   [align=left] {$(z_1)^2$};
\draw (408,136.5) node [anchor=north west][inner sep=0.75pt]   [align=left] {$z_1z_2$};
\draw (235,156) node [anchor=north west][inner sep=0.75pt]   [align=left] {$\partial_1$};
\draw (273,85) node [anchor=north west][inner sep=0.75pt]   [align=left] {$\partial_1\partial_2$};
\end{tikzpicture}
    \caption{An example of a diagram. 
    }
    \label{fig:diagram}
\end{figure}

\begin{defn}
The degree of a diagram $\Gamma$ is defined by
$$
\deg \Gamma = |E_\Gamma| \cdot (n'+n-1) - |I_\Gamma| \cdot (n'+n). 
$$
\end{defn}

\begin{defn}[Product of diagrams]
Let $\Gamma_1$ and $\Gamma_2$ be two isomorphism classes of diagrams on the same set $A$. Define the product diagram $\Gamma_1 \cdot \Gamma_2$ by specifying the following data:
\begin{itemize}
\item $A_{\Gamma_1 \cdot \Gamma_2}=A$
\item $I_{\Gamma_1 \cdot \Gamma_2}=I_{\Gamma_1} \sqcup I_{\Gamma_2}$
\item $E_{\Gamma_1 \cdot \Gamma_2}=E_{\Gamma_1} \sqcup E_{\Gamma_2}$ 
\item $s_{\Gamma_1 \cdot \Gamma_2}|_{E_{\Gamma_i}}=s_{\Gamma_i}$
\item $t_{\Gamma_1 \cdot \Gamma_2}|_{E_{\Gamma_i}}=t_{\Gamma_i}$
\item $W^V_{\Gamma_1 \cdot \Gamma_2}|_{A}=W^V_{\Gamma_1} \cdot W^V_{\Gamma_2}$
\item $W^V_{\Gamma_1 \cdot \Gamma_2}|_{I_{\Gamma_i}}=W^V_{\Gamma_i}$
\item $W^E_{\Gamma_1 \cdot \Gamma_2}|_{E_{\Gamma_i}}=W^E_{\Gamma_i}$
\end{itemize} 
Here we assume that the sets $I_{\Gamma_1}$ and $I_{\Gamma_2}$, and $E_{\Gamma_1}$ and
$E_{\Gamma_2}$ respectively, are disjoint. The orders on linearly ordered sets are extended to the disjoint union of by letting $x<y$ for $x$ in the first summand and $y$ in the second summand.  
\end{defn}

\begin{eg}\label{eg:diagram-product}
An example of a product of two isomorphism classes if diagrams is presented in Figure \ref{fig:diagram-product}.

\begin{figure}[H]
    \centering
\tikzset{every picture/.style={line width=0.75pt}} 

\begin{tikzpicture}[x=0.75pt,y=0.75pt,yscale=-1,xscale=1]

\draw [color={rgb, 255:red, 128; green, 128; blue, 128 }  ,draw opacity=0.41 ]   (45,353) -- (217,353) ;
\draw  [fill={rgb, 255:red, 0; green, 0; blue, 0 }  ,fill opacity=1 ] (65,353) .. controls (65,351.34) and (66.34,350) .. (68,350) .. controls (69.66,350) and (71,351.34) .. (71,353) .. controls (71,354.66) and (69.66,356) .. (68,356) .. controls (66.34,356) and (65,354.66) .. (65,353) -- cycle ;
\draw  [fill={rgb, 255:red, 0; green, 0; blue, 0 }  ,fill opacity=1 ] (185,353) .. controls (185,351.34) and (186.34,350) .. (188,350) .. controls (189.66,350) and (191,351.34) .. (191,353) .. controls (191,354.66) and (189.66,356) .. (188,356) .. controls (186.34,356) and (185,354.66) .. (185,353) -- cycle ;
\draw  [fill={rgb, 255:red, 0; green, 0; blue, 0 }  ,fill opacity=1 ] (125,353) .. controls (125,351.34) and (126.34,350) .. (128,350) .. controls (129.66,350) and (131,351.34) .. (131,353) .. controls (131,354.66) and (129.66,356) .. (128,356) .. controls (126.34,356) and (125,354.66) .. (125,353) -- cycle ;
\draw    (68,274.4) -- (68,353) ;
\draw    (68,274.4) -- (128,353) ;
\draw    (338,274.4) -- (398,274.4) ;
\draw    (128,353) .. controls (137.5,337.25) and (176.2,336.8) .. (188,353) ;
\draw [color={rgb, 255:red, 128; green, 128; blue, 128 }  ,draw opacity=0.41 ]   (255,353) -- (427,353) ;
\draw  [fill={rgb, 255:red, 0; green, 0; blue, 0 }  ,fill opacity=1 ] (275,353) .. controls (275,351.34) and (276.34,350) .. (278,350) .. controls (279.66,350) and (281,351.34) .. (281,353) .. controls (281,354.66) and (279.66,356) .. (278,356) .. controls (276.34,356) and (275,354.66) .. (275,353) -- cycle ;
\draw  [fill={rgb, 255:red, 0; green, 0; blue, 0 }  ,fill opacity=1 ] (395,353) .. controls (395,351.34) and (396.34,350) .. (398,350) .. controls (399.66,350) and (401,351.34) .. (401,353) .. controls (401,354.66) and (399.66,356) .. (398,356) .. controls (396.34,356) and (395,354.66) .. (395,353) -- cycle ;
\draw  [fill={rgb, 255:red, 0; green, 0; blue, 0 }  ,fill opacity=1 ] (335,353) .. controls (335,351.34) and (336.34,350) .. (338,350) .. controls (339.66,350) and (341,351.34) .. (341,353) .. controls (341,354.66) and (339.66,356) .. (338,356) .. controls (336.34,356) and (335,354.66) .. (335,353) -- cycle ;
\draw [color={rgb, 255:red, 128; green, 128; blue, 128 }  ,draw opacity=0.41 ]   (485,353) -- (657,353) ;
\draw  [fill={rgb, 255:red, 0; green, 0; blue, 0 }  ,fill opacity=1 ] (505,353) .. controls (505,351.34) and (506.34,350) .. (508,350) .. controls (509.66,350) and (511,351.34) .. (511,353) .. controls (511,354.66) and (509.66,356) .. (508,356) .. controls (506.34,356) and (505,354.66) .. (505,353) -- cycle ;
\draw  [fill={rgb, 255:red, 0; green, 0; blue, 0 }  ,fill opacity=1 ] (625,353) .. controls (625,351.34) and (626.34,350) .. (628,350) .. controls (629.66,350) and (631,351.34) .. (631,353) .. controls (631,354.66) and (629.66,356) .. (628,356) .. controls (626.34,356) and (625,354.66) .. (625,353) -- cycle ;
\draw  [fill={rgb, 255:red, 0; green, 0; blue, 0 }  ,fill opacity=1 ] (565,353) .. controls (565,351.34) and (566.34,350) .. (568,350) .. controls (569.66,350) and (571,351.34) .. (571,353) .. controls (571,354.66) and (569.66,356) .. (568,356) .. controls (566.34,356) and (565,354.66) .. (565,353) -- cycle ;
\draw  [fill={rgb, 255:red, 0; green, 0; blue, 0 }  ,fill opacity=1 ] (65,274.4) .. controls (65,272.74) and (66.34,271.4) .. (68,271.4) .. controls (69.66,271.4) and (71,272.74) .. (71,274.4) .. controls (71,276.06) and (69.66,277.4) .. (68,277.4) .. controls (66.34,277.4) and (65,276.06) .. (65,274.4) -- cycle ;
\draw  [fill={rgb, 255:red, 0; green, 0; blue, 0 }  ,fill opacity=1 ] (395,274.4) .. controls (395,272.74) and (396.34,271.4) .. (398,271.4) .. controls (399.66,271.4) and (401,272.74) .. (401,274.4) .. controls (401,276.06) and (399.66,277.4) .. (398,277.4) .. controls (396.34,277.4) and (395,276.06) .. (395,274.4) -- cycle ;
\draw    (338,274.4) -- (338,353) ;
\draw    (338,274.4) -- (398,353) ;
\draw    (278,353) -- (338,274.4) ;
\draw  [fill={rgb, 255:red, 0; green, 0; blue, 0 }  ,fill opacity=1 ] (335,274.4) .. controls (335,272.74) and (336.34,271.4) .. (338,271.4) .. controls (339.66,271.4) and (341,272.74) .. (341,274.4) .. controls (341,276.06) and (339.66,277.4) .. (338,277.4) .. controls (336.34,277.4) and (335,276.06) .. (335,274.4) -- cycle ;
\draw    (338,353) .. controls (351,302.4) and (383.8,300) .. (398,353) ;
\draw    (508.03,274.4) -- (508.03,353) ;
\draw    (508.03,274.4) -- (568.03,353) ;
\draw    (568.03,353) .. controls (577.53,337.25) and (616.23,336.8) .. (628.03,353) ;
\draw  [fill={rgb, 255:red, 0; green, 0; blue, 0 }  ,fill opacity=1 ] (505.03,274.4) .. controls (505.03,272.74) and (506.38,271.4) .. (508.03,271.4) .. controls (509.69,271.4) and (511.03,272.74) .. (511.03,274.4) .. controls (511.03,276.06) and (509.69,277.4) .. (508.03,277.4) .. controls (506.38,277.4) and (505.03,276.06) .. (505.03,274.4) -- cycle ;
\draw    (568,274.4) -- (628,274.4) ;
\draw  [fill={rgb, 255:red, 0; green, 0; blue, 0 }  ,fill opacity=1 ] (625,274.4) .. controls (625,272.74) and (626.34,271.4) .. (628,271.4) .. controls (629.66,271.4) and (631,272.74) .. (631,274.4) .. controls (631,276.06) and (629.66,277.4) .. (628,277.4) .. controls (626.34,277.4) and (625,276.06) .. (625,274.4) -- cycle ;
\draw    (568,274.4) -- (568,353) ;
\draw    (568,274.4) -- (628,353) ;
\draw    (508,353) -- (568,274.4) ;
\draw  [fill={rgb, 255:red, 0; green, 0; blue, 0 }  ,fill opacity=1 ] (565,274.4) .. controls (565,272.74) and (566.34,271.4) .. (568,271.4) .. controls (569.66,271.4) and (571,272.74) .. (571,274.4) .. controls (571,276.06) and (569.66,277.4) .. (568,277.4) .. controls (566.34,277.4) and (565,276.06) .. (565,274.4) -- cycle ;
\draw    (568,353) .. controls (581,302.4) and (613.8,300) .. (628,353) ;

\draw (380.2,255.9) node [anchor=north west][inner sep=0.75pt]   [align=left] {$z^2$};
\draw (149.67,341.17) node [anchor=north west][inner sep=0.75pt]   [align=left] {$\partial_2$};
\draw (62,357) node [anchor=north west][inner sep=0.75pt]   [align=left] {$1$};
\draw (122,357) node [anchor=north west][inner sep=0.75pt]   [align=left] {$2$};
\draw (182,357) node [anchor=north west][inner sep=0.75pt]   [align=left] {$3$};
\draw (272,357) node [anchor=north west][inner sep=0.75pt]   [align=left] {$1$};
\draw (332,357) node [anchor=north west][inner sep=0.75pt]   [align=left] {$2$};
\draw (392,357) node [anchor=north west][inner sep=0.75pt]   [align=left] {$3$};
\draw (502,357) node [anchor=north west][inner sep=0.75pt]   [align=left] {$1$};
\draw (562,357) node [anchor=north west][inner sep=0.75pt]   [align=left] {$2$};
\draw (622,357) node [anchor=north west][inner sep=0.75pt]   [align=left] {$3$};
\draw (54.8,266.6) node [anchor=north west][inner sep=0.75pt]   [align=left] {$4$};
\draw (404.8,266.2) node [anchor=north west][inner sep=0.75pt]   [align=left] {$5$};
\draw (227.2,298) node [anchor=north west][inner sep=0.75pt]   [align=left] {$\cdot$};
\draw (321.6,263.8) node [anchor=north west][inner sep=0.75pt]   [align=left] {$4$};
\draw (75.4,264.7) node [anchor=north west][inner sep=0.75pt]   [align=left] {$z^1$};
\draw (339.8,255.9) node [anchor=north west][inner sep=0.75pt]   [align=left] {$z^3$};
\draw (49.4,334.3) node [anchor=north west][inner sep=0.75pt]   [align=left] {$z^3$};
\draw (262.2,335.1) node [anchor=north west][inner sep=0.75pt]   [align=left] {$z^1$};
\draw (452,298) node [anchor=north west][inner sep=0.75pt]   [align=left] {$=$};
\draw (387.67,306) node [anchor=north west][inner sep=0.75pt]   [align=left] {$\partial_3$};
\draw (52,297.83) node [anchor=north west][inner sep=0.75pt]   [align=left] {$\partial_1$};
\draw (589.7,341.17) node [anchor=north west][inner sep=0.75pt]   [align=left] {$\partial_2$};
\draw (494.83,266.6) node [anchor=north west][inner sep=0.75pt]   [align=left] {$4$};
\draw (515.43,264.7) node [anchor=north west][inner sep=0.75pt]   [align=left] {$z^1$};
\draw (480,334.3) node [anchor=north west][inner sep=0.75pt]   [align=left] {$z^1z^3$};
\draw (490,297.83) node [anchor=north west][inner sep=0.75pt]   [align=left] {$\partial_1$};
\draw (610.2,255.9) node [anchor=north west][inner sep=0.75pt]   [align=left] {$z^2$};
\draw (634.8,265.53) node [anchor=north west][inner sep=0.75pt]   [align=left] {$6$};
\draw (551.6,263.8) node [anchor=north west][inner sep=0.75pt]   [align=left] {$5$};
\draw (569.8,255.9) node [anchor=north west][inner sep=0.75pt]   [align=left] {$z^3$};
\draw (617.67,306) node [anchor=north west][inner sep=0.75pt]   [align=left] {$\partial_3$};
\end{tikzpicture}
    \caption{Example of a product of two diagrams.}
    \label{fig:diagram-product}
\end{figure}
\end{eg}

\begin{prop}
The product defined above extends to a degree $0$ linear map 
$$
\GD(A) \otimes \GD(A) \lra \GD(A)
$$
with unit $\boldsymbol{1}$, 
which endows $\GD(A)$ with the structure of a commutative $\Z$-graded algebra.  
\end{prop}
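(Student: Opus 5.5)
The plan is to check, in order, that the product on isomorphism classes is well defined, that it respects the defining equivalence relation of $\GD(A)$, and then that it satisfies the graded algebra axioms. The model throughout is the argument in \cite{Formality-little-disks}. The only new feature is the vertex and edge weights, and these enter multiplicatively on $A$ and are carried along unchanged elsewhere.

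\textbf{Well-definedness.} Given representatives $\Gamma_1,\Gamma_2$ with disjoint internal and edge sets, the product $\Gamma_1\cdot\Gamma_2$ is a diagram on $A$. An isomorphism $\Gamma_i\cong\Gamma_i'$ is a pair of order-preserving bijections. These assemble into order-preserving bijections on the disjoint unions, which carry the concatenated orders to each other. Hence the isomorphism class of the product depends only on the classes of the factors.

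\textbf{Compatibility with the relations.} An edge reversal or a transposition in the internal/edge order of $\Gamma_1$ induces the same operation on $\Gamma_1\cdot\Gamma_2$. Since we use the sign convention of \cite{Formality-little-disks}, Definition 6.5, the same sign arises on both sides. Linearity in the weights also holds:
\begin{itemize}
\item edge weights and internal vertex weights are copied verbatim;
\item external vertex weights enter through the bilinear product $W^V_{\Gamma_1}\cdot W^V_{\Gamma_2}$ in $\C[\ZZi]$.
\end{itemize}
So the product descends to a bilinear map $\GD(A)\otimes\GD(A)\to\GD(A)$. It has degree $0$ because $|E|$ and $|I|$ are additive and $\deg$ is linear in them.

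\textbf{Algebra axioms.} Associativity holds because disjoint union with concatenated orders is associative and polynomial multiplication is associative. The unit $\boldsymbol 1$, with no edges, no internal vertices and all weights $1$, is neutral.

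\textbf{Graded commutativity (main obstacle).} Graded commutativity is the only step requiring care. $\Gamma_2\cdot\Gamma_1$ differs from $\Gamma_1\cdot\Gamma_2$ only by reordering: the block $I_{\Gamma_2}$ is moved past $I_{\Gamma_1}$, and $E_{\Gamma_2}$ past $E_{\Gamma_1}$. The external weights agree since $\C[\ZZi]$ is commutative. Under the convention of \cite{Formality-little-disks}, an edge carries parity $n'+n-1$ and an internal vertex carries parity $n'+n$, so these block permutations contribute the signs
$$(-1)^{(n'+n-1)^2|E_{\Gamma_1}||E_{\Gamma_2}|}\quad\text{and}\quad(-1)^{(n'+n)^2|I_{\Gamma_1}||I_{\Gamma_2}|}.$$
Their product is $(-1)^{\deg\Gamma_1\deg\Gamma_2}$, since a parity computation mod $2$ gives
$$\deg\Gamma_1\deg\Gamma_2\equiv (n'+n-1)|E_1||E_2|+(n'+n)|I_1||I_2| \pmod 2,$$
because the cross terms carry the factor $(n'+n-1)(n'+n)$, which is even. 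I expect this sign bookkeeping, namely matching the convention of Definition 6.5 to the parity assignments, to be the only real point. Everything else is formal.
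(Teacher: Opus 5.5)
Your proposal is correct. The paper states this proposition without proof and defers to Lambrechts–Volić for the construction and the sign conventions (their Definition 6.5), so your argument is essentially the standard verification that reference underlies. That verification consists of well-definedness on isomorphism classes, compatibility with edge reversals, reorderings and multilinearity in the weights, and the block-permutation sign check. In that check, assigning parity $n'+n-1$ to edges and $n'+n$ to internal vertices, i.e.\ reading the convention of \cite{Formality-little-disks} with $N=n'+n$, is exactly what makes graded commutativity match $\deg\Gamma=(n'+n-1)|E_\Gamma|-(n'+n)|I_\Gamma|$.
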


The differential on $\GD(A)$ is defined via contracting edges and transferring the weights of contracted edges to the remaining vertices/edges, corresponding to integration by parts.

\begin{defn}
Let $\Gamma$ be a diagram and let $e$ be a contractible edge of $\Gamma$. The diagram obtained from $\Gamma$ by contraction of the edge $e$ is the diagram $\Gamma/e$ defined as follows: 
\begin{itemize}
\item $A_{\Gamma/e}=A_\Gamma$
\item $I_{\Gamma/e}=I_{\Gamma} \setminus \lr{\max(s_\Gamma(e),t_{\Gamma}(e))}$
\item $E_{\Gamma/e}=E_\Gamma\setminus \lr{e}$
\item $s_{\Gamma/e}=q\circ s_\Gamma$ and $t_{\Gamma/e}=q\circ t_\Gamma$ where $q$ is defined by 
\ali{
q: V_\Gamma & \lra V_{\Gamma/e} \\
v & \lmt \begin{cases}
\min (s_\Gamma(e),t_{\Gamma}(e)) & v=\max(s_\Gamma(e),t_{\Gamma}(e)) \\
v & \mathrm{otherwise}
\end{cases}
}
where the linear orders on $I_{\Gamma/e}$ and $E_{\Gamma/e}$ are restrictions. 
\item $W^V_{\Gamma/e}=\begin{cases}
W^V_{\Gamma}(s_\Gamma(e))\cdot W^V_{\Gamma}(t_\Gamma(e)) & w=\min (s_\Gamma(e),t_{\Gamma}(e)) \\
W^V_{\Gamma}(w) & \text{else} \\
\end{cases}$
\item $W^E_{\Gamma/e}=W^E_{\Gamma}|_{E_{\Gamma/e}}$
\end{itemize} 
\end{defn}


\begin{defn}
Let $\Gamma$ be a diagram and let $v$ be a vertex of $\Gamma$. Define the operator $\partial_{v,i}$ by its action on $\Gamma$, which is the sum of diagrams  with the same vertices and edges and different weights 
$$
\partial_{v,i} \Gamma = \partial^v_{i} \Gamma + \sum_{e \text{ with source } v}  \partial^e_{i} \Gamma - \sum_{e \text{ with target } v}  \partial^e_{i} \Gamma
$$
where $\partial^v_{i} \Gamma$ is the diagram obtained from $\Gamma$ by apply $\partial_{z^i}$ to $W^V(v)$ and   $\partial^e_{i} \Gamma$ is the diagram obtained from $\Gamma$ by apply $\partial_{z^i}$ to $W^E(e)$.  
\end{defn}

Let $\Gamma$ be a diagram and let $e$ be a contractible edge of $\Gamma$.  Denote 
by $v_e$ be the vertex $\min(s_\Gamma(e),t_\Gamma(e))$ in $\Gamma/e$.

\begin{defn} 
Define the differential $d$ on $\GD(A)$ by 
$$
d(\Gamma) = \sum_{e \text{ contractable}} \pm W^E(e)(-\partial_{v_e,1},\cdots,-\partial_{v_e,n}) \cdot \Gamma/e
$$
where $\pm$ is the sign which comes from posets, see \cite{Formality-little-disks}, Example 6.10. 
\end{defn}

\begin{eg}\label{eg:diagram-differential}
Three examples of the diagram differential are presented in Figure \ref{fig:diagram-differential1} and Figure \ref{fig:diagram-differential2}. 
See Remark \ref{rem:Arnold} and Remark \ref{rem:local-relations} for algebraic relations of cohomology of configuration spaces encoded in the diagrams.

\begin{figure}[H]
    \centering
\tikzset{every picture/.style={line width=0.75pt}} 

\begin{tikzpicture}[x=0.75pt,y=0.75pt,yscale=-1,xscale=1]

\draw    (223,129.2) .. controls (229,95) and (253.83,95) .. (259.33,129.2) ;
\draw [color={rgb, 255:red, 128; green, 128; blue, 128 }  ,draw opacity=0.41 ]   (50,129.2) -- (167,129.2) ;
\draw  [fill={rgb, 255:red, 0; green, 0; blue, 0 }  ,fill opacity=1 ] (70,129.2) .. controls (70,127.54) and (71.34,126.2) .. (73,126.2) .. controls (74.66,126.2) and (76,127.54) .. (76,129.2) .. controls (76,130.86) and (74.66,132.2) .. (73,132.2) .. controls (71.34,132.2) and (70,130.86) .. (70,129.2) -- cycle ;
\draw  [fill={rgb, 255:red, 0; green, 0; blue, 0 }  ,fill opacity=1 ] (142.67,129.2) .. controls (142.67,127.54) and (144.01,126.2) .. (145.67,126.2) .. controls (147.32,126.2) and (148.67,127.54) .. (148.67,129.2) .. controls (148.67,130.86) and (147.32,132.2) .. (145.67,132.2) .. controls (144.01,132.2) and (142.67,130.86) .. (142.67,129.2) -- cycle ;
\draw  [fill={rgb, 255:red, 0; green, 0; blue, 0 }  ,fill opacity=1 ] (106.33,129.2) .. controls (106.33,127.54) and (107.68,126.2) .. (109.33,126.2) .. controls (110.99,126.2) and (112.33,127.54) .. (112.33,129.2) .. controls (112.33,130.86) and (110.99,132.2) .. (109.33,132.2) .. controls (107.68,132.2) and (106.33,130.86) .. (106.33,129.2) -- cycle ;
\draw    (109.33,50.6) -- (109.33,129.2) ;
\draw    (110,50.6) -- (145.67,129.2) ;
\draw    (73,129.2) -- (110,50.6) ;
\draw  [fill={rgb, 255:red, 0; green, 0; blue, 0 }  ,fill opacity=1 ] (106.33,50.6) .. controls (106.33,48.94) and (107.68,47.6) .. (109.33,47.6) .. controls (110.99,47.6) and (112.33,48.94) .. (112.33,50.6) .. controls (112.33,52.26) and (110.99,53.6) .. (109.33,53.6) .. controls (107.68,53.6) and (106.33,52.26) .. (106.33,50.6) -- cycle ;
\draw [color={rgb, 255:red, 128; green, 128; blue, 128 }  ,draw opacity=0.41 ]   (200,129.2) -- (317,129.2) ;
\draw  [fill={rgb, 255:red, 0; green, 0; blue, 0 }  ,fill opacity=1 ] (220,129.2) .. controls (220,127.54) and (221.34,126.2) .. (223,126.2) .. controls (224.66,126.2) and (226,127.54) .. (226,129.2) .. controls (226,130.86) and (224.66,132.2) .. (223,132.2) .. controls (221.34,132.2) and (220,130.86) .. (220,129.2) -- cycle ;
\draw  [fill={rgb, 255:red, 0; green, 0; blue, 0 }  ,fill opacity=1 ] (292.67,129.2) .. controls (292.67,127.54) and (294.01,126.2) .. (295.67,126.2) .. controls (297.32,126.2) and (298.67,127.54) .. (298.67,129.2) .. controls (298.67,130.86) and (297.32,132.2) .. (295.67,132.2) .. controls (294.01,132.2) and (292.67,130.86) .. (292.67,129.2) -- cycle ;
\draw  [fill={rgb, 255:red, 0; green, 0; blue, 0 }  ,fill opacity=1 ] (256.33,129.2) .. controls (256.33,127.54) and (257.68,126.2) .. (259.33,126.2) .. controls (260.99,126.2) and (262.33,127.54) .. (262.33,129.2) .. controls (262.33,130.86) and (260.99,132.2) .. (259.33,132.2) .. controls (257.68,132.2) and (256.33,130.86) .. (256.33,129.2) -- cycle ;
\draw [color={rgb, 255:red, 128; green, 128; blue, 128 }  ,draw opacity=0.41 ]   (350,129.2) -- (467,129.2) ;
\draw  [fill={rgb, 255:red, 0; green, 0; blue, 0 }  ,fill opacity=1 ] (370,129.2) .. controls (370,127.54) and (371.34,126.2) .. (373,126.2) .. controls (374.66,126.2) and (376,127.54) .. (376,129.2) .. controls (376,130.86) and (374.66,132.2) .. (373,132.2) .. controls (371.34,132.2) and (370,130.86) .. (370,129.2) -- cycle ;
\draw  [fill={rgb, 255:red, 0; green, 0; blue, 0 }  ,fill opacity=1 ] (442.67,129.2) .. controls (442.67,127.54) and (444.01,126.2) .. (445.67,126.2) .. controls (447.32,126.2) and (448.67,127.54) .. (448.67,129.2) .. controls (448.67,130.86) and (447.32,132.2) .. (445.67,132.2) .. controls (444.01,132.2) and (442.67,130.86) .. (442.67,129.2) -- cycle ;
\draw  [fill={rgb, 255:red, 0; green, 0; blue, 0 }  ,fill opacity=1 ] (406.33,129.2) .. controls (406.33,127.54) and (407.68,126.2) .. (409.33,126.2) .. controls (410.99,126.2) and (412.33,127.54) .. (412.33,129.2) .. controls (412.33,130.86) and (410.99,132.2) .. (409.33,132.2) .. controls (407.68,132.2) and (406.33,130.86) .. (406.33,129.2) -- cycle ;
\draw [color={rgb, 255:red, 128; green, 128; blue, 128 }  ,draw opacity=0.41 ]   (500,129.2) -- (617,129.2) ;
\draw  [fill={rgb, 255:red, 0; green, 0; blue, 0 }  ,fill opacity=1 ] (520,129.2) .. controls (520,127.54) and (521.34,126.2) .. (523,126.2) .. controls (524.66,126.2) and (526,127.54) .. (526,129.2) .. controls (526,130.86) and (524.66,132.2) .. (523,132.2) .. controls (521.34,132.2) and (520,130.86) .. (520,129.2) -- cycle ;
\draw  [fill={rgb, 255:red, 0; green, 0; blue, 0 }  ,fill opacity=1 ] (592.67,129.2) .. controls (592.67,127.54) and (594.01,126.2) .. (595.67,126.2) .. controls (597.32,126.2) and (598.67,127.54) .. (598.67,129.2) .. controls (598.67,130.86) and (597.32,132.2) .. (595.67,132.2) .. controls (594.01,132.2) and (592.67,130.86) .. (592.67,129.2) -- cycle ;
\draw  [fill={rgb, 255:red, 0; green, 0; blue, 0 }  ,fill opacity=1 ] (556.33,129.2) .. controls (556.33,127.54) and (557.68,126.2) .. (559.33,126.2) .. controls (560.99,126.2) and (562.33,127.54) .. (562.33,129.2) .. controls (562.33,130.86) and (560.99,132.2) .. (559.33,132.2) .. controls (557.68,132.2) and (556.33,130.86) .. (556.33,129.2) -- cycle ;
\draw    (373,129.2) .. controls (379,95) and (403.83,95) .. (409.33,129.2) ;
\draw    (223,129.2) .. controls (223.67,37) and (296.33,37.67) .. (295.67,129.2) ;
\draw    (409.33,129.2) .. controls (415.33,95) and (440.17,95) .. (445.67,129.2) ;
\draw    (523,129.2) .. controls (529,95) and (553.83,95) .. (559.33,129.2) ;
\draw    (523,129.2) .. controls (523.67,37) and (596.33,37.67) .. (595.67,129.2) ;

\draw (329,87) node [anchor=north west][inner sep=0.75pt]   [align=left] {$+$};
\draw (67,133.2) node [anchor=north west][inner sep=0.75pt]   [align=left] {$1$};
\draw (105,133.2) node [anchor=north west][inner sep=0.75pt]   [align=left] {$2$};
\draw (141.33,133.2) node [anchor=north west][inner sep=0.75pt]   [align=left] {$3$};
\draw (104.6,32) node [anchor=north west][inner sep=0.75pt]   [align=left] {$4$};
\draw (217,133.2) node [anchor=north west][inner sep=0.75pt]   [align=left] {$1$};
\draw (255,133.2) node [anchor=north west][inner sep=0.75pt]   [align=left] {$2$};
\draw (291.33,133.2) node [anchor=north west][inner sep=0.75pt]   [align=left] {$3$};
\draw (367,133.2) node [anchor=north west][inner sep=0.75pt]   [align=left] {$1$};
\draw (405,133.2) node [anchor=north west][inner sep=0.75pt]   [align=left] {$2$};
\draw (441.33,133.2) node [anchor=north west][inner sep=0.75pt]   [align=left] {$3$};
\draw (517,133.2) node [anchor=north west][inner sep=0.75pt]   [align=left] {$1$};
\draw (555,133.2) node [anchor=north west][inner sep=0.75pt]   [align=left] {$2$};
\draw (591.33,133.2) node [anchor=north west][inner sep=0.75pt]   [align=left] {$3$};
\draw (479,87) node [anchor=north west][inner sep=0.75pt]   [align=left] {$+$};
\draw (179,87) node [anchor=north west][inner sep=0.75pt]   [align=left] {$=$};
\draw (29,87) node [anchor=north west][inner sep=0.75pt]   [align=left] {$d$};
\end{tikzpicture}
    \caption{An example of the diagram differential.}
    \label{fig:diagram-differential1}
\end{figure}

\begin{figure}[H]
    \centering
\tikzset{every picture/.style={line width=0.75pt}} 

\begin{tikzpicture}[x=0.75pt,y=0.75pt,yscale=-1,xscale=1]

\draw [color={rgb, 255:red, 128; green, 128; blue, 128 }  ,draw opacity=0.41 ]   (50,121.5) -- (158.2,121.5) ;
\draw  [fill={rgb, 255:red, 0; green, 0; blue, 0 }  ,fill opacity=1 ] (70,121.5) .. controls (70,119.84) and (71.34,118.5) .. (73,118.5) .. controls (74.66,118.5) and (76,119.84) .. (76,121.5) .. controls (76,123.16) and (74.66,124.5) .. (73,124.5) .. controls (71.34,124.5) and (70,123.16) .. (70,121.5) -- cycle ;
\draw  [fill={rgb, 255:red, 0; green, 0; blue, 0 }  ,fill opacity=1 ] (130,121.5) .. controls (130,119.84) and (131.34,118.5) .. (133,118.5) .. controls (134.66,118.5) and (136,119.84) .. (136,121.5) .. controls (136,123.16) and (134.66,124.5) .. (133,124.5) .. controls (131.34,124.5) and (130,123.16) .. (130,121.5) -- cycle ;
\draw    (103,42.9) -- (74.1,121.5) ;
\draw    (103,42.9) -- (134.1,121.5) ;
\draw    (223,121.5) .. controls (234.5,79.75) and (269.5,79.25) .. (283,121.5) ;
\draw  [fill={rgb, 255:red, 0; green, 0; blue, 0 }  ,fill opacity=1 ] (100,42.9) .. controls (100,41.24) and (101.34,39.9) .. (103,39.9) .. controls (104.66,39.9) and (106,41.24) .. (106,42.9) .. controls (106,44.56) and (104.66,45.9) .. (103,45.9) .. controls (101.34,45.9) and (100,44.56) .. (100,42.9) -- cycle ;
\draw [color={rgb, 255:red, 128; green, 128; blue, 128 }  ,draw opacity=0.41 ]   (200,121.5) -- (308.2,121.5) ;
\draw  [fill={rgb, 255:red, 0; green, 0; blue, 0 }  ,fill opacity=1 ] (220,121.5) .. controls (220,119.84) and (221.34,118.5) .. (223,118.5) .. controls (224.66,118.5) and (226,119.84) .. (226,121.5) .. controls (226,123.16) and (224.66,124.5) .. (223,124.5) .. controls (221.34,124.5) and (220,123.16) .. (220,121.5) -- cycle ;
\draw  [fill={rgb, 255:red, 0; green, 0; blue, 0 }  ,fill opacity=1 ] (280,121.5) .. controls (280,119.84) and (281.34,118.5) .. (283,118.5) .. controls (284.66,118.5) and (286,119.84) .. (286,121.5) .. controls (286,123.16) and (284.66,124.5) .. (283,124.5) .. controls (281.34,124.5) and (280,123.16) .. (280,121.5) -- cycle ;
\draw    (373,121.5) .. controls (384.5,79.75) and (419.5,79.25) .. (433,121.5) ;
\draw [color={rgb, 255:red, 128; green, 128; blue, 128 }  ,draw opacity=0.41 ]   (350,121.5) -- (458.2,121.5) ;
\draw  [fill={rgb, 255:red, 0; green, 0; blue, 0 }  ,fill opacity=1 ] (370,121.5) .. controls (370,119.84) and (371.34,118.5) .. (373,118.5) .. controls (374.66,118.5) and (376,119.84) .. (376,121.5) .. controls (376,123.16) and (374.66,124.5) .. (373,124.5) .. controls (371.34,124.5) and (370,123.16) .. (370,121.5) -- cycle ;
\draw  [fill={rgb, 255:red, 0; green, 0; blue, 0 }  ,fill opacity=1 ] (430,121.5) .. controls (430,119.84) and (431.34,118.5) .. (433,118.5) .. controls (434.66,118.5) and (436,119.84) .. (436,121.5) .. controls (436,123.16) and (434.66,124.5) .. (433,124.5) .. controls (431.34,124.5) and (430,123.16) .. (430,121.5) -- cycle ;
\draw [color={rgb, 255:red, 128; green, 128; blue, 128 }  ,draw opacity=0.41 ]   (50,250.5) -- (158.2,250.5) ;
\draw  [fill={rgb, 255:red, 0; green, 0; blue, 0 }  ,fill opacity=1 ] (70,250.5) .. controls (70,248.84) and (71.34,247.5) .. (73,247.5) .. controls (74.66,247.5) and (76,248.84) .. (76,250.5) .. controls (76,252.16) and (74.66,253.5) .. (73,253.5) .. controls (71.34,253.5) and (70,252.16) .. (70,250.5) -- cycle ;
\draw  [fill={rgb, 255:red, 0; green, 0; blue, 0 }  ,fill opacity=1 ] (130,250.5) .. controls (130,248.84) and (131.34,247.5) .. (133,247.5) .. controls (134.66,247.5) and (136,248.84) .. (136,250.5) .. controls (136,252.16) and (134.66,253.5) .. (133,253.5) .. controls (131.34,253.5) and (130,252.16) .. (130,250.5) -- cycle ;
\draw    (103,171.9) -- (74.1,250.5) ;
\draw    (103,171.9) -- (134.1,250.5) ;
\draw    (373,250.5) .. controls (384.5,208.75) and (419.5,208.25) .. (433,250.5) ;
\draw  [fill={rgb, 255:red, 0; green, 0; blue, 0 }  ,fill opacity=1 ] (100,171.9) .. controls (100,170.24) and (101.34,168.9) .. (103,168.9) .. controls (104.66,168.9) and (106,170.24) .. (106,171.9) .. controls (106,173.56) and (104.66,174.9) .. (103,174.9) .. controls (101.34,174.9) and (100,173.56) .. (100,171.9) -- cycle ;
\draw [color={rgb, 255:red, 128; green, 128; blue, 128 }  ,draw opacity=0.41 ]   (350,250.5) -- (458.2,250.5) ;
\draw  [fill={rgb, 255:red, 0; green, 0; blue, 0 }  ,fill opacity=1 ] (370,250.5) .. controls (370,248.84) and (371.34,247.5) .. (373,247.5) .. controls (374.66,247.5) and (376,248.84) .. (376,250.5) .. controls (376,252.16) and (374.66,253.5) .. (373,253.5) .. controls (371.34,253.5) and (370,252.16) .. (370,250.5) -- cycle ;
\draw  [fill={rgb, 255:red, 0; green, 0; blue, 0 }  ,fill opacity=1 ] (430,250.5) .. controls (430,248.84) and (431.34,247.5) .. (433,247.5) .. controls (434.66,247.5) and (436,248.84) .. (436,250.5) .. controls (436,252.16) and (434.66,253.5) .. (433,253.5) .. controls (431.34,253.5) and (430,252.16) .. (430,250.5) -- cycle ;
\draw    (523,250.5) .. controls (534.5,208.75) and (569.5,208.25) .. (583,250.5) ;
\draw [color={rgb, 255:red, 128; green, 128; blue, 128 }  ,draw opacity=0.41 ]   (500,250.5) -- (608.2,250.5) ;
\draw  [fill={rgb, 255:red, 0; green, 0; blue, 0 }  ,fill opacity=1 ] (520,250.5) .. controls (520,248.84) and (521.34,247.5) .. (523,247.5) .. controls (524.66,247.5) and (526,248.84) .. (526,250.5) .. controls (526,252.16) and (524.66,253.5) .. (523,253.5) .. controls (521.34,253.5) and (520,252.16) .. (520,250.5) -- cycle ;
\draw  [fill={rgb, 255:red, 0; green, 0; blue, 0 }  ,fill opacity=1 ] (580,250.5) .. controls (580,248.84) and (581.34,247.5) .. (583,247.5) .. controls (584.66,247.5) and (586,248.84) .. (586,250.5) .. controls (586,252.16) and (584.66,253.5) .. (583,253.5) .. controls (581.34,253.5) and (580,252.16) .. (580,250.5) -- cycle ;
\draw    (223,251.5) .. controls (234.5,209.75) and (269.5,209.25) .. (283,251.5) ;
\draw [color={rgb, 255:red, 128; green, 128; blue, 128 }  ,draw opacity=0.41 ]   (200,251.5) -- (308.2,251.5) ;
\draw  [fill={rgb, 255:red, 0; green, 0; blue, 0 }  ,fill opacity=1 ] (220,251.5) .. controls (220,249.84) and (221.34,248.5) .. (223,248.5) .. controls (224.66,248.5) and (226,249.84) .. (226,251.5) .. controls (226,253.16) and (224.66,254.5) .. (223,254.5) .. controls (221.34,254.5) and (220,253.16) .. (220,251.5) -- cycle ;
\draw  [fill={rgb, 255:red, 0; green, 0; blue, 0 }  ,fill opacity=1 ] (280,251.5) .. controls (280,249.84) and (281.34,248.5) .. (283,248.5) .. controls (284.66,248.5) and (286,249.84) .. (286,251.5) .. controls (286,253.16) and (284.66,254.5) .. (283,254.5) .. controls (281.34,254.5) and (280,253.16) .. (280,251.5) -- cycle ;

\draw (95,22) node [anchor=north west][inner sep=0.75pt]   [align=left] {$z^1$};
\draw (122.67,197.83) node [anchor=north west][inner sep=0.75pt]   [align=left] {$\partial_1$};
\draw (67,125.5) node [anchor=north west][inner sep=0.75pt]   [align=left] {$1$};
\draw (127,125.5) node [anchor=north west][inner sep=0.75pt]   [align=left] {$2$};
\draw (205.5,103) node [anchor=north west][inner sep=0.75pt]   [align=left] {$z^1$};
\draw (217,125.5) node [anchor=north west][inner sep=0.75pt]   [align=left] {$1$};
\draw (277,125.5) node [anchor=north west][inner sep=0.75pt]   [align=left] {$2$};
\draw (438,101) node [anchor=north west][inner sep=0.75pt]   [align=left] {$z^1$};
\draw (367,125.5) node [anchor=north west][inner sep=0.75pt]   [align=left] {$1$};
\draw (427,125.5) node [anchor=north west][inner sep=0.75pt]   [align=left] {$2$};
\draw (95,151) node [anchor=north west][inner sep=0.75pt]   [align=left] {$z^1$};
\draw (67,254.5) node [anchor=north west][inner sep=0.75pt]   [align=left] {$1$};
\draw (127,254.5) node [anchor=north west][inner sep=0.75pt]   [align=left] {$2$};
\draw (355.5,232) node [anchor=north west][inner sep=0.75pt]   [align=left] {$z^1$};
\draw (367,254.5) node [anchor=north west][inner sep=0.75pt]   [align=left] {$1$};
\draw (427,254.5) node [anchor=north west][inner sep=0.75pt]   [align=left] {$2$};
\draw (588,230) node [anchor=north west][inner sep=0.75pt]   [align=left] {$z^1$};
\draw (517,254.5) node [anchor=north west][inner sep=0.75pt]   [align=left] {$1$};
\draw (577,254.5) node [anchor=north west][inner sep=0.75pt]   [align=left] {$2$};
\draw (217,255.5) node [anchor=north west][inner sep=0.75pt]   [align=left] {$1$};
\draw (277,255.5) node [anchor=north west][inner sep=0.75pt]   [align=left] {$2$};
\draw (392,202) node [anchor=north west][inner sep=0.75pt]   [align=left] {$\partial_1$};
\draw (542,202) node [anchor=north west][inner sep=0.75pt]   [align=left] {$\partial_1$};
\draw (24,78) node [anchor=north west][inner sep=0.75pt]   [align=left] {$d$};
\draw (174,78) node [anchor=north west][inner sep=0.75pt]   [align=left] {$=$};
\draw (324,78) node [anchor=north west][inner sep=0.75pt]   [align=left] {$-$};
\draw (24,197) node [anchor=north west][inner sep=0.75pt]   [align=left] {$d$};
\draw (174,197) node [anchor=north west][inner sep=0.75pt]   [align=left] {$=$};
\draw (324,197) node [anchor=north west][inner sep=0.75pt]   [align=left] {$+$};
\draw (474,197) node [anchor=north west][inner sep=0.75pt]   [align=left] {$-$};
\draw (110,35) node [anchor=north west][inner sep=0.75pt]   [align=left] {$3$};
\draw (110,165) node [anchor=north west][inner sep=0.75pt]   [align=left] {$3$};

\end{tikzpicture}

    \caption{Another two examples of the diagram differential.}
    \label{fig:diagram-differential2}
\end{figure}
\end{eg}

\begin{lem}
The map $d$ defines a $\C$-linear map on  $\GD(A)$ of degree $+1$, satisfying $d^2=0$ and 
$$
d(\Gamma\cdot\Gamma')=d(\Gamma)\cdot\Gamma'+(-1)^{\deg(\Gamma)}\Gamma\cdot d(\Gamma').
$$
\end{lem}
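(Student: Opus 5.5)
The plan is to reduce all three claims to the corresponding statements for the undecorated diagram complex of \cite{Formality-little-disks}, Section 6, and then check that the weights do not spoil them. Write $d=\sum_e d_e$, where $d_e\Gamma=\pm W^E(e)(-\partial_{v_e,1},\cdots,-\partial_{v_e,n})\cdot\Gamma/e$. The operator factor acts only on weights, so the graph-theoretic part (which edges are contractible, and the sign coming from the orders on $I_\Gamma$ and $E_\Gamma$) is exactly as in the classical setting.

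\emph{Well-definedness and degree.} Contracting a contractible edge removes one edge and one internal vertex. The degree therefore changes by $-(n'+n-1)+(n'+n)=1$, and the weight operators have degree $0$. Compatibility with edge reversal and with transpositions in the linear orders follows from the cited sign convention. The one extra point is that reversing $e$ changes the signs in front of the $\partial^e_i$ terms, and this is absorbed by the $\C$-linear identification of weights under reversal, since the pulled-back kernel satisfies $\partial_{z_j}\omega_{ij}=-\partial_{z_i}\omega_{ij}$. Contractibility of an edge also has to be respected: contracting $e$ never turns it into a chord, loop or dead end, because $e$ itself is deleted.

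\emph{$d^2=0$.} Expand $d^2\Gamma=\sum_{e\neq f}d_f d_e\Gamma$. The graph parts $(\Gamma/e)/f$ and $(\Gamma/f)/e$ agree, and the poset signs are opposite, exactly as in \cite{Formality-little-disks}. So it suffices to show that the weight operators agree. The operators $\partial_{v,i}$ are derivations on the weights that commute with one another. Under contraction $q$, one has $\partial_{q(v)}=\partial_{v}+\partial_{v'}$ on the merged vertex, because vertex weights multiply and the edge terms of $e$ cancel after $e$ is removed. Using this, the composite operator $W^E(f)(-\partial_{v_f})\,W^E(e)(-\partial_{v_e})$ is symmetric in $e,f$. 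This is the step I expect to be the main obstacle. When $e$ and $f$ share an endpoint, $v_f$ after contracting $e$ is the merged vertex, and one must check that the operator $-\partial_{v_e}$ applied in $\Gamma/e$ equals the total derivative at the triple merged vertex. Here I would carefully verify that the operators $\partial_{v,i}$ on $\Gamma/e$ intertwine with those on $\Gamma$ under $q$. One also has to handle the cases where contracting $e$ makes $f$ a chord, loop or dead end (and vice versa). As in the classical proof these terms cancel in pairs or are excluded symmetrically, and the weights play no role in that matching.

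\emph{Leibniz rule.} The contractible edges of $\Gamma\cdot\Gamma'$ are the disjoint union of those of $\Gamma$ and those of $\Gamma'$. Indeed, gluing along external vertices cannot make an internal vertex univalent or create chords from contractible edges, since their endpoints include an internal vertex. For $e\in E_\Gamma$, the operator $\partial_{v_e}$ acts on the product weight of an external vertex $v_e$ by the Leibniz rule for derivatives. This produces terms where $\partial$ falls on the weights coming from $\Gamma'$. I would check that these terms are consistent with $(\Gamma/e)\cdot\Gamma'$. If this fails, the definition must be read with $\partial_{v_e}$ acting through the total derivative, and the discrepancy is then treated via the translation invariance of the integrals. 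The sign $(-1)^{\deg\Gamma}$ comes from moving the contracted edge and vertex past the orders of $\Gamma$, using the concatenated orders, as in the classical case.
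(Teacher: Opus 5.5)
\textbf{There is a genuine gap, and it sits exactly in the weight bookkeeping you defer.} The degree count is fine. The problem is the definition itself: read literally, $d_e$ applies $W^E(e)$ to minus the \emph{total} derivative $\partial_{v_e}$ at the merged vertex of $\Gamma/e$. With that reading neither $d^2=0$ nor the Leibniz rule holds, so they cannot be checked after the fact.

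\emph{The $d^2=0$ step.} Your claim that the composite $W^E(f)(-\partial_{v_f})\,W^E(e)(-\partial_{v_e})$ is symmetric overlooks one point. For every edge $f$ adjacent to $e$, $\partial_{v_e,i}$ contains $\pm\partial^f_i$. So $d_e$ rewrites $W^E(f)$, and the later contraction of $f$ turns those extra factors into derivatives at the triple vertex. Here is a concrete case.
\begin{itemize}
\item Take $A=\{a_1,a_2\}$ and $I_\Gamma=\{b<c\}$, with edges $e\colon a_1\to b$, $f\colon b\to c$, $h\colon c\to a_2$.
\item Set $W^E(e)=\partial_1$ and $W^E(f)=W^E(h)=1$; set $W^V(a_1)=W^V(c)=z^1$ and $W^V(b)=W^V(a_2)=1$.
\item Let $Y_1$ be the chord $a_1\to a_2$ with edge weight $1$ and $W^V(a_1)=z^1$. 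Let $Y_2$ be the same chord with edge weight $\partial_1$ and $W^V(a_1)=(z^1)^2$.
\end{itemize}
The definition gives $d_fd_e\Gamma=\pm(Y_1+Y_2)$ but $d_ed_f\Gamma=\pm(2Y_1+Y_2)$. The pairs $\{e,h\}$ and $\{f,h\}$ only produce diagrams with $W^V(a_2)=z^1$, so they cannot compensate. Hence $d^2\Gamma\neq0$ for every choice of poset signs.

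\emph{The Leibniz step.} Take $A=\{a,b\}$ with one internal vertex $c$, edges $e\colon c\to a$ with $W^E(e)=\partial_1$ and $g\colon c\to b$ with weight $1$. Let $\Gamma'$ be the unit with $W^V(a)=z^1$. Then $d_e(\Gamma\cdot\Gamma')$ contains the term where $-\partial_{a,1}$ hits the factor $z^1$ coming from $\Gamma'$, namely the bare chord $a\to b$. This term appears nowhere in $d\Gamma\cdot\Gamma'$. Translation invariance of the integrals cannot repair this, because the lemma is an identity in $\GD(A)$, before any integration. Your fallback, reading $\partial_{v_e}$ as a total derivative, is already the literal definition and is the source of the failure.

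\emph{Edge reversal.} Your reversal check only treats $e$ as a neighbouring edge. For the contracted edge itself, the identification $W^E(e)(\partial)\leftrightarrow\pm W^E(e)(-\partial)$ turns $W^E(e)(-\partial_{v_e})$ into $W^E(e)(+\partial_{v_e})$. So $d_e$ is not compatible with the reversal relation either.

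\textbf{What is missing is a correct contraction rule.} The paper states this lemma without proof and does not supply one. The residue computation replaces $\theta_e^*(W^E(e)\omega)$ by $W^E(e)(\partial_{\zz_{s(e)}})\,\delta(\ZZ_{s(e)}-\ZZ_{t(e)})$ and integrates out $r=\max(s(e),t(e))$. That produces the following rule:
\begin{itemize}
\item apply $W^E(e)(\epsilon_e\,\partial'_{r})$, where $\partial'_{r,i}$ is $\partial_{r,i}$ computed in $\Gamma$ with the $e$-term omitted;
\item take $\epsilon_e=+1$ if $r=t(e)$ and $\epsilon_e=-1$ if $r=s(e)$;
\item only then merge $r$ into $\min(s(e),t(e))$.
\end{itemize}

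With this rule three things follow.
\begin{itemize}
\item Reversal is consistent.
\item Leibniz is immediate, because $r$ is internal and all differentiated data belong to one factor.
\item $d^2=0$ follows from your chain-rule idea, applied at the merged \emph{removed} vertex. For $a\to b\to c$ with $a<b<c$, both orders apply $W^E(e)(\partial'_b+\partial'_c)\,W^E(f)(\partial'_c)$ to the data at $b$ and $c$.
\end{itemize}

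One further point remains. For an edge joining two internal vertices, the two choices of $r$ correspond to the two orders on $I_\Gamma$, which the transposition relation identifies. These choices differ by a total derivative $\partial_{m,i}$ at the merged internal vertex $m$. So for $d$ to be well defined you must also work modulo total derivatives at internal vertices; this is a differential ideal.
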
 

\begin{thm}
$(\GD(A),d)$ is a CDGA. 
\end{thm}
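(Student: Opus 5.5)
The theorem follows by assembling the pieces already established. The preceding proposition gives a graded-commutative algebra structure on $\GD(A)$ with unit $\boldsymbol{1}$. The preceding lemma says $d$ has degree $+1$, satisfies $d^2=0$, and obeys the Leibniz rule with sign $(-1)^{\deg\Gamma}$. These are exactly the axioms of a CDGA in the sense of Section \ref{sec:CDGA-homotopy}. So the real content lies in checking that lemma, and the plan below is the route I would take to it.

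\textbf{Well-definedness and degree.} First I would show that $d$ descends to the quotient by edge reversals and transpositions of the orders on $I_\Gamma$ and $E_\Gamma$. Reversing an edge $e$ should change the sign and also exchange source and target in $\partial_{v,i}$. Since $\omega(-\ZZ)=(-1)^{n'+n}\omega(\ZZ)$ and $\partial_{z_i}$ picks up a sign under $\ZZ\mapsto-\ZZ$, the weight $W^E(e)(-\partial_{v_e,\cdot})$ would transform compatibly with the sign convention of \cite{Formality-little-disks}, Definition 6.5. For the degree, contracting a contractible edge removes one edge (contributing $n'+n-1$) and one internal vertex (contributing $-(n'+n)$), so $\deg(\Gamma/e)=\deg\Gamma+1$. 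The operators $\partial_{v,i}$ change only weights, so they preserve degree.

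\textbf{Square-zero and Leibniz.} For $d^2=0$, I would expand $d^2\Gamma$ as a sum over ordered pairs $(e,f)$ of distinct contractible edges. One has $(\Gamma/e)/f=(\Gamma/f)/e$ as diagrams, with the orders inducing opposite signs exactly as in \cite{Formality-little-disks}, Example 6.10. Two weight facts make the terms cancel. First, the operators $\partial_{v,i}$ commute with one another. Second, $\partial_{v_e,i}$ acting on $\Gamma/e$ equals the sum of $\partial_{s(e),i}$ and $\partial_{t(e),i}$ acting before contraction, with the contributions of $e$ itself cancelling. This is the Leibniz rule for $\partial_{z^i}$ on the merged vertex weight $W^V(s)W^V(t)$, together with the source and target terms for $e$ entering with opposite signs. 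Hence the weight factors attached to the pair $(e,f)$ agree in both orders, and the paired terms cancel.

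\textbf{Main obstacle.} The step I expect to be hardest is edges whose contractibility depends on the order of contraction. Contracting $e$ can make $f$ a chord, a loop, or a dead end, or can create double edges. This is exactly where \cite{Formality-little-disks} restricts to admissible diagrams or passes to a quotient. I would handle it the same way: show that these terms either cancel in pairs or lie in the ideal that is quotiented out when passing to admissible diagrams. Here the extra edge weights $W^E$ must be checked to transform compatibly.

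For the Leibniz rule, the contractible edges of $\Gamma\cdot\Gamma'$ are the disjoint union of those of $\Gamma$ and of $\Gamma'$. The only subtlety is that external vertex weights are multiplied, and $\partial_{v,i}$ acting on a product weight obeys the ordinary Leibniz rule. The sign $(-1)^{\deg\Gamma}$ comes from moving past the ordered edge and internal-vertex sets of $\Gamma$.
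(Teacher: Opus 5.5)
Your opening reduction is exactly what the paper does. The theorem is stated right after the Proposition (graded-commutative algebra with unit $\boldsymbol{1}$) and the Lemma (degree $+1$, $d^2=0$, Leibniz rule), and the paper offers no argument beyond combining them.

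The problem is in your sketch of that Lemma, which you identify as the real content. You read the operator in $d_e\Gamma=\pm W^E(e)(-\partial_{v_e,1},\dots,-\partial_{v_e,n})\,\Gamma/e$ literally, so that it differentiates everything at the merged vertex $v_e$. Under that reading, neither of your two weight arguments goes through.

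\emph{Leibniz rule.} The ordinary Leibniz rule at an external $v_e$ is exactly what breaks it. Take $\Gamma$ to be the second diagram of Figure \ref{fig:diagram-differential2}, and $\Gamma'=\Gamma_{z^1}(2)$, the diagram with no edges and $W^V(2)=z^1$, so $d\Gamma'=0$. In $d(\Gamma\cdot\Gamma')$, the operator $-\partial_{2,1}$ coming from the edge between $3$ and $2$ also differentiates the factor $z^1$ supplied by $\Gamma'$. This produces $\mp$ the chord between $1$ and $2$ with trivial edge weight and $W^V(2)=z^1$, and that term does not appear in $d\Gamma\cdot\Gamma'$.

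\emph{$d^2=0$.} Commutation of the $\partial_{v,i}$ is not enough. The operator attached to $e$ rewrites the weight of an adjacent edge $f$, and that rewritten weight is later used as an operator. Take external vertices $a,a'$, internal vertices $b<c$, edges $e\colon a\to b$ with weight $\partial_1$, and $f\colon b\to c$, $h\colon c\to a'$ with weight $1$, all vertex weights $1$.
\begin{itemize}
\item The orders $(e,f)$ and $(f,e)$ give the same chord $a\to a'$ with weight $\partial_1$, but with coefficients $+1$ and $-1$ before the orientation signs.
\item The orientation signs of the two orders are opposite, so these two terms add instead of cancelling.
\item The pairs $\{e,h\}$ and $\{f,h\}$ cancel as usual, so $d^2\Gamma=\pm 2$ times that chord.
\end{itemize}

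\emph{What works instead.} Let the weight act by integration by parts in the removed internal vertex $r=\max(s(e),t(e))$ only. That is, before merging, apply $W^E(e)(\pm\partial_{r,\cdot})$ to $W^V(r)$ and to the edges at $r$ other than $e$, with the sign fixed by whether $r$ is the source or the target of $e$.
\begin{itemize}
\item This agrees with the worked examples in the figures, where the surviving vertex carries no other data.
\item It is what $\GIK$ forces, since $\GIK(\Gamma\cdot\Gamma')=\pm\GIK(\Gamma)\GIK(\Gamma')$ when $\Gamma'$ has no internal vertices.
\item With it, Leibniz is immediate, because $r$ is internal and belongs to only one factor.
\item The two orders of a symmetric pair then compute the same iterated integration by parts.
\end{itemize}

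Separately, the fallback in your ``main obstacle'' paragraph does not prove this statement. Showing that the order-dependent terms lie in $\NAI(A)$ only gives $d^2\equiv 0$ on $\GD(A)/\NAI(A)$. The theorem is about $\GD(A)$ itself, and the paper establishes it before $\NAI(A)$ is introduced.

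Such order-dependent terms do occur. Take a triangle $u,x,w$ in which $w$ is internal and adjacent only to $u$ and $x$, while $u$ and $x$ have further neighbours. Contracting $ux$ turns $uw$ and $xw$ into dead ends. Contracting $uw$, however, leaves $ux$ contractible. The resulting loop diagrams must be shown to cancel in $\GD(A)$ itself, not merely modulo $\NAI(A)$.
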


\begin{defn}[Admissible diagrams]
A diagram is called {admissible} if it has no loops, no double edges,
no internal vertices of valence $\leq 1$,
no internal vertices of valence $2$ with weight in $\C$, and each of its internal vertices is connected to some external vertex.
Otherwise a diagram is {non-admissible}. 
We denote by $\NAI({\ExtVert})$ the graded submodule of $\GD({\ExtVert})$ generated by the non-admissible diagrams.
\end{defn}
Thus, an admissible diagram consists only of simple chords and simple contractible edges.

\begin{lem}
The submodule of non-admissible diagrams $\NAI({\ExtVert})$ is a differential ideal of $\GD(A)$.
\end{lem}

\begin{defn}
The CDGA of admissible diagrams is defined as the quotient
$$
D(A) := \GD(A) / \NAI({\ExtVert}).
$$
\end{defn}


\subsubsection{Formality of the CDGA of Admissible Diagrams}

In this section, we show that the CDGA $\cD(A)$ of admissible diagrams on $A$ is quasi-isomorphic to the CDGA $\Hdef(\Conf_A(\aka))$ with zero differential. The proof is parallel to those in \cite{Formality-little-disks}, Section 8. 

Denote by
\begin{itemize}
\item $\VertexDiagram$ the diagram on $A$ with no internal vertices or edges, with
$$
W^V(v)=\begin{cases}
W & v=a \\
1 & \text{else}
\end{cases}
$$ 
\item $\chorddiagram$ the diagram on $A$ 
with no internal vertices, vertex weights identically $1$, and a single edge, which is a chord from $a$ to $b$ with weight $W$.
\end{itemize}

\begin{thm}\label{thm:diagram-cohomology}
There exists a quasi-isomorphism of $\Z$-graded CDGAs
$$
\bar \I: \cD(A) \slra \ConfSpaceCohomology
$$
characterized by the relations
$$
\begin{cases}
\bar \I (\VertexDiagram) = p_a^*W & \text{ for $a\in A$} \\
\bar \I (\chorddiagram) = (p_{ab}^*W)\omega_{ab} & \text{ for $a,b$ distinct in $A$} \\
\bar \I (\Gamma)= 0 & \text{for $\Gamma$ with internal vertices}
\end{cases}.
$$
\end{thm}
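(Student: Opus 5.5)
The plan follows the strategy of \cite{Formality-little-disks}, Section 8, adapted to weighted diagrams. First I check that $\bar\I$, defined on generators by the three displayed rules, is a well-defined CDGA morphism. Every diagram without internal vertices is, up to sign, a product of vertex diagrams $\Gamma_{W}(a)$ and chord diagrams $\Gamma_{W}(a,b)$. So $\bar\I$ is forced to be multiplicative on the subalgebra they generate, and it is set to zero on diagrams with internal vertices. Three things need verifying.
\begin{itemize}
\item Edge reversal is compatible with the sign in $\omega_{ab}=(-1)^{n'+2n}\omega_{ba}$, up to the change of weight variables $\partial_{\zz_a}\leftrightarrow-\partial_{\zz_b}$.
\item Vanishing on non-admissible diagrams: double chords map to $0$ by the first relation of Corollary \ref{cor:Hdef-group-structure}, and loops are non-admissible by definition.
\item Compatibility with $d$: since the target has zero differential, I need $\bar\I(d\Gamma)=0$ for all admissible $\Gamma$.
\end{itemize}
For the last point, only diagrams with exactly one internal vertex contribute, because contracting one edge removes exactly one internal vertex. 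Admissibility forces this vertex to have valence $\ge 3$, or valence $2$ with a nonconstant weight. In the trivalent case with constant weight, $d\Gamma$ is precisely the weighted Arnold combination of Remark \ref{rem:Arnold}. In the bivalent case with weight $z^a$, $d\Gamma$ reproduces the local relation $\zz_{ij}^{\xalpha}\partial^{\xbeta}\omega_{ij}=\cdots$ of Remark \ref{rem:local-relations}, as in Figure \ref{fig:diagram-differential2}. Both vanish in $\Hconk$ by Corollary \ref{cor:Hdef-group-structure}. For higher valence, the contracted diagrams contain a doubled chord, or a product that vanishes by the Arnold relations. I would write out the general weighted identity once, using $\partial_{v,i}$ and the Leibniz rule for the weights.

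Surjectivity of $H(\bar\I)$ is immediate, since $\Hconk(\Conf_A)$ is generated by the images of the vertex and chord diagrams. For injectivity I filter by the number of external vertices and induct on $|A|$, following the forgetful projection used in Theorem \ref{thm:conf-space-cohomomogy}. Fix the maximal external vertex $m\in A$. Split $\cD(A)$ according to the connected component $C$ of $m$ in the graph with the other external vertices deleted; the graph $C$ consists of $m$ together with internal vertices and edges to other external vertices. Then $\cD(A)$ should be, as a complex, isomorphic to $\cD(A\setminus\{m\})\otimes T$, where $T$ is a complex of "$m$-trees". The goal is to show $H(T)$ is spanned by the unit weighted by $\C[\zz_m]$ together with single chords $f(\partial_{\zz_m})\omega_{im}$ for $i<m$. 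This matches the fiber description $\C[\zz_m]\oplus\bigoplus_i\C[\partial_{\zz_m}]\omega_{im}$ from the Leray argument. A dimension count per weight-graded piece then closes the induction.

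The main obstacle is computing $H(T)$. I would use a contracting homotopy: take the edge at $m$ leading to the smallest internal vertex and expand that vertex back into an edge. Bivalent internal vertices carry polynomial weights, and contraction multiplies weights while applying $W^E(e)(-\partial)$. Because of this, the homotopy must be graded by total polynomial degree of weights minus derivative order, and it must be corrected by a term that splits a vertex weight $z^a$ off into a new bivalent vertex. I expect the verification that the extra terms cancel, which is exactly the mechanism in Remark \ref{rem:local-relations}, to be the delicate computation. The rest should follow the unweighted argument of \cite{Formality-little-disks} verbatim.
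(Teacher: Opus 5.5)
Both halves of your outline leave the essential step undone. For the chain-map property, the paper never checks weighted Arnold identities by hand. On a diagram without internal vertices, $\I$ gives exactly the product of forms whose class is $\bar\I$ of that diagram. So for $\Gamma$ with one internal vertex, $\bar\I(d\Gamma)=[\I(d\Gamma)]=[d\I(\Gamma)]=0$ by Proposition~\ref{prop:diagram-to-form-map}, for every valence and every choice of weights. Your purely algebraic route is possible in principle, since Corollary~\ref{cor:Hdef-group-structure} is a presentation, but you only treat two cases: a trivalent vertex of constant weight and a bivalent vertex of weight $z^a$. Still open are trivalent vertices with nonconstant weight, bivalent vertices with general vertex and edge weights, and valence $k\ge 4$. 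In the last case each contracted term is a weighted star $\prod_{j\ne i}\omega_{a_ia_j}$, and each such star is a nonzero basis class of Theorem~\ref{thm:conf-space-cohomomogy}. Only the signed sum vanishes, and deriving that from the three-term relation is exactly the computation you postpone. (Also, $\omega_{ba}=(-1)^{n'+n}\omega_{ab}$; the sign $(-1)^{n'+2n}$ belongs to $\omega\,dz^1\cdots dz^n$.)

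The injectivity half has a more serious flaw: $\cD(A)$ is not isomorphic to $\cD(A\setminus\{m\})\otimes T$ as a complex. Contracting a hair from an internal vertex of the $m$-component onto some $b\neq m$ moves edges into the other factor; for example, a trivalent $v$ adjacent to $m,b,c$ yields the chord $bc$. Through $W^E(e)(-\partial_b)$, the same contraction also differentiates the weight that $v$ deposits at $b$. If $d_T$ simply discards these cross terms, $H(T)$ comes out wrong. Let $X_1$, $X_2$ be the diagram $m\leftarrow v\rightarrow b$ with $W^V(v)=z^1$ and edge weights $(\partial_1,1)$, respectively $(1,\partial_1)$. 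Let $K(p,f)$ be the chord $m\to b$ with weight $p$ at $m$ and $f$ on the edge. Contracting $v\to m$ gives $d_TX_1=\pm\bigl(-K(1,1)-K(z^1,\partial_1)\bigr)$ and $d_TX_2=\pm K(z^1,\partial_1)$, with the same sign. So the bare chord $K(1,1)$, which represents $\pm\omega_{bm}$, is $d_T$-exact; in $\cD(A)$, $X_1+X_2$ is closed only because of the discarded term. You would still have to produce a filtration whose associated graded has the right cohomology, and that is precisely the weighted difficulty you defer. A dimension count also cannot close the induction. For the gradings preserved by $d$ (polynomial degree minus derivative order, or the $\Z^n$-weight), every piece is infinite-dimensional, e.g.\ $\zz_b^{\xalpha}\partial_{\zz_m}^{\xbeta}\omega_{bm}$ with $\xalpha-\xbeta$ fixed. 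You therefore need an actual comparison of spectral sequences. The paper proceeds differently: it first splits $\cD(A)$ into connected components, which is an honest splitting of complexes because contraction preserves connectivity. It then applies the $\calU_0,\calU_1,\cGD'$ trichotomy of \cite{Formality-little-disks}, Lemma 8.5, at a fixed external vertex and inducts. That cited lemma also needs adapting to weighted bivalent vertices: the diagram $a$--$v$--$b$ with $W^V(v)=z^1$ lies in $\calU_0$, yet its differential lies entirely in $\cGD'$. You have located the difficulty correctly, but the proposal does not resolve it.
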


\begin{proof}
    
Let $\cD^{(0)}(A) \subset \cD(A)$ be the submodule generated by admissible  diagrams with no internal vertices. Then we have a surjective algebra map 
$$
\bar \I_0 : \cD^{(0)}(A) \lra \ConfSpaceCohomology.
$$
Moreover, we have $\bar \I_0(d\Gamma)=[\operatorname{I}(d\Gamma)]=[d\operatorname{I}(\Gamma)]=0$, where $\operatorname{I}$ is the configuration space integral in Section \ref{sec:Conf-Space-Integrals}. 
Thus $\bar \I_0$ extends to a CDGA morphism 
$$
\bar \I: \cD(A) \lra \ConfSpaceCohomology
$$
that annihilates diagrams with internal vertices and is surjective on cohomology. 

Next, we show $\bar \I$ induces an isomorphism on cohomology.

A diagram $\Gamma$ on ${\ExtVert}$ induces a partition of ${\ExtVert}$
into its connected components; we denote this partition by $\nu_\Gamma$. Let $\cGD(A)$ be the subcomplex of connected admissible diagrams on $A$. 
We have 
$$
\cD(A) \cong \oplus_{\nu} \otimes_{C \in \nu} \cGD(C).
$$
Moreover, $\ConfSpaceCohomology$ has a  similar decomposition, and $\bar \I$ preserves these decompositions. So  it suffices  to show that $\bar \I$ is injective on $\cGD(A)$. 


Fix an element $a\in A$. Consider the following submodules of $\cGD(A)$
\begin{itemize}
\item $\calU_0$ is the submodule  generated by connected admissible diagrams
with $a$ of valence $1$ and such that the only edge with endpoint $a$ is contractible;
\item $\calU_1$ is the submodule generated by connected admissible diagrams
with $a$ of valence $\geq2$;
\item $\cGD'(A)$ is the submodule generated by all connected admissible diagrams that are neither in $\calU_0$ nor in $\calU_1$.
\end{itemize}
Clearly $\cGD'(A)$ is a subcomplex of $\cGD(A)$. A spectral sequence argument in \cite{Formality-little-disks}, Lemma 8.5 shows the quotient complex associated to the inclusion 
$$
\cGD'(A) \lra \cGD(A)
$$
is acyclic, so the inclusion is a quasi-isomorphism. 
 
By induction on the number of vertices, we see $\cGD(A)$ is isomorphic to the corresponding partition of $\ConfSpaceCohomology$ in Corollary \ref{cor:Hdef-group-structure}. 

\end{proof}

\subsection{The Configuration Space Integrals}\label{sec:Conf-Space-Integrals}

Let $\Gamma$ be a diagram on $A$. Define 
\ali{
\theta_v: \Conf_{V_\Gamma}(\aka) &\lra \aka \\
(\ZZ_w)_{w\in V_\Gamma} &\lmt  \ZZ_v
}
\ali{
\theta_e: \Conf_{V_\Gamma}(\aka) &\lra \paka \\
(\ZZ_v)_{v\in V_\Gamma} &\lmt \ZZ_{s_\Gamma(e)}-\ZZ_{t_\Gamma(e)}
}
The configuration space integral $\GIK(\Gamma)$ is defined by the formula 
$$
\GIK(\Gamma) 
= \dashint \prod_{v\in I_\Gamma} d\zz_v \,  \prod_{v\in V_\Gamma} \theta_v^* (W^V_\Gamma(v)) \prod_{e\in E_\Gamma} \theta_e^* (W^E_\Gamma(e)\omega),
$$
where $\dashint$ is taken along fibers of a compactified version of  the projection  
$$
\Conf_{V_\Gamma}(\aka) \lra \Conf_{A}(\aka)
$$ 
which is defined in Section \ref{sec:reg-conf}.

\begin{prop}
$\GIK$ vanishes on 
non-admissible diagrams.
\end{prop}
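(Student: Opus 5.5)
We will check each of the five defects in the definition of admissibility separately and show that $\GIK(\Gamma)=0$ in each case. The tools are the explicit form of $\omega$, the degree count of the integrand, the symmetries of the regularized integral $\dashint$, and the descent criterion of Lemma \ref{lem:form-descent}.

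\emph{Loops and double edges.} For a loop $e$, the map $\theta_e$ is identically $0$, so $\theta_e^*(W^E_\Gamma(e)\omega)$ is not defined as a form. By the conventions of $\GD(A)$ we set its value to zero, as in \cite{Formality-little-disks}. For a double edge $e,e'$ between the same vertices, both weights are pulled back along the same map $\theta_e=\pm\theta_{e'}$. The product then factors as $\theta_e^*\big((W^E(e)\omega)(W^E(e')\omega)\big)$. We will show this product vanishes already on $\paka$, before any integration. The computation in the proof of Proposition \ref{formality->1>0} writes each $\partial^I\omega$ as $(\mu_1\bd\mu_2-\mu_2\bd\mu_1)(-)+(\bd\mu_1\bd\mu_2)(-)$, and these pieces square to zero. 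The same holds for $z$-derivatives of $\omega$, since $\partial^I\omega$ is $\omega$ times a factor built from the same one-forms. So the product of two edge kernels on one edge is $0$ pointwise.

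\emph{Internal vertices of valence $0$ or $1$.} Let $v$ be an internal vertex of valence $\le1$. We integrate over $\ZZ_v$ first, using the Fubini property of $\dashint$ (Proposition \ref{prop:constructibility-reg-int}). If $v$ has valence $0$, the integrand depends on $\ZZ_v$ only through the holomorphic weight $W^V(v)$. Together with $d\zz_v$ it has no $dx_v$ components, so the fiber degree in the $\R^{n'}$ directions is wrong and the integral vanishes. This uses $n'\ge2$. If $v$ has valence $1$ with edge $e$ to a vertex $u$, translate $\ZZ_v\mapsto\ZZ_v-\ZZ_u$. The fiber integral becomes $\dashint_{\paka} d\zz\, W^V(v)(\zz+\zz_u)\,\partial^I\omega$. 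This is a residue-type pairing of a polynomial with $\partial^I\omega$, so it is a constant times a polynomial in $\zz_u$; in particular it is a $0$-form. The integrand of degree $n'+n-1$ is integrated over a fiber of real dimension $n'+2n$ against only $n$ holomorphic differentials $d\zz_v$, so the form degree in the fiber does not match. The one-vertex fiber integral therefore vanishes identically.

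\emph{Bivalent internal vertices with constant weight, and internal components disconnected from $A$.} For a bivalent vertex $v$ with constant weight, joined to $u$ and $w$, the fiber integral over $\ZZ_v$ of $(\partial^I\omega)_{vu}(\partial^J\omega)_{vw}$ is a form of degree $2(n'+n-1)-(n'+n)=n'+n-2$ on $\Conf_2$. We will use the involution $\ZZ_v\mapsto \ZZ_u+\ZZ_w-\ZZ_v$, combined with $\omega(-\ZZ)=(-1)^{n'+n}\omega(\ZZ)$ and the degree parity, to show that this form equals its own negative, following Kontsevich's lemma in \cite{Formality-little-disks}. Alternatively, descent via Lemma \ref{lem:form-descent} under scaling invariance forces the result to be a $\bd$-closed form of degree $n'+n-2$ on $\paka$. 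Such a form is exact by Proposition \ref{prop:coh-local}, but we still need it to vanish on the nose; we expect scaling together with the $S^1$-weight of $\omega$ to force exact vanishing. For a component $C$ of internal vertices not connected to $A$, the integrand factorizes. The $C$-factor is translation invariant, so it descends to a form on $\Conf_C/(\aka)$ integrated to a point, and its degree $|E_C|(n'+n-1)-|C|(n'+n)$ is incompatible unless the factor is zero.

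We expect the main obstacle to be the bivalent-vertex case. Unlike in the real setting, the holomorphic $z$-derivative weights on edges break the naive symmetry argument, and the regularized integral also picks up residue contributions along the divisors at infinity. These contributions would have to be shown to cancel using the asymptotics of Section \ref{sec:bochner-infty}.
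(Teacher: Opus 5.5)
The case split is the same as the paper's, and the loop, double-edge and valence $\le 1$ cases are essentially its arguments. In the valence-one case, drop the sentence about a ``residue-type pairing''. The integral over $\aka$ of a form with only $n+(n'+n-1)$ fiber directions is zero by degree, and that is the actual reason.

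\textbf{The bivalent case is a genuine gap.} You leave it open, and your fallback through $\bd$-closedness only gives exactness. The involution does not apply directly once the two edges carry different weights $P_1(\partial),P_2(\partial)$, because it swaps them and replaces $\partial$ by $-\partial$. The missing step is to move the edge weights off the integrand:
\begin{itemize}
\item For an edge $e$ between $v$ and $u$, one has $\theta_e^*(P(\partial)\omega)=P(\mp\partial_{\zz_u})\,\theta_e^*\omega$. Here $\partial_{\zz_u}$ is the Lie derivative in the coordinates of $\ZZ_u$, and the sign depends on the orientation of $e$. By Fubini, $\ZZ_u$ is only a parameter for the $\ZZ_v$-integration.
\item Since $W^V(v)$ is constant, the $\ZZ_v$-integral equals $P_1(\mp\partial_{\zz_u})P_2(\mp\partial_{\zz_w})$ applied to the absolutely convergent integral $\int_{\ZZ_v}d\zz_v\,\omega(\ZZ_v-\ZZ_u)\,\omega(\ZZ_v-\ZZ_w)$.
\item Commuting these derivatives with the regularized integral is where your $S^1$-weight idea belongs. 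The flux through the cutoff sphere around $\ZZ_u$ has negative $S^1$-weight, so it integrates to zero.
\item For the unweighted integral, Kontsevich's involution $\ZZ_v\mapsto \ZZ_u+\ZZ_w-\ZZ_v$ works. It has orientation sign $(-1)^{n'}$ on the fiber. It pulls $d\zz_v$ back to $(-1)^n d\zz_v$, modulo base forms that cannot reach fiber-top degree. Using $\omega(-\ZZ)=(-1)^{n'+n}\omega(\ZZ)$ twice, it sends $\omega_{vu}\omega_{vw}$ to $\omega_{vw}\omega_{vu}=(-1)^{n'+n-1}\omega_{vu}\omega_{vw}$.
\item The fiber integral therefore equals $(-1)^{n'}(-1)^{n}(-1)^{n'+n-1}=-1$ times itself, so it vanishes.
\end{itemize}
No residues at infinity enter, since no Stokes argument is used. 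The constancy of $W^V(v)$ is exactly what survives the involution; a weight like $z^a_v$ does not, consistent with Remark \ref{rem:local-relations}.

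\textbf{The disconnected-component argument also needs repair.} A pure degree count cannot work. For example, with $n'+n=5$, four internal vertices and five edges give a $C$-factor of degree $20+4n$, which is exactly top degree on the $C$-fiber. Moreover, the $C$-factor does not descend to the quotient by translations, because $\prod_{v\in C}d\zz_v$ is not basic. The correct reason, which is the paper's, is that every $dx$ and $d\bar z$ component comes from some $\theta_e^*\omega$, and each of these is pulled back along a difference $\ZZ_s-\ZZ_t$ with $s,t\in C$. Hence the whole integrand is annihilated by contraction with the nonvanishing vertical vector field $\sum_{v\in C}\partial_{x^1_v}$. It therefore has no top-degree component along the fiber, and the integral vanishes.
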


\begin{proof}
$\GIK$ vanishes on diagrams with
\begin{itemize}
\item loops, because the pullback of $\omega_e$ for a loop $e$ is zero.
\item double edges, because $W^E_{\Gamma}(e) \omega \cdot W^E_{\Gamma}(e')\omega=0$ for double edges $e,e'$.
\item internal vertices of valence $\leq 1$, because the integrand does not contain a top form with respect to the vertex. 
\item internal vertices of valence $2$ with weight in $\C$, by Kontsevich’s trick from \cite{Kontsevich-diagrams}, Lemma 2.1.
\item an internal vertex not connected to any external vertices, because $\omega_{ij}$ relies only on coordinates of the diagonal $\ZZ_{ij}$ and these forms cannot form a top form. 
\end{itemize} 
\end{proof}

\begin{prop}\label{prop:diagram-to-form-map}
$\GIK$ induces a morphism of CDGAs 
$$
\GIK: \GD(A)  \lra \DefinableChainsA. 
$$
When modulo the ideal $\NAI({\ExtVert})$, it induces a morphism of CDGAs 
$$
\operatorname{I}: \cD(A)  \lra \DefinableChainsA. 
$$
\end{prop}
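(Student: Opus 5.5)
We must show three things: $\GIK$ is well defined on the quotient $\GD(A)$ (compatible with edge reversals and reorderings), $\GIK$ is multiplicative, and $\GIK$ intertwines the diagram differential with $\d$. The last is the substantive part.

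\textbf{Well-definedness.} By Proposition \ref{prop:constructibility-reg-int}, $\GIK(\Gamma)$ lies in $\DefinableChainsA$. Reversing an edge $e$ replaces $\theta_e$ by $-\theta_e$, and $(-1)^*\omega=(-1)^{n'+n}\omega$. The $\partial$-weight picks up $(-1)^{\deg W}$, which matches the convention of \cite{Formality-little-disks}, Definition 6.5, after the weight transfer $\partial_{z_j}\omega_{ij}=-\partial_{z_i}\omega_{ij}$. Transpositions of edges or internal vertices permute odd forms, respectively fiber orientations, with the same Koszul signs as in the diagram complex.

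\textbf{Multiplicativity.} The fiber of $\Conf_{V_{\Gamma_1\cdot\Gamma_2}}\to\Conf_A$ is, on a dense open set, the fiber product of the two fibers. The regularized integral satisfies Fubini (\cite{Regularized-integral}), so $\GIK(\Gamma_1\cdot\Gamma_2)=\GIK(\Gamma_1)\GIK(\Gamma_2)$. The multiplication of vertex weights on $A$ corresponds to multiplying pullbacks along the same $\theta_a$.

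\textbf{Compatibility with the differential.} Apply the fiberwise Stokes formula for regularized integrals along $\widetilde\pr$. It has the form $\d\dashint_{fib}\alpha=\dashint_{fib}\d\alpha\pm\sum_D\GlobalRes_D\alpha$, summed over the boundary and exceptional divisors of the compactified fiber. There are no interior terms: $\d\alpha=0$ on the open configuration space, because the weights are $\bd$-closed and so is $\omega$. The boundary terms come from three sources.

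\emph{Collision of exactly two vertices joined by a contractible edge $e$.} Here the residue formula of Example \ref{eg:local-residue} extracts the $d\log z$ coefficient of $\omega$ from Section \ref{sec:Asymptotic-zero}. The $\partial$-derivatives in $W^E(e)$ become, after integrating by parts in $\zz_{\max}$, the operator $W^E(e)(-\partial_{v_e,1},\dots,-\partial_{v_e,n})$ acting on the remaining weights at the merged vertex $v_e$. This reproduces $\GIK(\Gamma/e)$ with exactly the term in $d\Gamma$; the vertex weights multiply as in the definition of $\Gamma/e$.

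\emph{Collisions of two vertices not joined by an edge.} These give no residue, since the integrand has no $d\log z$ pole there.

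\emph{Hidden faces.} These are collisions of $\ge3$ vertices and the divisors at infinity, including the infinity behaviour of Section \ref{sec:bochner-infty}. They must vanish, and this is the main obstacle. For collisions of $\ge3$ points, I would adapt Kontsevich's vanishing lemmas. When the collapsing subgraph has an internal vertex of valence $\ge3$, a degree count on the stratum shows the form is not of top degree in the collapsed directions. This uses that $n'\ge2$: the $\R^{n'}$-directions supply the parity/degree argument and the extra $S^{n'-1}$ directions. Bivalent internal vertices are handled by the involution trick of \cite{Kontsevich-diagrams}, Lemma 2.1, unless they carry a nonconstant weight. In that case the weight is polynomial and the expansion of Section \ref{sec:Asymptotic-zero} shows the residue is of the form already matched by $d\Gamma$. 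At infinity, the expansion in Section \ref{sec:bochner-infty} shows that each $\omega$ vanishes to order $(z^0)^n$ near $H$. Polynomial weights grow only like $(z^0)^{-\deg}$, so the relevant residue contains no $dz^0/z^0$ coefficient, after using that the regularized principal value kills residual positive powers by the lemma in Section \ref{sec:PV-Res}. I expect this bookkeeping of orders at infinity against weight degrees to require the most care.

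Finally, $\GIK(\NAI(A))=0$ by the preceding proposition. Hence $\GIK$ descends to a CDGA map $\operatorname{I}$ on $\cD(A)=\GD(A)/\NAI(A)$.
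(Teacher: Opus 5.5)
\textbf{There is a genuine gap in the hidden faces.} Your treatment of the two-point faces $D_S$ with $|S|=2$ (adjacent versus non-adjacent vertices) and of the descent to $\cD(A)$ agrees with the paper, and your sign and Fubini checks are fine. The problem is the faces $D_S$ with $|S|>2$, which is exactly where $n'\ge 2$ must enter. Neither mechanism you propose depends on $n'$.

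\emph{The degree count.} Correctly stated, it needs \emph{all} internal vertices of $\Gamma_S$ to have valence $\ge 3$ in $\Gamma_S$. One trivalent vertex is not enough: for $n'+n=3$, a hexagon with one chord is $(n'+n)$-Laman, hence exactly of top degree. Even in the correct form, the count uses only $N=n'+n\ge 3$.

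\emph{The reflection trick.} The map $\ZZ_v\mapsto \ZZ_a+\ZZ_b-\ZZ_v$ at a bivalent vertex with constant weights gives the sign $(-1)^{n'}(-1)^{n}(-1)^{N-1}=-1$ for every $n'$.

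Both arguments therefore apply verbatim to $(n',n)=(0,3)$ or $(1,2)$. In those cases hidden faces do contribute, namely contractions of Laman subdiagrams (Figure \ref{fig:omegae} and the remarks in Section \ref{sec:Conf-Space-Integrals}), and this is what makes them non-formal.

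So everything rests on the case you set aside: Laman $\Gamma_S$ with bivalent internal vertices carrying nonconstant weights (admissible by definition), or with edge weights that the reflection exchanges. Your claim that such a residue ``is of the form already matched by $d\Gamma$'' cannot hold. The differential $d\Gamma$ only contracts single edges, so it accounts only for faces with $|S|=2$. Any nonzero residue at $|S|>2$ is an extra term that violates $(\mathrm{d}+\bar\partial)\GIK(\Gamma)=\GIK(d\Gamma)$.

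The paper closes this gap in two steps. Lemmas \ref{lem:Laman1} and \ref{lem:Laman2} reduce to the case where $\Gamma_S$ is $(n'+n)$-Laman. It then invokes the vanishing of such Laman integrals for $n'\ge 2$ from \cite{topological-holomorphic}, Proposition 3.26, identifying the regularized integral with Schwinger regularization. You need this input, or a proof of it.

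\textbf{At infinity, order counting is the wrong mechanism.} A vertex weight of degree $d$ contributes $(z^0)^{-d}$, while $l$ propagators contribute $(z^0)^{ln}$. For large $d$, nothing in a power count rules out a $dz^0/z^0$ term.

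The paper's argument (Section \ref{sec:vanishing-infty}) is about forms and does not depend on the weights:
\begin{itemize}
\item Near $H$, the leading part of every propagator is a scalar function times the same $(n'+n-1)$-form $\prod_{j\ne 0,p}d\bar z^j\prod_i dx^i$.
\item The wedge of these leading parts over the $l\ge 2$ edges at an internal vertex therefore vanishes identically. This is where valence $\ge 2$ is used.
\item Every remaining term carries $d\bar z^0/\bar z^0$, $d|z^0|/|z^0|$, or a positive power of $\bar z^0$ or $|z^0|$. By the lemma in Section \ref{sec:PV-Res}, each such term has zero circle residue, whatever the holomorphic pole order.
\end{itemize}
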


\begin{proof}[Sketch of proof]
Note that $\operatorname{I} (-)$ is constructible by Proposition \ref{prop:constructibility-reg-int}. 
By the fiberwise Stokes theorem, we obtain
$$
\bd \operatorname{I} (-) 
= \sum_{|S|=2}   \GlobalRes_{D_S}(-) + \sum_{|S|>2} \GlobalRes_{D_S}(-) +  \GlobalRes_{\infty}(-) 
$$
is the sum of residues. We will show that the last two terms vanish in Section \ref{sec:vanishing-DS} and \ref{sec:vanishing-infty}. 

For each $S=\lr{i,j}$, if $i$ and $j$ are not adjacent in the diagram, then the residue $\GlobalRes_{D_S}(-)$ vanishes. Otherwise, there is a contractible edge with endpoints $\partial e=S$, and the residue at $D_S$ is 
\ali{ 
\GlobalRes_{D_S}(-)
=& \GlobalRes_{D_S} \theta_e^* (W^E_\Gamma(e)\omega) (-) \\
=& \GlobalRes_{D_S} \theta_e^*\omega \cdot W^E(e)(-\partial_{v_e,1},\cdots,-\partial_{v_e,n})(-) \\
=& \dashint  W^E(e)(-\partial_{v_e,1},\cdots,-\partial_{v_e,n})(-)
} 
which corresponds to the contraction of $e$ in the differential. Here we have used the local calculation in Section \ref{sec:Asymptotic-zero} and Lemma \ref{vanish-lie-derivative}.

\end{proof}

\begin{rem}  
We have seen in Section \ref{sec:2-point-CDGA} that as a differential form on $\paka$, $\omega$ is generated by $\omega_{I}$ when $n'\le 1$. In this case, the differential on diagrams contains contractions of Laman subdiagrams, and the diagram complex is not formal.  

In particular,  when $n'=0$, in Lemma \ref{lem:omega-tree}, $\omega^e$ are represented by regularized integrals associated to diagrams of the form 
where internal vertices are bivalent and weighted by $z^{i_1},\cdots,z^{i_{n-2}}$. See Figure \ref{fig:omegae}. 

\begin{figure}[H]
    \centering
\tikzset{every picture/.style={line width=0.75pt}} 

\begin{tikzpicture}[x=0.75pt,y=0.75pt,yscale=-1,xscale=1]

\draw [color={rgb, 255:red, 128; green, 128; blue, 128 }  ,draw opacity=0.41 ]   (104,214.46) -- (359.5,214.46) ;

\draw  [fill={rgb, 255:red, 0; green, 0; blue, 0 }  ,fill opacity=1 ] (126.33,214.67) .. controls (126.33,212.46) and (128.12,210.67) .. (130.33,210.67) .. controls (132.54,210.67) and (134.33,212.46) .. (134.33,214.67) .. controls (134.33,216.88) and (132.54,218.67) .. (130.33,218.67) .. controls (128.12,218.67) and (126.33,216.88) .. (126.33,214.67) -- cycle ;
\draw    (139,172) -- (172,131.11) ;
\draw    (139,172) -- (130.33,214.67) ;
\draw  [fill={rgb, 255:red, 255; green, 255; blue, 255 }  ,fill opacity=1 ] (135,172) .. controls (135,169.79) and (136.79,168) .. (139,168) .. controls (141.21,168) and (143,169.79) .. (143,172) .. controls (143,174.21) and (141.21,176) .. (139,176) .. controls (136.79,176) and (135,174.21) .. (135,172) -- cycle ;
\draw  [fill={rgb, 255:red, 0; green, 0; blue, 0 }  ,fill opacity=1 ] (334,214.67) .. controls (334,212.46) and (335.79,210.67) .. (338,210.67) .. controls (340.21,210.67) and (342,212.46) .. (342,214.67) .. controls (342,216.88) and (340.21,218.67) .. (338,218.67) .. controls (335.79,218.67) and (334,216.88) .. (334,214.67) -- cycle ;
\draw    (329.33,172) -- (338,214.67) ;
\draw    (329.33,172) -- (296.33,131.11) ;
\draw  [fill={rgb, 255:red, 255; green, 255; blue, 255 }  ,fill opacity=1 ] (325.33,172) .. controls (325.33,169.79) and (327.12,168) .. (329.33,168) .. controls (331.54,168) and (333.33,169.79) .. (333.33,172) .. controls (333.33,174.21) and (331.54,176) .. (329.33,176) .. controls (327.12,176) and (325.33,174.21) .. (325.33,172) -- cycle ;
\draw    (190,123.78) -- (172,131.11) ;
\draw    (296.33,131.11) -- (278.33,123.78) ;
\draw  [fill={rgb, 255:red, 255; green, 255; blue, 255 }  ,fill opacity=1 ] (168,131.11) .. controls (168,128.9) and (169.79,127.11) .. (172,127.11) .. controls (174.21,127.11) and (176,128.9) .. (176,131.11) .. controls (176,133.32) and (174.21,135.11) .. (172,135.11) .. controls (169.79,135.11) and (168,133.32) .. (168,131.11) -- cycle ;
\draw  [fill={rgb, 255:red, 255; green, 255; blue, 255 }  ,fill opacity=1 ] (292.33,131.11) .. controls (292.33,128.9) and (294.12,127.11) .. (296.33,127.11) .. controls (298.54,127.11) and (300.33,128.9) .. (300.33,131.11) .. controls (300.33,133.32) and (298.54,135.11) .. (296.33,135.11) .. controls (294.12,135.11) and (292.33,133.32) .. (292.33,131.11) -- cycle ;

\draw (120,155) node [anchor=north west][inner sep=0.75pt]   [align=left] {$z^{i_1}$};
\draw (153,110.33) node [anchor=north west][inner sep=0.75pt]   [align=left] {$z^{i_2}$};
\draw (300,111.67) node [anchor=north west][inner sep=0.75pt]   [align=left] {$z^{i_{n-3}}$};
\draw (336.67,155.67) node [anchor=north west][inner sep=0.75pt]   [align=left] {$z^{i_{n-2}}$};
\draw (224,112) node [anchor=north west][inner sep=0.75pt]   [align=left] {$\cdots$};
\draw (50,160) node [anchor=north west][inner sep=0.75pt]   [align=left] {$\omega^e=$};
\end{tikzpicture} 
    \caption{Representation of the $1$-form $\omega^e$.}
    \label{fig:omegae}
\end{figure}

Moreover, $\omega$ is the sum of $n^{n-2}$ maximal trees, each of which corresponds to a diagram of the form in Figure \ref{fig:omega}. These diagrams, with $n^2-3n+2$ vertices and $n^2-2n+1$ edges, are Laman diagrams used in \cite{Feynman-Graph-Integral}.

\begin{figure}[H]
    \centering 
\tikzset{every picture/.style={line width=0.75pt}} 

\begin{tikzpicture}[x=0.75pt,y=0.75pt,yscale=-1,xscale=1]

\draw [color={rgb, 255:red, 128; green, 128; blue, 128 }  ,draw opacity=0.41 ]   (185,196.67) -- (447.5,196.67) ;

\draw  [fill={rgb, 255:red, 0; green, 0; blue, 0 }  ,fill opacity=1 ] (210.33,196.67) .. controls (210.33,194.46) and (212.12,192.67) .. (214.33,192.67) .. controls (216.54,192.67) and (218.33,194.46) .. (218.33,196.67) .. controls (218.33,198.88) and (216.54,200.67) .. (214.33,200.67) .. controls (212.12,200.67) and (210.33,198.88) .. (210.33,196.67) -- cycle ;
\draw    (289.2,169.73) -- (278,170.53) -- (247.6,179.73) -- (214.33,196.67) ;
\draw  [fill={rgb, 255:red, 0; green, 0; blue, 0 }  ,fill opacity=1 ] (418,196.67) .. controls (418,194.46) and (419.79,192.67) .. (422,192.67) .. controls (424.21,192.67) and (426,194.46) .. (426,196.67) .. controls (426,198.88) and (424.21,200.67) .. (422,200.67) .. controls (419.79,200.67) and (418,198.88) .. (418,196.67) -- cycle ;
\draw    (288.8,114.13) -- (275.2,114.93) -- (248,135.33) -- (214.33,196.67) ;
\draw    (287.6,80.13) -- (270.8,85.33) -- (236.4,116.53) -- (214.33,196.67) ;
\draw    (366.8,80.13) -- (385.6,86.13) -- (414,114.53) -- (422,196.67) ;
\draw    (364.4,113.73) -- (386,117.73) -- (405.2,135.73) -- (422,196.67) ;
\draw    (366.8,170.13) -- (381.6,170.93) -- (397.6,175.33) -- (422,196.67) ;
\draw  [fill={rgb, 255:red, 255; green, 255; blue, 255 }  ,fill opacity=1 ] (232.4,116.53) .. controls (232.4,114.32) and (234.19,112.53) .. (236.4,112.53) .. controls (238.61,112.53) and (240.4,114.32) .. (240.4,116.53) .. controls (240.4,118.74) and (238.61,120.53) .. (236.4,120.53) .. controls (234.19,120.53) and (232.4,118.74) .. (232.4,116.53) -- cycle ;
\draw  [fill={rgb, 255:red, 255; green, 255; blue, 255 }  ,fill opacity=1 ] (266.8,85.33) .. controls (266.8,83.12) and (268.59,81.33) .. (270.8,81.33) .. controls (273.01,81.33) and (274.8,83.12) .. (274.8,85.33) .. controls (274.8,87.54) and (273.01,89.33) .. (270.8,89.33) .. controls (268.59,89.33) and (266.8,87.54) .. (266.8,85.33) -- cycle ;
\draw  [fill={rgb, 255:red, 255; green, 255; blue, 255 }  ,fill opacity=1 ] (244,135.33) .. controls (244,133.12) and (245.79,131.33) .. (248,131.33) .. controls (250.21,131.33) and (252,133.12) .. (252,135.33) .. controls (252,137.54) and (250.21,139.33) .. (248,139.33) .. controls (245.79,139.33) and (244,137.54) .. (244,135.33) -- cycle ;
\draw  [fill={rgb, 255:red, 255; green, 255; blue, 255 }  ,fill opacity=1 ] (271.2,114.93) .. controls (271.2,112.72) and (272.99,110.93) .. (275.2,110.93) .. controls (277.41,110.93) and (279.2,112.72) .. (279.2,114.93) .. controls (279.2,117.14) and (277.41,118.93) .. (275.2,118.93) .. controls (272.99,118.93) and (271.2,117.14) .. (271.2,114.93) -- cycle ;
\draw  [fill={rgb, 255:red, 255; green, 255; blue, 255 }  ,fill opacity=1 ] (243.6,179.73) .. controls (243.6,177.52) and (245.39,175.73) .. (247.6,175.73) .. controls (249.81,175.73) and (251.6,177.52) .. (251.6,179.73) .. controls (251.6,181.94) and (249.81,183.73) .. (247.6,183.73) .. controls (245.39,183.73) and (243.6,181.94) .. (243.6,179.73) -- cycle ;
\draw  [fill={rgb, 255:red, 255; green, 255; blue, 255 }  ,fill opacity=1 ] (274,170.53) .. controls (274,168.32) and (275.79,166.53) .. (278,166.53) .. controls (280.21,166.53) and (282,168.32) .. (282,170.53) .. controls (282,172.74) and (280.21,174.53) .. (278,174.53) .. controls (275.79,174.53) and (274,172.74) .. (274,170.53) -- cycle ;
\draw  [fill={rgb, 255:red, 255; green, 255; blue, 255 }  ,fill opacity=1 ] (381.6,86.13) .. controls (381.6,83.92) and (383.39,82.13) .. (385.6,82.13) .. controls (387.81,82.13) and (389.6,83.92) .. (389.6,86.13) .. controls (389.6,88.34) and (387.81,90.13) .. (385.6,90.13) .. controls (383.39,90.13) and (381.6,88.34) .. (381.6,86.13) -- cycle ;
\draw  [fill={rgb, 255:red, 255; green, 255; blue, 255 }  ,fill opacity=1 ] (410,114.53) .. controls (410,112.32) and (411.79,110.53) .. (414,110.53) .. controls (416.21,110.53) and (418,112.32) .. (418,114.53) .. controls (418,116.74) and (416.21,118.53) .. (414,118.53) .. controls (411.79,118.53) and (410,116.74) .. (410,114.53) -- cycle ;
\draw  [fill={rgb, 255:red, 255; green, 255; blue, 255 }  ,fill opacity=1 ] (382,117.73) .. controls (382,115.52) and (383.79,113.73) .. (386,113.73) .. controls (388.21,113.73) and (390,115.52) .. (390,117.73) .. controls (390,119.94) and (388.21,121.73) .. (386,121.73) .. controls (383.79,121.73) and (382,119.94) .. (382,117.73) -- cycle ;
\draw  [fill={rgb, 255:red, 255; green, 255; blue, 255 }  ,fill opacity=1 ] (401.2,135.73) .. controls (401.2,133.52) and (402.99,131.73) .. (405.2,131.73) .. controls (407.41,131.73) and (409.2,133.52) .. (409.2,135.73) .. controls (409.2,137.94) and (407.41,139.73) .. (405.2,139.73) .. controls (402.99,139.73) and (401.2,137.94) .. (401.2,135.73) -- cycle ;
\draw  [fill={rgb, 255:red, 255; green, 255; blue, 255 }  ,fill opacity=1 ] (377.6,170.93) .. controls (377.6,168.72) and (379.39,166.93) .. (381.6,166.93) .. controls (383.81,166.93) and (385.6,168.72) .. (385.6,170.93) .. controls (385.6,173.14) and (383.81,174.93) .. (381.6,174.93) .. controls (379.39,174.93) and (377.6,173.14) .. (377.6,170.93) -- cycle ;
\draw  [fill={rgb, 255:red, 255; green, 255; blue, 255 }  ,fill opacity=1 ] (393.6,175.33) .. controls (393.6,173.12) and (395.39,171.33) .. (397.6,171.33) .. controls (399.81,171.33) and (401.6,173.12) .. (401.6,175.33) .. controls (401.6,177.54) and (399.81,179.33) .. (397.6,179.33) .. controls (395.39,179.33) and (393.6,177.54) .. (393.6,175.33) -- cycle ;

\draw (323.2,124.53) node [anchor=north west][inner sep=0.75pt]   [align=left] {$\cdots$};
\draw (316.2,69.73) node [anchor=north west][inner sep=0.75pt]   [align=left] {$\cdots$};
\draw (316.6,104.13) node [anchor=north west][inner sep=0.75pt]   [align=left] {$\cdots$};
\draw (316.6,162.93) node [anchor=north west][inner sep=0.75pt]   [align=left] {$\cdots$};

\end{tikzpicture}
    \caption{The diagram associated to a maximal tree. Here we have omitted all weights.}
    \label{fig:omega}
\end{figure}
\end{rem}

\subsubsection{Vanishing of Residues at $D_S$, $|S|>2$} \label{sec:vanishing-DS}
In this section, we will temporarily allow $n'\in \Z_{\ge0}$ and study the residue associated to a diagram $\Gamma$ at $D_S$, where $|S|>2$. 
Let $\Gamma_S$ be the maximal subdiagram of $\Gamma$ with $V_{\Gamma_S}=S$. 

\begin{lem}\label{lem:Laman1}
For  $n'\in \Z_{\ge0}$ and $|S|>2$,  suppose $\Gamma_S$ has a subdiagram $\Gamma'$ with $|V_{\Gamma'}| \ge2$ and satisfying the inequality 
$$
(n'+n-1) |E_{\Gamma'}| - (n'+n) |V_{\Gamma'}| \le -n'-n-1.
$$ 
Then the residue $\GlobalRes_{D_S}(-)$ in Proposition \ref{prop:diagram-to-form-map} vanishes.
\end{lem}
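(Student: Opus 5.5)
We prove this by a scaling/degree count near $D_S$, in the spirit of Kontsevich's vanishing lemmas. First we choose local coordinates on $\cmaka$ near a generic point of $D_S$. Fix a base point $v_0\in S$ and write $\ZZ_v=\ZZ_{v_0}+\epsilon\,\yy_v$ for $v\in S$. Here $\epsilon\in\C^*$ is the fiber coordinate of the (smooth) blowup of Section \ref{sec:blow-up}: its modulus scales all real and complex directions, and its phase rotates the complex directions. The relative configuration $(\yy_v)_{v\in S}$ lies in the fiber of $D_S$, which has real dimension $(n'+2n)(|S|-1)-2$. We pass from $\Gamma_S$ to a minimal subdiagram $\Gamma'$ satisfying the inequality and take $T=V_{\Gamma'}$. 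Then we work near $D_S\cap D_T$, with $T\subseteq S$, so that the vertices of $T$ collide at their own rate $\epsilon_T$. This reduces the claim to showing that the forms attached to $\Gamma'$ cannot produce the coefficient of $d\epsilon_T/\epsilon_T$ that is needed for a residue, as in Example \ref{eg:local-residue}.

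The central step is a homogeneity count for the edge forms $\theta_e^*(W^E(e)\omega)$ with $\partial e\subset T$. By the computation in Section \ref{sec:Asymptotic-zero}, $\omega_{BM}=d\log z\wedge\dVol_{P^{n',n-1}}$. Equivalently, $\omega$ is homogeneous of weight $-(n'+n)+1$ under the mixed scaling $(\xx,\zz)\mapsto(t\xx,t^2\zz)$ (weighted appropriately), and each $\partial_{z}$ in the weight $W^E$ lowers the weight further. The vertex weights $W^V$ are polynomials, so they only raise the weight. The integration measure $\prod d\zz_v$ over the internal vertices of $T$, together with the top-degree part in the $\yy$-directions, contributes weight $(n'+n)(|T|-1)$ once the translation is removed. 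Combining these, the total $\epsilon_T$-weight of the integrand restricted to the stratum is at least
\[
(n'+n)(|V_{\Gamma'}|-1)-(n'+n-1)|E_{\Gamma'}|\ \ge\ (n'+n)(|V_{\Gamma'}|-1)+n'+n+1-(n'+n)|V_{\Gamma'}|=1 .
\]
So the form is $O(|\epsilon_T|)$ with a strictly positive power. By the lemma on $\lim_{\epsilon\to0}\int_{|z|=\epsilon}z^a\bar z^b|z|^c\,dz/z$, only the weight-zero term contributes to the residue along $\{\epsilon_T=0\}$. Positive weight therefore gives zero, whatever the behaviour of the remaining edges (those outside $\Gamma'$), which are smooth in $\epsilon_T$ near a generic point of $D_T$.

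Two steps remain. First, we check that the residue at $D_S$ is computed iteratively, so that its vanishing follows from vanishing at the deeper stratum $D_S\cap D_T$ (or at $D_T$ itself when $T=S$). For this we use the normal crossing property of the divisors and the Fubini theorem for regularized integrals. Second, we handle the case where $T$ contains external vertices, for which the same weight count applies after fixing the external positions.

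The main obstacle is making the weight bookkeeping rigorous, because of the mixed scaling. The real directions scale with $|\epsilon|$ while the complex ones scale with $\epsilon$, and the forms carry $d|\epsilon|/|\epsilon|$, $d\bar\epsilon/\bar\epsilon$ and $\bar\epsilon$-factors, as in Section \ref{sec:bochner-infty}. To control this, I would first expand every $\theta_e^*\omega$ in the variables $\epsilon,\bar\epsilon,|\epsilon|$ and their log-differentials, using the generalized principal value and residue framework. I would then verify, term by term, that the combined exponent $a+b+c$ is positive.
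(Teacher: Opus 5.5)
\textbf{There is a genuine gap, and it starts with the hypothesis.} As printed, the inequality in the lemma points the wrong way. The paper's proof ends with ``the integrand vanishes if $(n'+n-1)|E_{\Gamma'}|>(n'+n)(|V_{\Gamma'}|-1)-1$''. That is the case where $\Gamma'$ \emph{violates} the sparsity bound in the definition of an $(n'+n)$-Laman diagram, which is exactly what the next sentence (``the residue vanishes if $\Gamma_S$ is not Laman'') needs.

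The printed version is false. A single edge ($|V_{\Gamma'}|=2$, $|E_{\Gamma'}|=1$) satisfies it with equality. So it would kill every residue at $D_S$ with $|S|>2$, for every $n'$. This contradicts the paper's remarks that for $n'\le 1$ contractions of Laman subdiagrams do occur in the differential.

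Since you ``prove'' this false version, your argument must break somewhere, and it breaks at the weight count:
\begin{itemize}
\item The substitution $(\xx,\zz)\mapsto(t\xx,t^2\zz)$ does not rescale $|\xx|^2+2|\zz|^2$, so $\omega$ is not homogeneous for it.
\item In the blow-up coordinate $\ZZ_v=\ZZ_{v_0}+\epsilon\yy_v$, work modulo terms containing $d\bar\epsilon$ or $d|\epsilon|$. Each edge form then picks up $\bar\epsilon^{\,n}|\epsilon|^{n'}/|\epsilon|^{2n+n'}=\epsilon^{-n}$, and each integrated vertex measure $dz_v^1\cdots dz_v^n$ ($v\ne v_0$) picks up $\epsilon^{n}$.
\item Your quantity $(n'+n)(|V_{\Gamma'}|-1)-(n'+n-1)|E_{\Gamma'}|$ is a difference of form degrees, not a homogeneity degree.
\end{itemize}
A one-line check exposes the problem. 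For a single edge your formula gives weight $1>0$, which would make the two-point residue vanish. In fact that residue equals $\oint\omega\,dz^1\cdots dz^n=1$, and it produces the edge-contraction term in Proposition \ref{prop:diagram-to-form-map}.

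The reduction from $D_S$ to $D_S\cap D_T$ also fails. The residue along $D_S$ is a regularized integral over the whole fibre of $D_S$. The absence of a pole along the boundary stratum $D_S\cap D_T$ says nothing about whether that integral vanishes.

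\textbf{The paper's argument.} The paper's proof, which is of the reversed inequality, uses no scaling at all; it is pointwise linear algebra.
\begin{itemize}
\item Each factor $\theta_e^*(W^E(e)\omega)$ with $\partial e\subset V_{\Gamma'}$ is built from the $(n'+n)(|V_{\Gamma'}|-1)$ relative differentials $dx_v-dx_{v_0}$, $d\bar z_v-d\bar z_{v_0}$.
\item Each such factor is annihilated by contraction with the relative anti-holomorphic Euler field, which pushes forward under $\theta_e$ to the field $\bar V$ of Section \ref{sec:Asymptotic-zero}. Indeed $\iota_{\bar V}\omega=0$, because $\omega\,dz^1\cdots dz^n=\omega_{BM}=\iota_{\bar V}(\cdots)$ and $\bar V$ has no $\partial_z$-component. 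Since $[\partial_{z^i},\bar V]=0$, the edge weights preserve this.
\item Contraction is a derivation, so $\prod_{e\in E_{\Gamma'}}\theta_e^*(W^E(e)\omega)$ is also annihilated. Hence it lies pointwise in the exterior algebra of a space of dimension $(n'+n)(|V_{\Gamma'}|-1)-1$.
\item Its degree $(n'+n-1)|E_{\Gamma'}|$ exceeds this dimension under the corrected hypothesis. So the integrand vanishes identically near $D_S$, and so does the residue.
\end{itemize}
To repair your write-up, reverse the inequality and replace the weight count by this degree count.
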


\begin{proof}
Let $S=\lr{v_{i_1},\cdots,v_{i_k}}$. The differential form $\prod_{e\in E_{\Gamma_S}} \omega_e$ associated with $\Gamma_S$ has degree $(n'+n-1) |E_{\Gamma_S}|$. Moreover,  its form part depends only on $(n'+n)(|V_{\Gamma_S}|-1)$ components of $\ZZ_{i_1}-\ZZ_{i_k},\cdots \ZZ_{i_{k-1}}-\ZZ_{i_k}$ and is annihilated by contraction with the Euler vector field
$$
2 \sum_{j,l} \bar z_{i_j}^l \partial_{\bar z_{i_j}^l} + \sum_{j,l} \bar x_{i_j}^l \partial_{x_{i_j}^l}.
$$
 So the integrand vanishes if 
$$
(n'+n-1) |E_{\Gamma'}| > (n'+n)(|V_{\Gamma'}|-1) -1.
$$ 
\end{proof}

\begin{lem}\label{lem:Laman2}
For  $n'\in \Z_{\ge0}$ and $|S|>2$, the residue $\GlobalRes_{D_S}(-)$ in Proposition \ref{prop:diagram-to-form-map} vanishes unless
$$
(n'+n-1) |E_{\Gamma_S}| - (n'+n) |V_{\Gamma_S}| = -n'-n-1.
$$ 
\end{lem}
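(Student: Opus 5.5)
We use a degree count on the boundary stratum $D_S$, in the same spirit as Lemma \ref{lem:Laman1}, but now we track the forms and the fiber dimensions exactly. Near $D_S$ we use the Fulton--MacPherson type coordinates of Section \ref{sec:cmaka}. The points $\ZZ_v$, $v\in S$, are written as a base point $\ZZ_{i_k}$ plus a scale $\C^*$-coordinate $z$ (the fiber coordinate of Section \ref{sec:Asymptotic-zero}) times a normalized configuration $\yy\in\P$ of relative positions. The normalized configurations vary in a space of real dimension $(n'+2n)(|S|-1)-2$. By the computation $\omega_{BM}=d\log z\wedge \dVol_{P^{n',n-1}}$ and its multi-point analogue, the residue along $D_S$ does two things: it picks out the coefficient of $d\log z$, and it integrates the remaining form over the normalized configurations of $S$ together with the internal vertices of $S$. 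We will count the form degree that the subdiagram $\Gamma_S$ contributes in the directions tangent to the fiber of $D_S$, and compare it with the degree needed to produce a nonzero residue.

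First, the form part of $\prod_{e\in E_{\Gamma_S}}\theta_e^*(W^E(e)\omega)$ depends only on the relative coordinates $\ZZ_v-\ZZ_{i_k}$, $v\in S$. It is a $\bd$-form of degree $(n'+n-1)|E_{\Gamma_S}|$ in the $(n'+n)(|S|-1)$ relative $(\xx,\bar\zz)$-directions. The holomorphic measure $\prod_{v\in I_\Gamma\cap S}d\zz_v$ contributes $n|I_\Gamma\cap S|$ further $dz$'s. For the residue to be nonzero we need two things. The integrand must contain exactly one $d\log z$ factor. After removing it, the remaining form must be top-degree on the fiber of $D_S\to$ (base). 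The fiber consists of the normalized relative positions of $S$ modulo the $\C^*$ action, together with the integrated internal directions.

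The key step is bookkeeping in the anti-holomorphic and real directions, since the holomorphic $dz_v$ only pair with the measure. Among the $(n'+n)(|S|-1)$ real-plus-antiholomorphic relative directions, the Euler-vector-field argument of Lemma \ref{lem:Laman1} removes one (the radial/$\bar z$-scaling direction, contracted with $\bar V$). The $d\log z$ factor absorbs the complementary one. The residue is therefore nonzero only if
$$
(n'+n-1)|E_{\Gamma_S}| = (n'+n)(|V_{\Gamma_S}|-1)-1,
$$
which rearranges to the stated identity
$$
(n'+n-1)|E_{\Gamma_S}|-(n'+n)|V_{\Gamma_S}|=-n'-n-1.
$$
Here is why each inequality kills the residue. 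If the left side exceeds the right, the form already vanishes, since its degree exceeds the available directions; this is the case of Lemma \ref{lem:Laman1} applied with $\Gamma'=\Gamma_S$. If the left side is smaller, the part of the integrand coming from outside $\Gamma_S$ must supply fiber directions along $D_S$. But edges with at most one endpoint in $S$, and vertex weights, restrict to $D_S$ as forms pulled back from the base (they depend on $\ZZ_{i_k}$ only, to leading order in $z$). Their higher-order terms in $z$ carry positive powers of $z,\bar z,|z|$. By the lemma on $\lim_{\epsilon\to0}\int_{|z|=\epsilon}z^a\bar z^b|z|^c\,dz/z$, those terms give zero residue. So the fiber integral is over a form of less than top degree and vanishes.

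The main obstacle is making rigorous the claim that the non-$\Gamma_S$ factors contribute only base forms plus terms with positive powers of $z$. This requires expanding $\theta_e^*\omega$ for edges leaving $S$ in the blown-up coordinates, together with the holomorphic vertex weights and edge derivatives $W^E(e)$. My first approach would be to Taylor expand these in $z$ along $D_S$, using the local form of Section \ref{sec:Asymptotic-zero}. I would then observe that the derivative weights only shift the homogeneity degree in $z$ and never create new $d\bar z$ or $dx$ fiber directions, so the degree count above is unaffected.
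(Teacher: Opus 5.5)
Your degree count is the paper's argument. Its proof of Lemma~\ref{lem:Laman2} only compares the degree of the $\Gamma_S$-part of the integrand (holomorphic measure times edge forms) with the dimension $(n'+2n)(|V_{\Gamma_S}|-1)-1$ that the residue at $D_S$ integrates over. You count only the $n(|V_{\Gamma_S}|-1)$ \emph{relative} holomorphic degrees of the measure, and that is the reading which actually gives $-n'-n-1$; the paper's displayed $n|V_{\Gamma_S}|$, taken literally, would give $-n'-2n-1$.

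One bookkeeping correction: you should not reserve a separate slot for $d\log z$. It appears only after passing to coordinates $(z,\yy)$, as a component of the product of measure and edge forms, and it is annihilated by $\iota_{\bar V}$ (Section~\ref{sec:Asymptotic-zero}). The count is simply $n(|S|-1)+(n'+n-1)|E_{\Gamma_S}|=(n'+2n)(|S|-1)-1$. The Euler argument supplies only the upper bound you use in your ``$>$'' case.

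The step that fails is your ``$<$'' case. It is false that Taylor terms of the non-$\Gamma_S$ factors carrying positive powers of $z$ have zero residue. Write the relative positions as $z\cdot\yy=(|z|\yy^{x},z\yy^{z})$. Each $\partial^{\beta_e}\omega$ on an edge of $\Gamma_S$ has holomorphic weight $-n-|\beta_e|$, and the relative measure has weight $n(|S|-1)$. So the cluster factor is $z^{k}(\eta_0+\tfrac{dz}{z}\wedge\eta_1)$, for forms $\eta_0,\eta_1$ in $\yy$, with $k=n(|S|-1-|E_{\Gamma_S}|)-\sum_e|\beta_e|$. The holomorphic Taylor coefficients of order $-k$ of the vertex weights and outside edges therefore survive the residue.

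This is exactly how the $|S|=2$ residue in Proposition~\ref{prop:diagram-to-form-map} produces $W^E(e)(-\partial_{v_e,1},\dots,-\partial_{v_e,n})$. At the equality of the lemma with $|S|>2$ one has $|S|-1-|E_{\Gamma_S}|=(2-|S|)/(n'+n-1)$, so $k<0$. Your claim would then kill every such residue for all $n'$, contradicting the paper's remark that Laman subdiagrams do contribute when $n'\le 1$.

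What the count actually needs is a finer statement.
\begin{itemize}
\item Fiber directions from an outside edge $\omega(\ZZ_v-\ZZ_w)$, $v\in S$, enter only through $d(\bar z\,\bar\yy^{z}_v)$ and $d(|z|\,\yy^{x}_v)$. They therefore always carry a positive power of $\bar z$ or $|z|$.
\item The cluster factor contains no $\bar z$ or $|z|$, by $\C^*$-equivariance and $\iota_{\bar V}$-annihilation. So such terms have zero residue by $\lim_{\epsilon\to0}\int_{|z|=\epsilon}z^a\bar z^b|z|^c\,\tfrac{dz}{z}=2\pi i\,\delta_{a,0}\delta_{b,0}\delta_{c,0}$, for every holomorphic exponent $a$.
\item Pure holomorphic Taylor terms only multiply base forms by functions of $\yy$, and vertex weights are functions.
\end{itemize}
Hence in every surviving term the fiber degree comes from the measure and $\Gamma_S$ alone, and your count goes through.
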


\begin{proof}
The integrand  contains a part $\prod_{v\in V_{\Gamma_S}} d\zz_v \prod_{e\in E_{\Gamma_S}} \omega_e$ of degree $n |V_{\Gamma_S}| + (n'+n-1) |E_{\Gamma_S}|$, while the residue requires degree $(n'+2n)(|V_{\Gamma_S}|-1)-1$.
\end{proof}

As a result, the residue vanishes if $\Gamma_S$ is not an ($n'+n$)-Laman diagram in the sense of \cite{Bud+23}:
\begin{defn}
A diagram $\Gamma$ is called ($n'+n$)-Laman if 
\begin{itemize}
\item For any subdiagram $\Gamma' \subset \Gamma$ with $|V_{\Gamma'}| \ge2$, the following inequality holds:
$$
(n'+n-1) |E_{\Gamma'}| - (n'+n) |V_{\Gamma'}| \le -n'-n-1.
$$
\item The following equality holds:
$$
(n'+n-1) |E_{\Gamma}| - (n'+n) |V_{\Gamma}| = -n'-n-1.
$$
\end{itemize}
\end{defn}





\begin{prop}
For $n'\ge2$ and $|S|>2$, the residue $\GlobalRes_{D_S}(-)$ in Proposition \ref{prop:diagram-to-form-map}  vanishes. 
\end{prop}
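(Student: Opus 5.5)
By Lemma \ref{lem:Laman2} and Lemma \ref{lem:Laman1}, it suffices to show that for $n'\ge2$ no subdiagram $\Gamma_S$ with $|V_{\Gamma_S}|=|S|\ge3$ can be $(n'+n)$-Laman. Indeed, if the equality of Lemma \ref{lem:Laman2} fails the residue vanishes, and if some subdiagram violates the Laman inequality, Lemma \ref{lem:Laman1} gives vanishing. So the plan is purely combinatorial: set $N=n'+n$ and show that the two Laman conditions force $|V_{\Gamma_S}|\le 2$ once $n'\ge 2$.

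Write $V=|V_{\Gamma_S}|$ and $E=|E_{\Gamma_S}|$. The equality reads
$$
(N-1)E=N V-N-1=N(V-1)-1 .
$$
We may assume $\Gamma_S$ has no double edges or loops, since otherwise the integrand already vanishes by the arguments in the proposition that $\GIK$ vanishes on non-admissible diagrams. Hence $E\le \binom{V}{2}$. Reducing the equality modulo $N-1$ gives $V-1-1\equiv 0$, i.e. $V\equiv 2 \pmod{N-1}$. So either $V=2$, which is excluded since $|S|>2$, or $V\ge N+1$. In the latter case $E=(N(V-1)-1)/(N-1)$.

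The remaining step is to rule out $V\ge N+1$ using the Laman inequality on all subdiagrams. For $V=N+1$ we get $E=N+1$. Apply the inequality to the subdiagram on any pair of adjacent vertices and to induced subdiagrams: a subdiagram $\Gamma'$ on $V'$ vertices needs $(N-1)E'\le N(V'-1)-1$. Each vertex of $\Gamma_S$ has valence $\ge$ its contribution, so removing a vertex $v$ of valence $d_v$ leaves a subdiagram with $E-d_v$ edges on $V-1$ vertices. The inequality forces $(N-1)(E-d_v)\le N(V-2)-1$, i.e. $(N-1)d_v\ge N$, so $d_v\ge 2$ for all $v$. Summing gives $2E\ge 2V$, which is consistent, so this alone is not enough.

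The real input must therefore come from $n'\ge2$. This is the step I expect to be the main obstacle: the Euler-vector-field count in Lemma \ref{lem:Laman1} should be refined to use the real directions. The form part of $\prod\omega_e$ contains, for each edge, the factor $d^{n'}x$ or $(n'-1)$ of the $dx$'s. Its $dx$-degree is at least $(n'-1)E$, but only $n'(V-1)-1$ independent real directions are available after quotienting by translations and the radial scaling, and the residue along $D_S$ also requires $dx$-degree exactly $n'(V-1)-1$. Moreover, the $d\bar z$-degree is at most $nE$ and must equal $n(V-1)$.

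Combining the real-degree constraint $(n'-1)E\le n'(V-1)-1$ with the total constraint $(N-1)E=N(V-1)-1$ gives $nE\ge n(V-1)$, so each graph needs $E\ge V-1$ with at most one edge contributing a $d\bar z$-deficient term. I would then carry out the bookkeeping showing that the $dx$-count, $\sum_e(\text{number of }dx\text{ in }\omega_e)=n'(V-1)-1$ with each term in $\{n'-1,n'\}$, together with the $d\bar z$-count, yields $E=V-1+\tfrac{V-2}{n'-1}\cdot(\ldots)$. This should force $V=2$, contradicting $|S|>2$.

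If this refined count is delicate, the fallback is the Kontsevich-style argument for $\R^{n'}$ with $n'\ge2$: the residue at $D_S$ with $|S|\ge3$ reduces to integrals over the compactified configuration space of $S$ in $\R^{n'}\times\C^n$ modulo translation and scaling. There I would exhibit a free involution or extra symmetry of the fiber (for example a reflection $x\mapsto -x$ in one real coordinate acting on a chosen vertex, possible because $n'\ge2$ leaves a transverse real direction) that reverses orientation and preserves the integrand, forcing the integral to vanish.
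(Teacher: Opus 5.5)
Your reduction has a genuine gap. Lemmas~\ref{lem:Laman2} and~\ref{lem:Laman1} only show that a nonzero residue at $D_S$ forces $\Gamma_S$ to be $N$-Laman, where $N=n'+n$. But $N$-Laman diagrams on three or more vertices exist for every $N$, so no combinatorial argument can finish the proof.

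Your own arithmetic produces one. With $V=E=N+1$, take $\Gamma_S$ to be the $(N+1)$-cycle. It satisfies the Laman equality. Every proper subdiagram $\Gamma'$ is a forest, so $(N-1)|E_{\Gamma'}|-N|V_{\Gamma'}|\le -|V_{\Gamma'}|-N+1\le -N-1$. Such a $\Gamma_S$ occurs inside admissible diagrams, e.g.\ one external vertex and $N$ bivalent internal vertices with non-constant weights arranged in a cycle, with $S$ the whole vertex set.

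The refined bookkeeping does not rescue this. Each $\omega_e$ is a sum of two terms: one with $n'$ factors $dx$ and $n-1$ factors $d\bar z$, the other with $n'-1$ and $n$ respectively. If $a$ edges use the first term, the $dx$- and $d\bar z$-degrees are $(n'-1)E+a$ and $nE-a$. You must require these to be either $(n'(V-1)-1,\,n(V-1))$ or $(n'(V-1),\,n(V-1)-1)$. Your claim that the $dx$-degree must be exactly $n'(V-1)-1$ is not justified, since the radial $\C^*$-direction mixes $x$ and $z$. In either case, eliminating $a$ gives back precisely the Laman equality. For the $(N+1)$-cycle, $a=n$ (resp.\ $a=n+1$) satisfies every constraint, so no degree count can force $V=2$.

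The vanishing is therefore a property of the actual integrals, and your fallback does not supply it.
\begin{itemize}
\item A reflection acting on a single vertex does not transform the edge forms at that vertex covariantly.
\item The only involution of that kind which works is Kontsevich's point reflection of a bivalent vertex through the midpoint of its neighbours. It requires constant vertex weight, which is exactly the case admissibility has already removed.
\item A global reflection $x^1\mapsto -x^1$ of all colliding points multiplies $\prod_e\omega_e$ by $(-1)^{E}$ and the orientation of the link by $(-1)^{V-1}$. Writing the Laman solutions as $V=2+k(N-1)$, $E=1+kN$, this at best disposes of odd $k$. It also makes no use of $n'\ge 2$.
\end{itemize}

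The missing ingredient is an analytic vanishing theorem for the weights of Laman diagrams when there are at least two topological directions. The paper does not prove this internally. It invokes Proposition~3.26 of \cite{topological-holomorphic}, using the fact that the regularized integral agrees with Schwinger regularization.
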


\begin{proof}
See \cite{topological-holomorphic}, Proposition 3.26. Here we have used the fact that regularized integral coincides with  Schwinger regularization.
\end{proof}


\begin{rem} 
For $n' \le 1$, the differential of the diagram complex contains contraction of ($n'+n$)-Laman subdiagrams.
\end{rem} 

\subsubsection{Vanishing of Residues at Infinity} \label{sec:vanishing-infty}
In this section, we prove the residue at infinity vanishes. Since an internal vertex in an admissible diagram has valence $l\ge 2$, we only need to show
$$
\GlobalRes_{\ZZ=\infty} d^n \zz \, f(\zz) \prod_{k=1}^{l}  \omega(\xx-\yy_k,\zz-\ww_k), \quad \ZZ=(\xx,\zz), \quad l\ge2
$$
vanishes. 
Here $\ZZ=\infty$ denotes a divisor at infinity.

By the computation in Section \ref{sec:bochner-infty}, the integrand takes the form 
\ali{
&d^n \zz \, f(\zz) \prod_{k=1}^{l}  \omega(\xx-\yy_k,\zz-\ww_k) \\
=& d\LR{\frac{z^1}{z^0}}\cdots d\LR{\frac{1}{z^0}} \cdots d\LR{\frac{z^n}{z^0}} f\LR{\frac{z^1}{z^0},\cdots,\frac{1}{z^0},\cdots,\frac{z^n}{z^0}} 
\prod_{k=1}^{l} \omega(\xx-\yy_k,\zz-\ww_k) \\
=& dz^0 \prod_{i\ne 0,p} dz^i \, \frac{(-1)^{p}}{(z^0)^{n+1}} f\LR{\frac{z^1}{z^0},\cdots,\frac{1}{z^0},\cdots,\frac{z^n}{z^0}} 
\prod_{k=1}^{l} \omega(\xx-\yy_k,\zz-\ww_k)\\
=& dz^0 (-) \prod_{k=1}^l \LR{\prod_{j\ne 0,p } d \bar z^j \prod_{i=1}^{n'} d x^i + O\LR{\frac{d\bar z^0}{\bar z^0},\frac{d|z^0|}{|z^0|}, \bar z^0, |z^0|}}.
}    
We therefore conclude that the residue vanishes, because 
$$
\int_{z^0\in S^1} dz^0 \frac{d\bar z^0}{\bar z^0} (-) 
=\int_{z^0\in S^1} dz^0 \frac{d|z^0|}{|z^0|} (-) 
=0
$$
and 
$$
\prod_{k=1}^l \LR{\prod_{j\ne 0,p } d \bar z^j \prod_{i=1}^{n'} d x^i}=0.
$$


\subsection{The Formality Theorem}\label{sec:formality-thm}
\begin{thm}\label{thm:formality}
The CDGA $\Adef(\maka)$ is formal.
\end{thm}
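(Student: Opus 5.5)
The plan is to exhibit the zigzag
\[
\DefinableChainsA \xleftarrow{\ \operatorname{I}\ } \cD(A) \xrightarrow{\ \bar\I\ } (\ConfSpaceCohomology,0)
\]
and show both arrows are quasi-isomorphisms of CDGAs. By Theorem \ref{thm:diagram-cohomology}, $\bar \I$ is already a quasi-isomorphism. By Proposition \ref{prop:diagram-to-form-map}, $\operatorname{I}$ is a CDGA morphism. Here we use that for $n'\ge2$ the residues at $D_S$ with $|S|>2$ and at infinity vanish, so Stokes' theorem identifies $\bd\operatorname{I}$ with $\operatorname{I}\circ d$. The remaining task is to show that $\operatorname{I}$ induces an isomorphism on cohomology.

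For this I would compare $H^\bullet(\operatorname{I})$ with $\bar\I$ under the identification of $\ConfSpaceCohomology$ with a space of representatives given by Theorem \ref{thm:conf-space-cohomomogy} and Corollary \ref{cor:Hdef-group-structure}. On diagrams without internal vertices, $\operatorname{I}$ is computed directly. A vertex weight $W$ at $a$ maps to $p_a^*W$. A chord $a\to b$ with weight $W$ maps to $W(\partial)\omega_{ab}$. Products go to wedge products. Hence, on cocycles in $\cD^{(0)}(A)$, the class $[\operatorname{I}(\Gamma)]$ agrees with $\bar\I(\Gamma)$ after the rewriting $W(\partial_{z_b})\omega_{ab}$ versus $(p_{ab}^*W)\omega_{ab}$, which Proposition \ref{prop:local-cohomology} handles. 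Since $\bar\I$ factors through $\cD^{(0)}$ and kills diagrams with internal vertices, I still need $[\operatorname{I}(\Gamma)]=0$ for every cocycle combination of diagrams with internal vertices.

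The cleanest route is to prove the two compositions coincide, namely $H^\bullet(\operatorname{I})=\bar\I$ as maps $H^\bullet(\cD(A))\to\ConfSpaceCohomology$. Then $H^\bullet(\operatorname{I})$ is bijective, because $\bar\I$ is. To establish this, I would use the reduction from the proof of Theorem \ref{thm:diagram-cohomology}. Via the decomposition over partitions and the quasi-isomorphism $\cGD'(A)\hookrightarrow\cGD(A)$ (Lemma 8.5 of \cite{Formality-little-disks}), one proceeds by induction on $|A|$. At each step the vertex $a$ has valence one with its edge a chord, or has no edge. Every cohomology class of $\cD(A)$ is then represented by diagrams built inductively by adding chords to a fixed external vertex. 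Iterating, every class is represented by a combination of diagrams without internal vertices, on which the two maps visibly agree.

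The main obstacle is checking this representative claim carefully with the weighted, bivalent-internal-vertex diagrams. One must verify that the acyclicity of $\cGD(A)/\cGD'(A)$ still holds when contraction transfers weights through $W^E(e)(-\partial_{v_e})$. The alternative, a dimension or surjectivity comparison, fails because the spaces are infinite-dimensional, although they are graded by total polynomial weight. If the spectral sequence argument becomes delicate, I would fall back on a filtration by the total polynomial degree of weights, which is preserved by $d$ up to lower terms. On each finite-dimensional graded piece the comparison reduces to the unweighted argument of \cite{Formality-little-disks}. Finally, the zigzag gives formality of $\Adef(\maka)$, and the same maps into $\A(\maka)$ with completed cohomology give the smooth case.
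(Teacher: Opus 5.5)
Your proposal is correct and is essentially the paper's proof. It uses the same zigzag $\DefinableChainsA \xleftarrow{\operatorname{I}} \cD(A) \xrightarrow{\bar\I} \ConfSpaceCohomology$, and $\operatorname{I}$ is a quasi-isomorphism because $H^\bullet(\operatorname{I})$ and $\bar\I$ agree on the classes of $\VertexDiagram$ and $\chorddiagram$. The one difference is that you plan to re-derive, through the spectral sequence argument, that internal-vertex-free diagrams represent all of $H^\bullet(\cD(A))$, but this follows directly from Theorem \ref{thm:diagram-cohomology} (since $H^\bullet(\bar\I)$ is an algebra isomorphism onto a ring generated, by Corollary \ref{cor:Hdef-group-structure}, by the images of these diagrams), so your ``main obstacle'' and the weight-filtration fallback are not needed.
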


\begin{proof}
Let $A=\lr{1,\cdots,m}$. 
By Theorem \ref{thm:diagram-cohomology}, there exists a quasi-isomorphism 
$$
\bar{\operatorname{I}}: \cD(A) \slra \ConfSpaceCohomology. 
$$
Moreover, the morphism of CDGAs
$$
\operatorname{I}: \cD(A)  \lra \DefinableChainsA. 
$$
maps the generators $\VertexDiagram,\chorddiagram$ of cohomology to the corresponding generators on the right hand side, so $\operatorname{I}$ is a quasi-isomorphism. 
\end{proof}

\begin{rem}
It is easy to see that the two quasi-isomorphisms in the above theorem preserve the $D_{\C^{mn}}$-module structures, so $\Aconk(\maka)$ is formal as a $D_{\C^{mn}}$-module. 
\end{rem}
 
\begin{rem}\label{rem:formality}
The  CDGA $\A(\maka)$ is also formal for the following reason.

Using a completion $\widetilde\cD(A)$ of $\cD(A)$ with respect to the nuclear topology of smooth differential forms on the product space of all weights of vertices and edges, we have a quasi-isomorphism of CDGAs
$$
\widetilde{\operatorname{I}}: \widetilde\cD(A)  \slra \A(\Conf_A(\aka)). 
$$
This, together with a completed version of the quasi-isomorphism in  Theorem \ref{thm:diagram-cohomology}  
$$
\widetilde{\bar \I}: \widetilde\cD(A) \slra H^\bullet(\Conf_A(\aka)),
$$
implies $\A(\maka)$ is formal. 
\end{rem}

\section{Application: Higher-dimensional Vertex Algebras}  
\label{sec:VA}

When $n'\ge2,n\ge1$, the configuration space of $\aka$ is formal. Consequently, the algebra of local observables associated with a topological-holomorphic field theory on $\aka$ takes 
a simple form, providing a higher-dimensional analogue of a vertex algebra.

We set the following notation following \cite{Cohomological-VA}: 
\begin{itemize}
\item A vector in $\Z^n$ is denoted by a bold symbol $\kk=(k_1,\cdots,k_n)$.
\item We denote $\bs{0}=(0,\cdots,0)$ and $\bs{1}=(1,\cdots,1)$, ect.
\item We write $\kk \ge \jj$ to mean $k_i \ge j_i$ for all $i=1,\cdots,n$.
\item Denote 
$$
\kk! := \prod_{i=1}^n k_i!, \qquad 
\binom{\kk}{\jj} = \prod_{i=1}^n \binom{k_i}{j_i}, \qquad 
(-1)^\kk:=\prod_{i=1}^n (-1)^{k_i},
$$
$$
\zz^\kk := \prod_{i=1}^n (z^i)^{k_i}, \qquad 
\partial_{\zz}^{(\kk)} := \frac{1}{\kk!} \partial_{\zz}^{\kk} : = \frac{1}{\kk!} \prod_{i=1}^n \partial_{z^i}^{k_i},  \qquad
\Omegaz^{\kk}=
\begin{cases}
(-1)^\kk \partial_{\zz}^{(\kk)} \omega(\ZZ) & \kk\ge\bs{0} \\
0 & \text{else}
\end{cases}.
$$ 
\end{itemize}

Denote $\Kdisk=\Hconk(\paka)$. It 
has a basis 
$$
\lr{\zz^{\kk}, \quad \Omegaz^{\kk}}_{\kk\ge \bs0}
$$
with $\deg \zz^{\kk} = 0$, $\deg \Omegaz^{\kk} = n'+n-1$. The product structure is given by the relation
$$
\begin{cases}
\zz^{\kk_1} \cdot \zz^{\kk_2} = \zz^{\kk_1+\kk_2} \\
\zz^{\kk_1} \cdot \Omegaz^{\kk_2} = \Omegaz^{\kk_2-\kk_1} \\
\Omegaz^{\kk_1} \cdot \Omegaz^{\kk_2} = 0 \\
\end{cases}.
$$

\begin{rem}
The coefficient notations above are more explicit in \v Cech cohomology, in which the \v Cech representative of $\Omegaz^{\kk}$ is $\frac{1}{\zz^{\kk+\bs{1}}}$.
\end{rem}


For $n'\ge2,n\ge1$, we define the notion of $(n',n)$-vertex algebras associated with $\aka$. 


\begin{defn}
An $(n',n)$-vertex algebra consists of the data $(V, Y, \TT, \VAvacuum)$ with
\begin{itemize}
\item (space of states) a $\Z$-graded $\C$-vector space $V=\oplus_{r\in \Z} V^r$.
\item (vacuum vector) a degree $0$ element $\VAvacuum\in V^0$.
\item (translation operator) a vector $\TT=(T_1,\cdots,T_n)$, where $T_i\in \End V$ are degree $0$ endomorphisms of $V$. 
\item (vertex operators) a degree $0$ $\C$-linear map 
\ali{
Y: V &\lra \Kdisk \otimes \End V \\
   a &\lmt Y(a,\zz) = \sum_{\kk \ge \bs{0}} \zz^{\kk} a_{(-\bs{1}-\kk)} + \Omegaz^\kk a_{(\kk)} 
}
which assigns a field $Y(a,\zz)$ to $a$. 

The condition that $Y(a,\zz)$ is a field means that for every $v\in V$, there exists a vector $\jj \ge \bs0$ such that $a_{(\kk)}v=0$ whenever $\kk>\jj$. 
$Y$ has degree $0$  means 
$$
\deg a_{(-\bs{1}-\kk)}=\deg a, \qquad \deg a_{(\kk)} = \deg a -(n'+n-1).
$$
\end{itemize}
The data must satisfy the following axioms:
\begin{itemize}
\item (vacuum axiom) $Y(\VAvacuum,\zz) = \Id_V$. Furthermore, for any $a\in V$ we have 
$$
Y(a,\zz) \VAvacuum \in \C[[z^1,\cdots,z^n]] \otimes V,
$$
and 
$$
Y(a,\zz) \VAvacuum |_{\zz=0}= a.
$$
\item (translation axiom) For any $a\in V$, 
$$
[T_i,Y(a,\zz)] = \partial_{z^i} Y(a,\zz),
$$
and for every $i$, we have
$$T_i \VAvacuum =0.
$$
\item (locality axiom) All fields $Y(a,\zz)$ are mutually local in the sense of \cites{Cohomological-VA}. 
\end{itemize}
\end{defn}

There are natural notions of ideals, morphisms, subalgebras, tensor products, direct sums, and modules of $(n',n)$-vertex algebras, and a reconstruction theorem holds.

\appendix
\section{Locally H\"older Continuity of Continuous Constructible Functions}

\begin{lemma} \label{lem:constructible-pre}
Let $E \subset \R^m \times [0,\epsilon_0]$ be compact and globally subanalytic. Put $E_0 = E \cap (\R^m \times \{0\})$. Suppose $F: E \to \R$ is continuous, constructible, and vanishes on $E_0$. Then there exist constants $C>0$, $\alpha>0$, and $\epsilon \in (0,\epsilon_0]$ such that
$$
|F(z,t)| \le C t^{\alpha}
$$
for every $(z,t)\in E$ with $0<t<\epsilon$.
\end{lemma}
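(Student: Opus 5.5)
Consider the sup function $M(t)=\sup\{|F(z,t)| : (z,t)\in E\}$ for $t\in(0,\epsilon_0]$, with the convention $M(t)=0$ when the fiber $E_t$ is empty. The plan is to show that $M$ is a one-variable function tending to $0$ at $0^+$ and definable in a structure where power-type asymptotics hold. Then $M(t)\le Ct^\alpha$ near $0$, and this bound is exactly the claim.

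\textbf{Step 1: $M(t)\to 0$ as $t\to0^+$.} This is a compactness argument. Suppose not. Then there are $\delta>0$ and points $(z_k,t_k)\in E$ with $t_k\to0$ and $|F(z_k,t_k)|\ge\delta$. Since $E$ is compact, a subsequence converges to some point $(z_*,0)\in E_0$. By continuity $|F(z_*,0)|\ge\delta$, contradicting $F|_{E_0}=0$.

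\textbf{Step 2: definability of $M$.} Write $F=\sum_i f_i\prod_j\log g_{ij}$ with $f_i,g_{ij}$ globally subanalytic. The sup over $z$ of a constructible function need not itself be constructible, so I would work in the o-minimal structure $\R_{an,exp}$, where every constructible function is definable (as already used in Proposition \ref{prop:Hcons-affine}). In that structure $M$ is definable, since a supremum over a definable family is first-order definable. Two facts about $\R_{an,exp}$ then apply: it is polynomially bounded at $0$ in the weak sense, and a definable one-variable function on $(0,\epsilon)$ is eventually monotone and comparable to a germ in the Hardy field of $\R_{an,exp}$. For our positive germ $M$ with $M\to0$, I would use the theorem of van den Dries--Macintyre--Marker that germs of $\R_{an,exp}$ at $0^+$ have valuation comparable to a monomial in iterated logs and exps of $t$.

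\textbf{Step 3, the main obstacle: ruling out decay slower than every power, e.g.\ $M(t)\sim 1/|\log t|$.} In $\R_{an,exp}$ this rate is genuinely possible, so definability alone does not give a power bound, and I would instead use the explicit constructible form. I would take the preparation theorem for constructible functions (Lion--Rolin, Cluckers--Miller) with respect to the variable $t$. After partitioning $E\cap\{0<t<\epsilon\}$ into finitely many cells, on each cell $F$ is a finite sum of terms
\[
a(z)\,|t-\theta(z)|^{r}\,(\log|t-\theta(z)|)^{s}\,u(z,t),
\]
with $u$ a bounded unit and $s\in\mathbb N$. Near $t=0$ the center $\theta$ can be taken to be $0$ on cells accumulating at $t=0$. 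Continuity together with vanishing on $E_0$ forces every surviving term to have $r>0$, because a term with $r\le 0$ would not tend to $0$ uniformly; here I would use Step 1 and the cell-wise limits along curves. The coefficients $a(z)$ are bounded on the compact set $E$. Finally, since $r>0$, each term satisfies $t^{r}|\log t|^{s}\le t^{r/2}$ for small $t$.

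Taking $\alpha$ to be half the minimum of the finitely many exponents $r$, $C$ the sum of the coefficient bounds, and $\epsilon$ small enough for the logarithms to be absorbed, we obtain $|F(z,t)|\le Ct^\alpha$ for $0<t<\epsilon$.
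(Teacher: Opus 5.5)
Step 1 is correct, but it only gives the qualitative statement $\sup_{E_t}|F|\to 0$. Step 2 is never used afterwards, and its remark is false: $\mathbb{R}_{an,exp}$ is not polynomially bounded at $0$ in any sense, since $e^{-1/t}$ is definable. All the quantitative content is in Step 3, and there the argument has a genuine gap. Preparing $F$ in the single variable $t$, with $z$ as a parameter, gives no control uniform in $z$, and three of the facts you rely on fail in general.

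(i) The coefficients $a(z)$ live on the base of the cell, which is not compact. They need not be bounded even though $F$ is, and compactness of $E$ says nothing about them. Take $F(z,t)=t^{3/2}/(z+t)$ on $[0,1]^2$ with $F(0,0)=0$; it is continuous, semialgebraic and zero on $t=0$. On the cell $\{0<t<z\}$ it is prepared as $z^{-1}t^{3/2}(1+t/z)^{-1}$. Since $z+t\asymp z$ there, any prepared form $a(z)t^{r}u$ with center $0$ has $r=3/2$ and $a(z)\asymp z^{-1}$.

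(ii) Terms with $r\le 0$ do survive on cells that reach $E_0$ only through the base. The function $F=\min(\sqrt z,\sqrt t)$ equals $\sqrt z\,t^{0}$ on the cell $\{0<z<t<1\}$, whose closure contains $(0,0)$.

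(iii) The center cannot be taken to be $0$. For $F=t\,|t-z|^{1/2}$, the cells adjacent to the diagonal $t=z$ accumulate at $(0,0)$ and require the center $\theta(z)=z$.

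In each example the bound $|F|\le Ct^{1/2}$ holds, but only because of the shape of the cell ($t<z$, $z<t$, $|t-z|<t$ respectively). Your argument never uses this interplay between coefficient, center and cell boundary. What you can actually prove is fibrewise: for a fixed $z$ whose fibre reaches $t=0$, the lexicographically dominant term forces the coefficients of the $r\le 0$ terms to vanish at that $z$. That gives no rate uniform in $z$.

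The missing ingredient is a preparation that is monomial in all variables at once, so the dependence on parameters is controlled too. This is what the paper does. It applies the parameterized rectilinearization of Cluckers--Miller to the $f_i$, the $g_{ij}$ and all coordinate functions. This gives pieces $\Phi:P\times(0,1)^s\to A$ with $\overline P\Subset(0,1]^l$, on which $F\circ\Phi=\sum r^q(\log r)^pA_{q,p}(w,\rho)$. The $A_{q,p}$ are analytic on a neighbourhood of $\overline P\times[0,1]^s$, hence bounded, which is exactly the uniformity you lack. On these pieces $t$ is itself a monomial $v\prod_j r_j^{b_j}$ with $v$ bounded above and below. Vanishing on $E_0$ then becomes a condition on the faces $\{r_k=0\}$, $k\in I$, giving $|F\circ\Phi|\le C'\prod_{k\in I}r_k^{\delta_k}$. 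The monomial form of $t$ converts this into $Ct^{\alpha}$. To rescue your route you would have to re-prepare $a(z)$, $\theta(z)$ and the cell boundaries in $z$ and induct on $m$, which amounts to reproving the rectilinearization.
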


\begin{proof}
Since $F$ is constructible, it is a finite sum of terms of the form
$$
f_i  \prod_{j=1}^{l_i} \log g_{ij},
$$
where the $f_i$ and $g_{ij}>0$ are globally subanalytic.

We apply the parameterized rectilinearization theorem of Cluckers–Miller \cite{cluckers-miller-2013-lebesgue} to to $f_i$, $g_{ij}$, and the $m+1$ coordinate functions on $E$. This gives a finite partition of the region $D = E \cap \{t>0\}$ into globally subanalytic pieces $A$, and for every such $A$, after suppressing coordinates that are identically determined, there is an analytic globally subanalytic
isomorphism
$$
\Phi:B\longrightarrow A,
\qquad
B=P\times(0,1)^s,
$$
where
$$
\overline P\Subset(0,1]^l.
$$
Denote coordinates of $B$ by $w=(w_1,\cdots,w_l)$, $r=(r_1,\cdots,r_s)$. 
Every nonzero globally subanalytic function $g$ included in the
rectilinearization has a prepared expression
$$
g\circ\Phi(w,r)
=c_g\,w^{a_g}r^{b_g}u_g(w,r),
$$
where $c_g$ is a nonzero constant, $a_g$ and $b_g$ are vectors with rational entries, and $u_g$ is a positive analytic unit obtained by composing an analytic function, defined near a compact set, with finitely many bounded rational monomials. Consequently, there are constants $c'_g,C'_g>0$ such
that
$$
0<c'_g\le u_g(w,r)\le C'_g
$$
on $B$, and $u_g$ extends continuously to
$\overline P\times[0,1]^s$.

The coordinate functions of $\Phi$ are bounded. Their prepared
monomials can therefore have no negative exponent in any 
$r_j$. Hence $\Phi$ extends continuously to
$$
\overline B=\overline P\times[0,1]^s,
$$
and the image of this extension lies in $\overline A\subset E$.

For the coordinate $t$, the prepared expression takes the form
$$
t\circ\Phi(w,r)
=v(w,r)\prod_{j=1}^s r_j^{b_j},
$$
where $b_j\in\mathbb Q_{\ge0}$ and
$$
0<C_1\le v(w,r)\le C_2
$$
for suitable constants $C_1,C_2>0$. If every $b_j$ vanishes, then
$t$ is bounded away from zero on $A$. Such a piece does not meet
$\{0<t<\epsilon\}$ after $\epsilon$ is sufficiently
decreased. It remains to consider pieces for which
$$
I:=\set{j}{b_j>0}\ne\varnothing.
$$

The preparation gives
$$
\log(g_{ij} \circ\Phi)
=\log c_{ij}
+a_{ij}\cdot\log w
+b_{ij}\cdot\log r
+\log u_{ij}(w,r)
$$
where $a_{ij}$, $b_{ij}$ are vectors with rational entries, and $\cdot$ denotes inner product of vectors. 
Since $\overline P\Subset(0,1]^l$, every $\log w_j$ is analytic
near $\overline P$. The term $\log u_{ij}$ is analytic in
the bounded monomials supplied by the rectilinearization. This allows us to write
$$
F\circ\Phi(w,r)
=\sum_{(q,p)\in S}
r^q(\log r)^p A_{q,p}(w,\rho),
\qquad
\rho_j=r_j^{1/N},
$$
where $S$ is a finite index set and every $A_{q,p}$ is analytic
on a neighborhood of 
$
\overline P\times[0,1]^s.
$ 
Thus the coefficients are uniformly controlled up to all chart frontiers.

Because $F$ is bounded and vanishes when any of the $r_k$ tends to $0$, a careful Taylor expansion in each variable shows that all terms with negative exponents or logarithmic singularities must cancel. Consequently, on each piece $A$, we obtain an estimate
$$
|F\circ \Phi(w,r)| \le C' \prod_{k\in I} r_k^{\delta_{k}}
$$
where $I$ is the set of variables that tend to $0$ when $t\to 0$, and $C',\delta_{k}>0$ only depend on $A$.

Finally, using the prepared expression for $t$ itself
$$
t\circ\Phi(w,r)
=v(w,r)\prod_{j=1}^s r_j^{b_j},
$$
we derive
$$
|F(z,t)| \le C" t^{\alpha}
$$
for some $\alpha$ on each piece. Taking the minimum of the finitely many exponents and cutoffs gives the uniform result.
\end{proof}

\begin{prop}
\label{prop:Holder-continuous}
Let $U\subset\R^n$ be an open globally subanalytic set, and let $f:U\to\R$ be a continuous constructible function. Then for every compact subset $K\Subset U$, there exist constants $C>0$, $\alpha>0$, and $r_0>0$ such that
$$
|f(x)-f(y)|\leq C\|x-y\|^\alpha
$$
for all $x,y\in K$ satisfying $\|x-y\|<r_0$.
\end{prop}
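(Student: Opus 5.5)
We reduce the statement to Lemma \ref{lem:constructible-pre} by adding the displacement magnitude as an auxiliary variable. Fix a compact $K\Subset U$ and choose $r_0>0$ with $K_{r_0}:=\set{x\in\R^n}{\mathrm{dist}(x,K)\le r_0}\subset U$. Replacing $K$ by a closed ball cover, we may assume $K$ is a finite union of closed balls, hence globally subanalytic. Consider
$$
E=\set{(x,h,t)\in K\times\R^n\times[0,r_0]}{\|h\|\le t},
$$
a compact globally subanalytic subset of $\R^{2n}\times[0,r_0]$, and the function
$$
F(x,h,t)=f(x+h)-f(x).
$$
Because $x+h\in K_{r_0}\subset U$, $F$ is well defined and continuous on $E$. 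It is constructible, since composing globally subanalytic functions with the affine map $(x,h)\mapsto x+h$ keeps them globally subanalytic, and sums and products of such terms and logarithms remain of the constructible form.

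On $E_0=E\cap\{t=0\}$ we have $h=0$, so $F$ vanishes there. Lemma \ref{lem:constructible-pre} therefore gives $C,\alpha>0$ and $\epsilon\in(0,r_0]$ with $|F(x,h,t)|\le Ct^\alpha$ for $0<t<\epsilon$.

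Now take $x,y\in K$ with $0<\|x-y\|<\epsilon$. Put $h=y-x$. For every $t\in[\|h\|,\epsilon)$ the point $(x,h,t)$ lies in $E$, so
$$
|f(y)-f(x)|\le Ct^\alpha.
$$
Letting $t\downarrow\|h\|$ gives $|f(x)-f(y)|\le C\|x-y\|^\alpha$. If the Lemma is applied strictly inside its hypotheses, we can also just take $t=\|h\|$ whenever it is positive, and the case $x=y$ is trivial. Setting the constant $r_0$ in the proposition to be $\epsilon$ finishes the proof.

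The only step needing care is checking the hypotheses of Lemma \ref{lem:constructible-pre}: compactness and global subanalyticity of $E$ (which is where we need $K$ definable), and continuity and constructibility of $F$ on all of $E$. This is also where $t$ must be introduced as a separate coordinate rather than setting $t=\|h\|$. Setting $t=\|h\|$ would be definable too, but keeping $t$ free makes the slice $E_0$ exactly the zero locus $h=0$ and matches the lemma's form directly. All the genuine analytic difficulty has already been absorbed into the lemma.
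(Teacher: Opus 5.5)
Your argument is correct, and it takes a more direct route than the paper.

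\textbf{The paper's route.} It covers $K$ by finitely many closed axis-parallel boxes $Q_j\subset U$. It applies Lemma \ref{lem:constructible-pre} separately to each one-directional increment $f(z+te_i)-f(z)$ on $E_{j,i}\subset Q_j\times[0,1]$. It then recovers the full estimate using a Lebesgue number for the cover and a telescoping sum along a coordinate polygon inside a convex box, which costs a factor $n$ in the constant.

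\textbf{Your route.} You put the whole displacement vector $h$ into the parameter space and apply the lemma once, on $E\subset\R^{2n}\times[0,r_0]$. This removes the Lebesgue-number and polygon steps and gives the H\"older bound directly; it even needs only $x\in K$, not $y\in K$. The only price is that the lemma is invoked in ambient dimension $2n+1$ instead of $n+1$. That is immaterial, since the lemma is stated for arbitrary $m$, and the analytic content is the same black box in both proofs.

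\textbf{Bookkeeping.} Choose the finite union of closed balls $K'\supset K$ before fixing the neighbourhood radius. For example, take balls of radius $<r_0/2$ centred in $K$, so that $K'$ lies in the $r_0/2$-neighbourhood of $K$, and then work with $r_0/2$ so that the $r_0/2$-neighbourhood of $K'$ still lies in $U$.

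Your closing remark about keeping $t$ free is unnecessary. The graph version with $t=\|h\|$ would also have zero slice $\{h=0\}$ and would work equally well.
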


\begin{proof} The proof is divided in three steps. 

Step 1. 
For each $x\in K$, choose a closed axis‑parallel box $Q_x \subset U$ with $x$ in its interior. By compactness, there exist finitely many boxes $Q_1,\dots,Q_N$ such that $K \subset \bigcup_j \operatorname{int} Q_j$.

Step 2. 
Fix one box $Q_j$ and a coordinate direction $e_i$. Consider the set
$$
E_{j,i} = \set{(z,t) \in Q_j \times [0,1] }{ z + t e_i \in Q_j},
$$
and the function
$$
F_{j,i}(z,t) = f(z + t e_i) - f(z).
$$
The set is compact and globally subanalytic; the function is continuous, constructible, and vanishes at $t=0$. Lemma \ref{lem:constructible-pre} yields constants $C_{j,i}>0$, $\alpha_{j,i}>0$, and $\epsilon_{j,i}>0$ such that
$$
|f(z + t e_i) - f(z)| \le C_{j,i}\, t^{\alpha_{j,i}}
$$
whenever $z, z+t e_i \in Q_j$ and $0<t<\epsilon_{j,i}$.

Step 3. 
Let $\lambda>0$ be a Lebesgue number for the open cover $\{K\cap \operatorname{int}Q_j\}$ of the compact metric space $K$. Thus any two points of $K$ at distance $<\lambda$ lie together in the interior of some $Q_j$.  
Define
$$
\alpha = \min_{j,i} \alpha_{j,i} > 0, \qquad C_0 = \max_{j,i} C_{j,i},
$$
and choose
$$
0<r_0 < \min\{1, \lambda, \min_{j,i}\epsilon_{j,i}\}.
$$

For $x,y \in K$ with $\|x-y\|<r_0$, pick a box $Q_j$ containing both. Join $x$ to $y$ by the coordinate polygon
$$
x^{(0)}=x, \quad x^{(k)} = (y_1,\dots,y_k, x_{k+1},\dots,x_n), \quad 1\le k\le n.
$$
Since $Q_j$ is convex, every segment lies in $Q_j$. Along the $k$-th segment, the previous estimate yields
$$
|f(x^{(k)}) - f(x^{(k-1)})| \le C_0 |x_k-y_k|^{\alpha_{j,k}} \le C_0 \|x-y\|^\alpha,
$$
where the last inequality uses $|x_k-y_k|\le \|x-y\|<1$ and $\alpha_{j,k}\ge \alpha$. Telescoping yields
$$
|f(x)-f(y)| \le \sum_{k=1}^n |f(x^{(k)}) - f(x^{(k-1)})| \le n C_0 \|x-y\|^\alpha.
$$
Thus the proposition holds with $C = n C_0$.
\end{proof} 

\begin{rem}
Proposition \ref{prop:Holder-continuous} does not follow directly from the H\"older  continuity of continuous globally subanalytic functions \cite{LC2}, because for a continuous constructible function
$$
f=\sum_{i=1}^k f_i \prod_{j=1}^{l_i} \log(g_{ij}),
$$
the globally subanalytic functions $f_i,g_{ij}$ in the representation need not be continuous.
\end{rem}


\begin{bibdiv}
\begin{biblist}

\bibitem{chiral-algebra}
A. Beilinson and V. Drinfeld, {\em Chiral algebras}, American Mathematical Society Colloquium Publications, vol. 51, American Mathematical Society, Providence, RI, 2004.


\bibitem{Esquisse}
A. Grothendieck, 1984. {\em "Esquisse d'un Programme"}, 1984 manuscript, finally published in Schneps and Lochak (1997, I), pp.5-48; English transl., ibid., pp. 243-283.

\bibitem{o-minimal-de-rham}
A. Huber, T. Kaiser and A. Oswal, {\em On the de Rham theorem in the globally subanalytic setting}, arXiv:2508.03499[math.LO]. 


\bibitem{blowup-corner}
A. K\"onig, {\em The blowup of complex manifolds with corners}, Master Thesis, published by the Library of the University of Regensburg, DOI 10.5283/epub.47792.




\bibitem{Secondary-Products}
C. Beem, D. Ben-Zvi,  M. Bullimore, et al, 
{\em Secondary Products in Supersymmetric Field Theory}, Ann. Henri Poincaré 21, 1235–1310 (2020).

\bibitem{Cohomological-VA}
C. Griffin, {\em Cohomological vertex algebras}, arXiv:2501.18720[math.QA]. 

\bibitem{poly-bound2} C. Miller, {\em Expansions of the real field with power functions}, Ann. Pure Appl. Logic 68 (1994), 79-94.

\bibitem{Tamarkin-formality}
D. E. Tamarkin, {\em Another proof of M. kontsevich formality theorem}, arxiv:math/9803025, 1998.


\bibitem{Complex-geometry} D. Huybrechts, {\em Complex geometry}, Universitext, Springer, Berlin, 2005.





\bibitem{algebraic-operads}
J.-L. Loday and B. Vallette, {\it Algebraic operads}, Grundlehren der mathematischen Wissenschaften, 346, Springer, Heidelberg, 2012


\bibitem{Bud+23}
K. Budzik, D. Gaiotto, J. Kulp, J. Wu, and M. Yu,  {\em Feynman diagrams in four-dimensional holomorphic theories and the Operatope}, JHEP 07 (2023), p. 127. arXiv: 2207.14321 [hep-th].

\bibitem{kevin-owen}
K.~Costello and O.~Gwilliam, {\em Factorization algebras in quantum field theory. {V}ol. 1,2},   volume~31 of {\em New Mathematical Monographs}, Cambridge University Press, Cambridge, 2017.



\bibitem{tame-o-minimal}
L. van~den~Dries, {\em Tame topology and o-minimal structures}, London Mathematical Society Lecture Note Series, 248, Cambridge Univ. Press, Cambridge, 1998.

\bibitem{LC2}
L. van~den~Dries and C.~L. Miller, {\em Geometric categories and o-minimal structures}, Duke
Mathematical Journal, 1996.




\bibitem{higher-residue}
M. Herrera, D. Lieberman, {\em Residues and principal values on complex spaces}, Math. Ann. {194} (1971), 259--294.


\bibitem{Kontsevich-DQ}
M. Kontsevich, {\em Deformation quantization of Poisson manifolds, I}, arxiv:math/9709180.

\bibitem{Kontsevich-diagrams}
M. Kontsevich, {\em Feynman diagrams and low-dimensional topology}, First European Congress of Mathematics, Vol. II (Paris, 1992), volume 120 of Progr. Math., pages 97–121. Birkh\"auser, Basel, 1994.

\bibitem{Kontsevich-Operads}
M. Kontsevich, {\em Operads and motives in deformation quantization}, Lett. Math. Phys., 48(1):35–72, 1999. Mosh\'e Flato (1937-1998).


\bibitem{Feynman-Graph-Integral}
M. Wang, {\em Feynman Graph Integrals on $\mathbb{C}^d$}, Communications in Mathematical Physics,
406(5):116, 2025. 


\bibitem{topological-holomorphic}
M. Wang and B. Williams, {\em On the renormalization and quantization of topological-holomorphic field theories}, arXiv:2407.08667[math-ph]. 

 
\bibitem{raviolo-vertex-algebras}
N. Garner, B. R. Williams, {\em Raviolo vertex algebras}, arXiv:2308.04414[math-QA]. 


\bibitem{Griffiths-Harris} P.~A. Griffiths and J.~D. Harris, {\em Principles of algebraic geometry}, Pure and Applied Mathematics, Wiley-Intersci., New York, 1978.

\bibitem{Formality-little-disks}
P. Lambrechts and I. Volic, {\em Formality of the little N-disks operad}, arXiv: 0808.0457[math.AT].




\bibitem{O-minimal-deRham}
R. Bianconi and R. Figueiredo, {\em O-minimal de Rham cohomology}, arxiv:1904.05485[math.LO].



\bibitem{cluckers-miller-2013-lebesgue}
R. Cluckers, D. J. Miller, {\em Lebesgue classes and preparation of real constructible
functions}, Journal of Functional Analysis, 2013.

\bibitem{int-closed}
R. Cluckers, D. J. Miller, {\em Stability under integration of sums of products of real globally subanalytic functions and their logarithms}, Duke Math. J., 156(2):311–348, 2011.

\bibitem{semi-algebraic}
R. Hardt, P. Lambrechts, V. Turchin and I. Voli\'c, {\em Real homotopy theory of semi-algebraic sets}, Algebr. Geom. Topol., 11(5):2477–2545, 2011.
 
\bibitem{Regularized-integral}
S. Li and J. Zhou, {\em Regularized integrals on Riemann surfaces and modular forms}, Comm. Math. Phys. {388} (2021), no.~3, 1403--1474.


\bibitem{derivative-closed}
T. Kaiser, {\em The Derivative of a Constructible Function is Constructible}, arXiv:2508.02517[math.LO].

\bibitem{derivative-closed-2}
T. Kaiser, A. Opris, {\em Differentiability properties of log-analytic functions}, Rocky Mountain J. Math., 52(4):1423–1443, 2022.

\bibitem{Arnold-conf-cohomology}
V.I. Arnold, {\em The cohomology ring of the colored braid group}, Mat. Zametki, 1969, Volume 5, Issue 2, Pages 227–231.

\bibitem{compactification}
W. Fulton and R.~D. MacPherson, {\em A compactification of configuration spaces}, Ann. of Math. (2) 139 (1994), no.~1, 183--225.

\bibitem{Rational-homotopy-theory}
Y. F\'elix, S. Halperin and J.-C. Thomas, {\em Rational homotopy theory}, Graduate Texts in Mathematics, 205, Springer, New York, 2001.

\bibitem{YS-chow1}
Y. Peterzil, S. Starchenko, {\em Complex analytic geometry and analytic-geometric categories}, J. Reine Angew. Math. 626 (2009), 39–74.

\bibitem{YS-chow2}
Y. Peterzil, S. Starchenko, {\em Complex analytic geometry in a nonstandard setting}, Model theory with applications to algebra and analysis. Vol. 1, 117–165, London Math. Soc. Lecture Note Ser., 349, Cambridge Univ. Press, Cambridge, 2008.


\bibitem{Higher-CA-Jouanolou}
Z. Gui, M. Wang and B. R. Williams, {\em Higher-dimensional Chiral Algebras in the Jouanolou Model and free-field realization}, arXiv: 2510.26608 [math.QA].  

\end{biblist}
\end{bibdiv}
\end{document}